\definecolor{bleu_sombre}{rgb}{0,0,0.4}
\definecolor{vert_sombre}{rgb}{0,0.3,0}
\definecolor{rouge_sombre}{rgb}{0.3,0,0}
\definecolor{mygray}{rgb}{0.925,0.925,0.925}
\definecolor{mymauve}{rgb}{0.58,0,0.82}
\definecolor{mygreen}{rgb}{0,0.6,0}
\newtheorem{theorem}{Theorem}
\newtheorem{lemma}[theorem]{Lemma}
\newtheorem{definition}[theorem]{Definition}
\newtheorem{remark}[theorem]{Remark}
\newtheorem{corollary}[theorem]{Corollary}
\numberwithin{theorem}{section}
\numberwithin{equation}{section}
\numberwithin{table}{section}
\numberwithin{figure}{section}
\newcommand{\nc}{\newcommand}
\newcommand\RedeclareMathOperator{%
  \@ifstar{\def\rmo@s{m}\rmo@redeclare}{\def\rmo@s{o}\rmo@redeclare}%
}
\newcommand\rmo@redeclare[2]{%
  \begingroup \escapechar\m@ne\xdef\@gtempa{{\string#1}}\endgroup
  \expandafter\@ifundefined\@gtempa
     {\@latex@error{\noexpand#1undefined}\@ehc}%
     \relax
  \expandafter\rmo@declmathop\rmo@s{#1}{#2}}
\newcommand\rmo@declmathop[3]{%
  \DeclareRobustCommand{#2}{\qopname\newmcodes@#1{#3}}%
}
\nc{\bfx}{\mathbf{x}} 
\nc{\bfy}{\mathbf{y}} 
\nc{\bfz}{\mathbf{z}} 
\nc{\bfu}{\mathbf{u}} 
\nc{\bfv}{\mathbf{v}} 
\nc{\bfw}{\mathbf{w}} 
\nc{\bft}{\mathbf{t}} 
\nc{\bfb}{\mathbf{b}} 
\nc{\bfn}{\mathbf{n}} 
\nc{\bfr}{\mathbf{r}} 
\nc{\bfc}{\mathbf{c}} 
\nc{\bfA}{\mathbf{A}} 
\nc{\bfB}{\mathbf{B}} 
\nc{\bfC}{\mathbf{C}} 
\nc{\bfR}{\mathbf{R}} 
\nc{\bfD}{\mathbf{D}} 
\nc{\bfI}{\mathbf{I}} 
\nc{\bfM}{\mathbf{M}} 
\nc{\bfK}{\mathbf{K}} 
\nc{\bfP}{\mathbf{P}} 
\nc{\bfU}{\mathbf{U}} 
\nc{\bfZ}{\mathbf{Z}} 
\nc{\bfV}{\mathbf{V}} 
\nc{\bfE}{\mathbf{E}} %
\nc{\bfH}{\mathbf{H}} %
\nc{\bfX}{\mathbf{X}} %
\nc{\bfId}{\mathbf{I}_{\mathrm{d}}} 
\nc{\bbN}{\mathbb{N}} 
\nc{\bbR}{\mathbb{R}} 
\nc{\bbP}{\mathbb{P}} 
\nc{\bbC}{\mathbb{C}} 
\nc{\wH}{\widetilde{H}}
\nc{\calS}{\mathcal{S}} %
\nc{\calD}{\mathcal{D}} %
\nc{\calN}{\mathcal{N}} %
\nc{\calT}{\mathcal{T}} %
\nc{\calR}{\mathcal{R}} %
\nc{\calP}{\mathcal{P}} %
\nc{\calV}{\mathcal{V}} %
\nc{\calW}{\mathcal{W}} %
\nc{\calJ}{\mathcal{J}} %
\nc{\calE}{\mathcal{E}} %
\nc{\calA}{\mathcal{A}} %
\nc{\calU}{\mathcal{U}} %
\nc{\calK}{\mathcal{K}} %
\nc{\calL}{\mathcal{L}} %
\nc{\rouge}{\color{red}}
\nc{\bleu}{\color{blue}}
\nc{\cyan}{\color{cyan}}
\nc{\noir}{\color{black}\rm}
\nc\dif{\mathop{}\!\mathrm{d}}   
\DeclareMathOperator{\supp}{supp}   
\DeclareMathOperator{\diam}{diam} 
\newcommand{\vertiii}[1]{{\left\vert\kern-0.25ex\left\vert\kern-0.25ex\left\vert #1 
    \right\vert\kern-0.25ex\right\vert\kern-0.25ex\right\vert}} 
\RedeclareMathOperator{\Re}{Re} 
\RedeclareMathOperator{\Im}{Im} 
\nc{\CC}{{C\nolinebreak[4]\hspace{-.05em}\raisebox{.4ex}{\tiny\bf ++}}\ }
\newtheorem{assumption}[theorem]{Assumption}
\newcommand{\R}{\mathbb{R}}
\newcommand{\cF}{{\cal F}}
\newcommand{\cR}{{\cal R}}
\newcommand{\cS}{{\cal S}}
\newcommand{\cK}{{\cal K}}
\newcommand{\cD}{{\cal D}}
\newcommand{\bx}{x}
\newcommand{\by}{y}
\newcommand{\ba}{\hat{a}}
\newcommand{\bze}{0}
\newcommand{\re}{{\rm e}}
\newcommand{\ri}{{\rm i}}
\newcommand{\rd}{{\rm d}}
\newcommand{\e}{\epsilon}
\newcommand{\beq}{\begin{equation}}
\newcommand{\eeq}{\end{equation}}
\newcommand{\beqs}{\begin{equation*}}
\newcommand{\eeqs}{\end{equation*}}
\newcommand{\bit}{\begin{itemize}}
\newcommand{\eit}{\end{itemize}}
\newcommand{\ben}{\begin{enumerate}}
\newcommand{\een}{\end{enumerate}}
\newcommand{\bal}{\begin{align}}
\newcommand{\eal}{\end{align}}
\newcommand{\bals}{\begin{align*}}
\newcommand{\eals}{\end{align*}}
\newcommand{\bse}{\begin{subequations}}
\newcommand{\ese}{\end{subequations}}
\newcommand{\bpr}{\begin{proposition}}
\newcommand{\epr}{\end{proposition}}
\newcommand{\bre}{\begin{remark}}
\newcommand{\ere}{\end{remark}}
\newcommand{\bpf}{\begin{proof}}
\newcommand{\epf}{\end{proof}}
\newcommand{\ble}{\begin{lemma}}
\newcommand{\ele}{\end{lemma}}
\newcommand{\bco}{\begin{corollary}}
\newcommand{\eco}{\end{corollary}}
\newcommand{\bex}{\begin{example}}
\newcommand{\eex}{\end{example}}
\newcommand{\bth}{\begin{theorem}}
\newcommand{\enth}{\end{theorem}}
\newcommand{\Rea}{\mathbb{R}}
\newcommand{\Com}{\mathbb{C}}
\newcommand{\Oi}{{\Omega^-}}
\newcommand{\Oe}{{\Omega^+}}
\newcommand{\eps}{\varepsilon}
\newcommand{\pdiff}[2]{\frac{\partial #1}{\partial #2}}
\newcommand{\half}{\frac{1}{2}}
\newcommand{\LtG}{{L^2(\bound)}}
\newcommand{\LtGt}{{\LtG\rightarrow \LtG}}
\newcommand{\HhG}{{H^{1/2}(\Gamma)}}
\newcommand{\HmhG}{{H^{-1/2}(\Gamma)}}
\newcommand{\HoG}{H^1(\Gamma)}
\newcommand{\tendi}{\rightarrow \infty}
\newcommand{\tendo}{\rightarrow 0}
\def\XXint#1#2#3{{\setbox0=\hbox{$#1{#2#3}{\int}$}
     \vcenter{\hbox{$#2#3$}}\kern-.5\wd0}}
\newcommand*{\N}[1]{\left\|#1\right\|}
\newcommand{\tfa}{\text{ for all }}
\newcommand{\tfor}{\text{ for }}
\newcommand{\tin}{\text{ in }}
\newcommand{\ton}{\text{ on }}
\newcommand{\tas}{\text{ as }}
\newcommand{\tand}{\text{ and }}
\newcommand{\tst}{\text{ such that }}
\newcommand{\NtD}{P^+_{\rm NtD}}
\newcommand{\ItDS}{P^{-,\eta, R}_{\rm ItD}}
\newcommand{\ItDSp}{P^{-,\eta, R'}_{\rm ItD}}
\newcommand{\ItD}{P^{-,\eta}_{\rm ItD}}
\newcommand{\ItDext}{P^{+,\beta}_{\rm ItD}}
\newcommand{\bound}{\Gamma}
\definecolor{jwcol}{RGB}{27, 137, 18}  
\definecolor{dalcol}{rgb}{0.8,0,0}
\definecolor{jeffColor}{RGB}{102, 0, 204}
\definecolor{escol}{rgb}{0,0,0.8}
\definecolor{estcol}{rgb}{0,0.5,0}
\definecolor{esnewcol}{rgb}{0,0.5,0}
\newcommand{\mythmname}[1]{\emph{(#1.)}}
\newcommand{\QMC}{\epsilon}
\newcommand{\Gammaext}{\widetilde{\Gamma}}
\newcommand{\noi}{\noindent}
\newcommand{\cond}{\operatorname{cond}}
\newcommand{\hsc}{\hbar}
\newcommand{\Op}{{\rm Op}}
\newcommand{\Reg}{R}
\newcommand{\ItDR}{P^{-,\eta, \Reg}_{\rm ItD}}
\newcommand{\Bregimp}{\widetilde{B}_{k,\eta,\Reg}}
\newcommand{\Breg}{ B_{k,\eta,\Reg}}
\newcommand{\Bregp}{ B'_{k,\eta,\Reg}}
\newcommand{\BregSik}{ B_{k,\eta,S_{\ri k}}}
\newcommand{\BregSO}{ B_{k,\eta,S_{0}}}
\newcommand{\DL}{K}
\title{High-frequency estimates on boundary integral operators for the Helmholtz exterior Neumann problem.}
\author{
    J. Galkowski\thanks{Department of Mathematics, University College London, 25 Gordon Street, London, WC1H 0AY, UK, \tt J.Galkowski@ucl.ac.uk}
    \and
    P.~Marchand\thanks{Department of Mathematical Sciences, University of Bath, Bath, BA2 7AY, UK, \tt pfcm20@bath.ac.uk}
    \and
    E.~A.~Spence\thanks{Department of Mathematical Sciences, University of Bath, Bath, BA2 7AY, UK, \tt E.A.Spence@bath.ac.uk}}
\date{
    \today
}
\begin{document}

\maketitle

\begin{abstract}

We study a commonly-used second-kind boundary-integral equation for solving the Helmholtz exterior Neumann problem at high frequency, where, writing $\Gamma$ for the boundary of the obstacle, the relevant integral operators map $L^2(\Gamma)$ to itself.
We prove new frequency-explicit bounds on the norms of both the integral operator and its inverse. The bounds on the norm are valid for piecewise-smooth $\Gamma$ and are sharp up to factors of $\log k$ (where $k$ is the wavenumber), and the bounds on the norm of the inverse are valid for smooth $\Gamma$ and are observed to be sharp at least when $\Gamma$ is smooth with strictly-positive curvature.
Together, these results give bounds on the condition number of the operator on $L^2(\Gamma)$; this is the first time $L^2(\Gamma)$ condition-number bounds have been proved for this operator for obstacles other than balls. 

\paragraph{Keywords:} boundary integral equation, Helmholtz, high frequency, Neumann problem, pseudodifferential operator, semiclassical analysis.
\end{abstract}

\section{Introduction}

\subsection{Motivation, and informal discussion of the main results and their novelty}

The frequency-dependence of the norms of both Helmholtz boundary-integral operators and their inverses has been studied since the work of Kress and Spassov \cite{KrSp:83, Kr:85} and Amini \cite{Am:90}, who studied the case when the obstacle is a ball.

Over the last 15 years there has been renewed interest in this dependence at high-frequency 
\cite{BuSa:06, BaSa:07, DoGrSm:07, ChMo:08, ChGrLaLi:09, ChGr:09, BeSp:11,
BeChGrLaLi:11, SpChGrSm:11, Me:12, BePhSp:13, Sp:14, ChHe:15, GaSm:15, HaTa:15, SpKaSm:15, BaSpWu:16, GaSp:19, ChSpGiSm:20}, 
motivated mainly by its importance in the analysis of associated boundary-element methods 
\cite{ChLa:07,
GaHa:11, 
LoMe:11,
ChGrLaSp:12, 
HeLaMe:13,
ChHeLaTw:15,
GrLoMeSp:15,
HeLaCh:15,
GaMuSp:19,
GiChLaMo:21}.
Almost all of the analysis of boundary-integral operators for the high-frequency Helmholtz equation has been for the exterior Dirichlet problem.
Indeed, there is only one paper proving frequency-explicit bounds on boundary-integral operators used to solve the high-frequency Helmholtz exterior Neumann problem \cite{BoTu:13}; in this informal discussion, we denote these operators by $B$. 
We discuss the results of  \cite{BoTu:13} in detail later, but note here that they prove (i) sharp bounds on $\|B\|$ when the obstacle is a ball and non-sharp bounds for general smooth obstacles, and (ii) a bound on $\|B^{-1}\|$ only when the obstacle is a ball.

In this paper, we prove bounds on $\|B\|$ and $\|B^{-1}\|$ (see Theorems \ref{thm:upper_bound_norm} and \ref{thm:upper_bound_inverse}). 
The bounds on $\|B\|$ are valid for piecewise smooth domains and are sharp up to factors of $\log k$, while those on $\|B^{-1}\|$ are valid for smooth domains, and are observed to be sharp (via numerical experiments) at least for strictly-convex obstacles. 
These bounds are the Neumann analogues of the Dirichlet results obtained in 
\cite{ChMo:08, ChGrLaLi:09, BeChGrLaLi:11, BaSpWu:16, LaSpWu:20}.

In obtaining these bounds, we crucially use the high-frequency decompositions of the single-layer, double-layer, and hypersingular operators from \cite{Ga:19}, the PDE results of \cite{Bu:98, Vo:00, BaSpWu:16,LaSpWu:20, GaMaSp:21, GaLaSp:21}, and results about semiclassical pseudodifferential operators (see, e.g., \cite{Zw:12}, \cite[Appendix E]{DyZw:19}).

An immediate application of these bounds is in extending the Dirichlet analysis in \cite{MaGaSpSp:21} of iterative methods applied to the 
linear systems arising from the boundary-element method 
to the Neumann case (see the discussion in \S\ref{sec:MGSS}). Furthermore, the results in \S\ref{sec:4.1} about the high-frequency components of the 
single- and double-layer operators are used in the high-frequency analysis of the boundary-element method in \cite{GaSp:22}.

\subsection{The Helmholtz exterior Neumann problem}\label{sec:Neumann}

Let $\Oi \subset \Rea^d$, $d\geq 2$ be a bounded open set such that its open complement $\Oe :=\Rea^d \setminus \overline{\Oi }$ is connected. 
Let $\Gamma:= \partial \Oi $; the majority of the results in this paper hold when $\Gamma$ is $C^\infty$ (so that we can easily use the calculus of pseudodifferential operators), but some results hold when $\Gamma$ is piecewise smooth in the sense of Definition \ref{def:piecewisesmooth} below.
Let $n$ be the outward-pointing unit normal vector to $\Oi $, and let $\gamma^\pm$ and $\partial^{\pm}_n$ denote the Dirichlet and Neumann traces on $\Gamma$ from $\Omega^{\pm}$.

We consider the exterior Neumann scattering problem. For simplicity, we consider boundary data coming from an incident plane wave $u^I(\bx):= \exp(\ri k \bx\cdot \ba)$ for $\ba\in\Rea^d$ with $|\ba|_2=1$, but we note that the same boundary-integral operators used to solve this problem can be used to solve the exterior Neumann problem given arbitrary data in $\HmhG$.
That is, we consider the sound-hard plane-wave scattering problem defined by:~given $k>0$ and the incident plane wave $u^I$,
find the total field $u \in H^1_{\rm loc}(\Omega_+)$ satisfying 
\begin{align}\label{eq:Helmholtz}
\Delta u + k^2 u =0 \quad\tin\quad \Oe ,\qquad 
\partial_n^+ u = 0 \quad\ton\quad\Gamma,
\end{align}
and
\beq\label{eq:src}
\dfrac{\partial u^S }{\partial r} -\ri ku^S = o \left(\frac{1}{r^{(d-1)/2}}\right)  \text{ as }r:=|x|\rightarrow \infty, \text{ uniformly in $x/r$},
\eeq
where \(u^S := u - u^I\) is the scattered field. We study this problem when the wavenumber $k$ is large. 

\subsection{Boundary-integral operators}
The standard single-layer, adjoint-double-layer, double-layer, and hypersingular operators are defined for $k\in \mathbb{C}$, $\phi\in \LtG$, $\psi\in H^1(\Gamma)$, and $x\in \Gamma$ by 
\begin{align}\label{eq:SD'}
&S_k \phi(\bx) := \int_\bound \Phi_k(\bx,\by) \phi(\by)\,\rd s(\by), \qquad
\DL_k' \phi(\bx) := \int_\bound \frac{\partial \Phi_k(\bx,\by)}{\partial n(\bx)}  \phi(\by)\,\rd s(\by),\\
&\DL_k \phi(\bx) := \int_\bound \frac{\partial \Phi_k(\bx,\by)}{\partial n(\by)}  \psi(\by)\,\rd s(\by), 
\quad 
H_k \psi(\bx) := \pdiff{}{n(\bx)} \int_\bound \frac{\partial \Phi_k(\bx,\by)}{\partial n(\bx)}  \psi(\by)\,\rd s(\by),
 \label{eq:DH}
\end{align}
where $\Phi_k(\bx,\by)$ is the standard Helmholtz fundamental solution satisfying the radiation condition \eqref{eq:src}; see \eqref{eq:fund} below.
(We use the notation $K_k$, $K_k'$ for the double-layer and its adjoint, instead of $D_k$, $D_k'$, to avoid a notational clash with the differential operator $D:= -\ri \partial$ used in \S\ref{sec:SCA} onwards.) 

This paper studies the integral operators
\beq\label{eq:BIEs}
    \Breg := \ri\eta \left(\dfrac{1}{2}I-\DL_k\right) +  \Reg H_k
    \quad\tand\quad
    \Breg' := \ri\eta \left(\dfrac{1}{2}I-\DL_k'\right) +  H_k\Reg
       \eeq
       where $\eta\in \Com\setminus\{0\}$, and the operator $\Reg$ satisfies the following assumption. This assumption uses the notation of semiclassical pseudodifferential operators on $\Gamma$ recapped in \S\ref{sec:SCA}. 
     
\begin{assumption}\label{ass:Reg}
$\Reg\in k^{-1} \Psi_{k^{-1}}^{-1}(\Gamma)$ is elliptic and its semiclassical principal symbol, $\sigma_{k^{-1}}(\Reg)$, is real.
\end{assumption}

The prototypical example of an operator satisfying Assumption \ref{ass:Reg} is $S_{\ri k}$, i.e. the single-layer operator at wavenumber $\ri k$. Assumption \ref{ass:Reg} and standard mapping properties of $K_k, K_k',$ and $H_k$ (see \eqref{eq:mapping} below) imply that $\Breg, \Breg': \LtGt$.
Indeed, since $H_k: L^2(\Gamma)\rightarrow H^{-1}(\Gamma)$, the fact that $R$ is a regulariser and maps $H^{-1}(\Gamma)\rightarrow\LtG$ is crucial; see \S\ref{sec:rationale} below for a recap of the history of this idea.

We use the $'$ notation on $\Breg'$ because, if $R$ is self-adjoint in the real-valued $L^2(\Gamma)$ inner product, then $\Breg$ and $\Breg'$ are self-adjoint in this inner product; see Lemma \ref{lem:quasiadjoint} below. 

\paragraph{The relationship of $\Breg$ and $\Breg'$ to the Helmholtz exterior Neumann problem.}

If $u$ is the solution of \eqref{eq:Helmholtz}-\eqref{eq:src}, then 
\begin{align}\label{eq:direct}
    \Breg \gamma^+  u  = \ri \eta \gamma^+ u^I  - \Reg \partial_{n}^+ u^I.
\end{align}
Indeed, expressing $u$ via Green's integral representation (see \eqref{eq:Green}) and taking Dirichlet and Neumann traces (using the third and fourth jump relations in \eqref{eq:jumprelations}) yields the two integral equations 
\beq\label{eq:standard}
\left(\dfrac{1}{2}I-\DL_k\right) \gamma^+ u = \gamma^+ u^I \quad\tand \quad  H_k\gamma^+ u = - \partial_n^+ u^I;
\eeq
acting on the second equation with $\Reg$ and then adding this to $\ri \eta$ times the first, we obtain \eqref{eq:direct}.

Furthermore, if $\phi$ satisfies
\begin{align}\label{eq:indirect}
\Breg' \phi  = -\partial_n^+ u^I,
\end{align}
then, by the jump relations \eqref{eq:jumprelations}, $u=u^I+ (\cK_k  \Reg-\ri \eta \cS_k)\phi$ is a solution of \eqref{eq:Helmholtz}-\eqref{eq:src} (where the double- and single-layer potentials, $\cK_k$ and $\cS_k$, are defined by \eqref{eq:SLPDLP}).
 
Since the unknown in \eqref{eq:direct} is the unknown part of the Cauchy data of $u$ satisfying  \eqref{eq:Helmholtz}-\eqref{eq:src}, the boundary-integral equation (BIE) \eqref{eq:direct} is called a \emph{direct} BIE. On the other hand, since the unknown in \eqref{eq:indirect} has less-immediate physical relevance, the BIE \eqref{eq:indirect} is known as an \emph{indirect} BIE.

The main results of the paper  -- bounds on $\Breg$, $\Breg'$, and their inverses -- are stated in the next section (\S\ref{sec:main_results}); an outline of the rest of the paper is then given in \S\ref{sec:outline}. We highlight here that the results of this paper can be extended to cover the analogous BIOs for the exterior impedance problem, with this outlined in \S\ref{sec:ext_imp}.

\section{Statement of the main results}
\label{sec:main_results}

Our first main result gives bounds on the norms of $\Breg$.and $\Breg'$.
\begin{theorem}[Bounds on $\|\Breg\|_{\LtGt}$ and $\|\Bregp\|_{\LtGt}$]\label{thm:upper_bound_norm} 

\

(i) If $\Reg$ satisfies Assumption \ref{ass:Reg} and $\Gamma$ is $C^\infty$ and curved (in the sense of Definition \ref{def:curved}) then given $k_0>0$ there exists $C>0$ such that, for all $k\geq k_0$,  
\beq\label{eq:ball}
\N{\Breg}_{\LtGt}+
\big\|\Breg'\big\|_{\LtGt}
 \leq C \big( 1 + |\eta| \big).
\eeq

(ii) If $\Reg$ satisfies Assumption \ref{ass:Reg} and $\Gamma$ is $C^\infty$
then given $k_0>0$ there exists $C>0$ such that, for all $k\geq k_0$,  
\beq\label{eq:smoothnormbound}
\N{\Breg}_{\LtGt}+
\big\|\Breg'\big\|_{\LtGt}
\leq C \Big( |\eta|\big(1 + k^{1/4}\log (k+2)\big) +\log (k+2) \Big).
\eeq

(iii) If $\Reg= S_{\ri k}$ and $\Gamma$ is piecewise smooth (in the sense of Definition \ref{def:piecewisesmooth}), then given $k_0>0$ there exists $C>0$ such that, for all $k\geq k_0$,  
\beq\label{eq:piecewisebound}
\N{\Breg}_{\LtGt}+
\big\|\Breg'\big\|_{\LtGt}
\leq C \Big( |\eta|\big(1 + k^{1/4}\log (k+2)\big) +\big(\log (k+2)\big)^{3/2} \Big).
\eeq

(iv) If $\Reg=S_{\ri k}$ and $\Gamma$ is piecewise curved (in the sense of Definition \ref{def:piecewisecurved}), then given $k_0>0$ there exists $C>0$ such that, for all $k\geq k_0$,    
\beqs
\N{\Breg}_{\LtGt}+
\big\|\Breg'\big\|_{\LtGt}
\leq C \Big( |\eta|\big(1 + k^{1/6}\log (k+2)\big) +\big(\log (k+2)\big)^{3/2} \Big).
\eeqs
\end{theorem}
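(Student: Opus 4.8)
The plan is to reduce the bound on $\Breg$ and $\Breg'$ to bounds on the three constituent operators $\frac12 I - \DL_k$, its adjoint, and $\Reg H_k$ (resp.\ $H_k \Reg$), using the triangle inequality applied to the decomposition \eqref{eq:BIEs}. The term $\ri\eta(\frac12 I - \DL_k)$ contributes the $|\eta|$-dependent part of the bound, while the term $\Reg H_k$ contributes the $\log(k+2)$ part; since $\Reg = S_{\ri k}$ here, the $\log$-type contributions are governed by $L^2(\Gamma)\to L^2(\Gamma)$ bounds on $S_{\ri k} H_k$ and $H_k S_{\ri k}$.

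First I would handle $\|\frac12 I - \DL_k\|_{\LtGt}$ under the piecewise-curved hypothesis. This is where the improvement from $k^{1/4}$ (in part (iii)) to $k^{1/6}$ (in part (iv)) comes from, and I expect this to be the main obstacle. The key input is the high-frequency decomposition of $\DL_k$ from \cite{Ga:19}, which splits $\DL_k$ into a pseudodifferential part plus a Fourier-integral-operator part carrying the "glancing"/boundary-ray contributions; for a piecewise-\emph{curved} boundary one can use the nonvanishing curvature to get a better estimate on the glancing region — morally, the $k^{1/4}$ of the generic smooth/piecewise-smooth case is a "worst-case flat" bound, and curvature disperses the glancing rays, yielding $k^{1/6}$ (this is the Neumann analogue of the $k^{1/3}$ vs.\ $k^{1/4}$-type phenomena for $\DL_k'$ in the Dirichlet literature \cite{ChMo:08, ChGrLaLi:09, BeChGrLaLi:11, BaSpWu:16}, reflecting the Airy-type scaling near a curved boundary). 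On each smooth open piece of $\Gamma$ one applies these semiclassical/oscillatory-integral estimates, and then combines the pieces: because the pieces meet at lower-dimensional edges, a partition-of-unity argument together with the bounds on the "near-edge" pieces of the kernel (as in part (iii), where the extra $(\log(k+2))^{1/2}$ factor over the purely-curved case in (i)/(ii) arises from the corners) produces the stated $(\log(k+2))^{3/2}$ term.

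Next I would bound $\|S_{\ri k} H_k\|_{\LtGt}$ and $\|H_k S_{\ri k}\|_{\LtGt}$. Here $\Reg = S_{\ri k}$ is an elliptic element of $k^{-1}\Psi^{-1}_{k^{-1}}(\Gamma)$ with real principal symbol (this is exactly why $S_{\ri k}$ is singled out as the prototype satisfying Assumption \ref{ass:Reg}), so $S_{\ri k}$ gains one semiclassical derivative, and $H_k$ loses one (it maps $L^2(\Gamma)\to H^{-1}(\Gamma)$, and more precisely $H_k\in k\Psi^1_{k^{-1}} + (\text{FIO part})$ by \cite{Ga:19}); the product therefore lands in $L^2$. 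The pseudodifferential$\,\times\,$pseudodifferential contribution is bounded uniformly by the composition calculus, and the FIO contribution in $H_k$, after composing with the smoothing operator $S_{\ri k}$, is where the $\log(k+2)$ factor enters — exactly as for the $\DL_k'$-type estimates, a curved/piecewise-curved boundary does not improve this logarithm, so the $\log$-power in (iv) is inherited unchanged from (iii). The $k^{1/6}$ improvement affects only the $|\eta|$-coefficient because only the $(\frac12 I - \DL_k)$ term is multiplied by $\eta$.

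Finally I would assemble: $\|\Breg\|_{\LtGt} \le |\eta|\,\|\frac12 I - \DL_k\|_{\LtGt} + \|S_{\ri k} H_k\|_{\LtGt} \le C\big(|\eta|(1 + k^{1/6}\log(k+2)) + (\log(k+2))^{3/2}\big)$, and identically for $\Breg'$ using the adjoint versions (indeed $\Breg'$ is the adjoint of $\Breg$ up to the real/complex $L^2$-pairing subtlety recorded in Lemma \ref{lem:quasiadjoint}, so its norm equals that of $\Breg$). The only genuinely new work relative to part (iii) is the piecewise-curved refinement of the $\DL_k$ bound; everything else is a verbatim repeat of the (iii) argument, so I would present (iv) as a corollary of the proof of (iii) with the single estimate on $\|\frac12 I - \DL_k\|$ upgraded.
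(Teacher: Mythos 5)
Your overall decomposition is the paper's: a triangle inequality splitting $\Breg$ into $\ri\eta(\tfrac12 I-\DL_k)$ and $\Reg H_k$, with the $\DL_k$ bounds quoted from the known results ($k^{1/4}\log(k+2)$ piecewise smooth, $k^{1/6}\log(k+2)$ piecewise curved, $O(1)$ curved) feeding the $|\eta|$-coefficient. However, there is a genuine gap in how you treat the $\Reg H_k$ term in parts (iii) and (iv). You bound $\|S_{\ri k}H_k\|_{\LtGt}$ by invoking the fact that $S_{\ri k}\in k^{-1}\Psi^{-1}_{k^{-1}}(\Gamma)$ and composing in the semiclassical calculus (plus an FIO composition for the glancing part). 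But in (iii) and (iv) $\Gamma$ is only piecewise smooth, so the pseudodifferential calculus on $\Gamma$ is not available and the statement ``$S_{\ri k}$ satisfies Assumption \ref{ass:Reg}'' (Lemma \ref{lem:Sik}) does not apply; your composition argument therefore has no foundation precisely in the two cases where $R=S_{\ri k}$ is assumed. The paper avoids any composition analysis: it writes $\|\Reg H_k\|_{\LtGt}\le \N{\Reg}_{H^{-1}_k(\Gamma)\to L^2(\Gamma)}\N{H_k}_{L^2(\Gamma)\to H^{-1}_k(\Gamma)}$ and supplies two separate inputs, namely the bound $\N{H_k}_{L^2\to H^{-1}_k}\lesssim k\log(k+2)$ for piecewise-smooth Lipschitz $\Gamma$ (Theorem \ref{thm:Hk}) and the new estimate $\N{S_{\ri k}}_{H^{-1}_k(\Gamma)\to L^2(\Gamma)}\lesssim k^{-1}(\log(k+2))^{1/2}$ valid on piecewise-smooth $\Gamma$ (Theorem \ref{thm:newSikbound}, proved by hand via the free resolvent and trace estimates, not via the calculus). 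The product of these is exactly the $(\log(k+2))^{3/2}$ in (iii) and (iv); without a piecewise-smooth substitute for the calculus (i.e.\ something like Theorem \ref{thm:newSikbound}) your argument cannot produce it.

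Relatedly, you misattribute the $(\log(k+2))^{3/2}$ term: it does not come from ``near-edge'' contributions in the $\DL_k$ bound (that operator only ever enters multiplied by $|\eta|$, with bound $k^{1/4}\log(k+2)$ or $k^{1/6}\log(k+2)$), but from the product just described; note also that in part (ii), where $\Gamma$ is smooth, the calculus gives $\N{\Reg}_{H^{-1}_k\to L^2}\lesssim k^{-1}$ with no log (Corollary \ref{cor:Rbound}), which is why the additive term there is only $\log(k+2)$. Two smaller points: for parts (i)--(ii) you should say explicitly that the same norm-product argument is used with Corollary \ref{cor:Rbound} and the two cases of Theorem \ref{thm:Hk} (the curved case giving $\lesssim k$ with no log); and the adjoint identity of Lemma \ref{lem:quasiadjoint} relates $\Breg$ to $B'_{k,\eta,\Reg'}$, i.e.\ with $\Reg$ replaced by its real transpose, so for general $\Reg$ satisfying Assumption \ref{ass:Reg} the bound for $\Breg'$ is obtained either by rerunning the triangle-inequality argument with $H_k\Reg$ and $\DL_k'$ or by noting that $\Reg'$ again satisfies the hypotheses, rather than by asserting equality of the norms of $\Breg$ and $\Breg'$ for the same $\Reg$ (for $\Reg=S_{\ri k}$ this is harmless since $S_{\ri k}'=S_{\ri k}$).
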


We next give conditions under which $\Breg$ and $\Breg'$ are invertible on $\LtG$.

\begin{theorem}[Invertibility of $\Breg$ and $\Breg'$ on $\LtG$]\label{thm:invert}

\

(i) If $\Gamma$ is $C^\infty$, $\Reg$ satisfies Assumption \ref{ass:Reg}, and $\eta\in \Rea\setminus\{0\}$, then
there exists a $k_0>0$ such that, for all $k\geq k_0$,
 $\Breg$ and $\Breg'$ are injective and Fredholm on $\LtG$, and hence invertible.

(ii) Suppose that $\Gamma$ is $C^1$, $\eta\in \Rea\setminus\{0\}$, and either $\Reg = S_{\ri k}$ or $\Reg=S_0$, 
where in the latter case in 2-d the constant $a$ in the Laplace fundamental solution \eqref{eq:LaplaceFund} is 
taken larger than the \emph{capacity} of $\Gamma$ (see, e.g., \cite[Page 263]{Mc:00} for the definition of capacity).
Then,  for all $k>0$, $\Breg$ and $\Breg'$ are injective and are equal to a multiple of the identity plus a compact operator on $\LtG$, and hence invertible.
\end{theorem}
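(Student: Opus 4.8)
The plan is to establish invertibility via Fredholm theory plus injectivity in both cases, with the division into (i) and (ii) reflecting two different mechanisms for the Fredholm property.

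For part (ii) I would begin with the structural observation that, by Assumption \ref{ass:Reg} (which $S_{\ri k}$ and $S_0$ satisfy — in the $S_0$ case the constant $a$ being chosen larger than the capacity ensures ellipticity and, in $2$-d, that $S_0$ is positive, so it is a genuine elliptic regulariser), the operator $\Reg$ maps $H^{-1}(\Gamma)\to L^2(\Gamma)$ and indeed (for fixed $k$, or $k$ away from the discrete set where things could degenerate) $\Reg H_k$ and $H_k\Reg$ differ from $\frac12 I$-type terms by compact operators: $H_k = H_0 + (\text{smoothing in }k)$, $\DL_k = \DL_0 + (\text{compact})$, and $\Reg H_0$, $H_0 \Reg$ are classical zeroth-order pseudodifferential operators (or, for $C^1$ boundaries, one uses the classical compactness of $\DL_k$ and the fact that the product of $\Reg$ with $H_k$ is, up to compact terms, $\tfrac14 I$ by the standard Calderón identity $\DL_0^2 - \tfrac14 I = S_0 H_0$ and its variants). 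Writing $\Breg$ as (multiple of $I$) $+$ compact then gives the Fredholm property of index zero for all $k>0$; so invertibility reduces to injectivity. Injectivity I would get from a classical argument: if $\Breg\phi=0$ (or $\Breg'\phi=0$), form the associated potential $u = u^I + (\cK_k\Reg - \ri\eta\cS_k)\phi$ on $\Oe$ and on $\Oi$, use the jump relations \eqref{eq:jumprelations}, the fact that $\Reg$ is self-adjoint and positive (so that the real part of the appropriate boundary pairing has a sign), and uniqueness for the exterior Neumann problem together with a Rellich/unique-continuation argument on $\Oi$ to conclude $\phi=0$; here the hypothesis $\eta\in\Rea\setminus\{0\}$ is exactly what makes the cross term $\ri\eta\langle \cdot,\cdot\rangle$ contribute to the imaginary part so that both the $L^2$ norm and the $\Reg$-weighted norm of $\phi$ are controlled.

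For part (i), where $\Reg$ is only known to satisfy Assumption \ref{ass:Reg}, I would not have a zeroth-order classical pseudodifferential structure to lean on, so instead I would use the semiclassical calculus recapped in \S\ref{sec:SCA} together with the high-frequency decompositions of $\DL_k$ and $H_k$ from \cite{Ga:19}. The key point is that $\Reg H_k$ and $H_k\Reg$ are, modulo operators that are $O(k^{-\infty})$ or at least compact, semiclassical pseudodifferential operators whose principal symbol, combined with the $\ri\eta(\tfrac12 I - \DL_k)$ term, is elliptic on $L^2(\Gamma)$ for $k$ large (this is essentially what underlies the norm bounds in Theorem \ref{thm:upper_bound_norm}, and a parametrix construction gives the Fredholm property); hence $\Breg$ and $\Breg'$ are Fredholm of index zero for $k\geq k_0$. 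Injectivity for $k$ large then again follows from the potential-theoretic argument above, using $\eta\in\Rea\setminus\{0\}$ and the self-adjointness (Lemma \ref{lem:quasiadjoint}) and ellipticity of $\Reg$ — here one also needs that $\sigma_{k^{-1}}(\Reg)$ is real and nonvanishing, so that $\Re\langle \Reg H_k \phi,\phi\rangle$ has a definite sign up to lower-order terms.

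The main obstacle I anticipate is the injectivity step rather than the Fredholm step: converting "$\Breg\phi=0$" into a statement about a genuine Helmholtz solution with zero Cauchy data requires care with the mapping properties ($H_k$ lands in $H^{-1}(\Gamma)$, so the layer potentials must be interpreted correctly and the jump relations applied in the right spaces), and the sign argument must handle the fact that $\Reg$ is positive only in a principal-symbol sense in case (i) — one must absorb the lower-order semiclassical errors, which is why the statement in (i) is only for $k\geq k_0$. A secondary subtlety in (ii) is the $C^1$ regularity hypothesis: there one cannot use the full pseudodifferential calculus and must instead invoke the classical results (due to Fabes–Jodeit–Rivière and others) that $\DL_0$ is compact on $L^2(\Gamma)$ for $C^1$ curves, together with the Calderón relations, to get the "identity plus compact" structure.
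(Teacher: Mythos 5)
Your overall route coincides with the paper's: for (i) split $\Breg'$ into an elliptic zeroth-order semiclassical part (the $\ri\eta/2$ shift plus the high-frequency part of $H_k\Reg$, whose real principal symbol is supplied by Corollary \ref{cor:hBEM}, made invertible for large $k$ by Theorem \ref{thm:elliptic}) plus a compact remainder (the frequency-cutoff part and $\DL_k'$, the latter compact by Fabes--Jodeit--Rivi\`ere); for (ii) use the Calder\'on relations together with compactness of $\DL_k,\DL_k'$ on $C^1$ boundaries and the extra smoothness of $S_{\ri k}-S_k$ (resp.\ $H_k-H_{\ri k}$) to get ``multiple of the identity plus compact''; and reduce invertibility to injectivity.

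The weak point is the injectivity step, where your sketch would not go through as written. First, for the homogeneous equations there is no incident field, so the ansatz should not contain $u^I$; more importantly, the combined potential $u=(\cK_k\Reg-\ri\eta\cS_k)\phi$ satisfies $\partial_n^+u=\Breg'\phi$ by \eqref{eq:jumprelations}, so it only treats $\Breg'$. For the direct operator the paper instead takes $u=\cK_k\phi$, for which $\Reg\partial_n^-u-\ri\eta\gamma^-u=\Breg\phi$, and runs the argument in the opposite order (interior uniqueness first, then $\partial_n^\pm u=H_k\phi=0$ and exterior Neumann uniqueness); alternatively, once Fredholmness of index zero is in hand one could transfer injectivity via the real-adjoint identity of Lemma \ref{lem:quasiadjoint}, but you would need to say one of these. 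Second, you invoke self-adjointness and positivity of $\Reg$, which is neither assumed nor needed: the actual input is the pair of coercivity-type bounds $|\Re\langle\Reg\psi,\psi\rangle_\Gamma|\gtrsim\|\psi\|_{H^{-1/2}}^2$ and smallness of $\Im\langle\Reg\psi,\psi\rangle_\Gamma$, which the paper packages as uniqueness for the interior boundary-value problem with boundary condition $\Reg\partial_n^-u-\ri\eta\gamma^-u=0$ (Lemma \ref{lem:ItDS1}), verified for general $\Reg$ satisfying Assumption \ref{ass:Reg} by the sharp G\aa rding inequality (Lemma \ref{lem:Rcoer}, whence the restriction $k\geq k_0$ in (i)), and for $S_{\ri k}$ and $S_0$ by their coercivity (Theorem \ref{thm:Sikcoer} and the corresponding Laplace results), which is why (ii) holds for all $k>0$. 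You correctly anticipated the G\aa rding absorption issue, but the self-adjointness claim and the single-ansatz treatment of both operators need to be repaired along these lines.
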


In addition, we prove bounds on the inverses of $\Breg$ and $\Breg'$.

\begin{theorem}[Upper bounds on $\|(\Breg)^{-1}\|_{\LtGt}$ and $\|(\Breg')^{-1}\|_{\LtGt}$]\label{thm:upper_bound_inverse}
Assume that $\eta\in \Rea\setminus\{0\}$ is independent of $k$ and that $\Reg$ satisfies Assumption \ref{ass:Reg}.

(i) If $\Oi$ is $C^\infty$ and $\Gamma$ is curved (and hence $\Oi$ is nontrapping in the sense of Definition \ref{def:nontrapping}), then there exists $k_0>0$ and $C>0$ such that, for all $k\geq k_0$,    
\beq\label{eq:ballinverse}
\N{(\Breg)^{-1}}_{\LtGt}+
\big\|(\Breg')^{-1}\big\|_{\LtGt}
\leq C k^{1/3}.
\eeq

(ii) If $\Oi$ is $C^\infty$ and nontrapping, then there exists $k_0>0$ and $C>0$ such that, for all $k\geq k_0$,    
\beq\label{eq:smoothinversebound}
\N{(\Breg)^{-1}}_{\LtGt}+
\big\|(\Breg')^{-1}\big\|_{\LtGt}
\leq C k^{2/3}.
\eeq

(iii) If $\Oi$ is $C^\infty$ then there exists $k_0>0$ such that given $\delta>0$ there exists a set $J\subset [k_0,\infty)$ with $|J|\leq \delta$ such that, given $\eps>0$, there exists $C= C(k_0,\delta,\eps)>0$ such that, for all $k \in [k_0,\infty)\setminus J,$
\beq\label{eq:inverse_bound_formost}
\N{(\Breg)^{-1}}_{\LtGt}+
\big\|(\Breg')^{-1}\big\|_{\LtGt}
\leq C k^{5d/2+1 + \eps}. 
\eeq

(iv) If $\Oi$ is $C^\infty$ then there exists $k_0>0,$ $\alpha>0$, and $C>0$ such that, for all $k\geq k_0$,    
\beqs
\N{(\Breg)^{-1}}_{\LtGt}+
\big\|(\Breg')^{-1}\big\|_{\LtGt}
\leq C \exp(\alpha k).
\eeqs
\end{theorem}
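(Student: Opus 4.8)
The plan is to reduce the bound on $\|(\Breg)^{-1}\|_{\LtGt}$ (and the analogous bound for $\Breg'$) to a bound on the solution operator of the exterior Neumann problem, and then invoke the known worst-case resolvent estimate for trapping obstacles, which loses at most an exponential in $k$. First I would recall the standard machinery: from the representation formulae \eqref{eq:direct}--\eqref{eq:indirect} and the jump relations, a bound of the form $\|v\|_{\LtG} \le F(k)\big( \|\Breg v\|_{\LtG} + (\text{lower-order terms})\big)$ follows once one controls, for given boundary data, the solution of the exterior Neumann problem in terms of that data; concretely, if $v = \gamma^+ u$ where $u$ solves \eqref{eq:Helmholtz}--\eqref{eq:src} with $\partial_n^+ u = g$ on $\Gamma$, then one uses Green's representation to write $v$ in terms of $g$ and the boundary integral operators, and estimates the potentials using their known mapping properties together with $\|u\|_{H^1(\Omega_R)} \le \Csol \|g\|_{\HmhG}$, where $\Csol$ is the norm of the Neumann solution operator truncated to a ball $B_R \supset \Oi$.

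The key input is therefore: for a general $C^\infty$ obstacle $\Oi$ (which may be trapping), there exist $k_0$, $\alpha>0$, $C>0$ such that $\Csol \le C\exp(\alpha k)$ for all $k \ge k_0$. This is the exponential-resolvent bound; for the Dirichlet case it is classical (Burq, Vod\`ev — cf.\ the references \cite{Bu:98, Vo:00} cited in the excerpt), and the Neumann analogue is among the PDE results of \cite{GaMaSp:21, GaLaSp:21} also cited there, so I would simply quote it. Given this, the second step is bookkeeping: one inverts the relation $\Breg \gamma^+ u = \ri\eta\gamma^+ u^I - \Reg \partial_n^+ u^I$ abstractly — since Theorem \ref{thm:invert}(i) already gives invertibility of $\Breg$ on $\LtG$ for $\eta \in \Rea\setminus\{0\}$ and $k$ large — and then, for arbitrary $f \in \LtG$, sets $g$ so that $\ri\eta\gamma^+ u^I - \Reg\partial_n^+ u^I$ is replaced by $f$, solves the corresponding exterior Neumann problem, and reads off $\|(\Breg)^{-1} f\|_{\LtG} = \|\gamma^+ u\|_{\LtG}$. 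Tracking the $k$-dependence of the trace inequality, of $\|\Reg\|$ (controlled by Assumption \ref{ass:Reg}, giving only polynomial factors), and of the potential operators $\cS_k,\cK_k$ on the relevant Sobolev spaces (again only polynomial in $k$), all these polynomial losses are absorbed into $\exp(\alpha' k)$ for a slightly larger $\alpha'$. The bound for $(\Breg')^{-1}$ follows either by the same argument applied to the indirect formulation \eqref{eq:indirect}, or by a duality/adjoint argument using Lemma \ref{lem:quasiadjoint} when $\Reg$ is self-adjoint (and by a perturbation argument otherwise).

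The main obstacle — really the only nontrivial point — is ensuring that every intermediate estimate costs at most $\exp(\alpha k)$, i.e.\ that no step secretly requires a bound one does not have. The delicate places are (a) the passage from the $H^1(\Omega_R)$ bound on $u$ to an $\LtG$ bound on the Neumann \emph{trace} $\gamma^+ u$ via $\Breg^{-1}$ acting on data built from $g$: here one must use elliptic regularity near $\Gamma$ to gain control of $\gamma^+ u$ in $\HhG$, which introduces a factor of $k$ (from the Helmholtz equation, $\|u\|_{H^2} \lesssim k\|u\|_{H^1} + \|\Delta u\|_{L^2}$ locally), harmless against the exponential; and (b) confirming that the regulariser mapping property $R : H^{-1}(\Gamma)\to\LtG$ contributes only a factor $O(k)$ or so — immediate from $\Reg \in k^{-1}\Psi_{k^{-1}}^{-1}(\Gamma)$. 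Since all of these are polynomial in $k$, the exponential term dominates and the stated bound $\|(\Breg)^{-1}\|_{\LtGt} + \|(\Breg')^{-1}\|_{\LtGt} \le C\exp(\alpha k)$ follows; in fact this part is considerably softer than parts (i)--(iii), which require the finer microlocal and defect-measure arguments, whereas (iv) is essentially "plug the worst-case PDE bound into the standard reduction."
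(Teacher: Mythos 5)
There is a genuine gap, and it sits exactly at the step you describe as ``bookkeeping''. Your reduction controls $(\Breg)^{-1}$ only through the exterior Neumann solution operator, but that is not enough: the relation $\Breg\,\gamma^+u=\ri\eta\gamma^+u^I-\Reg\partial_n^+u^I$ only produces right-hand sides of the special form $\ri\eta\gamma^+u^I-\Reg\partial_n^+u^I$ with $u^I$ a Helmholtz solution near $\overline{\Oi}$; an arbitrary $f\in\LtG$ need not lie in this range, and even when it does, nothing in your argument bounds the norm of a preimage $u^I$ in terms of $\|f\|_{\LtG}$, so the ``read off $\|(\Breg)^{-1}f\|_{\LtG}=\|\gamma^+u\|_{\LtG}$'' step does not go through. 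The paper's proof instead uses the exact resolvent formula \eqref{eq:fav_formula_reg},
\begin{equation*}
(\Breg)^{-1}=\NtD\Reg^{-1}-\big(I-\ri\eta\,\NtD\Reg^{-1}\big)\ItDR ,
\end{equation*}
in which the \emph{interior} impedance-to-Dirichlet map $\ItDR$ for the nonstandard boundary condition $\Reg\partial_n^-u-\ri\eta\gamma^-u=g$ appears unavoidably (this is precisely the point emphasised in \S\ref{sec:harder}). Consequently, even for the crude exponential bound of part (iv), one needs a quantitative bound on $\|\ItDS\|_{\LtGt}$; the paper supplies the $k$-independent bound of Theorem \ref{thm:ItDS}, whose proof is the genuinely hard ingredient (it relies on the semiclassical results of \cite{GaLaSp:21}, and on $\eta\in\Rea\setminus\{0\}$ being $k$-independent). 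Mere invertibility of the interior problem via Fredholm theory (Lemma \ref{lem:ItDS1}) gives no norm bound at all, not even an exponential one, so part (iv) is not ``plug the worst-case PDE bound into the standard reduction'': the standard reduction itself requires control of the interior $R$-impedance problem that your proposal never establishes.

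Two smaller points. The exponential exterior bound that you do need (used in Theorem \ref{thm:NtD}(iv)) comes from \cite{Bu:98,Vo:00}, not from \cite{GaMaSp:21,GaLaSp:21}, which enter instead in the proof of Theorem \ref{thm:ItDS}. And the passage from $\Breg$ to $\Bregp$ in the paper is not a self-adjointness/perturbation argument: one runs the same estimate for $(B_{k,\eta,\Reg'})^{-1}$ (using formula \eqref{eq:fav_formula_reg2} and the adjoint identities of Lemma \ref{lem:ItDS2}) and then invokes Corollary \ref{cor:norms}; this works for any $\Reg$ satisfying Assumption \ref{ass:Reg}, with no self-adjointness assumption needed.
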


\bre[Choice of $\eta$]\label{rem:eta1}
Theorem \ref{thm:upper_bound_inverse} is proved under the assumption that $\eta$ is independent of $k$. 
This choice was advocated for in \cite{BrElTu:12, BoTu:13}, with these papers stating that this choice 
leads to a ``small number''/``nearly optimal numbers'' of iterations of the generalised minimum residual method (GMRES) compared to other choices of $\eta$; see \cite[Equation 23]{BrElTu:12}, \cite[\S5]{BoTu:13}.
\S\ref{sec:eta} contains numerical results showing that, at least for some geometries, \emph{both} the condition number of $\Breg$ 
\emph{and} the number of GMRES iterations are smaller for some $k$-dependent choices of $\eta$ than they are when $\eta$ is independent of $k$.
 \ere

Part (iv) of Theorem \ref{thm:upper_bound_inverse} shows that $\|(\Breg)^{-1}\|_{\LtGt}$ can grow at most exponentially in $k$, although Part (iii) shows that for most frequencies $\|(\Breg)^{-1}\|_{\LtGt}$ is polynomially bounded in $k$. We now show that exponential growth occurs through a discrete set of $k$s.

\begin{definition}[Quasimodes]\label{def:quasimodes}
    A family of Neumann quasimodes of quality $\QMC(k)$
    is a sequence $\{(u_j,k_j)\}_{j=1}^\infty\subset H^2_{\rm loc}(\Oe)\times \mathbb{R}$ 
    with $\partial_n^+ u =0$ on $\Gamma$ 
    such that the frequencies $k_j\tendi$ as $j \tendi$ and there exists a compact subset $\mathcal{K}\subset \Oe$ such that, for all $j$, $\supp\, u_j \subset \mathcal{K}$,
    \beqs
    \N{(\Delta +k_j^2) u_j}_{L^2(\Oe)} \leq \QMC(k_j), \quad\tand\quad\N{u_j}_{L^2(\Oe)}=1.
    \eeqs
\end{definition}

\begin{theorem}[Lower bounds on $\|(\Breg)^{-1}\|_{\LtGt}$]\label{thm:lower_bound_inverse}
Assume that $\Gamma$ is piecewise smooth, $\Reg$ is bounded on $\LtG$, and $B'_{k,\eta,\Reg}$ and $B_{k,\eta,\Reg}$ are bounded and invertible on $\LtG$.
If there exists a family of Neumann quasimodes with quality $\epsilon(k)$, then there exists $C>0$ (independent of $j$) such that 
\begin{align*}
&   \min\Big\{
    \big\|   (B'_{k_j,\eta,R})^{-1}\big\|_{\LtGt}\,,\,   
    \N{    (B_{k_j,\eta,R})^{-1}}_{\LtGt}\big\}\\
&\hspace{4cm}
\geq C \left(\frac{1}{\QMC(k_j)} - \frac{1}{k_j}\right)
k_j^{1/2}
\Big(\lVert \Reg\rVert_{L^2(\Gamma)\rightarrow L^2(\Gamma)} k_j + |\eta| \Big)^{-1} .
\end{align*}
\end{theorem}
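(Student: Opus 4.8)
The plan is to derive the lower bound on the inverses from a quasimode family by the standard strategy: a quasimode of the exterior Neumann problem gives an approximate solution to \eqref{eq:Helmholtz} whose Cauchy data is ``small'' in a PDE sense but ``not small'' on $\Gamma$, and feeding this into the integral equation \eqref{eq:indirect} (or \eqref{eq:direct}) shows $\Breg'$ (or $\Breg$) maps something of controlled norm to something small, forcing $\|(\Breg')^{-1}\|$ to be large. First I would fix a quasimode $(u_j,k_j)$ with $\partial_n^+ u_j=0$ on $\Gamma$, $\|u_j\|_{L^2(\Oe)}=1$, and $\|(\Delta+k_j^2)u_j\|_{L^2(\Oe)}\le\QMC(k_j)$, supported in a fixed compact set $\mathcal{K}\subset\Oe$. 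Writing $f_j:=-(\Delta+k_j^2)u_j$, Green's identity on the region between $\Gamma$ and a large sphere (using that $u_j$ is compactly supported, so the sphere contributes nothing, and $\partial_n^+u_j=0$) gives an integral representation $u_j = -\cS_{k_j} f_j$-type formula plus a single-layer potential with density $\gamma^+u_j$; more precisely, $u_j(\bx) = \int_{\Oe}\Phi_{k_j}(\bx,\by)f_j(\by)\,\rd\by - \cS_{k_j}(\gamma^+u_j)(\bx)$ for $\bx\in\Oe$ (the double-layer term vanishes because the Neumann trace is zero). Taking the Neumann trace from $\Oe$ and using the jump relations, the density $\phi_j:=-\gamma^+u_j$ satisfies $H_{k_j}\phi_j = \partial_n^+\big(\text{volume potential of }f_j\big)$, and hence $\Breg'\big(\text{appropriately transformed density}\big)$ equals something of size $O(\QMC(k_j))$ plus lower-order terms.

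The cleanest route is probably to work with $\Breg'$ and the indirect formulation: set $v_j := \int_{\Oe}\Phi_{k_j}(\cdot,\by)f_j(\by)\,\rd\by$, the Newtonian-type potential of $f_j$, so that $u_j + v_j$ extends across $\Gamma$ appropriately, and then the density $\phi_j = \partial_n^+ u_j - \partial_n^+(u_j+v_j)|_{\text{interior side}}$, etc. — but the key quantitative facts I need are: (a) $\|\phi_j\|_{L^2(\Gamma)}$, where $\phi_j$ is essentially $\gamma^+u_j$, is bounded below by a constant times $k_j^{1/2}$ — this comes from a quantitative unique-continuation / semiclassical trace estimate saying a Helmholtz (quasi)solution with $L^2(\Oe)$-norm $1$ and small residual cannot have both traces small; concretely $\|\gamma^+u_j\|_{L^2(\Gamma)}^2 + k_j^{-2}\|\partial_n^+u_j\|^2 \gtrsim k_j^{-1}$ modulo $\QMC(k_j)$-corrections, and since $\partial_n^+u_j=0$ this forces $\|\gamma^+u_j\|_{L^2(\Gamma)}\gtrsim k_j^{-1/2}(1 - O(\QMC(k_j)/k_j))$ — hmm, I need to be careful with the direction of the power of $k_j$; the factor $k_j^{1/2}$ in the statement and the $(\|\Reg\| k_j + |\eta|)^{-1}$ suggest the bound is $\gtrsim (\QMC^{-1} - k_j^{-1}) k_j^{1/2} (\|\Reg\|k_j+|\eta|)^{-1}$, so I would track constants so that $\|\Breg'\phi_j\| \lesssim (\|\Reg\| k_j + |\eta|)\,\QMC(k_j)\,\|\phi_j\|/k_j^{?}$ with $\|\phi_j\| \sim 1$ after normalising $u_j$. (b) The right-hand side $\Breg'\phi_j$ is controlled: $H_{k_j}\phi_j$ equals the Neumann data of the volume potential of $f_j$, whose $L^2(\Gamma)$ norm is $\lesssim k_j\,\QMC(k_j)$ by elliptic/trace estimates, and $(\tfrac12 I - \DL'_{k_j})\phi_j$ is likewise controlled using that $u_j$ solves the exterior problem up to $f_j$; multiplying by $\Reg$ (bounded on $L^2(\Gamma)$ with norm $\|\Reg\|$) and combining with the $\ri\eta$ term yields $\|\Breg'\phi_j\|_{L^2(\Gamma)} \lesssim (\|\Reg\| k_j + |\eta|)\QMC(k_j) + (\text{terms from }k_j^{-1})$.

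Putting these together: from $\phi_j = (\Breg')^{-1}(\Breg'\phi_j)$ we get $\|\phi_j\|_{L^2(\Gamma)} \le \|(\Breg')^{-1}\|\,\|\Breg'\phi_j\|_{L^2(\Gamma)}$, hence
\[
\|(\Breg')^{-1}\|_{\LtGt} \ \ge\ \frac{\|\phi_j\|_{L^2(\Gamma)}}{\|\Breg'\phi_j\|_{L^2(\Gamma)}} \ \gtrsim\ \frac{k_j^{-1/2}}{(\|\Reg\|k_j + |\eta|)\QMC(k_j) + k_j^{-1}(\cdots)},
\]
and rearranging gives the stated bound $\gtrsim(\QMC(k_j)^{-1} - k_j^{-1})\,k_j^{1/2}\,(\|\Reg\|k_j+|\eta|)^{-1}$ after normalising the density; the identical argument with the direct equation \eqref{eq:direct} handles $\Breg$, and the $\min$ appears because the quasimode naturally controls whichever of the two we phrase it through (or, with a duality/transpose argument via Lemma \ref{lem:quasiadjoint}, one bounds the other). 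The main obstacle I anticipate is step (a) — the sharp \emph{lower} bound $\|\gamma^+u_j\|_{L^2(\Gamma)} \gtrsim k_j^{-1/2}$ (equivalently a quantitative lower bound on the boundary trace of an interior-supported quasimode). This is the place where one must invoke a quantitative unique continuation / Rellich-type identity or a semiclassical propagation estimate: if the trace were $o(k_j^{-1/2})$ one would contradict the normalisation $\|u_j\|_{L^2(\Oe)}=1$ up to the residual $\QMC(k_j)$, and making this contradiction quantitative with the correct power of $k_j$ (and the right interaction with the residual, which is exactly the source of the $-1/k_j$ correction) is the technical heart. A secondary, more routine obstacle is controlling the volume-potential (Newtonian) terms and their Neumann traces with explicit $k_j$-dependence — these follow from standard bounds on $\Phi_{k_j}$ and trace theorems but require care so the final power of $k_j$ matches.
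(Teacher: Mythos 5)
Your overall strategy is the same as the paper's: manufacture an incident field from the quasimode residual ($u_j^I=\cR_{k_j}g_j$, your ``volume potential of $f_j$''), observe that $u_j$ then solves the scattering problem so that $\Breg \gamma^+u_j=f_j$ with $f_j=-(\Reg\partial_n^+-\ri\eta\gamma^+)u_j^I$ (cf.\ \eqref{eq:fm}), and bound $\|(\Breg)^{-1}\|_{\LtGt}\geq \|\gamma^+u_j\|_{\LtG}/\|f_j\|_{\LtG}$, with $\Bregp$ handled by the same argument plus the adjoint relation of Lemma \ref{lem:quasiadjoint}/Corollary \ref{cor:norms}. However, there is a genuine gap in your quantitative execution, and it sits exactly where you declare ``the technical heart''. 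You want a lower bound $\|\gamma^+u_j\|_{\LtG}\gtrsim k_j^{-1/2}$ proved by quantitative unique continuation / Rellich / propagation estimates. No such tool is needed, and as an unconditional claim it is false: the quasimode is only required to be supported in a compact subset of $\Oe$, which may lie strictly away from $\Gamma$, in which case both traces vanish identically; such a configuration is only ruled out when $\QMC(k_j)\lesssim k_j$, and this is precisely what the correction $-1/k_j$ in the statement encodes. The paper obtains the trace lower bound with no sophisticated machinery: from Green's representation \eqref{eq:uGreen}, $u_j=u_j^I+\cK_{k_j}\gamma^+u_j$, the normalisation $\|u_j\|_{L^2(\mathcal{K})}=1$, the uniform $\LtG\to L^2(\mathcal{K})$ bound on the double-layer potential (Theorem \ref{thm:HT}, which is where the piecewise-smoothness hypothesis enters) and the free-resolvent bound (Theorem \ref{thm:Newton}), one gets $\|\gamma^+u_j\|_{\LtG}\geq C^{-1}-Ck_j^{-1}\QMC(k_j)$ --- a bound of size $O(1)$, not $k_j^{-1/2}$.

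The second problem is the power counting: with your allocation ($\|\phi_j\|\gtrsim k_j^{-1/2}$ below and $\|\Breg'\phi_j\|\lesssim(\|\Reg\|k_j+|\eta|)\QMC(k_j)$ above) the quotient yields $k_j^{-1/2}\QMC(k_j)^{-1}(\|\Reg\|k_j+|\eta|)^{-1}$, a full power of $k_j$ weaker than the theorem --- you noticed the mismatch yourself but did not resolve it. The $k_j^{1/2}$ in fact comes from the \emph{upper} bound on the data: since $u_j^I=\cR_{k_j}g_j$, Theorem \ref{thm:Newton} combined with the $k$-weighted trace estimate \eqref{e:basicTrace} gives $\|\gamma^+u_j^I\|_{\LtG}\lesssim k_j^{-1/2}\QMC(k_j)$ and $\|\partial_n^+u_j^I\|_{\LtG}\lesssim k_j^{1/2}\QMC(k_j)$, whence $\|f_j\|_{\LtG}\lesssim k_j^{-1/2}\big(\|\Reg\|_{\LtGt}k_j+|\eta|\big)\QMC(k_j)$; dividing the $O(1)$ trace lower bound by this gives exactly the stated estimate. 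A further (minor) slip: with $\partial_n^+u_j=0$ it is the \emph{single}-layer term of Green's representation that drops out, leaving the \emph{double}-layer potential with density $\gamma^+u_j$ plus the Newtonian term --- you wrote a single-layer potential with density $\gamma^+u_j$ while simultaneously taking its Neumann trace to be $H_{k_j}$, which is inconsistent; the rest of your argument implicitly uses the correct (double-layer) representation.
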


We emphasise that the lower bound of Theorem \ref{thm:lower_bound_inverse} does not require that $R$ satisfy Assumption \ref{ass:Reg}, and so holds for more general $\Reg$ (such as $R=S_0$).

The following result gives situations where quasimodes with small quality exist;
Part (i) is \cite[Theorem 1]{St:00}, and Part (ii) is  \cite[Theorem 3.1]{NgGr:13}.
Recall that the resonances of the exterior Neumann problem are the poles of the meromorphic continuation of the solution operator from $\Im k\geq 0$ to $\Im k<0$; see, e.g., \cite[Theorem 4.4. and Definition 4.6]{DyZw:19}). 
We use the notation that $a = O(k^{-\infty})$ as $k\tendi$ if, given $N>0$, there exists $C_N$ and $k_0$ such that $|a|\leq C_N k^{-N}$ for all $k\geq k_0$, i.e.~$a$ decreases superalgebraically in $k$.

\begin{theorem}[Existence of quasimodes with $\QMC(k)=O(k^{-\infty})$]\label{thm:ellipse}

\

(i) If there exists a sequence of resonances $\{\lambda_\ell\}_{\ell=1}^\infty$ of the exterior Neumann problem with
\beqs
0\leq -\Im \lambda_\ell = \mathcal{O}\big(|\lambda_\ell|^{-\infty}\big)  \quad\tand \quad \Re \lambda_\ell \tendi \quad\tas\quad \ell \tendi,
\eeqs
then there exist families of Neumann quasimodes with $\QMC(k)=\mathcal{O}(k^{-\infty})$.

(ii) Let $d=2$. Given $a_1>a_2>0$, let 
\beq\label{eq:ellipse}
E:= \left\{(x_1,x_2) \, : \, \left(\frac{x_1}{a_1}\right)^2+\left(\frac{x_2}{a_2}\right)^2<1\right\}.
\eeq
Assume that $\Gamma$ coincides with the boundary of $E$ in the neighbourhoods of the points \((0,\pm a_2)\), 
and that $\Oe $ contains the convex hull of these neighbourhoods.
Then there exist families of Neumann quasimodes with 
\beqs
\QMC(k)=C_1 \exp( - C_2 k) \quad\tfa k>0.
\eeqs
where $C_1, C_2>0$ are both independent of $k$.
\end{theorem}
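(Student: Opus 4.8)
The plan is to prove the two parts by separate, essentially known, arguments: Part~(i) by invoking \cite[Theorem~1]{St:00}, and Part~(ii) by localising the ellipse bouncing-ball eigenfunctions of \cite[Theorem~3.1]{NgGr:13} by a cut-off. For Part~(i), I would recall that \cite{St:00} converts a sequence of resonances approaching the real axis into a family of quasimodes, and observe that inspecting that proof shows the quasimode quality inherits the rate at which $-\Im\lambda_\ell\to0$; in particular $-\Im\lambda_\ell=\mathcal{O}(|\lambda_\ell|^{-\infty})$ yields $\epsilon(k)=\mathcal{O}(k^{-\infty})$. The mechanism is: let $u_\ell$ be the resonant state attached to $\lambda_\ell$, i.e.\ the outgoing solution of $(\Delta+\lambda_\ell^2)u_\ell=0$ in $\Oe$ with $\partial_n^+u_\ell=0$ on $\Gamma$; since $-\Im\lambda_\ell$ is superalgebraically small, $u_\ell$ grows extremely slowly at infinity, and truncating it by a cut-off $\chi_R\equiv1$ on $B_R$ yields a compactly supported $H^2$ function whose Helmholtz residual is supported in $\{\nabla\chi_R\neq0\}$ and is bounded there by $|\Im\lambda_\ell|\,|\Re\lambda_\ell|$ times the size of $u_\ell$; a pigeonhole choice of $R=R_\ell$ (allowed to grow slowly with $\ell$) makes the residual small relative to $\|\chi_{R_\ell}u_\ell\|_{L^2}$ after normalisation. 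The only non-routine point is matching the conventions of \cite{St:00} to the exterior Neumann problem and to Definition~\ref{def:quasimodes}, which is immediate from the structure of that argument.

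For Part~(ii), write $I:=\{0\}\times[-a_2,a_2]$ for the minor-axis segment of $E$; its endpoints $(0,\pm a_2)$ are the reflection points of the bouncing-ball periodic orbit along $I$, and this orbit is stable because the two arcs of $\partial E$ near $(0,\pm a_2)$ are concave towards each other (a stable-resonator configuration). By \cite[Theorem~3.1]{NgGr:13} there is a sequence of Neumann eigenfunctions (or exponentially-accurate quasimodes) $(\phi_n,k_n^2)$ of $E$ with $k_n\tendi$, $\|\phi_n\|_{L^2(E)}=1$, $\partial_n\phi_n=0$ on $\partial E$, that concentrates exponentially on $I$: for every fixed open $U\supset I$ there are $C,c>0$ with $\|\phi_n\|_{H^1(E\setminus U)}\le Ce^{-ck_n}$. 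I would then take $\chi\in C^\infty(\Rea^2)$ with $0\le\chi\le1$, supported in a thin tube around $I$ and identically $1$ on a smaller neighbourhood of $I$ (relative to $\overline{E}$) still containing the reflection points, chosen thin enough that the hypotheses on $\Gamma$ force $\supp\chi\cap\overline{E}\subset\overline{\Oe}$, with $\Oe$ coinciding with the interior of $E$ near $(0,\pm a_2)$ and $\supp\chi$ meeting $\Gamma$ only along arcs of $\partial E$. Setting $u_n:=\chi\phi_n$ on $\Oe$: $\supp u_n$ lies in a fixed compact set; $\partial_n^+u_n=\chi\,\partial_n^+\phi_n+\phi_n\,\partial_n^+\chi=0$ on $\Gamma$, since near the reflection points $\Gamma=\partial E$ where $\partial_n\phi_n=0$ and $\partial_n^+\chi=0$, while $u_n\equiv0$ near the rest of $\Gamma$; and, since $(\Delta+k_n^2)\phi_n=0$ in $E$, one gets $(\Delta+k_n^2)u_n=2\nabla\chi\cdot\nabla\phi_n+(\Delta\chi)\phi_n$, supported in $\supp\nabla\chi$, which is disjoint from a fixed neighbourhood of $I$, so the concentration estimate gives $\|(\Delta+k_n^2)u_n\|_{L^2(\Oe)}\le Ck_ne^{-ck_n}\le C_1e^{-C_2k_n}$. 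Since $\chi\equiv1$ on a neighbourhood of $I$ carrying all but an exponentially small portion of the mass of $\phi_n$, we have $\|u_n\|_{L^2(\Oe)}\to1$; normalising yields the required family.

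I expect the main obstacle to be the geometric bookkeeping in Part~(ii). The cut-off $\chi$ must equal $1$ on a full neighbourhood of the \emph{entire closed} orbit $I$ — including the two reflection points, which lie on $\Gamma$ — so that the commutator $[\Delta,\chi]\phi_n$ only sees the exponentially small transverse tail of $\phi_n$; yet $\supp\chi$ must stay inside $\overline{\Oe}$ and be compatible with the Neumann trace. Reconciling these is exactly where the hypotheses are used: that $\Gamma=\partial E$ near $(0,\pm a_2)$ with $\Oe$ on the interior side, and that $\Oe$ contains the convex hull of those boundary neighbourhoods, together guarantee that a sufficiently thin tube around $I$ lies in $\overline{\Oe}$ and meets $\Gamma$ only along genuine elliptical arcs, where $\phi_n$ already satisfies the Neumann condition. (For Part~(i) the corresponding delicate point, handled in \cite{St:00}, is the pigeonhole choice of truncation radius ensuring the truncated resonant state is not too small relative to its Helmholtz residual.)
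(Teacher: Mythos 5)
Your proposal is correct and takes essentially the same route as the paper, whose proof consists precisely of attributing Part (i) to \cite[Theorem 1]{St:00} and Part (ii) to \cite[Theorem 3.1]{NgGr:13} (the cut-off of the minor-axis-localised ellipse eigenfunctions being understood, analogously to the Dirichlet construction in \cite{BeChGrLaLi:11}). The additional details you supply --- truncating the resonant states with a pigeonhole choice of radius, and choosing the cut-off with vanishing normal derivative on the arcs of $\partial E$ near $(0,\pm a_2)$ so that the Neumann condition holds exactly --- are exactly the standard bookkeeping the paper delegates to those references.
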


\subsection{Discussion of the main results}\label{sec:discussion}

\subsubsection{The rationale behind using $\Breg$ and $\Breg'$ to solve the exterior Neumann problem.}\label{sec:rationale}

Recall that taking the Dirichlet and Neumann traces of Green's integral representation results in the two equations \eqref{eq:standard}. 
Each of the integral operators in these two equations is not invertible for all $k>0$. This fact prompted the introduction of 
``combined-field'' or ``combined-potential'' BIEs in the 1960s and 1970s, with \cite{BuMi:71} 
using the BIE 
\begin{align}\label{eq:BW}
    B_{k,\eta} u  = \ri \eta \gamma^+u^I  -\partial_n^+ u^I , 
\quad\text{ where }\quad 
    B_{k,\eta} := \ri\eta \left(\dfrac{1}{2}I-\DL_k\right) +  H_k,
\end{align}
and \cite{BrWe:65,Le:65,Pa:65} introducing analogous BIEs for the exterior Dirichlet problem.
The analogous Neumann indirect formulation comes from posing the ansatz $u^S= (\cK_k-\ri \eta \cS_k)\phi$, after which the jump relations \eqref{eq:jumprelations} imply that 
\beq\label{eq:BWindirect}
B'_{k,\eta} \phi= -\partial_n^+ u^I,
\quad\text{ where }\quad 
    B'_{k,\eta} := \ri\eta \left(\dfrac{1}{2}I-\DL_k'\right) +  H_k.
\eeq
For $k>0$ and $\Re \eta\neq 0$, $B_{k,\eta}$ and $B'_{k,\eta}$ are bounded and invertible operators from $H^{s+1/2}(\Gamma)$ to $H^{s-1/2}(\Gamma)$ for all $|s|\leq 1/2$; see \cite[Theorem 2.27]{ChGrLaSp:12}.

The presence of $H_k$ in \eqref{eq:BW} and \eqref{eq:BWindirect} means that both $B_{k,\eta}$ and $B'_{k,\eta}$ are not bounded from $\LtG\rightarrow \LtG$, and this means that the condition numbers of their $h$-version Galerkin discretisations blow up 
as $h\tendo$ for fixed $k$ \cite[\S4.5]{SaSc:11}. 
This motivates using the BIEs \eqref{eq:direct} and \eqref{eq:indirect} where $R$ is chosen as an order $-1$ operator so that the composition $RH_k: \LtGt$. (Once $R$ is introduced, the constant $\ri \eta$ at the front of $\Breg$ and $\Breg'$ is redundant, but we keep it so that $\Breg$ and $\Breg'$ reduce to the classic operators $B_{k,\eta}$ and $B'_{k,\eta}$ when $R=I$.)

A popular choice is $R= S_0$ (see, e.g., \cite{AmHa:90, StWe:98, AnOvTu:11}) or $R= (S_0)^2$ (see, e.g., \cite[\S3.2]{CoKr:98}, \cite[Proof of Theorem 9.1]{Mi:96}). These choices are motivated by the Calder\'on relations
\beq\label{eq:Calderon}
S_k H_k = -\frac{1}{4}I + \DL_k^2
\quad\tand\quad
H_k S_k= -\frac{1}{4}I + (\DL_k')^2,
\eeq
for all $k\geq 0$;
see, e.g., \cite[Equation 2.56]{ChGrLaSp:12}. Indeed, if $R= S_0$ and $\Gamma$ is $C^1$, then $\Breg$ and $\Breg'$ equal a multiple of the identity plus a compact operator on $\LtG$, since 
$\DL_k$ and $\DL_k'$ are compact when $\Gamma$ is $C^1$ by \cite[Theorem 1.2]{FaJoRi:78}, and 
$(S_{k}- S_0)H_k $ and $H_{k}(S_k-S_0)$ are compact (this follows from the mapping properties \eqref{eq:mapping} and the bounds on $\Phi_k-\Phi_0$
 in, e.g., \cite[Equation 2.25]{ChGrLaSp:12}). The idea of composing the hypersingular operator with the single-layer operator 
goes back to \cite{Bu:76} (see the discussion in, e.g., \cite{AmHa:90}),  
and falls under the class of methods known as ``operator preconditioning''; see \cite{StWe:98, Hi:06}. 

Following the use of $\Reg=S_0$, the choice $\Reg=S_{\ri k}$ was proposed in \cite{BrElTu:12}, and then advocated for in
\cite{BoTu:13,ViGrGi:14}, with \cite{BoTu:13} also using the principal symbol of $S_{\ri k}$.
Part of the contribution of the present paper is the rigorous justification of this choice. Indeed, a result of \cite{Ga:19} (extended in Theorem \ref{thm:Hk} below) shows that the norm of $H_k$ grows with $k$. If  $R$ is an order $-1$ operator that is independent of $k$, then $R H_k :\LtGt$, but with a norm that grows with $k$. A better choice is therefore an operator of order $-1$ whose norm decreases with $k$, leading to  the general class of $R$ described in Assumption \ref{ass:Reg}, to which $S_{\ri k}$ belongs.

Finally, we note that if $\Reg$ equals $\ri \eta$ times the exterior Neumann-to-Dirichlet map $\NtD$, then $\Breg=\ri \eta$ (this can be proved by taking the Neumann trace of Green's integral representation and using the definition of $\NtD$). This observation is then the basis of the construction of suitable operators $\Reg$ (more complicated than $S_0$ or $S_{\ri k}$) in \cite{LeMi:04,AnDa:07,AnDa:05,Antoine2005a, DaDaLa:13, AnDa:21}.

\subsubsection{Comparison with the results of \cite{BoTu:13}}

The paper \cite{BoTu:13} considers the operator $\Bregp$ with $R$ equal to either $S_{\ri k}$ or its principal symbol. By Lemma \ref{lem:quasiadjoint}, the results in  \cite{BoTu:13} also hold for $\Breg$ with these choices of $R$. The majority of the bounds in \cite{BoTu:13} are proved for $\Oi$ a 2- or 3-d ball, using 
the fact that the eigenvalues of the boundary-integral operators can be expressed in terms of Bessel and Hankel functions, and then bounding the appropriate combinations of these functions uniformly in both argument and order.

The results \cite[Theorems 3.2 and 3.4]{BoTu:13} prove the bound \eqref{eq:ball} when $\Oi$ is a 2- or 3-d ball.
The result \cite[Theorem 3.12]{BoTu:13} proves that, if $\Gamma$ is $C^\infty$, then $\|\Breg \|_{\LtGt}\lesssim (1+|\eta|)k^{1/2+\eps}$ for any $\eps>0$, which is less sharp in its $k$-dependence than \eqref{eq:smoothnormbound}.
The results \cite[Theorems 3.6 and 3.9]{BoTu:13} show that there exist $k_0, C_1, C_2>0$ such that if $\Oi$ is a 2- or 3-d ball, $k\geq k_0$, and $\eta \geq C_1 k^{1/3}$, then 
\beqs
\Re \big\langle \Bregp \phi,\phi\big\rangle_\Gamma \geq C_2 \N{\phi}^2_{\LtG} \quad\tfa \phi\in\LtG;
\eeqs
i.e., that $\Bregp$ is coercive on $\LtG$ when $\Oi$ is a ball. By the Lax--Milgram theorem, this implies that $\|(\Bregp)^{-1}\|_{\LtGt} \leq (C_2)^{-1}$, under the same assumptions on $\Oi, k$, and $\eta$. The calculations in \cite{BoTu:13} suggest actually that (for sufficiently-large $k$) $\Bregp$ is coercive with constant $|\eta| k^{-1/3}$; see \cite[Remark 3.7]{BoTu:13}. If this were the case, then $\|(\Bregp)^{-1}\|_{\LtGt}\leq C k^{1/3}/|\eta|$ for $\Oi$ the ball, which would be consistent with the $k$-dependence in \eqref{eq:ballinverse} (recall that this latter bound is proved assuming that $\eta\in \Rea\setminus\{0\}$ is independent of $k$).

\subsubsection{Comparison of conditioning of $\Breg$ with that for its Dirichlet analogue}

If $\Oi$ is smooth and curved and $\eta$ is independent of $k$, 
then the $\LtG\rightarrow \LtG$ condition number of $\Breg$,
\beq\label{eq:conditionnumber}
\cond(\Breg):= \N{\Breg}_{\LtGt} \big\|(\Breg)^{-1}\big\|_{\LtGt},
\eeq
satisfies $\cond (\Breg)\sim k^{1/3}$. This is the same $k$-dependence as the condition number of the direct and indirect 
boundary-integral operators used to solve the exterior Dirichlet problem for this geometry. Indeed, these operators are, respectively,
\beq\label{eq:Aketa}
A'_{k,\eta}:= \frac{1}{2}I + \DL_k' - \ri \eta S_k \quad\tand\quad A_{k,\eta} :=  \frac{1}{2}I + \DL_k - \ri \eta S_k.
\eeq
When $|\eta|\sim k$ (which one can actually prove is the optimal choice for general $\Oi$), $\cond(A'_{k,\eta}) = \cond(A_{k,\eta}) \sim k^{1/3}$, with the bound on $\|(A'_{k,\eta})^{-1}\|_{\LtGt}$ coming from \cite[Theorem 4.3]{ChMo:08} or \cite[Theorem 1.13]{BaSpWu:16} and the bound on the norm coming from \cite[Theorem 1.2]{GaSm:15} and \cite[Theorem A.1]{HaTa:15}.

When $\Oi$ is $C^\infty$ and nontrapping and $\eta\sim 1$,  $\cond(\Breg) \lesssim k^{11/12}\log k$ by \eqref{eq:smoothnormbound} and \eqref{eq:smoothinversebound}. In contrast, when $\Oi$ is $C^\infty$ and nontrapping and $|\eta|\sim k$, $\cond(A'_{k,\eta}) = \cond(A_{k,\eta}) \lesssim k^{1/2}\log k$ (with the bound on the norm again coming from \cite{GaSm:15, HaTa:15} and the bound on the inverse coming from \cite[Theorem 1.13]{BaSpWu:16}).
For summaries of the results on the conditioning of $A_{k,\eta}'$ and $A_{k,\eta}$ and their sharpness, see \cite[\S5.4]{ChGrLaSp:12}, \cite[Section 7]{BaSpWu:16}, \cite[Theorem 6.4]{ChSpGiSm:20}.

\subsubsection{Why is $\Breg$ harder to analyse than $B_{k,\eta}$?}\label{sec:harder}

The summary is that analysing $\Breg$ is harder than analysing $B_{k,\eta}$ because $(B_{k,\eta})^{-1}$ can be expressed in terms of the interior Impedance-to-Dirichlet map, about which much is known, but $(\Breg)^{-1}$ can only be expressed in terms of a non-standard Impedance-to-Dirichlet map involving $R$ (see \eqref{eq:ItDSdef} below), about which very little was known until the recent results of \cite{GaLaSp:21}. 

As well as being used to solve the exterior Neumann problem, the integral operator $B_{k,\eta}$ defined by \eqref{eq:BW} can be also used to solve the interior impedance problem
\beq\label{eq:iip}
\Delta u + k^2 u =0 \quad\tin \Oi  \quad\tand \quad \partial_n^- u - \ri \eta \gamma^- u = g \quad\ton \Gamma.
\eeq
Indeed, seeking a solution of \eqref{eq:iip} of the form $u= \cK_k\phi$, the third and fourth jump relations in \eqref{eq:jumprelations} implies that $B_{k,\eta}\phi=g$. 
This relationship between the operator $\Breg$, the exterior Neumann problem, and the interior impedance problem is demonstrated further by the decomposition
\begin{align}\label{eq:fav_formula}
    (B_{k,\eta})^{-1}= \NtD  - (I-\ri\eta \NtD )\ItD
\end{align}
\cite[Equation 2.94]{ChGrLaSp:12}. Here $\NtD:\HmhG\rightarrow\HhG$ 
is the Neumann-to-Dirichlet map for the Helmholtz equation posed in $\Oe $ with the Sommerfeld radiation condition \eqref{eq:src}, and $\ItD: \HmhG\rightarrow \HhG$ is the Impedance-to-Dirichlet map for the problem \eqref{eq:iip} (i.e., the map $g\mapsto \gamma^-u$).
Recall that both $\NtD$ and $\ItD$ have unique extensions to bounded operators $H^{s-1/2}(\Gamma)\rightarrow H^{s+1/2}(\Gamma)$ for $|s|\leq 1/2$ 
(see \cite[Section 2.7]{ChGrLaSp:12} and Lemma \ref{lem:NtD1} below) and thus \eqref{eq:fav_formula} is valid on this range of Sobolev spaces.

The analogue of \eqref{eq:fav_formula} for $\Breg$ is 
    \begin{align}\label{eq:fav_formula_reg_intro}
        (\Breg)^{-1}= \NtD \Reg^{-1} - (I-\ri\eta \NtD \Reg^{-1})\ItDR
    \end{align}
where the map \(\ItDR\) takes
$g \mapsto \gamma^- u$, where $u$ is the solution of 
\beq\label{eq:ItDSdef}
\Delta u + k^2 u =0 \quad\tin \Oi  \quad\tand \quad \Reg\partial_n^- u - \ri \eta \gamma^- u = g \quad\ton \Gamma.
\eeq
The formula \eqref{eq:fav_formula_reg_intro} was proved in \cite[Lemma 6.1]{BaSpWu:16}; since it is central to the present paper we 
nevertheless state this result as Lemma \ref{lem:fav_formula_reg} below and 
give a short proof, different to that in \cite{BaSpWu:16}.
In \S\ref{sec:ItDS} we prove the necessary results about the problem \eqref{eq:ItDSdef} to prove Theorem \ref{thm:upper_bound_inverse}, using results about semiclassical pseudodifferential operators and recent results about the frequency-explicit wellposedness of \eqref{eq:ItDSdef} from \cite[Section 4]{GaLaSp:21}. 

\subsubsection{Extending the results of \cite{MaGaSpSp:21} to $\Breg$.}\label{sec:MGSS}
The paper \cite{MaGaSpSp:21} proves a $k$-explicit bound on the number of iterations when GMRES is applied to the standard second-kind integral equation for the exterior Dirichlet problem when $\Oi$ is trapping, and the proof uses the Dirichlet analogues of (a) the bounds in Parts (iii) and (iv) of Theorem \ref{thm:upper_bound_inverse}, and (b)
 the bounds in Theorem \ref{thm:upper_bound_norm}.
Therefore, with the bounds of Theorems \ref{thm:upper_bound_norm} and \ref{thm:upper_bound_inverse} in hand, the main result of \cite{MaGaSpSp:21} (i.e., \cite[Theorem 1.6]{MaGaSpSp:21}) also holds for $\Breg$; see \cite[Remark 2.7]{MaGaSpSp:21}.

\subsection{Outline of the paper}\label{sec:outline}
\S\ref{sec:SCA} recaps existing results about layer potentials, boundary-integral operators, and semiclassical pseudodifferential operators. 
\S\ref{sec:4} proves new results about boundary-integral operators.
\S\ref{sec:NtD} proves new bounds on the exterior Neumann-to-Dirichlet map $\NtD$.
\S\ref{sec:ItDS} proves new bounds on the interior impedance-to-Dirichlet map $\ItDS$.
\S\ref{sec:mainproofs} proves the main results in \S\ref{sec:main_results}.
\S\ref{sec:num} contains numerical experiments illustrating the main results.
\S\ref{sec:eta} contains a heuristic discussion and numerical experiments investigating the dependence on the coupling parameter $\eta$.
Appendix \S\ref{sec:recap} recaps the definitions of layer potentials, their jump relations, and Green's integral representation.
Appendix \S\ref{sec:geo} defines precisely the geometric definitions used in the statements of the main results.
Appendix \S\ref{sec:ext_imp} outlines how the main results can be extended from the exterior Neumann problem to the exterior impedance problem.

\paragraph{Notation:}
In many of the proofs, $C>0$ is a constant whose values may change from line to line. We sometimes use the notation that $a\lesssim b$ if there exists $C>0$, independent of $k$, such that $a\leq Cb$. We say that $a\sim b$ if $a\lesssim b$ and $b\lesssim a$.

\section{Recap of existing results about layer potentials, boundary-integral operators, and semiclassical pseudodifferential operators}
\label{sec:SCA}

\subsection{Definition of weighted Sobolev spaces}\label{sec:weighted_norms}

We first define weighted Sobolev spaces on $\Rea^d$, and then use these to define analogous weighted Sobolev spaces on $\Gamma$.
Let
\beqs
(\cF u)(\zeta):= \int_{\Rea^d} \exp(-\ri \zeta\cdot x)\,u(x)\,\rd x,
\eeqs
and, for $s\in \Rea$ and $k>0$, let 
\beq\label{eq:Hsk}
H_k^s(\Rea^d):= \Big\{ u\in \mathcal{S}^*(\Rea^d) \,\tst\, \big(1+ k^{-2}|\zeta|^2\big)^{s/2} (\cF u)(\zeta) \in L^2(\Rea^d) \Big\},
\eeq
where $\mathcal{S}(\Rea^d)$ is the Schwartz space (see, e.g., \cite[Page 72]{Mc:00}) and $\mathcal{S}^*(\Rea^d)$ its dual.
Define the norm
\beq\label{eq:Hsknorm}
\N{u}_{H^s_k(\Rea^d)}^2:= \int_{\Rea^d} \big(1 + k^{-2}|\zeta|^2\big)^{s} |(\cF u)(\zeta)|^2 \, \rd \zeta.
\eeq
and observe that, for $s>0$,
\beq\label{eq:weightednorm}
 \N{u}_{H^{-s}_k(\Rea^d)} \leq \N{u}_{L^2(\Rea^d)}\leq \N{u}_{H^s_k(\Rea^d)}.
\eeq

If $\Gamma$ is $C^{m-1,1}$, the weighted spaces $H^s_k(\Gamma)$ for $|s|\leq m$ can be defined by charts; see, e.g., \cite[Pages 98 and 99]{Mc:00} for the unweighted case and \cite[\S5.6.4]{Ne:01} or \cite[Definition E.20]{DyZw:19} for the weighted case (but note that \cite[\S5.6.4]{Ne:01} uses the weight $(k^2 + |\zeta|^2\big)^{s}$ in \eqref{eq:Hsknorm} instead of our $(1 + k^{-2}|\zeta|^2\big)^{s}$).

The facts we need about these spaces in the rest of the paper are the following.

(i) Since $H^{-s}_k(\Rea^d)$ is an isometric realisation of the dual space of $H^s_k(\Rea^d)$ \cite[Page 76]{Mc:00}, $H^{-s}_k(\Gamma)$ is a realisation of the dual space of $H^s_k(\Gamma)$ \cite[Page 98]{Mc:00}.

(ii) 
\beq\label{eq:1knorm}
    \N{ w }_{H^1_k(\Gamma)}^2 \sim k^{-2}\N{ \nabla_{\Gamma} w }_{\LtG}^2 + \N{ w }_{L^2(\Gamma)}^2,
\eeq
where $\nabla_\Gamma$ is the surface gradient operator, defined in terms of a parametrisation of the boundary by, e.g., \cite[Equation A.14]{ChGrLaSp:12}. 

(iii) If $\Gamma$ is Lipschitz, then given $k_0>0$ and $1/2<s<3/2$ there exists $C>0$ such that for all $k\geq k_0$  
the Dirichlet trace operators $\gamma^\pm$ satisfy
\begin{equation}
\label{e:basicTrace}
\|\gamma^\pm\|_{H_k^s(\Omega^{\pm})\to H_k^{s-\frac{1}{2}}(\Gamma)}
\leq Ck^{\frac{1}{2}};
\end{equation}
this is proved in the unweighted case in \cite[Theorem 3.38]{Mc:00}, and the proof for the weighted case follows similarly; see, e.g., \cite[Theorem 5.6.4]{Ne:01}.
When $\gamma^+ u = \gamma^-u$ we write $\gamma u = \gamma^\pm u$; recall that the adjoint of this two-sided trace operator is defined by 
\beq\label{eq:traceadjoint}
\big\langle \gamma^*\phi,u\big\rangle_{\Rea^d} =\big\langle \phi, \gamma u\big\rangle_\Gamma
\eeq
for $\phi\in H^{1/2-s}(\Gamma)$, $1/2<s<3/2$, and $u\in C_{\rm comp}^\infty(\mathbb{R}^d)$ (see, e.g., \cite[Equation 6.14]{Mc:00}),
and then \eqref{e:basicTrace} implies that
\beq
\label{e:basicTrace2}
\|\gamma^*\|_{H_k^{\frac{1}{2}-s}(\Gamma)\to H_k^{-s}(\mathbb{R}^d)}
\leq Ck^{\frac{1}{2}}.
\eeq

\subsection{Recap of results about layer potentials and integral operators}

\begin{theorem}\mythmname{Bounds on the $\LtGt$ norms of $\DL_k, \DL_k'$ 
\cite[Appendix A]{HaTa:15}, \cite[Chapter 4]{Ga:19}}\label{th:K_k_bounds}
    Let \(\Oi\) a bounded Lipschitz open set such that the open complement \(\Oe :=\bbR^d\setminus \overline{\Oi}\) is connected

    \begin{enumerate}
        \item If $\Oi$ is convex and $\Gamma$ is $C^\infty$ and curved (in the sense of Definition \ref{def:curved}),  then given \(k_0>0\) there exists $C>0$ such that 
        \begin{align*}
            \N{ \DL_k'}_{L^2(\Gamma)\rightarrow L^2(\Gamma)}+            \N{ \DL_k}_{L^2(\Gamma)\rightarrow L^2(\Gamma)} \leq C\quad\tfa k\geq k_0.
        \end{align*}
\item        If \(\Gamma\) is piecewise curved (in the sense of Definition \ref{def:piecewisecurved}), then given \(k_0>0\) there exists $C>0$ such that
        \begin{align*}
                    \N{ \DL_k'}_{L^2(\Gamma)\rightarrow L^2(\Gamma)}+    
            \N{ \DL_k}_{L^2(\Gamma)\rightarrow L^2(\Gamma)} \leq C k^{1/6} \log (k+2)\quad\tfa k\geq k_0.
        \end{align*}
        \item If \(\Gamma \) is piecewise smooth (in the sense of Definition \ref{def:piecewisesmooth}), 
        then given \(k_0>0\) there exists $C>0$ such that
             \begin{align*}
                         \N{ \DL_k'}_{L^2(\Gamma)\rightarrow L^2(\Gamma)}+    
            \N{ \DL_k}_{L^2(\Gamma)\rightarrow L^2(\Gamma)} \leq C k^{1/4} \log (k+2)\quad\tfa k\geq k_0.
        \end{align*}
    \end{enumerate}
\end{theorem}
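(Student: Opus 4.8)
The plan is to treat $\DL_k$ and $\DL_k'$ as oscillatory integral operators on $\Gamma$, separating a pseudodifferential piece near the diagonal (bounded by the calculus) from a genuinely oscillatory remainder whose $L^2(\Gamma)$-norm is governed by the geometry of $\Gamma$ through a stationary-phase/$TT^*$ argument. It suffices to treat $\DL_k$: the Schwartz kernel of $\DL_k'$ is obtained from that of $\DL_k$ by interchanging the two arguments (using the symmetry $\Phi_k(x,y)=\Phi_k(y,x)$), so $\|\DL_k'\|_{L^2(\Gamma)\to L^2(\Gamma)}=\|\DL_k\|_{L^2(\Gamma)\to L^2(\Gamma)}$. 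On $\Gamma$ the kernel of $\DL_k$ is $\partial_{n(y)}\Phi_k(x,y)$; writing out the normal derivative produces a factor $(y-x)\cdot n(y)$, and the elementary observation that $(y-x)\cdot n(y)=O(|x-y|^2)$ for $x,y\in\Gamma$ (with $\Gamma\in C^2$) shows that the leading term in $k$ has the form $\mathrm{i}k\,\mathrm{e}^{\mathrm{i}k|x-y|}\,a(x,y)$ with $a$ carrying a factor $|x-y|^2$ of decay near the diagonal, while the lower-order term is the classical frequency-independent double-layer kernel, a Calder\'on--Zygmund operator bounded on $L^2(\Gamma)$. Now introduce a dyadic partition of unity in $|x-y|$ and write $\DL_k=\DL_k^{\mathrm{near}}+\DL_k^{\mathrm{far}}$, with $\DL_k^{\mathrm{near}}$ supported where $|x-y|\lesssim k^{-1}$. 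After rescaling to the semiclassical wavelength $k^{-1}$, $\DL_k^{\mathrm{near}}$ is a semiclassical pseudodifferential operator of order $0$ on $\Gamma$ with symbol bounded on the elliptic and hyperbolic regions, so by the Calder\'on--Vaillancourt theorem (see \S\ref{sec:SCA}) $\|\DL_k^{\mathrm{near}}\|_{L^2(\Gamma)\to L^2(\Gamma)}=O(1)$; when $\Gamma$ is only piecewise smooth the calculus applies away from the singular support, and localising near the edges/corners with a further dyadic decomposition at scales $k^{-1}\lesssim|x-y|\lesssim1$ produces $O(\log k)$ pieces, each $O(1)$, which is the source of the logarithmic factors in parts (2) and (3).

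The substance of the theorem --- and the step I expect to be the main obstacle --- is the bound on $\DL_k^{\mathrm{far}}$, which is an oscillatory integral operator with phase $k|x-y|$ carrying one extra power of $k$ in its amplitude compared with the single-layer case. The trivial Schur bound gives only $O(k)$, so one must extract a gain of $k^{-5/6}$, resp.\ $k^{-3/4}$, from the oscillation. I would do this either (a) by a $TT^*$ argument reducing the bound to estimates on the restriction to $\Gamma$ of the free resolvent $(-\Delta-(k+\mathrm{i}0)^2)^{-1}$ and of the associated spectral measure of $-\Delta$ at $k^2$, the relevant input being the $L^2$ spectral-cluster restriction estimate for a hypersurface --- which loses $k^{1/6}$ when $\Gamma$ has non-vanishing curvature and only $k^{1/4}$ for a general (possibly flat) smooth hypersurface, exactly the exponents in parts (2) and (3); or (b) following \cite[Chapter 4]{Ga:19}, by a direct stationary/non-stationary phase analysis in which the non-stationary part is negligible, the part supported near non-degenerate stationary points is $O(1)$, and all the growth comes from the glancing region, where an Airy-type (Melrose--Taylor) parametrix quantifies the degeneracy of the phase and produces the $k^{1/6}$ (curved) resp.\ $k^{1/4}$ (flat) loss.

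For part (1), strict convexity of $\Oi$ removes the glancing loss altogether: the boundary billiard relation is benign enough that the glancing contribution can be summed without loss, giving $\|\DL_k\|_{L^2(\Gamma)\to L^2(\Gamma)}=O(1)$ with no logarithm. This is the one place where convexity (rather than mere curvature) is genuinely used, and obtaining the uniform-in-scale estimate there --- instead of the $O(\log k)$ one would get from naive dyadic summation --- is the most delicate point of the argument. Finally, the piecewise-curved and piecewise-smooth statements follow by applying the smooth-piece analysis on each curved component, together with the corner/edge localisation already described; the matching between pieces contributes only the harmless $O(\log k)$ factor.
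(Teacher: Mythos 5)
First, note that the paper does not actually prove this theorem: it is stated as a recap, with the proof delegated entirely to \cite[Appendix A]{HaTa:15} and \cite[Chapter 4]{Ga:19}. So the comparison is with those works, and your outline is indeed in the same family as their arguments (decomposition into a pseudodifferential near-diagonal piece plus an oscillatory far piece, with the growth extracted from the glancing region by restriction-type or Melrose--Taylor/Airy analysis, and the transpose relation reducing $K_k'$ to $K_k$).

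There is, however, a genuine gap at exactly the point you flag as the main obstacle. Your route (a) asserts that the hypersurface restriction/spectral-cluster losses $k^{1/6}$ (curved) and $k^{1/4}$ (general) are ``exactly the exponents in parts (2) and (3)''. But the double-layer kernel carries one extra power of $k$ relative to the single layer, so a direct $TT^*$/restriction argument of the kind you describe gives $\|K_k\|_{L^2\to L^2}\lesssim k\cdot k^{-2/3}=k^{1/3}$ in the curved case and $k\cdot k^{-1/2}=k^{1/2}$ in general --- not $k^{1/6}$ and $k^{1/4}$. The missing mechanism, which is the heart of the proofs in \cite{HaTa:15,Ga:19}, is that the double-layer amplitude contains the factor $\langle (x-y)/|x-y|,\,n(y)\rangle$, which vanishes precisely in the glancing directions where the restriction estimate incurs its worst loss; one must decompose dyadically in the angle to glancing, use the improved (non-glancing) restriction bounds where that factor is of size one, and let the vanishing factor damp the glancing blocks. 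This dyadic summation over angles is also the true source of the $\log(k+2)$ factors: they appear already for smooth, curved, non-convex $\Gamma$ (part (2) covers that case), so attributing the logarithms to corner/edge localisation is not correct. Finally, in part (1) the statement ``the boundary billiard relation is benign enough'' is not an argument: the $O(1)$ bound uses convexity quantitatively (in \cite{Ga:19} through sign/size control of the curvature term in the phase near glancing, making the angular blocks summable without loss), and curvature alone does not suffice. As written, then, the proposal identifies the right architecture but does not supply the step that actually produces the stated exponents.
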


We make two remarks: (i) the bounds in Points 2 and 3 are sharp up to the factor of $\log (k+2)$, and the bound in Point 1 is sharp; see \cite[\S3]{GaSp:19}, \cite[\S A.3]{HaTa:15}, (ii) 
by \cite{GaSp:19}, bounds with the same $k$-dependence hold on the $L^2(\Gamma)\rightarrow H^1_k(\Gamma)$ norms under the additional assumption that $\Gamma$ is $C^{2,\alpha}$ for some $\alpha>0$, which is necessary for $\DL_k$ and $\DL_k$ to be bounded operators from $\LtG\rightarrow \HoG$ \cite[Theorem 4.2]{Ki:89}, \cite[Theorem 3.6]{CoKr:98}.

\begin{theorem}[{Bound on $\cD_k$ \cite[Theorem 1.2]{HaTa:15}}]
\label{thm:HT}
If $\Gamma$ is piecewise smooth, then, given $\chi \in C^\infty_{\rm comp}(\Rea^d)$ and $k_0>0$ there exists $C>0$ such that
\beqs
\N{\chi \cD_k}_{\LtG\rightarrow L^2(\Oe)}\leq C
\quad\tand\quad
\N{\cD_k}_{\LtG\rightarrow L^2(\Oi)}\leq C
\eeqs
for all $k\geq k_0$.
\end{theorem}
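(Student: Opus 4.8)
The plan is to reduce the estimate, by duality, to a restriction-type bound for the free resolvent on the hypersurface $\Gamma$, and then to prove that bound by a $TT^*$ argument in which the gain comes from the curvature of the characteristic sphere $\{|\xi|=k\}$.

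\textbf{Step 1: reduction to a resolvent restriction estimate.} Write $R_k := (-\Delta-k^2-\ri 0)^{-1}$ for the outgoing free resolvent, so that $\Phi_k$ is its Schwartz kernel. By duality, $\N{\chi\cD_k}_{\LtG\rightarrow L^2(\Oe)} = \N{(\chi\cD_k)^*}_{L^2(\Oe)\rightarrow\LtG}$, and a short computation with the definition of the double-layer potential $\cD_k$ shows that, up to complex conjugation (which does not affect norms), $(\chi\cD_k)^* v = \partial_n\gamma w$, where $w$ is the Helmholtz volume potential of $\chi v$ — i.e.\ $w$ solves $(\Delta+k^2)w = -\chi v$ in $\Rea^d$ and is outgoing — and $\partial_n\gamma$ is the two-sided normal-derivative trace on $\Gamma$ (legitimate since the volume potential of an $L^2$ density lies in $H^2_{\rm loc}$). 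The interior estimate is identical, with the bounded set $\Oi$ playing the role of a built-in cutoff. Thus both statements reduce to the single bound
\beqs
\N{\partial_n\gamma R_k f}_{\LtG} \leq C\N{f}_{L^2(\Rea^d)}
\eeqs
for all $f$ supported in a fixed compact set, uniformly in $k\geq k_0$.

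\textbf{Step 2: $TT^*$ and the oscillatory kernel.} I would first observe that the ``soft'' route — estimating $\gamma$ by $k^{1/2}$ via \eqref{e:basicTrace}, $\partial_n$ by $k$, and using the standard bound $\N{\chi R_k\chi}_{L^2\rightarrow L^2}\lesssim k^{-1}$ — loses a factor of $k^{1/2}$, so the genuine bound must exploit oscillation. Set $T:=\chi\cD_k:\LtG\rightarrow L^2(\Rea^d)$, so that $\N{T}^2 = \N{T^*T}_{\LtGt}$ and the Schwartz kernel of $T^*T$ on $\Gamma\times\Gamma$ is $\partial_{n(\bx)}\partial_{n(\by)}G(\bx,\by)$, where $G(\bx,\by):=\int|\chi(\bz)|^2\overline{\Phi_k(\bz,\bx)}\Phi_k(\bz,\by)\,\rd\bz$ is the kernel of $R_k^*|\chi|^2R_k$ (which is bounded on $L^2(\Rea^d)$ with norm $\lesssim k^{-2}$, since $\langle R_k^*|\chi|^2R_kf,g\rangle = \langle\chi R_kf,\chi R_kg\rangle$). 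Using the Fourier representation of $\Phi_k$, one writes $G$, and hence $\partial_n\partial_nG$ restricted to $\bx,\by\in\Gamma$, as an oscillatory integral over a neighbourhood of the sphere $\{|\xi|=k\}$ with phase $(\bx-\by)\cdot\xi$; the two normal derivatives contribute a factor $(n(\bx)\cdot\xi)(n(\by)\cdot\xi)=O(k^2)$, while the amplitude carries the decay of $\widehat{|\chi|^2}$ and the resolvent denominators.

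\textbf{Step 3: estimating the oscillatory integral operator on $\Gamma$.} Split $\Gamma\times\Gamma$ into (a) a near-diagonal/glancing region, where $|\bx-\by|\lesssim k^{-1}$ or where the directions $\xi$ in the relevant part of the sphere graze $T_\bx\Gamma$, and (b) its complement. On region (b) apply stationary phase in $\xi$ along $S^{d-1}$: the critical points are at $\xi\parallel(\bx-\by)$, and the Hessian is nondegenerate precisely because the sphere is curved, producing decay $(k|\bx-\by|)^{-(d-1)/2}$; the resulting kernel on $\Gamma$ is then controlled by a Schur test / Young's inequality in the $(d-1)$-dimensional boundary variables. Region (a) has measure $O(k^{-1})$ (up to lower-dimensional factors) and is handled by the trivial size bound on the amplitude. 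Summing, $\N{T^*T}_{\LtGt}\leq C$, giving the exterior bound; the interior one follows verbatim. I expect region (a) to be the main obstacle: one must verify that the set where nondegenerate stationary phase fails is small enough, and that the two factors of $k$ from the normal derivatives are genuinely absorbed, so that no power of $k$ survives. Finally, it is worth stressing that — in contrast to Theorem \ref{th:K_k_bounds} — no curvature assumption on $\Gamma$ enters here: the only curvature used is that of the characteristic sphere, because the potential is integrated over the full-dimensional region $\Oe$ (resp.\ $\Oi$) rather than over the $(d-1)$-dimensional boundary.
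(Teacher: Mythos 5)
First, a point of context: the paper contains no proof of Theorem \ref{thm:HT} — it is quoted verbatim from \cite[Theorem 1.2]{HaTa:15} — so your attempt has to be judged against the known proofs of that result. Your Step 1 is correct: by duality the two bounds reduce to the uniform estimate $\N{\partial_n\gamma R_k(\chi v)}_{\LtG}\lesssim \N{v}_{L^2(\Rea^d)}$, and your observation that the ``soft'' route (trace bound \eqref{e:basicTrace} plus resolvent bounds) loses $k^{1/2}$ is also right. The $TT^*$ framework of Step 2 is a reasonable one, although already there the kernel of $R_k^*|\chi|^2R_k$ is not directly an oscillatory integral with integrable amplitude (the squared singular denominator $(|\xi|^2-k^2\mp \ri 0)^{-1}$ needs a preliminary decomposition of the resolvent into its elliptic part and a piece microlocalised near the characteristic set).

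The genuine gap is in Step 3, and it is exactly the point you flag as ``the main obstacle''. The $k$-uniform bound for $\cD_k$ is special to the \emph{normal} derivative: the symbol contributed by $\partial_n$ on the characteristic sphere is $k\,n(\bx)\cdot\xi/|\xi|$, which vanishes precisely on the glancing set — exactly where your stationary-phase analysis degenerates — and exploiting this cancellation is what removes the loss (this is the same mechanism as the loss-free Neumann-data restriction bounds for quasimodes, and it is the heart of the argument in \cite{HaTa:15}). Your plan instead treats the normal derivatives as a crude size factor $O(k^2)$ and handles the near-glancing region ``by the trivial size bound''; that cannot close, because it estimates the same quantities as a \emph{tangential} derivative of the single-layer potential, whose norm genuinely is of size $k\cdot k^{-3/4}=k^{1/4}$ in the flat/glancing regime, so your accounting can give at best $\N{\chi\cD_k}\lesssim k^{1/4}$, not $O(1)$. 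Moreover the claim that region (a) has measure $O(k^{-1})$ fails for precisely the geometries the theorem covers: on a flat piece of $\Gamma$ (e.g.\ a side of the square) the chord $\bx-\by$ lies in $\Gamma$, so the stationary directions $\xi=\pm(\bx-\by)/|\bx-\by|$ are glancing for \emph{every} pair $(\bx,\by)$ on that piece, and the ``glancing region'' is essentially all of $\Gamma\times\Gamma$. What rescues the estimate there is again that $n(\bx)\cdot\xi$ and $n(\by)\cdot\xi$ vanish at those stationary points, killing the leading stationary-phase term; this structure has to be kept in the amplitude rather than discarded by an absolute-value bound (and, relatedly, the off-glancing kernel is itself oscillatory, so a plain Schur test on $|K|$ is not obviously sufficient either). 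Your closing remark that only the curvature of the sphere matters is true as far as it goes, but your own decomposition implicitly assumes chords are non-glancing, which is false for flat pieces — so as written the argument does not prove the theorem in the piecewise-smooth generality claimed.
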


We also recall well-known bounds on the free resolvent i.e., integration against the fundamental solution $\Phi_{k} (x,y)$ defined by \eqref{eq:fund}. Let
\begin{align}\label{eq:free_resolvent} 
\calR_k f (x):= \int_{\Oe } \Phi_{k} (x,y)f(y) \dif y.
\end{align}

\begin{theorem}[{Bound on $\cR_k$}]
\label{thm:Newton}
Given $\chi_1, \chi_2 \in C^\infty_{\rm comp}(\Rea^d)$ and $k_0>0$ there exists $C>0$ such that
\beqs
\frac{1}{k}\N{\chi_1 \cR_k \chi_2}_{L^2(\Oe)\rightarrow H^2(\Oe)}
+ 
\N{\chi_1 \cR_k \chi_2}_{L^2(\Oe)\rightarrow H^1(\Oe)}
+
k\N{\chi_1 \cR_k \chi_2}_{L^2(\Oe)\rightarrow L^2(\Oe)}
\leq C
\eeqs
for all $k\geq k_0$.
\end{theorem}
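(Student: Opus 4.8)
The plan is to recognise $\cR_k$ as the restriction to $\Oe$ of the outgoing free resolvent on $\Rea^d$, reduce the claimed estimates to the standard cutoff-resolvent bound in $L^2$, and then upgrade to $H^1$ and $H^2$ by elliptic bootstrapping. Extending the argument of $\cR_k$ by zero, we have $\chi_1\cR_k\chi_2 f=\big(\chi_1 R_0(k)g\big)\big|_{\Oe}$ with $g$ the extension by zero of $\chi_2 f$, where $R_0(k):=(-\Delta-k^2-\ri 0)^{-1}$ is convolution against $\Phi_k$ on all of $\Rea^d$; since $\|g\|_{L^2(\Rea^d)}\lesssim\|f\|_{L^2(\Oe)}$, $g$ is supported in $\supp\chi_2$, and $\|\cdot\|_{H^s(\Oe)}\leq\|\cdot\|_{H^s(\Rea^d)}$, it suffices to prove $\|\chi_1 R_0(k)\chi_2\|_{L^2\to L^2}\lesssim k^{-1}$, $\|\chi_1 R_0(k)\chi_2\|_{L^2\to H^1}\lesssim 1$, and $\|\chi_1 R_0(k)\chi_2\|_{L^2\to H^2}\lesssim k$ for $k\geq k_0$ (here and below $\chi_1,\chi_2$ denote arbitrary compactly supported cutoffs, possibly enlarged from line to line).

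For the $L^2\to L^2$ bound I would cite the well-known sharp decay estimate for the cutoff free resolvent (see, e.g., \cite{DyZw:19}, or derive it directly from the large-argument asymptotics of the Hankel functions entering $\Phi_k$); note that a crude Schur test on the kernel $\chi_1(x)\Phi_k(x,y)\chi_2(y)$ gives only an $O(1)$ bound, so the $k^{-1}$ gain genuinely uses the oscillation of $\Phi_k$, and this is the one non-elementary input.

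Given the $L^2\to L^2$ bound, the $H^1$ and $H^2$ bounds follow by localisation. Fix nested compactly supported cutoffs $\chi_1\prec\chi_1'\prec\chi_1''$ (each identically $1$ on the support of the previous), and put $u:=R_0(k)g$, so $(-\Delta-k^2)u=g$. Testing this against $(\chi_1')^2\overline u$ and integrating by parts (a Caccioppoli/Green's-identity computation) controls $\|\chi_1'\nabla u\|_{L^2}^2$ by $k^2\|\chi_1'' u\|_{L^2}^2$, $\|g\|_{L^2}\|\chi_1'' u\|_{L^2}$, and a gradient term absorbed into the left-hand side; since $\|\chi_1'' u\|_{L^2}\lesssim k^{-1}\|g\|_{L^2}$ by the $L^2$ bound (applied with cutoff $\chi_1''$), this yields $\|\chi_1 u\|_{H^1}\lesssim\|g\|_{L^2}$. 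For $H^2$, apply interior elliptic regularity, $\|\chi_1 u\|_{H^2(\Rea^d)}\lesssim\|\Delta(\chi_1 u)\|_{L^2}+\|\chi_1 u\|_{L^2}$, expand $\Delta(\chi_1 u)=-k^2\chi_1 u-\chi_1 g+2\nabla\chi_1\cdot\nabla u+(\Delta\chi_1)u$, and estimate the right-hand side using the $L^2$ bound on $\chi_1'' u$ and the $H^1$ bound on $\chi_1' u$ just obtained; this gives $\|\chi_1 u\|_{H^2}\lesssim k\|g\|_{L^2}$. Alternatively, once the $L^2$ and $H^2$ bounds are known, the $H^1$ bound is immediate by complex interpolation of the target spaces, since $[L^2,H^2]_{1/2}=H^1$ and $(k^{-1})^{1/2}k^{1/2}=1$.

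I expect the main obstacle to be (or, more honestly, the place where one invokes the literature) the sharp $k^{-1}$ decay of the cutoff free resolvent in $L^2$; the remaining steps are routine cutoff juggling and elliptic regularity, uniform in $k$ because all constants in elliptic regularity are $k$-independent and the $k$-dependence is carried explicitly by the terms $k^2 u$ and $g$.
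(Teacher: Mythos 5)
Your proposal is correct, and it takes a genuinely different (more self-contained) route than the paper, whose ``proof'' is purely a citation: the paper quotes \cite[Theorem 3.1]{DyZw:19} for odd $d$ and \cite[Theorem 14.3.7]{Ho:83a} for general $d$ (after rescaling the independent variable), results that already contain all three estimates, including the derivative bounds. You instead take as your only external input the sharp cutoff bound $\|\chi_1 R_0(k)\chi_2\|_{L^2\to L^2}\lesssim k^{-1}$ and recover the $H^1$ and $H^2$ estimates yourself: the reduction from $\Oe$ to $\Rea^d$ is legitimate, since $\chi_1\cR_k\chi_2 f$ is exactly the restriction to $\Oe$ of $\chi_1 R_0(k)$ applied to the zero-extension of $\chi_2 f$, and restriction does not increase Sobolev norms; the Caccioppoli estimate with nested cutoffs gives $\|\chi_1' \nabla u\|_{L^2}\lesssim \|g\|_{L^2}$ once $\|\chi_1'' u\|_{L^2}\lesssim k^{-1}\|g\|_{L^2}$ is fed in (absorption of the gradient cross term is harmless because $u\in H^2_{\rm loc}$); and the $H^2$ step via $\|\chi_1 u\|_{H^2}\lesssim\|\Delta(\chi_1 u)\|_{L^2}+\|\chi_1 u\|_{L^2}$, expanding $\Delta(\chi_1 u)=-k^2\chi_1 u-\chi_1 g+2\nabla\chi_1\cdot\nabla u+(\Delta\chi_1)u$, correctly produces the factor $k$ from the $k^2\chi_1 u$ term, with all elliptic constants $k$-independent. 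What your route buys is an elementary argument valid in all dimensions at once, needing only the standard $L^2$ resolvent decay (which is indeed the one genuinely non-elementary input, and is itself contained in the references the paper cites); what the paper's route buys is brevity, since the quoted theorems deliver the full weighted package in one stroke. Your interpolation alternative for the $H^1$ bound is also fine and consistent, since $[L^2,H^2]_{1/2}=H^1$ and $k^{-1/2}\cdot k^{1/2}=1$.
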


\bpf[References for the proof]
See, e.g., \cite[Theorem 3.1]{DyZw:19} for odd $d$ and \cite[Theorem 14.3.7]{Ho:83a} for arbitrary dimension (note that \cite[Theorem 14.3.7]{Ho:83a} is for fixed $k$, but a rescaling of the independent variable yields the result for arbitrary $k$).
\epf

Finally, we recall that $S_{\ri k}:\HmhG\rightarrow \HhG$ is coercive by, e.g., \cite[Theorem 5.6.5]{Ne:01}; this result is proved using Green's first identity and the first two jump relations in \eqref{eq:jumprelations}.
Note that we use different weighted norms than \cite[Theorem 5.6.5]{Ne:01}, so that \cite[Theorem 5.6.5]{Ne:01} has the coercivity constant independent of $k$, but we have it proportional to $1/k$.

\begin{theorem}[{Coercivity of $S_{\ri k}$ on $\HmhG$ \cite[Theorem 5.6.5]{Ne:01}}]\label{thm:Sikcoer}
If $\Gamma$ is Lipschitz then given $k_0>0$  there exists $C>0$ such that, for all $k\geq k_0$,
\beqs
\big\langle S_{\ri k}\phi,\phi\big\rangle_\Gamma \geq \frac{C}{k} \N{\phi}^2_{H^{-1/2}_k(\Gamma)} \quad\tfa \phi \in\HmhG.
\eeqs
\end{theorem}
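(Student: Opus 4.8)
The plan is to reduce the coercivity estimate to two ingredients: a potential-theoretic energy identity relating $\langle S_{\ri k}\phi,\phi\rangle_\Gamma$ to a weighted energy of the associated single-layer potential, and a weighted Neumann-trace bound. First I would introduce $u:= \cS_{\ri k}\phi$, the single-layer potential with density $\phi$ at wavenumber $\ri k$. Since $\Phi_{\ri k}$ is a constant multiple of the exponentially-decaying fundamental solution of $\Delta-k^2$, the function $u$ satisfies $\Delta u - k^2 u = 0$ in $\Rea^d\setminus\Gamma$, decays exponentially along with its derivatives, and hence lies in $H^1(\Rea^d)$. The first two jump relations in \eqref{eq:jumprelations} give that $\gamma^+ u = \gamma^- u =: \gamma u = S_{\ri k}\phi$ and that $\phi$ is (up to an irrelevant sign) the jump $\partial_n^- u - \partial_n^+ u$ of the Neumann trace across $\Gamma$. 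Applying Green's first identity to $u$ separately in $\Oi$ and in $\Oe$ — with the surface integral over large spheres vanishing in the limit thanks to the exponential decay — and using $\Delta u = k^2 u$ on each side, I would add the two identities to obtain
\beq\label{eq:Sik_energy}
\big\langle S_{\ri k}\phi,\phi\big\rangle_\Gamma = \big\langle \gamma u,\, \partial_n^- u - \partial_n^+ u\big\rangle_\Gamma = \N{\nabla u}_{L^2(\Rea^d)}^2 + k^2\N{u}_{L^2(\Rea^d)}^2 ,
\eeq
the right-hand side being real and nonnegative (for complex $\phi$ one splits into real and imaginary parts, or uses that $S_{\ri k}$ has a real symmetric kernel). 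Crucially, by the definition \eqref{eq:Hsknorm} of the weighted norm, the right-hand side equals exactly $k^2\N{u}_{H^1_k(\Rea^d)}^2$.

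Next I would bound $\N{\phi}_{H^{-1/2}_k(\Gamma)}$ from above by $\N{u}_{H^1_k(\Rea^d)}$. Recall that $\partial_n^{\pm}u$ is defined as an element of $H^{-1/2}(\Gamma)$ via Green's identity, so $\N{\partial_n^{\pm}u}_{H^{-1/2}_k(\Gamma)}$ is the supremum of $\lvert\langle\partial_n^{\pm}u,\gamma v\rangle_\Gamma\rvert$ over $v\in H^1_k(\Omega^{\pm})$ with $\N{\gamma v}_{H^{1/2}_k(\Gamma)}\le 1$. Using \eqref{e:basicTrace} together with a right inverse of the Dirichlet trace that is bounded $H^{1/2}_k(\Gamma)\to H^1_k(\Omega^{\pm})$ with norm $\lesssim k^{-1/2}$ (this $k$-dependence follows by rescaling the classical unweighted statement, or from \cite[\S5.6.4]{Ne:01}), and Cauchy--Schwarz on the two terms of Green's identity — with $\Delta u = k^2 u$, so that $\lvert\int(\Delta u)\bar v\rvert\le k^2\N{u}_{L^2}\N{v}_{L^2}$ and $\lvert\int\nabla u\cdot\overline{\nabla v}\rvert\le\N{\nabla u}_{L^2}\N{\nabla v}_{L^2}\le k^2\N{u}_{H^1_k}\N{v}_{H^1_k}$ — I would get $\N{\partial_n^{\pm}u}_{H^{-1/2}_k(\Gamma)}\lesssim k^{3/2}\N{u}_{H^1_k(\Omega^{\pm})}$, whence $\N{\phi}_{H^{-1/2}_k(\Gamma)}\le\N{\partial_n^- u}_{H^{-1/2}_k(\Gamma)}+\N{\partial_n^+ u}_{H^{-1/2}_k(\Gamma)}\lesssim k^{3/2}\N{u}_{H^1_k(\Rea^d)}$.

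Finally I would combine the two estimates: from \eqref{eq:Sik_energy},
\[
\big\langle S_{\ri k}\phi,\phi\big\rangle_\Gamma = k^2\N{u}_{H^1_k(\Rea^d)}^2 \;\gtrsim\; k^2\,k^{-3}\N{\phi}_{H^{-1/2}_k(\Gamma)}^2 \;=\; k^{-1}\N{\phi}_{H^{-1/2}_k(\Gamma)}^2,
\]
which is the claim. (A shorter route is to quote \cite[Theorem 5.6.5]{Ne:01} verbatim and merely translate norms: that reference uses the weight $(k^2+\lvert\zeta\rvert^2)^s$, whose square differs from ours by the factor $k^{2s}$, equal to $k^{-1}$ at $s=-1/2$, which turns the $k$-uniform coercivity constant there into the factor $1/k$ here.) I expect the only genuinely delicate point to be tracking the power of $k$ in the norm of the bounded right inverse of the Dirichlet trace; the rest is Green's identity, the jump relations, and Cauchy--Schwarz. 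A density reduction to smooth $\phi$ is available but unnecessary, since the single-layer potential and its jump relations are valid directly for $\phi\in H^{-1/2}(\Gamma)$ and both sides of the asserted inequality are continuous in $\phi$ with respect to the $H^{-1/2}(\Gamma)$-norm.
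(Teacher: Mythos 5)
Your proposal is correct and follows essentially the same route as the paper: the paper simply cites N\'ed\'elec's Theorem 5.6.5 and remarks that the result is "proved using Green's first identity and the first two jump relations", with the $1/k$ factor arising purely from the translation between N\'ed\'elec's weight $(k^2+|\zeta|^2)^s$ and the paper's $(1+k^{-2}|\zeta|^2)^s$ at $s=-1/2$ --- which is exactly the argument you carry out in detail (energy identity for $u=\cS_{\ri k}\phi$ plus the $k$-explicit Neumann-trace/extension bounds) and also note in your parenthetical shortcut. The one step you rightly flag, the $O(k^{-1/2})$ bound on the right inverse of the Dirichlet trace in the weighted norms, is standard and uniform in $k\geq k_0$ for Lipschitz $\Gamma$, so there is no gap.
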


\subsection{Recap of results about semiclassical pseudodifferential operators}\label{sec:scpseudos}

\subsubsection{The semiclassical parameter and weighted Sobolev spaces}

Semiclassical pseudodifferential operators are pseudodifferential operators with a large/small parameter, where behaviour with respect to this parameter is explicitly tracked in the associated calculus. In our case the small parameter is 
$\hsc:=k^{-1}$; normally this parameter is denoted by $h$, but we use $\hsc$ to avoid a notational clash with the meshwidth of the $h$-version of the boundary element method.
The notation $\hsc$ is motivated by the fact that the semiclassical parameter is often related to Planck's constant, which is written as $2\pi\hsc$ see, e.g., \cite[S1.2]{Zw:12}, \cite[Page 82]{DyZw:19}.

We define the weighted spaces $H^s_\hsc(\Rea^d)$ by \eqref{eq:Hsk} with $\hsc =k^{-1}$. 
These spaces can also be defined by the semiclassical Fourier transform and its inverse
\beqs
(\mathcal F_{\hsc}\phi)(\xi) := \int_{\mathbb R^d} \exp\big( -\ri x \cdot \xi/\hsc\big)
\phi(x) \, \rd x, \qquad
(\mathcal F^{-1}_{\hsc}\psi)(x) := (2\pi \hsc)^{-d} \int_{\mathbb R^d} \exp\big( \ri x \cdot \xi/\hsc\big)
 \psi(\xi)\, \rd \xi;
\eeqs
see \cite[\S3.3]{Zw:12}.
Indeed, since $(\cF u)(\xi/\hsc)= \cF_\hsc(\xi)$, \eqref{eq:Hsknorm} implies that
\beq\label{eq:Hhnorm}
\Vert u \Vert_{H_k^m(\Rea^d)} ^2 = \hsc^{-d} \int_{\Rea^d} \langle \xi \rangle^{2m}
 |\mathcal F_\hsc u(\xi)|^2 \, \rd \xi, 
\eeq
where $\langle \xi \rangle := (1+|\xi|^2)^{1/2}$.
We define $\|\cdot\|_{H^s_\hsc(\Rea^d)}$ to be the right-hand side of \eqref{eq:Hhnorm};
 this definition
 means that 
 $\|\cdot\|_{H^s_\hsc(\Rea^d)} = \|\cdot\|_{H^s_k(\Rea^d)}$; we use this clashing notation to avoid writing $H^s_{k^{-1}}(\Rea^d)$ and $\|\cdot\|_{H^s_{k^{-1}}(\Rea^d)}$.
The weighted spaces $H^s_\hsc(\Gamma)$ are then equal to $H^s_k(\Gamma)$ defined in 
\S\ref{sec:weighted_norms}.

In \S\ref{sec:331}-\S\ref{sec:335} we review basic facts about semiclassical pseudodifferential operators, with our default references being \cite{Zw:12} and \cite[Appendix E]{DyZw:19}. 
Homogeneous -- as opposed to semiclassical --
versions of these results can be found in, e.g.,
\cite[Chapter 7]{Ta:96}, \cite[Chapter 7]{SaVa:02}, \cite[Chapter
6]{HsWe:08}.

\subsubsection{Phase space, symbols, quantisation, and semiclassical pseudodifferential operators.}\label{sec:331}

For simplicity of exposition, we begin by discussing semiclassical pseudodifferential operators on $\Rea^d$, and then 
outline in \S\ref{sec:ReatoGamma} below how to extend the results from $\Rea^d$ to $\Gamma$.

The set of all possible positions $x$ and momenta (i.e.~Fourier variables) $\xi$ is denoted by $T^*\Rea^d$; this is known informally as ``phase space''. Strictly, $T^*\Rea^d :=\Rea^d \times (\Rea^d)^*$, i.e. the cotangent bundle to $\mathbb{R}^d$, but 
for our purposes, we can consider $T^*\Rea^d$ as $\{(x,\xi) : \bx\in \Rea^d, \xi\in\Rea^d\}$.

A symbol is a function on $T^*\Rea^d$ that is also allowed to depend on $\hsc$, and can thus be considered as an $\hsc$-dependent family of functions.
Such a family $a=(a_\hsc)_{0<\hsc\leq\hsc_0}$, with $a_\hsc \in C^\infty({T^*\mathbb R^d})$, 
is a \emph{symbol
of order $m$}, written as $a\in S^m(T^*\Rea^d)$,
if for any multiindices $\alpha, \beta$
\beq\label{eq:Sm}
| \partial_x^\alpha \partial^\beta_\xi a_\hsc(x,\xi) | \leq C_{\alpha, \beta} 
\langle \xi\rangle^{m -|\beta|}
\quad\tfa (x,\xi) \in T^* \Rea^d \text{ and for all } 0<\hsc\leq \hsc_0,
\eeq
(recall that $\langle\xi\rangle:= (1+ |\xi|^2)^{1/2}$) and 
$C_{\alpha, \beta}$ does not depend on $\hsc$; see \cite[p.~207]{Zw:12}, \cite[\S E.1.2]{DyZw:19}. 

For $a \in S^m$, we define the \emph{semiclassical quantisation} of $a$, denoted by $a(x,\hsc D):\mathcal{S}(\mathbb{R}^d)\to \mathcal{S}(\mathbb{R}^d)$, by  
\beq \label{eq:quant}
a(x,\hsc D) v(x) := (2\pi \hsc)^{-d} \int_{\Rea^d} \int_{\Rea^d} 
\exp\big(\ri (x-y)\cdot\xi/\hsc\big)\,
a(x,\xi) v(y) \,\rd y  \rd \xi
\eeq
where $D:= -\ri \partial$; see, e.g., \cite[\S4.1]{Zw:12} \cite[Page 543]{DyZw:19}.
We also write $a(x,\hsc D)= \Op_\hsc(a)$. The integral in \eqref{eq:quant} need not converge, and can be understood \emph{either} as an oscillatory integral in the sense of \cite[\S3.6]{Zw:12}, \cite[\S7.8]{Ho:83}, \emph{or} as an iterated integral, with the $y$ integration performed first; see \cite[Page 543]{DyZw:19}.

Conversely, if $A$ can be written in the form above, i.\,e.\ $A =a(x,\hsc D)$ with $a\in S^m$, we say that $A$ is a \emph{semiclassical pseudo-differential operator of order $m$} and
we write $A \in \Psi_{\hsc}^m$. We use the notation $a \in \hsc^l S^m$  if $\hsc^{-l} a \in S^m$; similarly 
$A \in \hsc^l \Psi_\hsc^m$ if 
$\hsc^{-l}A \in \Psi_\hsc^m$. We define $\Psi^{-\infty}_\hsc = \cap_m \Psi^{-m}_\hsc$.

\begin{theorem}\mythmname{Composition and mapping properties of
semiclassical pseudo-differential operators \cite[Theorem 8.10]{Zw:12}, \cite[Propositions E.17, E.19, and E.24]{DyZw:19}}\label{thm:basicP} If $A\in \Psi_{\hsc}^{m_1}$ and $B  \in \Psi_{\hsc}^{m_2}$, then
\begin{itemize}
\item[(i)]  $AB \in \Psi_{\hsc}^{m_1+m_2}$.
\item[(ii)]  For any $s \in \mathbb R$, $A$ is bounded uniformly in $\hsc$ as an operator from $H_\hsc^s$ to $H_\hsc^{s-m_1}$. 
\end{itemize}
\end{theorem}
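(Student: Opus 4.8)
The plan is to prove part~(i) first, purely at the level of symbols and oscillatory integrals, and then deduce the mapping property (ii) from it by reducing to the $L^2$-boundedness of order-zero operators. For part~(i), write $A = \Op_\hsc(a)$ and $B = \Op_\hsc(b)$ with $a\in S^{m_1}$, $b\in S^{m_2}$. Composing the defining oscillatory integrals \eqref{eq:quant}, carrying out the inner $y$-integration, and changing variables leads to the candidate symbol
\[
c(x,\xi) := (2\pi\hsc)^{-d}\int_{\Rea^d}\!\!\int_{\Rea^d} e^{-\ri w\cdot\zeta/\hsc}\, a(x,\xi+\zeta)\, b(x+w,\xi)\,\rd w\,\rd\zeta ,
\]
understood as an oscillatory integral in the sense of \cite[\S3.6]{Zw:12}. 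Taylor-expanding $a(x,\xi+\zeta)$ in $\zeta$ about $\zeta=0$ and applying non-stationary phase (integration by parts in $w$, which produces $\xi$-derivatives) gives the asymptotic expansion
\[
c(x,\xi)\sim\sum_{\alpha}\frac{\hsc^{|\alpha|}}{\alpha!}\,\partial_\xi^\alpha a(x,\xi)\, D_x^\alpha b(x,\xi), \qquad D_x:=-\ri\partial_x,
\]
with the $\alpha$-th term lying in $\hsc^{|\alpha|}S^{m_1+m_2-|\alpha|}$. The routine-but-careful part is to show that the remainder after $N$ terms lies in $\hsc^N S^{m_1+m_2-N}$ with constants uniform in $\hsc$: this uses the symbol bounds \eqref{eq:Sm} together with repeated integration by parts in both $w$ and $\zeta$ to defeat the non-compactness of the domain of integration. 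A Borel summation then yields a genuine $c\in S^{m_1+m_2}$ with $AB-\Op_\hsc(c)\in\bigcap_N\hsc^N\Psi_\hsc^{-N}$, and absorbing this smoothing remainder gives $AB\in\Psi^{m_1+m_2}_\hsc$.

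For part~(ii) it suffices to treat $s=0$ and $m_1=0$. The Fourier multiplier $\Lambda_t:=\Op_\hsc(\langle\xi\rangle^t)\in\Psi^t_\hsc$ satisfies $\Lambda_t\Lambda_{-t}=I$ (Fourier multipliers compose exactly) and, directly from the definition of the norms in \eqref{eq:Hhnorm}, is an isometry $H^s_\hsc\to H^{s-t}_\hsc$ for every $s$. Hence, for $A\in\Psi^{m_1}_\hsc$, part~(i) gives $\widetilde A:=\Lambda_{s-m_1}\,A\,\Lambda_{-s}\in\Psi^0_\hsc$ and
\[
\N{A}_{H^s_\hsc\to H^{s-m_1}_\hsc}=\N{\widetilde A}_{L^2\to L^2},
\]
so everything reduces to the bound $\N{\Op_\hsc(a)}_{L^2\to L^2}\leq C$ uniformly in $\hsc\in(0,\hsc_0]$ for $a\in S^0$, with $C$ depending only on finitely many of the seminorms in \eqref{eq:Sm}.

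This last bound --- the semiclassical Calder\'on--Vaillancourt estimate --- is the step I expect to require the most care, and I would prove it by H\"ormander's square-root trick. Given $a\in S^0$, choose $M>0$ with $M^2$ exceeding $\sup|a|^2$ over $T^*\Rea^d$ and over $\hsc$; then $b:=(M^2-|a|^2)^{1/2}\in S^0$, and applying the symbol calculus of part~(i) to $A^*A$ and to $B^*B$ (with $B:=\Op_\hsc(b)$) gives
\[
M^2 I - A^*A = B^*B + \hsc E,\qquad E\in\Psi^0_\hsc,
\]
where the symbol seminorms of $E$ are controlled by finitely many seminorms of $a$, uniformly in $\hsc$. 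Since $B^*B\geq0$, for $v\in L^2$ we get $\N{Av}^2=\langle A^*Av,v\rangle\leq M^2\N{v}^2+\hsc\N{E}_{L^2\to L^2}\N{v}^2$, i.e.\ $\N{A}^2\leq M^2+\hsc\N{E}$. This is not yet a bound, but feeding the same inequality back in for $E$ and its iterates and summing the resulting geometric series in $\hsc$ --- legitimate for $\hsc$ small, with every constant controlled by finitely many seminorms of $a$ --- yields $\N{\Op_\hsc(a)}_{L^2\to L^2}\leq C$ uniformly in $\hsc$. (Alternatively one can run a Cotlar--Stein argument on a dyadic decomposition of the Schwartz kernel of $\Op_\hsc(a)$; the square-root trick is cleaner to state.) The delicate point is making the iteration uniform in $\hsc$, which needs the precise dependence of the seminorms of $E$ on those of $a$. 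Finally, the passage from $\Rea^d$ to $\Gamma$ via the chart/partition-of-unity definition of $\Psi^m_\hsc(\Gamma)$ recalled in \S\ref{sec:ReatoGamma} is routine, both conclusions being local and invariant under the change-of-variables formula for semiclassical quantisations.
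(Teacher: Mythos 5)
Your proposal is correct in substance, but be aware that the paper itself offers no proof of this statement: Theorem \ref{thm:basicP} is part of a recap section and is simply cited from \cite[Theorem 8.10]{Zw:12} and \cite[Propositions E.17, E.19, E.24]{DyZw:19}. What you have written is therefore a self-contained reproof of the textbook result, and your route is the standard one: the exact composition symbol $a\# b$ given by the oscillatory integral, its stationary-phase expansion with remainder in $\hsc^N S^{m_1+m_2-N}$; reduction of the $H^s_\hsc\to H^{s-m_1}_\hsc$ bound to $L^2$-boundedness of order-zero operators by conjugating with the exact Fourier multipliers $\Lambda_{\pm t}=\Op_\hsc(\langle\xi\rangle^{\pm t})$, which are isometries for the norms \eqref{eq:Hhnorm} (this is precisely how the cited references deduce Sobolev boundedness from the semiclassical Calder\'on--Vaillancourt theorem); and then the square-root trick for the $L^2$ bound, which is a legitimate alternative to the almost-orthogonality/Cotlar--Stein argument used in \cite{Zw:12}. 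One stylistic simplification: the Borel summation in part (i) is superfluous, since your own remainder estimate already shows that the \emph{exact} symbol $c=a\#b$ lies in $S^{m_1+m_2}$, and working with $c$ directly is cleaner given that the paper's definition of $\Psi^{m}_\hsc$ on $\Rea^d$ asks for $AB$ to be exactly a quantisation, rather than a quantisation modulo an $O(\hsc^\infty)$ operator that you must then realise as one.

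The only place where the sketch is genuinely thinner than it should be is the closing step of the Calder\'on--Vaillancourt argument. As written, ``feeding the inequality back in and summing a geometric series'' does not quite close, because each application of the square-root trick produces a new error operator whose symbol seminorms are controlled by \emph{more} seminorms of the previous symbol, so an infinite summation needs bookkeeping you have not supplied. The standard repair is a finite iteration: in the paper's symbol classes each step gains both a factor of $\hsc$ and one order of decay in $\langle\xi\rangle$, so after finitely many steps the remainder lies in $\hsc^{N}\Psi_\hsc^{-N}$ with $N>d$; its Schwartz kernel is then bounded by $C\hsc^{-d}\langle |x-y|/\hsc\rangle^{-(d+1)}$ and Schur's test gives an operator norm uniform in $\hsc$, terminating the iteration. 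With that replacement (or with the Cotlar--Stein alternative you mention) the argument is complete.
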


A key fact we use below is that if $\psi\in C_{\rm comp}^\infty(\mathbb{R})$ then, given $s\in\mathbb{R}$, $N>0$ and $\hsc_0>0$ there exists $C>0$ such that for all $\hsc\leq \hsc_0$, 
\beq\label{eq:frequencycufoff}
\N{\psi(|\hsc D|)}_{H_\hsc^{s}(\Rea^d)\to H_\hsc^{s+N}(\Rea^d)} \leq C;
\eeq
this can easily be proved using the semiclassical Fourier transform, since $\psi(|\hsc D|)$ is a Fourier multiplier (i.e., $\psi(|\hsc D|)$ is defined by \eqref{eq:quant} with $a(x,\xi)= \psi(|\xi|)$, which is independent of $x$).

\subsubsection{The principal symbol map $\sigma_{\hsc}$.}
Let the quotient space $ S^m/\hsc S^{m-1}$ be defined by identifying elements 
of  $S^m$ that differ only by an element of $\hsc S^{m-1}$. 
For any $m$, there is a linear, surjective map
$$
\sigma^m_{\hsc}:\Psi_\hsc ^m \to S^m/\hsc S^{m-1},
$$
called the \emph{principal symbol map}, 
such that, for $a\in S^m$,
\beq\label{eq:symbolone}
\sigma_\hsc^m\big(\Op_\hsc(a)\big) = a \quad\text{ mod } \hsc S^{m-1};
\eeq
see \cite[Page 213]{Zw:12}, \cite[Proposition E.14]{DyZw:19} (observe that \eqref{eq:symbolone} implies that 
$\operatorname{ker}(\sigma^m_{\hsc}) = \hsc\Psi_\hsc ^{m-1}$).

When applying the map $\sigma^m_{\hsc}$ to 
elements of $\Psi^m_\hsc$, we denote it by $\sigma_{\hsc}$ (i.e.~we omit the $m$ dependence) and we use $\sigma_{\hsc}(A)$ to denote one of the representatives
in $S^m$ (with the results we use then independent of the choice of representative).
The key properties of the principal symbol that we use below are that 
\beq \label{eq:multsymb}
\sigma_{\hsc}(AB)=\sigma_{\hsc}(A)\sigma_{\hsc}(B)
\quad\tand \quad 
\sigma_\hsc(A^*) =\overline{\sigma_\hsc(A)};
\eeq
see \cite[Proposition E.17]{DyZw:19}.
               
\subsubsection{Extension of the above results from $\Rea^d$ to $\Gamma$}\label{sec:ReatoGamma}

While the definitions above are written for operators on $\Rea^d$, semiclassical pseudodifferential operators and all of their properties above have analogues on compact manifolds  (see e.g.~\cite[\S14.2]{Zw:12},~\cite[\S E.1.7]{DyZw:19}). Roughly speaking, the class of semiclassical pseudodifferential operators of order $m$ on a compact manifold $\Gamma$, $\Psi^m_\hsc(\Gamma)$, are operators that, in any local coordinate chart, have kernels of the form~\eqref{eq:quant} where the function $a\in S^m$ modulo a remainder operator $R$ that has the property that
\begin{equation}
\label{e:remainder}
\|R\|_{H_\hsc^{-N}\to H_{\hsc}^N}\leq C_{N} \hsc^N;
\end{equation}
we say that an operator $R$ satisfying~\eqref{e:remainder} is $O(\hsc^\infty)_{\Psi_\hsc^{-\infty}}$.

Semiclassical pseudodifferential operators on manifolds continue to have a natural principal symbol map
$$
\sigma_{\hsc}:\Psi_\hsc^m\to S^m(T^*\Gamma)/\hsc S^{m-1}(T^*\Gamma)
$$
where now $S^m(T^*\Gamma)$ is the class of functions on $T^*\Gamma$, the cotangent bundle of $\Gamma$, that satisfy the estimate~\eqref{eq:Sm}.
The properties \eqref{eq:multsymb} hold as before.

Finally, there is a noncanonical quantisation map $\Op_{\hsc}:S^m(T^*\Gamma)\to \Psi^m(\Gamma)$ (involving choices of cut-off functions and coordinate charts) that satisfies
$$
\sigma_\hsc (\Op_{\hsc}(a))=a,
$$
and for all $A\in \Psi_\hsc^m(\Gamma)$ there exists $a\in S^m(T^*\Gamma)$ such that 
$$
A=\Op_{\hsc}(a)+O(\hsc^\infty)_{\Psi_\hsc^{-\infty}}.
$$

\subsubsection{Local coordinates}\label{sec:local}

Near the boundary $\Gamma$, we use Riemannian/Fermi normal coordinates $(x_{1},x')$, in which $\Gamma$
is given by $\{x_{1}=0\}$, $\Oi = \{ x_1<0\},$ $\Oe = \{x_1>0\}$, and so $\partial_n = \partial_{x_1}$. We write $D' = -\ri \partial_{x'}$.
The conormal and cotangent variables are given by $(\xi_{1},\xi')$. We write $g_{\Gamma}(x')$ for the metric induced on $\Gamma$ from the Euclidean metric on $\mathbb{R}^d$, and $|\cdot|_g$ for the corresponding norm (thus abbreviated $g_\Gamma$ to $g$ in the subscript). 
The trace operators $\gamma^\pm$ are such that
$$
(\gamma^{\pm}u)(x')=\lim_{x_1\to 0^{\pm}}u(x_1,x'),\qquad u\in C^\infty(\mathbb{R}^d)
$$ 
and $\gamma^*$ defined by \eqref{eq:traceadjoint} satisfies $\gamma^* \phi(x)= \phi(x') \delta(x_1)\sqrt{|\det g_{\Gamma}(x')|}$.
Finally, recall that, in these local coordinates, the conormal bundle to $\Gamma$, $N^*\Gamma$, consists of $(x,\xi)$ of the form $(0,x',\xi_1,0)$.

\subsubsection{Ellipticity}\label{sec:335}

We now give a simplified version of the general semiclassical ellipticity estimate. 

\begin{theorem}[Simplified elliptic estimate]\label{thm:elliptic}
Assume that $\Gamma$ is $C^\infty$. If $B\in \Psi^\ell_\hsc(\Gamma)$ is \emph{elliptic}, i.e., there exists $\delta>0$ such that
\beqs
\inf_{(x',\xi')\in T^*\Gamma} \big|\sigma_\hsc(B)(x',\xi')\langle \xi'\rangle^{-\ell}\big|\geq \delta,
\eeqs
then there exists $\hsc_0>0$ such that, for all $0<\hsc\leq \hsc_0$, $B^{-1}\in \Psi^{-\ell}_{\hsc}(\Gamma)$.
\end{theorem}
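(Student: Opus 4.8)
The plan is to build the inverse by the usual parametrix construction: first find an approximate inverse modulo a small (i.e. $O(\hsc^\infty)$) remainder, then correct it by a Neumann series. Let $b:=\sigma_\hsc(B)\in S^\ell(T^*\Gamma)/\hsc S^{\ell-1}$. Ellipticity says $|b(x',\xi')|\gtrsim\langle\xi'\rangle^\ell$ uniformly, so $1/b$ (taking any representative) obeys the derivative bounds $|\partial_{x'}^\alpha\partial_{\xi'}^\beta (1/b)|\lesssim\langle\xi'\rangle^{-\ell-|\beta|}$ --- this is the routine ``reciprocal of an elliptic symbol is a symbol of opposite order'' computation, proved by induction on $|\alpha|+|\beta|$ via the quotient rule. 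Hence $1/b\in S^{-\ell}(T^*\Gamma)$ and we may set $E_0:=\Op_\hsc(1/b)\in\Psi^{-\ell}_\hsc(\Gamma)$.

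Next I would compute $BE_0$ using the composition calculus (Theorem~\ref{thm:basicP}(i)): $BE_0\in\Psi^0_\hsc(\Gamma)$, and by the multiplicativity of the principal symbol \eqref{eq:multsymb}, $\sigma_\hsc(BE_0)=b\cdot(1/b)=1$ mod $\hsc S^{-1}$. Therefore $BE_0=I-\hsc R_1$ with $R_1\in\Psi^{-1}_\hsc(\Gamma)$, and likewise $E_0B=I-\hsc R_1'$ with $R_1'\in\Psi^{-1}_\hsc(\Gamma)$. By Theorem~\ref{thm:basicP}(ii), $R_1$ is bounded on $L^2(\Gamma)=H^0_\hsc(\Gamma)$ uniformly in $\hsc$, so there is $\hsc_0>0$ such that $\|\hsc R_1\|_{L^2\to L^2}\le 1/2$ for $\hsc\le\hsc_0$; thus $I-\hsc R_1$ is invertible on $L^2(\Gamma)$ by Neumann series, and similarly for $I-\hsc R_1'$. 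This gives a two-sided bounded inverse of $B$ on $L^2(\Gamma)$ and, in particular, shows $B$ is invertible.

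The remaining point --- and the only place one must be slightly careful --- is to see that $B^{-1}$ is itself a semiclassical pseudodifferential operator of order $-\ell$, not merely a bounded operator. The clean way is to iterate the parametrix construction one more layer: since $\Psi^{-\infty}_\hsc(\Gamma)$ elements have the mapping property \eqref{e:remainder}, one improves $E_0$ to $E_N$ with $BE_N=I+O(\hsc^\infty)_{\Psi^{-\infty}_\hsc}$ and $E_NB=I+O(\hsc^\infty)_{\Psi^{-\infty}_\hsc}$ by setting $E_N:=\big(\sum_{j=0}^{N}(\hsc R_1)^j\big)E_0$ and checking that the symbolic remainders telescope (each composition lands one order lower, using Theorem~\ref{thm:basicP}(i) repeatedly); alternatively one invokes Beals's characterisation of $\Psi_\hsc^m$ (commutator bounds), which is the standard black box for ``inverse of an elliptic $\Psi$DO is a $\Psi$DO'' --- see \cite[Theorem 8.3]{Zw:12} or \cite[\S E.1.7]{DyZw:19}. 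Either way, writing $B^{-1}=E_N(I+O(\hsc^\infty))^{-1}=E_N+E_N\cdot O(\hsc^\infty)_{\Psi^{-\infty}_\hsc}$ and noting $\Psi^{-\infty}_\hsc(\Gamma)$ is a two-sided ideal absorbed into $\Psi^{-\ell}_\hsc(\Gamma)$, we conclude $B^{-1}\in\Psi^{-\ell}_\hsc(\Gamma)$ for $\hsc\le\hsc_0$.

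I expect the main (though still routine) obstacle to be the bookkeeping in the iterated parametrix step: one must track that each correction term genuinely gains a power of $\hsc$ and a negative order, and that the resulting asymptotic series can be Borel-summed inside $\Psi^{-\ell}_\hsc(\Gamma)$ --- this is exactly the content packaged in the references above, so in the write-up I would simply cite \cite[Theorem 8.3]{Zw:12}/\cite[Proposition E.32]{DyZw:19} rather than reprove it, and keep the explicit work confined to the reciprocal-symbol estimate and the Neumann-series bound on $I-\hsc R_1$.
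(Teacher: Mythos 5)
Your proposal is correct and is, in substance, the same argument as the paper's: the paper simply outsources the entire statement to the semiclassical elliptic parametrix machinery of \cite[Theorem E.33]{DyZw:19} (with $P=A=I$, $B_1=B$), whereas you unpack that machinery explicitly --- reciprocal-symbol estimate, $BE_0=I-\hsc R_1$, Neumann series, iterated parametrix --- and only cite \cite[Theorem 8.3]{Zw:12}/\cite[Proposition E.32]{DyZw:19} for the final Borel-summation/Beals packaging. The only cosmetic slip is the ordering in your iterated parametrix (for the telescoping you want $E_N=E_0\sum_j(\hsc R_1)^j$ for the right parametrix, or $\sum_j(\hsc R_1')^jE_0$ with the left-remainder $R_1'$), which does not affect the argument.
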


\bpf[References for the proof]
This follows from \cite[Theorem E.33]{DyZw:19} (and the second remark afterwards) with $P=I, A=I, B_1=B$, $m=0$, and $\ell=0$.
In simplifying this general result, we use that (i) since $A=I$, the $O(\hsc^\infty)$ error term on the right-hand side of \cite[Equation E.2.9]{DyZw:19} can be absorbed on the left-hand side, (ii) since $\Gamma$ is compact $B \in \Psi^\ell_\hsc(\Gamma)$ is compactly supported.
\epf

\begin{corollary}[Upper and lower bounds on $R$]\label{cor:Rbound}
If $\Reg$ satisfies Assumption \ref{ass:Reg}, then given $k_0>0$ and $t\in \Rea$ there exists $C_1,C_2>0$ such that for all $k\geq k_0$,
\beq\label{eq:Rbound1}
\frac{C_1}{k} \leq         \N{R}_{H^t_k(\Gamma)\rightarrow H^{t+1}_k(\Gamma)}\leq \frac{C_2}{k}
\eeq
and 
\beq\label{eq:Rbound2}
\frac{k}{C_2} \leq \N{\Reg^{-1}}_{H^{t+1}_k(\Gamma)\rightarrow H^{t}_k(\Gamma)}\leq \frac{k}{C_1}.
\eeq
\end{corollary}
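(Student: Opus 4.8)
The idea is to transfer everything to the order $-1$ operator $kR$, to which the semiclassical calculus applies directly, and then undo the scaling by $k$. By Assumption \ref{ass:Reg} we have $kR\in\Psi_\hsc^{-1}(\Gamma)$ (with $\hsc=k^{-1}$), and $kR$ is elliptic in the sense of Theorem \ref{thm:elliptic} with $\ell=-1$; note that for this corollary we only need the ellipticity of $R$, not the reality of its principal symbol.

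\emph{Upper bounds.} Applying Theorem \ref{thm:basicP}(ii) to $kR\in\Psi_\hsc^{-1}(\Gamma)$ with $s=t$ gives a constant $C_2>0$, independent of $k$, such that $\|kR\|_{H^t_k(\Gamma)\to H^{t+1}_k(\Gamma)}\leq C_2$; dividing by $k$ gives the upper bound in \eqref{eq:Rbound1}. By Theorem \ref{thm:elliptic} there is a threshold on $\hsc$ (which we absorb into $k_0$) beyond which $(kR)^{-1}\in\Psi_\hsc^{1}(\Gamma)$; applying Theorem \ref{thm:basicP}(ii) to $(kR)^{-1}$ with $s=t+1$ gives $C_1>0$, independent of $k$, with $\|(kR)^{-1}\|_{H^{t+1}_k(\Gamma)\to H^t_k(\Gamma)}\leq C_1^{-1}$. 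Since $R^{-1}=k(kR)^{-1}$, multiplying by $k$ yields the upper bound in \eqref{eq:Rbound2}.

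\emph{Lower bounds.} These follow from the upper bounds by the elementary fact that $\|T\|\,\|T^{-1}\|\geq1$, used between the appropriate pair of spaces (recall $kR$ is injective, as part of the conclusion of Theorem \ref{thm:elliptic}). Explicitly: for $v\in H^t_k(\Gamma)$ we have $v=(kR)^{-1}\big((kR)v\big)$, so $\|v\|_{H^t_k}\leq C_1^{-1}\|(kR)v\|_{H^{t+1}_k}$, whence $\|kR\|_{H^t_k\to H^{t+1}_k}\geq C_1$ and therefore $\|R\|_{H^t_k\to H^{t+1}_k}\geq C_1/k$. Symmetrically, for $w\in H^{t+1}_k(\Gamma)$ we have $w=(kR)\big((kR)^{-1}w\big)$, so $\|(kR)^{-1}w\|_{H^t_k}\geq C_2^{-1}\|w\|_{H^{t+1}_k}$, whence $\|(kR)^{-1}\|_{H^{t+1}_k\to H^t_k}\geq C_2^{-1}$ and therefore $\|R^{-1}\|_{H^{t+1}_k\to H^t_k}=k\|(kR)^{-1}\|_{H^{t+1}_k\to H^t_k}\geq k/C_2$.

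\emph{Main obstacle.} There is no substantive difficulty: the argument is a direct application of the boundedness and ellipticity results already recalled. The only point needing care is the bookkeeping of $k_0$ --- Theorem \ref{thm:elliptic} only produces $(kR)^{-1}\in\Psi_\hsc^1(\Gamma)$ once $\hsc=k^{-1}$ is below a threshold, so $k_0$ must be taken at least that large, and then all four inequalities hold simultaneously with the single pair $C_1,C_2$ identified above.
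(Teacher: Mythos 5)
Your proof is correct and follows essentially the same route as the paper's: write $R=k^{-1}\widetilde{R}$ with $\widetilde{R}\in\Psi_\hsc^{-1}(\Gamma)$, use Theorem \ref{thm:basicP}(ii) for the upper bound on $R$, Theorem \ref{thm:elliptic} plus the mapping property for the upper bound on $R^{-1}$, and deduce the lower bounds from the upper ones. Your explicit remark that $k_0$ must absorb the ellipticity threshold, and that only ellipticity (not reality of the symbol) is used, are points the paper treats implicitly, but the substance is identical.
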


\bpf
Once we prove the upper bounds in \eqref{eq:Rbound1} and \eqref{eq:Rbound2},
the lower bounds then follow. Indeed, the upper bound in \eqref{eq:Rbound1} implies the lower bound in \eqref{eq:Rbound2}, and vice versa.

By assumption, $R= k^{-1} \widetilde{\Reg}$ with $\widetilde{\Reg}\in \Psi_\hsc^{-1}(\Gamma)$. Therefore, by Part (ii) of Theorem \ref{thm:basicP}, given $\hsc_0>0$ and $t\in \Rea$ there exists $C_2>0$ such that 
\beqs
\big\|\widetilde R\big\|_{H^t_\hsc(\Gamma) \rightarrow H^{t+1}_\hsc(\Gamma)}\leq C_2 \quad\tfa 0<\hsc\leq \hsc_0;
\eeqs
the upper bound in \eqref{eq:Rbound1} immediately follows since $R=k^{-1}\widetilde{\Reg}$.
By assumption, $\widetilde R$ is elliptic, and thus invertible by Theorem \ref{thm:elliptic}. Indeed, given $\hsc_0>0$ and $t\in \Rea$, there exists $C_1>0$ such that 
\beqs
\big\|\widetilde \Reg^{-1}\big\|_{H^{t+1}_\hsc(\Gamma) \rightarrow H^{t}_\hsc(\Gamma)}\leq (C_1)^{-1} \quad\tfa 0<\hsc\leq \hsc_0;
\eeqs
the upper bound in \eqref{eq:Rbound2} immediately follows.
\epf

\subsubsection{Sharp G\aa rding inequality}

\begin{theorem}\label{thm:sharpG}
If $\Gamma$ is $C^\infty$ and $A\in \Psi_\hsc^\ell(\Gamma)$ with $\Re \sigma_\hsc(A)\geq 0$ on $T^*\Gamma$, then there exists $C>0$ and $\hsc_0$ such that, for all $0<\hsc\leq \hsc_0$,
\beqs
\Re\big\langle A \phi,\phi\big\rangle_\Gamma \geq - C\hsc \N{\phi}^2_{H^{(\ell-1)/2}_\hsc(\Gamma)} \quad\tfa \phi \in H^{\ell/2}(\Gamma).
\eeqs
\end{theorem}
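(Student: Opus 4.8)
The plan is to reduce everything to the classical semiclassical sharp G\aa rding inequality for order-zero operators on $\Rea^d$ — whose proof rests on Friedrichs symmetrisation / anti-Wick quantisation and is \emph{not} a formal consequence of the symbolic calculus recapped in \S\ref{sec:scpseudos} — and then to recover the general-order statement on $\Gamma$ by elementary manipulations within that calculus together with the ellipticity result Theorem \ref{thm:elliptic}.

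First I would note that only the self-adjoint part of $A$ matters: $\Re\langle A\phi,\phi\rangle_\Gamma = \langle \tfrac12(A+A^*)\phi,\phi\rangle_\Gamma$, and by \eqref{eq:multsymb} the operator $\widetilde A:=\tfrac12(A+A^*)\in\Psi^\ell_\hsc(\Gamma)$ is self-adjoint with $\sigma_\hsc(\widetilde A)=\Re\sigma_\hsc(A)\geq 0$, so we may assume $A=A^*$ with real, non-negative principal symbol. Next I would reduce to the case $\ell=0$ by conjugating with an elliptic operator: let $\Lambda:=\tfrac12\big(\Op_\hsc(\langle\xi\rangle_g^{\ell/2})+\Op_\hsc(\langle\xi\rangle_g^{\ell/2})^*\big)\in\Psi^{\ell/2}_\hsc(\Gamma)$, which is self-adjoint, elliptic, and hence, for $\hsc$ small, invertible with $\Lambda^{-1}\in\Psi^{-\ell/2}_\hsc(\Gamma)$ by Theorem \ref{thm:elliptic}. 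Then $B:=\Lambda^{-1}A\Lambda^{-1}\in\Psi^0_\hsc(\Gamma)$ is self-adjoint with $\sigma_\hsc(B)=\langle\xi\rangle_g^{-\ell}\,\sigma_\hsc(A)\geq 0$, and, writing $\psi:=\Lambda\phi$, we have $\Re\langle A\phi,\phi\rangle_\Gamma=\langle \Lambda B\Lambda\phi,\phi\rangle_\Gamma=\langle B\psi,\psi\rangle_\Gamma$. Granting the order-zero estimate $\langle B\psi,\psi\rangle_\Gamma\geq -C\hsc\N{\psi}^2_{H^{-1/2}_\hsc(\Gamma)}$, and using that $\Lambda:H^{(\ell-1)/2}_\hsc(\Gamma)\to H^{-1/2}_\hsc(\Gamma)$ is bounded uniformly in $\hsc$ (Theorem \ref{thm:basicP}(ii)), so that $\N{\psi}_{H^{-1/2}_\hsc(\Gamma)}\leq C\N{\phi}_{H^{(\ell-1)/2}_\hsc(\Gamma)}$, the theorem follows; note that the hypothesis $\phi\in H^{\ell/2}(\Gamma)$ is exactly what makes $\psi\in L^2(\Gamma)$.

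To prove the order-zero estimate on $\Gamma$ I would pass to coordinate charts: with a finite partition of unity $1=\sum_j\chi_j^2$ subordinate to charts one writes $\langle B\psi,\psi\rangle_\Gamma=\sum_j\langle B\chi_j\psi,\chi_j\psi\rangle_\Gamma + (\text{cross terms})$, where the cross terms involve commutators $[B,\chi_j]\in\hsc\Psi^{-1}_\hsc(\Gamma)$ and are therefore $O(\hsc)\N{\psi}^2_{H^{-1/2}_\hsc(\Gamma)}$; each remaining term is, up to an $O(\hsc^\infty)$ error from pseudolocality (cf.~\eqref{e:remainder}), a pairing on $\Rea^d$ of the form $\langle\Op_\hsc(a)v,v\rangle_{\Rea^d}$ with $a\in S^0(T^*\Rea^d)$ and $\Re a\geq 0$. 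For this I would invoke the standard semiclassical sharp G\aa rding inequality on $\Rea^d$ (see, e.g., \cite[Theorem 4.32]{Zw:12} and \cite[Appendix E]{DyZw:19}): $\Re\langle\Op_\hsc(a)v,v\rangle_{\Rea^d}\geq -C\hsc\N{v}^2_{H^{-1/2}_\hsc(\Rea^d)}$, whose proof replaces $\Op_\hsc(a)$ by its anti-Wick (Bargmann) quantisation $\Op^{\mathrm{AW}}_\hsc(a)=T_\hsc^*M_aT_\hsc$ — manifestly $\geq 0$ when $a\geq 0$ — and shows $\Op_\hsc(a)-\Op^{\mathrm{AW}}_\hsc(a)\in\hsc\Psi^{-1}_\hsc(\Rea^d)$, so that the error gains a full order as well as a power of $\hsc$.

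I expect the main obstacle to be expository rather than mathematical: the one genuinely non-elementary input is the $\Rea^d$ order-zero sharp G\aa rding inequality with the \emph{sharp} Sobolev index $H^{-1/2}_\hsc$ in the remainder (the crude version with $\N{v}^2_{L^2}$ on the right would only yield $\hsc\N{\phi}^2_{H^{\ell/2}_\hsc(\Gamma)}$ in the theorem), and one must check that the cited statement, or the Friedrichs-symmetrisation argument, really does drop a full order in the remainder. Everything else — self-adjointisation, conjugation by $\Lambda$, the partition-of-unity patching, and the $O(\hsc^\infty)$ bookkeeping — is routine within the calculus of \S\ref{sec:scpseudos} and Theorem \ref{thm:elliptic}. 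If one prefers, one can instead cite a textbook statement of semiclassical sharp G\aa rding directly on compact manifolds, in which case only the self-adjointisation and the order-reduction step remain.
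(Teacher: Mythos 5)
Your overall architecture differs from the paper's: the paper proves this result by a one-line citation of \cite[Proposition E.23]{DyZw:19}, which is already a sharp G\aa rding inequality for (compactly supported) semiclassical operators on manifolds, noting only that compactness of $\Gamma$ makes every $A\in\Psi^\ell_\hsc(\Gamma)$ compactly supported. Your reductions to that point are sound: passing to $\tfrac12(A+A^*)$, conjugating by a self-adjoint elliptic $\Lambda\in\Psi^{\ell/2}_\hsc(\Gamma)$ (invertible for small $\hsc$ by Theorem \ref{thm:elliptic}), and the partition-of-unity/commutator bookkeeping are all routine and correct, with the minor caveat that $\sigma_\hsc(B)\geq 0$ only modulo $\hsc S^{-1}$, an error which is harmlessly absorbed into the remainder.

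The genuine gap is exactly at the point you flagged: the Euclidean order-zero inequality with the \emph{sharp} index $H^{-1/2}_\hsc$ in the remainder is not delivered by what you cite or sketch. \cite[Theorem 4.32]{Zw:12} is the $S(1)$, no-gain version, giving only $\Re\langle\Op_\hsc(a)v,v\rangle\geq -C\hsc\N{v}^2_{L^2}$; and your claim that $\Op_\hsc(a)-\Op^{\mathrm{AW}}_\hsc(a)\in\hsc\Psi^{-1}_\hsc(\Rea^d)$ is false for the isotropic anti-Wick/Bargmann quantisation: the difference has symbol $\tfrac{\hsc}{4}(\Delta_x+\Delta_\xi)a+O(\hsc^2)$, and $\Delta_x a$ gains no decay in $\xi$, so for $a\in S^0$ the error lies only in $\hsc S^0$. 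With that input your argument yields $-C\hsc\N{\phi}^2_{H^{\ell/2}_\hsc(\Gamma)}$, which (as you note) is strictly weaker than the theorem; moreover your conjugation by $\Lambda$ does not reduce the difficulty, since the order-zero statement with $H^{-1/2}_\hsc$ remainder is equivalent, under the same conjugation, to the general-order statement with the gain. To close the gap you must use a symmetrisation adapted to $\langle\xi\rangle$ (Gaussian widths $(\hsc/\langle\xi\rangle)^{1/2}$ in $x$ and $(\hsc\langle\xi\rangle)^{1/2}$ in $\xi$, i.e.\ the Weyl--H\"ormander-calculus sharp G\aa rding, or a dyadic decomposition in $\xi$ with rescaling), or simply cite the sharp-index statement itself, namely \cite[Proposition E.23]{DyZw:19} --- at which point the statement already lives on the manifold and the entire order-reduction and chart-patching apparatus becomes unnecessary, which is precisely the paper's route.
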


\bpf[References for the proof]
This follows from \cite[Proposition E.23]{DyZw:19} using the fact that every $A\in \Psi_\hsc^\ell(\Gamma)$ is compactly supported since $\Gamma$ is compact.
\epf

\subsubsection{Microlocality of pseudodifferential operators}
We next recall the fact that pseudodifferential operators act microlocally (i.e., pseudo locally in phase space). We include here the following lemma which follows from the more general statements in \cite[E.2.4-E.2.5]{DyZw:19} or \cite[Theorem 9.5]{Zw:12}.

\begin{lemma}
\label{l:disjoint}
If $K\Subset T^*\Gamma$, $a,b\in C_{\rm comp}^\infty(K)$ and there exists $c>0$ such that 
$$
d\big( \supp a, \supp b\big) >c, \quad{ then } \quad
\Op_{\hsc}(a)\Op_{\hsc}(b)=O(\hsc^\infty)_{\Psi_\hsc^{-\infty}}.
$$
\end{lemma}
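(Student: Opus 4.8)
The plan is to reduce to a single coordinate chart and then compute the full symbol of the composition by the standard oscillatory-integral formula, using the gap $d(\supp a,\supp b)>c$ to integrate by parts arbitrarily often.

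First I would reduce to $\mathbb R^d$. Since $K\Subset T^*\Gamma$, choose a partition of unity subordinate to a finite cover of $\pi_\Gamma(K)$ by coordinate charts and write $a=\sum_\ell a_\ell$, $b=\sum_m b_m$ with each factor supported in a single chart; note $d(\supp a_\ell,\supp b_m)>c$ since $\supp a_\ell\subseteq\supp a$ and $\supp b_m\subseteq\supp b$. For each pair, either the base projections lie in disjoint coordinate patches, in which case $\Op_\hsc(a_\ell)\Op_\hsc(b_m)$ has smooth, compactly supported Schwartz kernel which is $O(\hsc^\infty)$ by the elementary non-stationary-phase estimate on the kernel of $\Op_\hsc(a_\ell)$ away from the diagonal; or both supports lie in a common chart, in which case the manifold quantisations agree, modulo $O(\hsc^\infty)_{\Psi^{-\infty}_\hsc}$, with the Euclidean quantisations $\Op_\hsc$ of \eqref{eq:quant}. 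It therefore suffices to treat $a,b\in C^\infty_{\rm comp}(T^*\mathbb R^d)$ with $d(\supp a,\supp b)>c$.

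In that setting I would use the composition formula $\Op_\hsc(a)\Op_\hsc(b)=\Op_\hsc(c)$ with
$$
c(x,\xi)=(2\pi\hsc)^{-d}\iint_{\mathbb R^d\times\mathbb R^d} e^{-iy\cdot\eta/\hsc}\,a(x,\xi+\eta)\,b(x+y,\xi)\,\rd y\,\rd\eta
$$
(a standard computation; see, e.g., \cite[Chapter 4]{Zw:12}, \cite[Appendix E]{DyZw:19}). The amplitude $F(y,\eta):=a(x,\xi+\eta)\,b(x+y,\xi)$ is compactly supported in $(y,\eta)$, uniformly for $(x,\xi)$ in the relevant bounded set, and — this is the crucial point — it vanishes near the stationary point $(y,\eta)=(0,0)$: if $F(y,\eta)\ne0$ then $(x,\xi+\eta)\in\supp a$ and $(x+y,\xi)\in\supp b$, so the distance between these two points, which in local coordinates is comparable to $|(y,\eta)|$, exceeds $c$; hence $|(y,\eta)|\gtrsim c$ on $\supp F$. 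Since the phase $\phi(y,\eta)=-y\cdot\eta$ has $|\nabla_{y,\eta}\phi|^2=|y|^2+|\eta|^2=:\rho^2$, the operator $L:=(\hsc/i)\rho^{-2}(-\eta\cdot\nabla_y-y\cdot\nabla_\eta)$ satisfies $Le^{-iy\cdot\eta/\hsc}=e^{-iy\cdot\eta/\hsc}$, and integrating by parts $N$ times moves $(L^{t})^N$ onto $F$; because $\rho$ is bounded below and $F$ is compactly supported, this yields $|c(x,\xi)|\le C_N\hsc^{N-d}$ for every $N$. Differentiating under the integral sign in $x$ and $\xi$ merely replaces $F$ by finite sums of amplitudes with the same support properties, so the same bound holds for every derivative of $c$. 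Thus $\hsc^{-N}c$ is bounded in $C^\infty_{\rm comp}$ with support in a fixed compact set, i.e. $c\in\bigcap_N\hsc^N S^{-N}$; applying Theorem \ref{thm:basicP}(ii) to $\hsc^{-N}c$ gives $\|\Op_\hsc(c)\|_{H^s_\hsc\to H^{s+N}_\hsc}\le C_N\hsc^N$ for all $N$ and $s$, which is exactly the statement that $\Op_\hsc(c)=O(\hsc^\infty)_{\Psi_\hsc^{-\infty}}$ in the sense of \eqref{e:remainder}.

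The main obstacle is essentially bookkeeping rather than a new idea: one must check that disjointness of supports in $T^*\Gamma$ survives the reduction to charts and handle the cross-terms between non-overlapping charts, and verify that the $(2\pi\hsc)^{-d}$ prefactor is harmless (which it is, since $N$ may be taken arbitrarily large). The one genuinely substantive observation is that $F$ vanishes on a whole neighbourhood of the stationary point, which upgrades an ordinary stationary-phase expansion — whose terms $\partial_\xi^\alpha a\,\partial_x^\alpha b$ all happen to vanish here — into an exact $O(\hsc^\infty)$ bound via plain non-stationary phase.
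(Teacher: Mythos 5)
Your proposal is correct, but it is worth noting that the paper does not actually prove Lemma \ref{l:disjoint}: it records it as a direct consequence of the general composition/wavefront-set statements cited there (\cite[E.2.4--E.2.5]{DyZw:19}, \cite[Theorem 9.5]{Zw:12}). What you have written is essentially a self-contained proof of the special case needed: rather than appealing to the full calculus (where the asymptotic expansion of the composed symbol, built from $\partial^\alpha_\xi a\,\partial^\alpha_x b$, vanishes identically and the remainder is then controlled by the general theorems), you work directly with the exact oscillatory-integral formula for $a\# b$ and use non-stationary phase, exploiting that the separation $d(\supp a,\supp b)>c$ keeps the amplitude a fixed distance from the critical point $(y,\eta)=(0,0)$. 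This buys an elementary argument that yields the full $O(\hsc^\infty)_{\Psi^{-\infty}_\hsc}$ conclusion in one step, at the price of the chart bookkeeping, which is where the only (minor) loose ends sit: (i) your case split over pairs of partition members is exhaustive only if the partition is taken fine enough (diameters below a Lebesgue number of the chart cover), so that any pair either lies in a common chart or has base projections separated by a fixed positive distance --- and in the latter case what your kernel argument really uses is separation of the base supports, not merely that the patches are distinct; (ii) in that cross-chart case one should say explicitly that, for the left quantisation, the Schwartz kernel of $\Op_\hsc(a_\ell)$ is supported over base points in the projection of $\supp a_\ell$ and is $O(\hsc^\infty)$ with all derivatives off the diagonal, so the composed kernel is $O(\hsc^\infty)$ with all derivatives, uniformly, which is what the definition \eqref{e:remainder} requires. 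Neither point affects the correctness of the argument; the core computation (the composition formula, the operator $L$ with $L\,\re^{-\ri y\cdot\eta/\hsc}=\re^{-\ri y\cdot\eta/\hsc}$, the lower bound $|(y,\eta)|\gtrsim c$ on the support, and the passage from symbol estimates to operator bounds via Theorem \ref{thm:basicP}(ii)) is sound.
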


\subsection{Restriction of pseudodifferential kernels to submanifolds }

We recall in this section a simplified version of~\cite[Lemma 4.22]{Ga:19} which describes the operator 
\beq\label{eq:gammaAgamma}
\gamma^{\pm} A\gamma^*
\eeq
when $A\in \Psi^m_{\hsc}(\mathbb{R}^d)$.
The motivation for considering operators of the form \eqref{eq:gammaAgamma} is the following.
Let $L$ be a vector field equal to $\partial_n$ in a neighbourhood of $\Gamma$
(where $n$ is the outward-pointing unit normal vector to $\Oi$). Then 
 with $\cR_k$ defined by \eqref{eq:free_resolvent}, 
for $\Im k \geq 0$, 
\begin{align}
\label{eq:SHdef}
&S_k=\gamma \cR_k \gamma^*, 
\hspace{3cm} H_k = \gamma^\pm L \cR_k L^* \gamma^*,\\
& K_k = \mp \half I + \gamma^\pm \cR_k L^* \gamma^*, 
\hspace{1cm} K_k' = \pm \half I + \gamma^\pm L \cR_k  \gamma^*;\label{eq:KK'def}
\end{align}
see \cite[Page 202 and Equation 7.5]{Mc:00}. That is, $S_k, K_k \pm I/2$, $K_k' \mp I/2$, and $H_k$ can all be written in the form \eqref{eq:gammaAgamma} for suitable $A$ involving $\cR_k$, $L$, and $L^*$.

In the next lemma, we use the notions of conic sets $V\subset T^*\mathbb{R}^d\setminus \{0\}$ and conic neighborhoods thereof. Here, we say that $V\subset T^*\mathbb{R}^d\setminus\{0\}$ is  \emph{conic} if for all $\lambda>0$
$$
(x,\xi)\in V \quad\text{ implies }\quad (x,\lambda\xi)\in V.
$$
For a conic set $V\subset T^*\mathbb{R}^d$, we say that $U$ is a \emph{conic neighborhood of $V$} if $U\subset T^*\mathbb{R}^d\setminus \{0\}$ is an open conic set containing the closure of $V$ (as a subset of $T^*\mathbb{R}^d\setminus \{0\}$).

\begin{lemma}\label{l:computeLayer}
Suppose that $\Gamma\subset \mathbb{R}^d$ is an embedded hypersurface. Let $A\in \Psi^m_{\hsc}(\mathbb{R}^d)$ with $A=\Op_{\hsc}(a)+O(\hsc^\infty)_{\Psi_{\hsc^{-\infty}}}$ and suppose that
\begin{equation}
\label{e:classical1a}
\left|\partial_x^\alpha\partial_{\xi}^\beta\Bigg(a(x,\xi)- \sum_{j=-1}^ma_j(x,\xi)\Bigg)\right|\leq C_{\alpha\beta}\langle\xi\rangle^{-2-|\beta|}\quad \tfor |\xi|\geq 1,
\end{equation}
with $a_j$ homogeneous of degree $j$ in $\xi$ (i.e., $a_j(x,\lambda\xi)=\lambda^j a_j(x,\xi)$ for $\lambda>0$) and there is an open conic neighbourhood, $U$, of $N^*\Gamma$ such that 
\begin{equation}
\label{e:classical2a}
a_j(x,\xi)=(-1)^ja_j(x,-\xi)\quad \tfor (x,\xi)\in U\cap T^*\mathbb{R}^d\setminus\{0\}.
\end{equation}
Then $\gamma^\pm A \gamma^*\in \hsc^{-1}\Psi^{m+1}$ and, in coordinates $(x,\xi)$ with $\Gamma=\{(0,x')\}$, 
\beq\label{eq:prinsymb}
\sigma_{\hsc}(\gamma^{\pm}A\gamma^*)=\lim_{x_1\to 0^{\pm}} \mathcal{F}_{\hsc}^{-1}\Big(\sigma_{\hsc}(A)(x,\cdot,\xi')\Big)(x_1).
\eeq
\end{lemma}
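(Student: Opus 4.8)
The plan is to reduce to a local computation in the Fermi normal coordinates $(x_1,x')$ of \S\ref{sec:local}, where $\Gamma=\{x_1=0\}$ and $\gamma^*\phi(x)=\phi(x')\delta(x_1)\sqrt{|\det g_\Gamma(x')|}$. Write $A=\Op_\hsc(a)+O(\hsc^\infty)_{\Psi^{-\infty}_\hsc}$; the $O(\hsc^\infty)$ remainder contributes harmlessly since, by \eqref{e:remainder} and the trace bounds \eqref{e:basicTrace}, \eqref{e:basicTrace2}, composing it with $\gamma^\pm$ and $\gamma^*$ still yields an $O(\hsc^\infty)$ operator, which is negligible both for the mapping statement $\gamma^\pm A\gamma^*\in\hsc^{-1}\Psi^{m+1}(\Gamma)$ and for the principal symbol \eqref{eq:prinsymb}. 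So it suffices to treat $A=\Op_\hsc(a)$. Inserting $\gamma^*\phi$ into the oscillatory-integral formula \eqref{eq:quant}, the $y$-integration collapses the $\delta(y_1)$ and one is left with an $\hsc^{-d}$ double integral over $(\xi_1,\xi')$ and $y'$ of $\exp(\ri(x_1-0)\xi_1/\hsc)\exp(\ri(x'-y')\cdot\xi'/\hsc)\,a(x_1,x',\xi_1,\xi')\,\phi(y')\sqrt{|\det g_\Gamma(y')|}$. Performing the $\xi_1$-integral first produces the partial inverse semiclassical Fourier transform $(2\pi\hsc)^{-1}\int e^{\ri x_1\xi_1/\hsc}a(x_1,x',\xi_1,\xi')\,\rd\xi_1=:\mathcal{F}_\hsc^{-1}\big(a(x_1,x',\cdot,\xi')\big)(x_1)$, and then taking the limit $x_1\to 0^\pm$ gives a symbol on $T^*\Gamma$ whose quantisation is $\gamma^\pm A\gamma^*$ modulo lower-order/smoothing terms.

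The two hypotheses \eqref{e:classical1a}--\eqref{e:classical2a} are precisely what make this partial inverse Fourier transform well-defined and well-behaved near $N^*\Gamma$, which is where the $\xi_1$-integral is genuinely singular (there $\xi'$ is bounded while $\xi_1\to\infty$, so $a\sim\langle\xi_1\rangle^m$ and the integral $\int e^{\ri x_1\xi_1/\hsc}\langle\xi_1\rangle^m\,\rd\xi_1$ diverges classically). I would split $a=\sum_{j=-1}^m a_j+a_{\rm rem}$ as in \eqref{e:classical1a}. For the remainder $a_{\rm rem}$, decay $\langle\xi\rangle^{-2-|\beta|}$ means the $\xi_1$-integral converges absolutely and depends continuously on $x_1$ down to $x_1=0$, producing an honest symbol of order $m+1$ on $T^*\Gamma$ (one order is lost to the single integration; this is where the $\langle\xi'\rangle^{m+1}$ growth — hence $\Psi^{m+1}$ — and the factor $\hsc^{-1}$ — from the $(2\pi\hsc)^{-1}$ — appear). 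For each homogeneous piece $a_j$, I would microlocalise: away from a conic neighbourhood of $N^*\Gamma$ the operator $\gamma^\pm a_j(x,\hsc D)\gamma^*$ is smoothing by the non-stationary phase / microlocality argument (Lemma \ref{l:disjoint}-type reasoning, since on $\supp(1-\chi_{N^*\Gamma})$ one has $|\xi'|\gtrsim|\xi|$ and the $\xi_1$ integration by parts gains arbitrary decay), so only a conic neighbourhood of $N^*\Gamma$ matters — and that neighbourhood can be taken inside $U$. Inside $U$, the parity condition \eqref{e:classical2a}, $a_j(x,\xi_1,\xi')=(-1)^j a_j(x,-\xi_1,\xi')$ for $(x,\xi)\in U$, makes $\mathcal{F}_\hsc^{-1}\big(a_j(x_1,x',\cdot,\xi')\big)$ a distribution in $x_1$ whose one-sided limits $x_1\to 0^\pm$ exist (the homogeneous function $\langle\xi_1\rangle^{-1}\cdot\xi_1^{j}\langle\xi_1\rangle^{1-j}$, or rather the homogeneous-of-degree-$j$ symbol restricted to the conormal direction, has a one-sided boundary value precisely because of this parity — the even/odd homogeneous distributions on $\mathbb{R}_{\xi_1}$ transform to functions with at worst a jump across $x_1=0$, not to something like $1/x_1$).

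Assembling: the $x_1\to 0^\pm$ limit of $\mathcal{F}_\hsc^{-1}\big(a(x_1,x',\cdot,\xi')\big)$ is a well-defined symbol of order $m+1$ on $T^*\Gamma$ (depending on the sign $\pm$), and its quantisation equals $\hsc\cdot\gamma^\pm A\gamma^*$ modulo $\hsc\cdot\Psi^{m}$; this gives $\gamma^\pm A\gamma^*\in\hsc^{-1}\Psi^{m+1}(\Gamma)$. Taking principal symbols, only the leading behaviour survives, and since $\sigma_\hsc(A)$ is represented by $a$ mod $\hsc S^{m-1}$ and the construction is $\hsc$-linear, we obtain \eqref{eq:prinsymb}. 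The one subtlety to track carefully — and the step I expect to be the main obstacle — is justifying the interchange of the $\xi_1$-integral with the limit $x_1\to 0^\pm$ for the non-decaying homogeneous pieces $a_j$ with $j\ge -1$: this is exactly where \eqref{e:classical2a} is used, and it requires knowing that the partial-Fourier-transform-in-$\xi_1$ of a symbol which is (up to lower order) even or odd in $\xi_1$ near the conormal has one-sided limits at $x_1=0$; the clean way to organise this is to quote the relevant parts of the proof of \cite[Lemma 4.22]{Ga:19}, of which the present statement is a simplified version, rather than to redo the homogeneous-distribution bookkeeping from scratch. The metric Jacobian $\sqrt{|\det g_\Gamma|}$ is smooth, positive, and $\hsc$-independent, so it only modifies the quantisation by conjugation/multiplication at lower order and does not affect the principal symbol identity.
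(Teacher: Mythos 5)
Your overall strategy---flatten $\Gamma$, realise $\gamma^\pm\Op_\hsc(a)\gamma^*$ as the partial inverse Fourier transform of $a$ in $\xi_1$, split $a$ into an integrable remainder plus the homogeneous pieces $a_j$, and use the parity \eqref{e:classical2a} to make the one-sided limits $x_1\to 0^\pm$ exist---is the same as the paper's. But one step as written is wrong: away from a conic neighbourhood of $N^*\Gamma$ the operator $\gamma^\pm\Op_\hsc\big(a_j(1-\chi)\big)\gamma^*$ is \emph{not} smoothing. At $x_1=0$ the phase $\re^{\ri x_1\xi_1/\hsc}$ is identically $1$, so there is no non-stationary phase in $\xi_1$ to exploit, and Lemma \ref{l:disjoint} does not apply because $\gamma^*$ is not a pseudodifferential operator whose wavefront set avoids the conormal (the factor $\delta(x_1)$ is spread over all $\xi_1$). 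In fact this region contributes at top order: after the rescaling $\xi_1\mapsto\xi_1\langle\xi'\rangle$ it is exactly the term $I_{1,\pm,j}$ in the proof of Lemma \ref{l:computeLayerOld}, an honest $S^0$ symbol multiplying $\hsc^{-1}\langle\xi'\rangle^{j+1}$, and it enters the principal symbol \eqref{eq:prinsymb}---for instance the residue computation giving $\sigma_\hsc(\widetilde{S}_k)=\hsc/(2\sqrt{|\xi'|_g^2-1})$ uses the whole $\xi_1$-line, not just a neighbourhood of the conormal. What you actually need (and what is true) is only that the limits for this piece exist trivially; discarding it ``modulo smoothing'' would make \eqref{eq:prinsymb} false.

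Beyond that, the two pieces of real work are left out. (a) The hypotheses \eqref{e:classical1a}--\eqref{e:classical2a} are stated in the ambient coordinates, while the computation takes place after flattening $\Gamma$; most of the paper's proof consists of verifying, via the change-of-variables expansion for symbols, that the transformed symbol again has a polyhomogeneous expansion whose terms satisfy the parity \eqref{e:oddness} near the conormal---this is not automatic, since the correction terms mix homogeneities. (b) Near the conormal, parity must be converted into quantitative information: homogeneity together with \eqref{e:classical2a} yields a \emph{two-sided} expansion of $a_j(x,\xi_1,\omega')$ in integer powers $\xi_1^\ell$ (hypothesis \eqref{e:classical2} of Lemma \ref{l:computeLayerOld}), and the one-sided limits are then computed explicitly because $\mathcal{F}_\hsc^{-1}(\xi_1^\ell)$ is a derivative of $\delta$ (vanishing one-sided limits) while $\mathcal{F}_\hsc^{-1}({\rm p.v.}\,\xi_1^{-1})$ is a multiple of $\operatorname{sgn}(x_1)$. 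Your sentence that ``even/odd homogeneous distributions give at worst a jump'' is the right intuition, but it is not a proof of the symbol estimates in $(x',\xi')$, uniformly in $\hsc$, for the boundary symbol. Deferring both points to quoting \cite[Lemma 4.22]{Ga:19} leaves precisely the content of the lemma unproved: the present lemma and its preparatory Lemma \ref{l:computeLayerOld} are where that bookkeeping is carried out.
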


The non-semiclassical analogue of Lemma \ref{l:computeLayer} can be found in, e.g., \cite[Chapter 7, \S11]{Ta:96} and \cite[Theorem 8.4.3]{HsWe:08}. These non-semiclassical results are slightly simpler because there one is not concerned with the behavior of the symbol inside a compact set and hence one works directly with homogeneous expansions of symbols; i.e.~the assumption \eqref{e:classical1a} is immediate from the definition of a polyhomogeneous pseudodifferential operator.

A key ingredient in the proof of Lemma \ref{l:computeLayer} is the following preparatory lemma.

\begin{lemma}\label{l:computeLayerOld}
Suppose that $A\in \Psi^m_\hsc(\mathbb{R}^d)$, $\Gamma=\{x_1=0\}$, and there are $\e>0$, $a\in S^m$ such that $A=\Op_{\hsc}(a) +O(\hbar^\infty)_{\Psi_\hsc^{-\infty}}$,
\begin{equation}
\label{e:classical}
\left|\partial_x^\alpha\partial_{\xi}^\beta\Bigg(a(x,\xi)- \sum_{j=-1}^ma_j(x,\xi)\Bigg)\right|\leq C_{\alpha\beta}\langle\xi\rangle^{-2-|\beta|}\quad \tfor |\xi|\geq 1\tand |x_1|<\e
\end{equation}
with $a_j$ homogeneous of degree $j$ in $\xi$ (i.e., $a_j(x,\lambda\xi)=\lambda^j a_j(x,\xi)$ for $\lambda>0$) and satisfying
\beq\label{e:classical2}
\left|\partial_{x}^{\alpha}\partial_{\omega'}^\beta \Big(a_j(x,\xi_1,\omega')-\sum_{\ell=-1}^j \widetilde{a}_{j,\ell}(x,\omega')\xi_1^\ell\Big)\right|\leq C_{\alpha\beta}|\xi_1|^{-2-|\beta|}\quad \tfor 
 |\omega'|\leq 1, |x_1|<\e, \tand
|\xi_1|\geq 1,
\eeq
where $\widetilde{a}_{j,\ell}(x,\omega')\in C^\infty(\mathbb{R}^{d-1}_{x'}\times B(0,2)_{\omega'})$.

Then $\gamma^{\pm}A\gamma^*\in \hsc^{-1}\Psi^{m+1}_\hsc(\Gamma)$ with semiclassical principal symbol given by
\eqref{eq:prinsymb}.
\end{lemma}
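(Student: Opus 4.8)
The plan is to compute $\gamma^{\pm}A\gamma^*$ acting on $\phi\in\mathcal S(\mathbb R^{d-1})$ by hand. Writing $\gamma^*\phi(x)=\phi(x')\delta(x_1)$ and doing the $\xi_1$-integration in $\Op_{\hsc}(a)\gamma^*\phi$ first gives
\[
\Op_{\hsc}(a)\gamma^*\phi(x_1,x')=(2\pi\hsc)^{-(d-1)}\!\int e^{i(x'-y')\cdot\xi'/\hsc}\,b(x_1,x',\xi')\,\phi(y')\,dy'\,d\xi',\qquad b(x_1,x',\xi'):=\mathcal F_{\hsc}^{-1}\!\big(a(x_1,x',\cdot,\xi')\big)(x_1),
\]
the inner integral defining $b$ being interpreted as an oscillatory integral in $\xi_1$. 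So the lemma reduces to showing: (a) for $0<|x_1|<\e$ the map $(x',\xi')\mapsto b(x_1,x',\xi')$ is a symbol in $\hsc^{-1}S^{m+1}$ with seminorms bounded uniformly in $\hsc$ and in $x_1$; and (b) the one-sided limits $b(0^{\pm},\cdot,\cdot):=\lim_{x_1\to 0^{\pm}}b(x_1,\cdot,\cdot)$ exist in $\hsc^{-1}S^{m+1}$. Granting these, $\gamma^{\pm}A\gamma^*=\Op_{\hsc}(b(0^{\pm},\cdot,\cdot))\in\hsc^{-1}\Psi^{m+1}_{\hsc}(\Gamma)$ with principal symbol $b(0^{\pm},\cdot,\cdot)$, which is exactly \eqref{eq:prinsymb}. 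The $O(\hsc^\infty)_{\Psi_{\hsc}^{-\infty}}$ term in $A=\Op_{\hsc}(a)+O(\hsc^\infty)_{\Psi_{\hsc}^{-\infty}}$ contributes an $O(\hsc^\infty)_{\Psi_{\hsc}^{-\infty}}$ term to $\gamma^{\pm}A\gamma^*$ by the trace bounds \eqref{e:basicTrace}, \eqref{e:basicTrace2}, and may be dropped.

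I would first arrange that $a$ is supported in $\{|x_1|<\e\}$: if $\chi\in C^\infty_{\rm comp}(\mathbb R)$ equals $1$ near $0$ and is supported in $\{|x_1|<\e\}$, then $\Op_{\hsc}\big((1-\chi(x_1))a\big)\gamma^*\phi$ vanishes in a neighbourhood of $\{x_1=0\}$ and so has zero trace from both sides; hence we may replace $a$ by $\chi(x_1)a$, and now \eqref{e:classical}, \eqref{e:classical2} hold for all $x$. Next, with $\chi_1\in C^\infty(\mathbb R^d)$ even, equal to $1$ for $|\xi|\ge 1$ and supported in $\{|\xi|\ge 1/2\}$, decompose $a=\sum_{j=-1}^m\chi_1(\xi)a_j(x,\xi)+r(x,\xi)$ with $r\in S^{-2}$ (using \eqref{e:classical} on $\{|\xi|\ge1\}$ and mere boundedness on the compact set $\{|\xi|\le1\}$). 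Since $\mathcal F_{\hsc}^{-1}$ in $\xi_1$ is linear and \eqref{eq:prinsymb} is linear in the representative of $\sigma_{\hsc}(A)$, it suffices to analyse each summand. For the $r$-term, $|r|\lesssim\langle\xi_1\rangle^{-2}$ uniformly in $\xi'$, so the $\xi_1$-integral converges absolutely; $b_r(x_1,\cdot,\cdot)$ is then continuous in $x_1$ up to $x_1=0$ with equal one-sided limits, and $b_r(0^{\pm},\cdot,\cdot)\in\hsc^{-1}S^{-1}\subset\hsc^{-1}S^{m+1}$ since $\int|\partial_{x'}^\alpha\partial_{\xi'}^\beta r|\,d\xi_1\lesssim\langle\xi'\rangle^{-1-|\beta|}$; this contributes a term of order strictly below $m+1$.

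The core of the argument — and the step I expect to be the main obstacle — is the terms $\chi_1 a_j$, where the $\xi_1$-integral is only conditionally convergent and \eqref{e:classical2} must be used. Fix $j$ and split in frequency with a symbol $\Theta\in S^0$, even in $\xi_1$, equal to $1$ on a conic neighbourhood of the $\xi_1$-axis and supported in $\{|\xi'|\le\delta_0|\xi_1|\}$. On $\supp\big(\chi_1 a_j(1-\Theta)\big)$ one has $|\xi_1|\lesssim\langle\xi'\rangle$, hence $\langle\xi\rangle\sim\langle\xi'\rangle$ and the effective $\xi_1$-range is $\lesssim\langle\xi'\rangle$; the corresponding $b$ therefore lies in $\hsc^{-1}S^{j+1}$, is continuous in $x_1$ with equal one-sided limits at $0$, and is handled as the $r$-term. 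On $\supp\big(\chi_1 a_j\Theta\big)$ (where $|\xi'/\xi_1|\le\delta_0$, $|\xi_1|\ge c_0$), homogeneity of $a_j$ together with the Taylor expansion at the $\xi_1$-axis encoded in \eqref{e:classical2} gives, for homogeneous polynomials $\widetilde a_{j,\ell}(x,\cdot)$ of degree $j-\ell$,
\[
\chi_1 a_j\Theta=\Big(\sum_{0\le\ell\le j}\widetilde a_{j,\ell}(x,\xi')\,\xi_1^{\ell}+\widetilde a_{j,-1}(x,\xi')\,\xi_1^{-1}+E_j(x,\xi)\Big)\chi_1\Theta,\qquad \big|\partial_x^\alpha\partial_{\xi'}^\beta\partial_{\xi_1}^p E_j\big|\lesssim\langle\xi'\rangle^{j+2-|\beta|}\langle\xi_1\rangle^{-2-p}
\]
on this support. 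The remainder $E_j\chi_1\Theta$ is integrable in $\xi_1$ for fixed $\xi'$ and is handled as the $r$-term, giving a contribution in $\hsc^{-1}S^{j+1}$ with equal one-sided limits. For the polynomial terms, $\mathcal F_{\hsc}^{-1}(\xi_1^\ell)(x_1)=0$ for $x_1\ne 0$ and integer $\ell\ge 0$ (integrate by parts $\ell+1$ times in $\xi_1$), so for $x_1\ne 0$ one has $\mathcal F_{\hsc}^{-1}(\widetilde a_{j,\ell}\xi_1^\ell\chi_1\Theta)(x_1)=\mathcal F_{\hsc}^{-1}\big(\widetilde a_{j,\ell}\xi_1^\ell(\chi_1\Theta-1)\big)(x_1)$, and $\xi_1^\ell(\chi_1\Theta-1)$ has compact $\xi_1$-support of size $\lesssim\langle\xi'\rangle$, so dominated convergence yields a limit at $x_1=0^{\pm}$, equal from both sides, in $\hsc^{-1}S^{j+1}$. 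Finally, $\xi_1^{-1}\chi_1\Theta$ is odd in $\xi_1$ for fixed $\xi'$, and rescaling $\xi_1$ together with $\int_0^\infty\frac{\sin u}{u}\,du=\frac{\pi}{2}$ shows $\mathcal F_{\hsc}^{-1}(\xi_1^{-1}\chi_1\Theta)(x_1)\to\pm\frac{i}{2\hsc}$ as $x_1\to 0^{\pm}$, uniformly enough in $\xi'$; hence this term contributes $\pm\frac{i}{2\hsc}\widetilde a_{j,-1}(0,x',\xi')\in\hsc^{-1}S^{j+1}$ — the contribution responsible for the difference between the $+$ and $-$ traces (and, e.g., for the $\pm\tfrac{1}{2}I$ in \eqref{eq:KK'def}). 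Summing these three pieces, then over the near/far split and over $j$, and adding the $r$-term, shows that $b(x_1,\cdot,\cdot)$ has one-sided limits in $\hsc^{-1}S^{m+1}$; by the reduction in the first paragraph this proves $\gamma^{\pm}A\gamma^*\in\hsc^{-1}\Psi^{m+1}_{\hsc}(\Gamma)$ with principal symbol \eqref{eq:prinsymb}.
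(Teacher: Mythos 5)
Your proposal is correct and rests on the same core strategy as the paper's proof: reduce $\gamma^{\pm}\Op_\hsc(a)\gamma^*$ to the partial inverse Fourier transform $b(x_1,x',\xi')=\mathcal{F}_\hsc^{-1}\big(a(x_1,x',\cdot,\xi')\big)(x_1)$, peel off an absolutely integrable piece using \eqref{e:classical}, and for each homogeneous term use \eqref{e:classical2} to separate the powers $\xi_1^{\ell}$ with $\ell\ge 0$ (which do not affect the one-sided limits) from the $\xi_1^{-1}$ term, which alone produces the $\pm\tfrac{\ri}{2\hsc}\widetilde{a}_{j,-1}$ discrepancy between the two traces. The implementations differ: the paper rescales $\xi_1\mapsto\langle\xi'\rangle\xi_1$, so that \eqref{e:classical2} is only ever invoked at the bounded argument $\omega'=\xi'/\langle\xi'\rangle$, splits with $\varphi(|\xi_1|/2)$ and $\psi(|\xi_1|/2)$ in the rescaled variable, and reads off the limits from $\mathcal{F}_\hsc^{-1}(\xi_1^{\ell})=(\hsc D_{x_1})^{\ell}\delta$ and $\mathcal{F}_\hsc^{-1}({\rm p.v.}\,\xi_1^{-1})=\tfrac{\ri}{2\hsc}\operatorname{sgn}$; you instead stay in the original variables, split conically about the $\xi_1$-axis with $\Theta$, kill the nonnegative powers by the subtraction $\chi_1\Theta\mapsto\chi_1\Theta-1$ (using that their Fourier transforms vanish off $x_1=0$), and evaluate the $\xi_1^{-1}$ term via oddness and the Dirichlet integral, which yields the same $\pm\tfrac{\ri}{2\hsc}$ jump. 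The one step you assert without justification is that the coefficients $\widetilde{a}_{j,\ell}$, which the hypothesis provides only as smooth functions of $\omega'\in B(0,2)$, extend to homogeneous polynomials of degree $j-\ell$ in $\xi'$, so that $\widetilde{a}_{j,\ell}(x,\xi')\xi_1^{\ell}$ makes sense on your conic region with the stated bound on $E_j$. This is true but needs a line: from $a_j(x,\xi_1,s\omega')=s^{j}a_j(x,\xi_1/s,\omega')$ and uniqueness of the coefficients in the expansion \eqref{e:classical2} one gets $\widetilde{a}_{j,\ell}(x,s\omega')=s^{j-\ell}\widetilde{a}_{j,\ell}(x,\omega')$ for $0<s\le 1$, and a smooth function positively homogeneous of nonnegative integer degree near $0$ is a homogeneous polynomial; the paper's rescaling avoids needing this fact at all. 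With that supplied, your symbol and derivative estimates (including the uniform-in-$x_1$ bounds and the dominated-convergence arguments for the one-sided limits, at the same level of detail as the paper's) go through, and both arguments produce the principal symbol \eqref{eq:prinsymb}.
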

\begin{proof}
First observe that, for $u\in C^\infty(\Gamma)$,
\begin{align*}
&\gamma^{\pm}A\gamma^* u(x')= (2\pi \hbar)^{-d}\lim_{x_1\to 0^{\pm}}\int_{\Rea^d}\int_{\Gamma} \re^{\frac{\ri}{\hbar}(x_1\xi_1+\langle x'-y',\xi'\rangle) }a(x_1,x',\xi_1,\xi')u(y')\sqrt{|\det g_{\Gamma}(y')|}dy'\rd \xi\\
&= (2\pi \hbar)^{-(d-1)}\int_{\Rea^{d-1}}\int_{\Gamma} \re^{\frac{\ri}{\hbar}\langle x'-y',\xi'\rangle}\left(\lim_{x_1\to 0^{\pm}}(2\pi \hbar)^{-1}\int_{\Rea} \re^{\frac{\ri}{\hbar}x_1\xi_1 }a(x_1,x',\xi_1,\xi')\rd \xi_1\right)u(y')\sqrt{|\det g_{\Gamma}(y')|}dy'\rd \xi'.
\end{align*}
Therefore, to prove the lemma, we only need to show that 
\begin{align*}
I_{\pm}(a):=(2\pi \hbar)^{-1}\lim_{x_1\to 0^{\pm}}\int_{\Rea} \re^{\frac{\ri}{\hbar}x_1\xi_1 }a(x_1,x',\xi_1,\xi')\rd \xi_1\in \hbar^{-1}S^{m+1}(T^*\Gamma).
\end{align*}

We start by decomposing $a$ into its integrable and non-integrable pieces (with respect to $\xi_1$).  Let $\varphi\in C_{\rm comp}^\infty((-2,2);[0,1])$ with $\varphi \equiv 1$ on $[-1,1]$, let $\psi:=1-\varphi$, and write 
$$
a=a_L+a_{H},\qquad a_H=\sum_{j=-1}^m a_j(x,\xi)\psi(|\xi|).
$$
By~\eqref{e:classical} and the fact that $a\in S^m(T^*\Rea^d)$, $a_L\in S^{\min(-2,m)}(T^*\mathbb{R}^d)$ (where the minimum is achieved at $m$ only when all the $a_j$s equal zero). Since 
$$
I_{\pm}(a_L)= (2\pi \hbar)^{-1}\lim_{x_1\to 0^{\pm}}\int_{\Rea} \re^{\frac{\ri}{\hbar}x_1\xi_1}a_L(x,\xi)\rd \xi_1.
$$ 
and, for $n >1$,
$$
\int_{\Rea} \langle \xi\rangle^{-n}\rd \xi_1\leq C\langle \xi'\rangle^{1-n}, 
$$
we have
$$
I_{+}(a_L)=I_-(a_L)\in \hbar^{-1}S^{\min(-1,m+1)}(T^*\Gamma).
$$
Now, using the change of variables $\xi_1 \mapsto \xi_1 \langle\xi'\rangle$, the homogeneity of $a_j(x,\xi)$, and the fact that $\phi +\psi=1$, we have
\begin{align*}
I_{\pm}(a_j\psi)&=(2\pi \hbar)^{-1}\lim_{x_1\to 0^{\pm}}\int_\Rea \re^{\frac{\ri}{\hbar}x_1\xi_1} a_j(x,\xi)\psi(|\xi|)\rd \xi_1 \\
&=(2\pi \hbar)^{-1}\langle \xi'\rangle^{j+1}\lim_{x_1\to 0^{\pm}}\int_\Rea \re^{\frac{\ri}{\hbar}x_1\langle \xi'\rangle \xi_1}\psi\big( \big|(\langle \xi'\rangle \xi_1,\xi')\big|\big)\varphi\big(|\xi_1|/2\big)a_j\left(x,\xi_1,\dfrac{\xi'}{\langle\xi'\rangle}\right)\rd \xi_1\\
&\qquad +(2\pi \hbar)^{-1}\langle \xi'\rangle^{j+1}\lim_{x_1\to 0^{\pm}}\int_\Rea \re^{\frac{\ri}{\hbar}x_1\langle \xi'\rangle \xi_1}\psi\big(|\xi_1|/2\big)a_j\left(x,\xi_1,\dfrac{\xi'}{\langle\xi'\rangle}\right)\rd \xi_1\\
&=:(2\pi \hbar)^{-1}\langle \xi'\rangle^{j+1}\big(I_{1,\pm,j}+I_{2,\pm,j}\big).
\end{align*}
Since $(2\pi \hbar)^{-1}\langle \xi'\rangle^{j+1}\in \hbar ^{-1}S^{j+1}(T^*\Gamma)$, we need only show that $I_{1,\pm,j},\,I_{2,\pm,j}\in S^0(T^*\Gamma)$.

We first study $I_{1,\pm,j}$. By the definition of $\varphi$, uniformly in $|x_1|$ small, 
$$
\psi\big( |(\langle \xi'\rangle \xi_1,\xi')|\big)\,\varphi\big(|\xi_1|/2\big)\,a_j\left(x,\xi_1,\dfrac{\xi'}{\langle\xi'\rangle}\right)\in C_{\rm comp}^\infty\big((-4,4)_{\xi_1};S^0(T^*\Gamma)\big),
$$
and thus
\beqs
I_{1,+,j}=I_{1,-,j}\in S^0(T^*\Gamma),
\eeqs
where we use that $\partial\psi(|(\langle \xi'\rangle \xi_1,\xi')|)$ is compactly supported in $\xi'$ to see that derivatives falling on this term are harmless.

Finally, we consider $I_{2,\pm,j}$. Observe that
by the chain rule and the fact that $\xi'/\langle\xi'\rangle \in S^0(T^*\Gamma)$,
 to obtain $I_{2,\pm,j}\in S^0(T^*\Gamma)$ we only need to show that
$$
\lim_{x_1\to \pm 0}\int_\Rea \re^{\frac{\ri}{\hsc}x_1\langle \xi'\rangle \xi_1}\psi\big(|\xi_1|/2\big)a_j(x,\xi_1,\omega')\rd \xi_1\in C^\infty\big( \mathbb{R}_{x'}^{d-1}\times B(0,2)_{\omega'}\big).
$$
To do this, put
$$
q_j(x,\omega',\xi_1):= \sum_{\ell=-1}^j \widetilde{a}_{j,\ell}(x,\omega')\xi_1^\ell\in  C^\infty\big(\mathbb{R}^{d}_x\times B(0,2)_{\omega'};\mathcal{S}'(\mathbb{R})\big),
$$
where we interpret $\xi_1^{-1}$ as $\rm{p.v.} \xi_1^{-1}$ (see, e.g., \cite[Page 166]{Mc:00}), and let $r_{j}(x,\omega',\xi_1)=\psi(|\xi_1|/2)a_j(x,\omega',\xi_1)-q_j(x,\omega',\xi_1)$. Observe that $r_j\in C^\infty(\mathbb{R}^{d}_{x}\times B(0,2)_{\omega'};\mathcal{S}'(\mathbb{R}))$ 
and, by \eqref{e:classical2}, 
$$
|r_j(x,\xi_1,\omega')|\leq C|\xi_1|^{-2}\quad \tfor |\xi_1|\geq 2.
$$
Therefore, since the Fourier transform of an $L^1$ function is continuous,
$$
r_{j,\Gamma}(x,\omega'):=(2\pi h)^{-1}\int_\Rea \re^{\frac{\ri}{\hsc}x_1\langle \xi'\rangle\xi_1}r_j(x,\xi_1,\omega')\rd \xi_1
$$
is continuous in $x_1$ and satisfies, 
$$
\big|\partial_{x'}^\alpha\partial_{\omega'}^\beta r_{j,\Gamma}(0,x',\omega')\big|=  \int_\Rea 
\Big|
\partial_{x'}^\alpha\partial_{\omega'}^\beta
r_j(0,x',\xi_1,\omega')\Big|\rd \xi_1\leq C_{\alpha\beta}.
$$
Therefore, we only need to consider the term $q_j$. For this, recall that
$$
\xi_1^j=\mathcal{F}_{\hsc}((\hsc D_{x_1})^j \delta_0(x_1)), \quad j\geq 0,\qquad {\rm p.v. }\,\xi_1^{-1}= \mathcal{F}_{\hsc}\left(\dfrac{\ri}{2h}\operatorname{sgn}(x_1)\right)
$$
where $\operatorname{sgn}(x):= 1$ for $x>0$ and $:= -1$ for $x<0$.
Let
\begin{align*}
q_{j,\Gamma}(x,\xi')&:=(2\pi h)^{-1}\int_\Rea \re^{\frac{\ri}{\hsc}x_1\langle \xi'\rangle\xi_1}\left(\sum_{\ell=-1}^j\widetilde{a}_{j,\ell}(x,\omega')\xi_1^\ell \right)\rd \xi_1\\
&=\sum_{\ell=0}^j \widetilde{a}_{j,\ell}(x,\omega')(\hsc D_{x_1})^\ell \delta(x_1\langle\xi'\rangle)+ \dfrac{\ri}{2h}\widetilde{a}_{j,-1}(x,\omega')\operatorname{sgn}(x_1\langle\xi'\rangle).\end{align*}
Therefore,
$$
\lim_{x_1\to 0^{\pm}} q_{j,\Gamma}(x_1,x',\xi')=\pm\frac{\ri}{2h}\widetilde{a}_{j,-1,}(0,x',\omega')\in C^\infty\big(\mathbb{R}^{d-1}_{x'}\times B(0,2)_{\omega'}\big),
$$
and the proof is complete.
\end{proof}

\begin{proof}[Proof of Lemma \ref{l:computeLayer}]
By a partition of unity and pseudolocality of pseudodifferential operators, we can assume that $\supp a$ is contained in a small open subset, $V$, of $\mathbb{R}^d$. 
Let $V'\subset \mathbb{R}^d$ and $\Phi:V'\to V$ be a diffeomorphism such that 
$$
\{ \Phi(0,y')\,:\, (0,y')\in V\}=V\cap \Gamma.
$$
To prove the lemma, we observe that, by~\cite[Theorem 9.9]{Zw:12}, \cite[Theorem 18.1.17]{Ho:85},
$$
\Phi^*\Op_{\hsc}(a)(\Phi^{-1})^* =\Op_{\hsc}(b)+O(\hsc^\infty)_{\Psi^{-\infty}_\hsc},
$$
where  $b\in S^m$ satisfies
\beq\label{eq:Hormander}
b(y,\eta)-\sum_{|\alpha|\leq N-1}\frac{1}{\alpha!} \partial_\xi ^\alpha a\big(\Phi(y),[(\partial \Phi)^{-1}]^t(\Phi(y))\eta\big)(\hsc D_z)^\alpha \re^{\frac{\ri}{\hsc}\langle \rho_{\Phi(y)}(z),\eta\rangle}\Big|_{z=\Phi(y)}\in \hsc^{\lceil \frac{N}{2}\rceil}S^{m-\lceil \frac{N}{2}\rceil},
\eeq
and
$$
\rho_x(z)=\Phi^{-1}(z)-\Phi^{-1}(x)-\partial\Phi^{-1}(x)(y-x).
$$

Now, if $\varphi\in C_c^\infty(\mathbb{R})$ with $\varphi\equiv 1$ on $[-1,1]$, then, by \eqref{e:classical1a},  $a-\sum_{j=-1}^m(1-\varphi)a_j\in S^{\min\{m,-2\}}$. Therefore, by writing
\beqs
a= \Big(a-\sum_{j=-1}^m(1-\varphi)a_j\Big) + \sum_{j=-1}^m(1-\varphi)a_j
\eeqs
and then changing variables and using \eqref{eq:Hormander},
$$
\Big| \partial_{y}^\alpha \partial_\eta^\beta \Big( b(y,\eta)-\sum_{j=-1}^m\sum_{|\alpha|\leq 2j+3}b_{\alpha,j}(y,\eta)\Big)\Big|\leq C_{\alpha\beta} \langle \eta\rangle^{-2-|\beta|},\quad|\eta|\geq 1, 
$$
(where the significance of $2j+3$ in the index of the sum is that $j- \lceil(2 j+3)/2\rceil = -2$)
where 
$$
b_{\alpha,j}(y,\eta):=a_{j,\alpha}(y,\eta)(\hsc D_z)^\alpha \re^{\frac{\ri}{\hsc}\langle \rho_{\Phi(y)}(z),\eta\rangle}\Big|_{z=\Phi(y)},\quad a_{j,\alpha}(y,\eta):=\frac{1}{\alpha!} \partial_\xi ^\alpha a_j\big(\Phi(y),[(\partial \Phi)^{-1}]^t(\Phi(y))\eta\big).
$$
Since $a_j$ is homogeneous degree $j$, $a_{j,\alpha}$ is homogeneous degree $j-|\alpha|$. Next, since $\rho_{x}(z)$ vanishes to order $2$ at $z=x$, direct calculation shows that
$$
(\hsc D_z)^\alpha \re^{\frac{\ri}{\hsc}\langle \rho_{\Phi(y)}(z),\eta\rangle}\Big|_{z=\Phi(y)}=\sum_{\ell=0}^{\lfloor \frac{|\alpha|}{2}\rfloor} \widetilde{b}_{\alpha,\ell}(y,\eta),
$$
where 
$\widetilde{b}_{\alpha,\ell}(y,\eta)$ is a polynomial in $\eta$, and hence $\hsc^\ell\widetilde{b}_{\alpha,\ell}\in S^\ell$ and is homogeneous of degree $\ell$. In particular, grouping terms with a given homogeneity in $\eta$,  $b$ satisfies
\begin{equation}
\label{e:classical1b}
\Big|\partial_y^\alpha\partial_\eta^\beta \Big(b(y,\eta)-\sum_{j=-1}^m b_j(y,\eta)\Big)\Big|\leq C_{\alpha\beta} \langle \eta\rangle^{-2-|\beta|},\, |\eta|\geq 1,
\end{equation}
with $b_j$ homogeneous of degree $j$ and defined by
$$
b_j:=\sum_{\ell=j}^{m}\sum_{|\alpha|=0}^{2(\ell-j)} a_{\ell,\alpha}\widetilde{b}_{\alpha,j-\ell+|\alpha|}.
$$

We claim that there is a conic neighbourhood $U'$ of $\{(0,y',\eta_1,0)\,:\, (0,y')\in V,\, \eta_1\in \mathbb{R}\setminus[-1,1]\}$ such that 
\begin{equation}
\label{e:oddness}
b_j(y,\eta)=(-1)^jb_j(y,-\eta),\qquad (y,\eta)\in U'.
\end{equation}
To see this, first note that  for $\eta_1\in \mathbb{R}$, $([( \partial \Phi)^{-1}]^t(\Phi(y)))(\eta_1,0)\in N^*\Gamma$ and therefore there is a conic neighbourhood, $U'$, of $\{(0,y',\eta_1,0)\,:\, (0,y')\in V,\, \eta_1\in \mathbb{R}\setminus[-1,1]\}$ such that 
$$
\big\{ \big(\Phi(y),[( \partial \Phi)^{-1}]^t(\Phi(y))\eta\big)\,:\, (y,\eta)\in U'\big\}\subset U.
$$
Therefore, since the $a_j$ satisfy~\eqref{e:classical2a},
$$
a_{j,\alpha}(y,\eta)=(-1)^{j-|\alpha|}a_{j,\alpha}(y,-\eta),\qquad (y,\eta)\in U'.
$$
Next, since $\widetilde{b}_{\alpha,\ell}(y,\eta)$ is a polynomial of degree $\ell$ in $\eta$,
$$
\widetilde{b}_{\alpha,\ell}(y,\eta)=(-1)^\ell \widetilde{b}_{\alpha,\ell}(y,-\eta).
$$
Thus, we have, for $(y,\eta)\in U'$,
\begin{align*}
b_j(y,\eta)&=\sum_{k=j}^{m}\sum_{|\alpha|=0}^{2(k-j)} a_{k,\alpha}(y,\eta)\widetilde{b}_{\alpha,j-k+|\alpha|}(y,\eta)\\
&=\sum_{k=j}^{m}\sum_{|\alpha|=0}^{2(k-j)} (-1)^{k-|\alpha|}a_{k,\alpha}(y,-\eta)(-1)^{j-k+|\alpha|}\widetilde{b}_{\alpha,j-k+|\alpha|}(y,-\eta)=(-1)^jb_j(y,\eta),
\end{align*}
and we have thus proved~\eqref{e:oddness}.

Now, there are $C>0$, $\e>0$ such that 
$$
\{ (y_1,y',\eta_1,\omega')\,:\, |y_1|<\e, \,|\omega'|<2,\, |\eta_1|>C\}\subset U'.
$$
Therefore, by~\eqref{e:oddness}, for $|y_1|<\e, |\omega'|<2$, and $|\eta_1|>C$
$$
b_j(y,\eta_1,\omega')=(-1)^jb_j(y,-\eta_1,-\omega')
$$
Furthermore, since $b_j$ is homogeneous degree $j$,  for $|y_1|<\e$,
\begin{equation}
\label{e:theCredits}
\partial_{y}^{\alpha}\partial_{\xi'}^{\beta}b_j(y,1,0)=(-1)^{j+|\beta|}\partial_y^{\alpha}\partial^{\beta}_{\xi'}b_j(y,-1,0).
\end{equation}
By homogeneity and Taylor's theorem, 
\begin{align}\nonumber
&\partial_{y}^\alpha\partial_{\omega'}^{\beta}b_j(y,\eta_1,\omega')\\ \nonumber
&=|\eta_1|^j\partial_{y}^\alpha\partial_{\omega'}^{\beta}\Big(b_j\big(y,\tfrac{\eta_1}{|\eta_1|},\tfrac{\omega'}{|\eta_1|}\big)\Big)\\ \nonumber
&=\sum_{|\beta_1|\leq j+1}\frac{1}{\beta_1!}|\eta_1|^{j-|\beta|-|\beta_1|}\partial_{y}^\alpha\partial_{\xi'}^{\beta_1+\beta} b_j(y,\operatorname{sgn}(\eta_1),0)(\omega')^{\beta_1} +O(|\eta_1|^{-2-|\beta|})\\ \nonumber
&=\sum_{|\beta_1|\leq j+1}\frac{1}{\beta_1!}\eta_1^{j-|\beta|-|\beta_1|} (\operatorname{sgn}(\eta_1))^{j-|\beta|-|\beta_1|}\partial_{y}^\alpha\partial_{\xi'}^{\beta_1+\beta} b_j(y,\operatorname{sgn}(\eta_1),0)(\omega')^{\beta_1} +O(|\eta_1|^{-2-|\beta|})\\
&=\sum_{|\beta_1|\leq j+1}\frac{1}{\beta_1!}\eta_1^{j-|\beta|-|\beta_1|}\partial_{y}^\alpha\partial_{\xi'}^{\beta_1+\beta} b_j(y,1,0)(\omega')^{\beta_1} +O(|\eta_1|^{-2-|\beta|}),\label{e:classical2b}
\end{align}
where we have used~\eqref{e:theCredits} in the last step.
The bound \eqref{e:classical1b}
 and the expansion \eqref{e:classical2b} show that $b$ satisfies the assumptions of Lemma \ref{l:computeLayerOld} (with $a$ replaced by $b$), and the result of this lemma then completes the proof.
\end{proof}

\section{New results about boundary-integral operators}\label{sec:4}

\subsection{The high-frequency components of the operators $S_k$, $\DL_k$, $\DL'_k$, and $H_k$.}\label{sec:4.1}

In this subsection, we prove results about the high-frequency components of the standard boundary-integral operators; these results are then used to prove bounds on $H_k$ (Theorem \ref{thm:Hk} below) and to prove that $\Breg$ and $\Bregp$ are Fredholm (i.e., in Part (i) of Theorem \ref{thm:invert}).

\begin{lemma}
\label{l:SDHHigh}
Let  $\psi\in C_{\rm comp}^\infty((-2,2))$ with $\psi\equiv 1$ in a neighbourhood of $[-1,1]$. Then, with 
 $\psi(|\hsc D|)$ defined by \eqref{eq:quant} and $\cR_k$ the free resolvent defined by \eqref{eq:fund},
$$
\begin{aligned}
\widetilde{S}^L_k&:=\gamma (1-\psi(|\hsc D|))\cR_k\gamma^*  \in\hsc\Psi_\hsc^{-1}(\Gamma),&\widetilde{S}^R_k&:=\gamma \cR_k(1-\psi(|\hsc D|))\gamma^*\in  \hsc\Psi_\hsc^{-1}(\Gamma),\\
(\widetilde{K}'_k)^{\pm,L}&:=\gamma^{\pm} (1-\psi(|\hsc D|))L\cR_k\gamma^*\in \Psi_\hsc^{0}(\Gamma),&(\widetilde{K}'_k)^{\pm,R}&:=\gamma^{\pm}L\cR_k (1-\psi(|\hsc D|))\gamma^*\in \Psi_\hsc^{0}(\Gamma),\\
(\widetilde{K}_k)^{\pm,L}&:=\gamma^{\pm}(1-\psi(|\hsc D|)) \cR_kL^*\gamma^*
\in \Psi_\hsc^{0}(\Gamma),&(\widetilde{K}_k)^{\pm,R}&:=\gamma^{\pm} \cR_kL^*(1-\psi(|\hsc D|))\gamma^*
\in \Psi_\hsc^{0}(\Gamma),\\
(\widetilde{H}_k)^{\pm,L}&:=\gamma^{\pm} (1-\psi(|\hsc D|)) L\cR_kL^*\gamma^*\in \hsc^{-1}\Psi_\hsc^{1}(\Gamma),&(\widetilde{H}_k)^{\pm,R}&:=\gamma^{\pm}  L\cR_kL^*(1-\psi(|\hsc D|))\gamma^*\in \hsc^{-1}\Psi_\hsc^{1}(\Gamma).
\end{aligned}
$$
Moreover, for $|\xi'|_g\geq 2,$
\begin{align}\label{eq:HFsym1}
&\sigma(\widetilde{S}^{L/R}_k)(x',\xi')=\frac{\hsc}{2\sqrt{|\xi'|_g^2-1}},\qquad \sigma\big((\widetilde{K}_k)^{\pm,{L/R}}\big)(x',\xi')=\pm\frac{1}{2},\\
& \sigma\big((\widetilde{K}_k')^{\pm,{L/R}}\big)(x',\xi')=\mp\frac{1}{2} ,\qquad \sigma\big((\widetilde{H}_k)^{\pm,{L/R}}\big)(x',\xi')=-\hsc^{-1}\frac{\sqrt{|\xi'|_g^2-1}}{2}. \label{eq:HFsym2}
\end{align}
\end{lemma}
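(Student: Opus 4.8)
The plan is to realise each operator in the lemma in the form $\gamma^{\pm}A\gamma^*$ for an explicit $A\in\Psi_\hsc^m(\Rea^d)$ and then apply Lemma~\ref{l:computeLayer}. First I would recall, from \eqref{eq:free_resolvent}, that $\cR_k$ is convolution with the outgoing fundamental solution $\Phi_k$, and so is the Fourier multiplier with (distributional) semiclassical symbol $\hsc^2(|\xi|^2-1-\ri 0)^{-1}$; since $\psi\equiv1$ on a neighbourhood of $[-1,1]$, the multiplier $1-\psi(|\hsc D|)$ vanishes near the characteristic variety $\{|\xi|=1\}$, so that $(1-\psi(|\hsc D|))\cR_k=\Op_\hsc(m_\hsc)$ with $m_\hsc(\xi):=(1-\psi(|\xi|))\hsc^2(|\xi|^2-1)^{-1}\in\hsc^2S^{-2}(T^*\Rea^d)$ a genuine \emph{smooth} symbol, the cutoff having removed the singularity on the characteristic variety. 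With $L$ a vector field equal to $\partial_n=\partial_{x_1}$ near $\Gamma$ (as in \eqref{eq:SHdef}--\eqref{eq:KK'def}), the operators $L$ and $L^*$ are first-order differential operators, hence lie in $\hsc^{-1}\Psi_\hsc^1(\Rea^d)$, with $\sigma_\hsc(L)=\ri\xi_1/\hsc$ and, by \eqref{eq:multsymb}, $\sigma_\hsc(L^*)=-\ri\xi_1/\hsc$ near $\Gamma$. By \eqref{eq:SHdef}--\eqref{eq:KK'def}, the eight operators of the lemma are then $\gamma^{\pm}A\gamma^*$ with $A$ equal to $(1-\psi(|\hsc D|))\cR_k$ and $\cR_k(1-\psi(|\hsc D|))$ (for $\widetilde S_k^{L}$, $\widetilde S_k^{R}$), to $(1-\psi(|\hsc D|))L\cR_k$ and $L\cR_k(1-\psi(|\hsc D|))$ (for $(\widetilde K_k')^{\pm,L}$, $(\widetilde K_k')^{\pm,R}$), to $(1-\psi(|\hsc D|))\cR_kL^*$ and $\cR_kL^*(1-\psi(|\hsc D|))$ (for $(\widetilde K_k)^{\pm,L}$, $(\widetilde K_k)^{\pm,R}$), and to $(1-\psi(|\hsc D|))L\cR_kL^*$ and $L\cR_kL^*(1-\psi(|\hsc D|))$ (for $(\widetilde H_k)^{\pm,L}$, $(\widetilde H_k)^{\pm,R}$).

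The main step -- and, I expect, the main technical obstacle -- is to verify that each such $A$ is a semiclassical pseudodifferential operator of the asserted order with a symbol satisfying the hypotheses of Lemma~\ref{l:computeLayer}. Because $\cR_k=\Op_\hsc$ of an $x$-independent symbol, composing on either side with the differential operators $L$, $L^*$ reduces to multiplication of symbols, up to symbol-calculus corrections of lower order in $\hsc$; and composing with the smooth multiplier $1-\psi(|\hsc D|)$ produces, beyond the product of symbols, only terms supported in a fixed compact $\xi$-annulus (where $1-\psi(|\xi|)$ is non-constant), hence $O(\langle\xi\rangle^{-\infty})$, together with an $O(\hsc^\infty)_{\Psi_\hsc^{-\infty}}$ remainder. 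Consequently $A\in\hsc^2\Psi_\hsc^{-2}$, $\hsc\Psi_\hsc^{-1}$, or $\Psi_\hsc^0$ in the $\widetilde S_k$-, $\widetilde K_k/\widetilde K_k'$-, $\widetilde H_k$-cases, and its full symbol equals, modulo the harmless terms just described, the product of the (principal) symbols of the factors. The classical-expansion bound \eqref{e:classical1a} then holds since $(|\xi|^2-1)^{-1}=\sum_{\ell\geq1}|\xi|^{-2\ell}$ for $|\xi|$ large, so that after multiplying by a symbol of $L$ and/or $L^*$ and subtracting finitely many homogeneous terms one is left with a remainder of order $\leq-2$; and the oddness condition \eqref{e:classical2a} (in fact on all of $T^*\Rea^d\setminus\{0\}$) is automatic, since $\sigma_\hsc(\cR_k)$ and each $|\xi|^{-2\ell}$ are even in $\xi$ while $\sigma_\hsc(L)$, $\sigma_\hsc(L^*)$ are odd and homogeneous of degree one, and the operations $\partial_x$, $\partial_\xi$ and products all preserve the property that parity in $\xi$ matches homogeneity degree. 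Lemma~\ref{l:computeLayer} (after rescaling out the overall power of $\hsc$) then gives $\gamma^{\pm}A\gamma^*\in\hsc^{-1}\Psi_\hsc^{m+1}(\Gamma)$ for the relevant $m$, i.e.\ $\widetilde S_k^{L/R}\in\hsc\Psi_\hsc^{-1}(\Gamma)$, $(\widetilde K_k)^{\pm,L/R},(\widetilde K_k')^{\pm,L/R}\in\Psi_\hsc^0(\Gamma)$, and $(\widetilde H_k)^{\pm,L/R}\in\hsc^{-1}\Psi_\hsc^1(\Gamma)$, as claimed.

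Finally, to read off the principal symbols I would fix $x'$ and $\xi'$ with $|\xi'|_g\geq2$, so that $1-\psi(|\xi|)\equiv1$ for all $\xi_1$ (as $|\xi|\geq|\xi'|_g\geq2$ and $\supp\psi\subset(-2,2)$) and $\mu:=\sqrt{|\xi'|_g^2-1}>0$. In the coordinates of \S\ref{sec:local}, \eqref{eq:prinsymb} gives $\sigma_\hsc(\gamma^{\pm}A\gamma^*)(x',\xi')=\lim_{x_1\to0^{\pm}}\mathcal{F}_\hsc^{-1}\big(\sigma_\hsc(A)(x,\cdot,\xi')\big)(x_1)$, where, by the previous paragraph, $\sigma_\hsc(A)(x,\xi_1,\xi')$ equals $\hsc^2(\xi_1^2+\mu^2)^{-1}$ for $\widetilde S_k$, equals $-\ri\hsc\xi_1(\xi_1^2+\mu^2)^{-1}$ for $\widetilde K_k$ and $+\ri\hsc\xi_1(\xi_1^2+\mu^2)^{-1}$ for $\widetilde K_k'$, and equals $\xi_1^2(\xi_1^2+\mu^2)^{-1}=1-\mu^2(\xi_1^2+\mu^2)^{-1}$ for $\widetilde H_k$. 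Using the elementary one-dimensional identities $\mathcal{F}_\hsc^{-1}\big((\xi_1^2+\mu^2)^{-1}\big)(x_1)=(2\hsc\mu)^{-1}\re^{-\mu|x_1|/\hsc}$ and $\mathcal{F}_\hsc^{-1}\big(\xi_1(\xi_1^2+\mu^2)^{-1}\big)(x_1)=(\ri/2\hsc)\operatorname{sgn}(x_1)\re^{-\mu|x_1|/\hsc}$ (both by residues), together with $\mathcal{F}_\hsc^{-1}(1)=\delta_0$, and then letting $x_1\to0^{\pm}$ -- so that the $\delta_0$-contribution, being supported at $x_1=0$, drops out of the limit (exactly as in the proof of Lemma~\ref{l:computeLayerOld}) -- one obtains, respectively, $\hsc(2\sqrt{|\xi'|_g^2-1})^{-1}$, $\pm\tfrac12$, $\mp\tfrac12$, and $-\hsc^{-1}\sqrt{|\xi'|_g^2-1}/2$, which are precisely \eqref{eq:HFsym1}--\eqref{eq:HFsym2}.
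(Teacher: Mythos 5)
Your proposal is correct and follows essentially the same route as the paper: realise each operator as $\gamma^{\pm}A\gamma^*$ via \eqref{eq:SHdef}--\eqref{eq:KK'def}, use the frequency cut-off to turn $\cR_k$ into a genuine operator in $\hsc^2\Psi_\hsc^{-2}(\Rea^d)$ with symbol $\hsc^2(1-\psi(|\xi|))(|\xi|^2-1)^{-1}$, check the hypotheses \eqref{e:classical1a}--\eqref{e:classical2a} of Lemma~\ref{l:computeLayer} (your global parity argument is a tidy way of doing what the paper checks case by case), and evaluate the principal symbols by the same residue computations, with the $\delta_0$ term dropping out exactly as in Lemma~\ref{l:computeLayerOld}. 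The only place you are briefer than the paper is the justification that the compositions with the cut-off on the far side of $L$ or $L^*$ are still pseudodifferential, which is precisely the content of Lemmas~\ref{l:resolveHighFreq} and~\ref{l:resolveHighFreq2} (via the disjoint-support Lemma~\ref{l:disjoint}), and your sketch of that step is the right idea.
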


Our plan to prove Lemma \ref{l:SDHHigh} is to apply Lemma \ref{l:computeLayer} with suitable choices of $A\in \Psi^m_\hsc(\Rea^d)$. 
For the results for $\widetilde{S}^{L/R}_k$, we want to let $A= (1-\psi(|\hsc D|))\cR_k$ and $A=\cR_k (1-\psi(|\hsc D|))$. These two operators are studied in the following lemma (which is similar to~\cite[Lemma 4.12]{Ga:19}).

Recall the following property of the free resolvent $\cR$ \eqref{eq:free_resolvent} (from, e.g., \cite[Theorem 4.1]{Ag:75}): for $s>1/2$ and  $f$ with $\mathcal{F}_{\hsc}(f)\in H^s(\Rea^d)$, $\mathcal{F}_{\hsc}(\cR_k f)\in\!~H^{-s}(\Rea^d)$ and
\beq\label{eq:resolvent}
\cR_kf=\lim_{\eps \to 0^+}(-\Delta-(k+i\eps)^2)^{-1}f=\hsc^2\lim_{\eps\to 0^+} \mathcal{F}_{\hsc}^{-1}\Big(\frac{\mathcal{F}_{\hsc}(f)(\xi)}{|\xi|^2-(1+\ri\eps)^2}\Big).
\eeq

\begin{lemma}
\label{l:resolveHighFreq}
Let  $\psi\in C_{\rm comp}^\infty$ with $\psi\equiv 1$ in a neighbourhood of $[-1,1]$. Then
\beqs
\cR_k(1-\psi(|\hsc D|))= (1-\psi(|\hsc D|))\cR_k = \Op_\hsc\left(\frac{\hsc^2(1-\psi(|\xi|))}{|\xi|^2-1}\right)\in \hsc^2\Psi_\hsc^{-2}(\Rea^d).
\eeqs
\end{lemma}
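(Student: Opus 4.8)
The plan is to compute $\cR_k(1-\psi(|\hsc D|))$ directly on the Fourier side, where both $\cR_k$ and $1-\psi(|\hsc D|)$ are Fourier multipliers and hence commute; this immediately gives the equality of the two orderings. Concretely, I would start from the formula~\eqref{eq:resolvent} for $\cR_k$ and apply it to $(1-\psi(|\hsc D|))f$, whose semiclassical Fourier transform is $(1-\psi(|\xi|))\mathcal{F}_\hsc(f)(\xi)$. The point is that the multiplier $1-\psi(|\xi|)$ vanishes on a neighbourhood of $\{|\xi|=1\}$, so the would-be singularity of $(|\xi|^2-1)^{-1}$ on the sphere $|\xi|=1$ is annihilated: the product $(1-\psi(|\xi|))/(|\xi|^2-1)$ extends to a smooth function on all of $\Rea^d$. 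In particular the limit $\eps\to 0^+$ in~\eqref{eq:resolvent} is trivial (no $\ri\eps$ regularisation is needed once the singular set is avoided), and we obtain
\[
\cR_k(1-\psi(|\hsc D|))f = \hsc^2\,\mathcal{F}_\hsc^{-1}\!\left(\frac{1-\psi(|\xi|)}{|\xi|^2-1}\,\mathcal{F}_\hsc(f)(\xi)\right),
\]
which is exactly $\Op_\hsc\!\big(\hsc^2(1-\psi(|\xi|))/(|\xi|^2-1)\big)$ by the definition~\eqref{eq:quant} of the quantisation (the symbol being independent of $x$).

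It then remains to check that $b(\xi):=\hsc^2(1-\psi(|\xi|))/(|\xi|^2-1)$ indeed lies in $\hsc^2 S^{-2}(T^*\Rea^d)$, i.e.\ that $b/\hsc^2$ satisfies the symbol estimates~\eqref{eq:Sm} with $m=-2$ (there are no $x$-derivatives to worry about). Away from the support of $1-\psi$ this is vacuous; on the support of $1-\psi$ we have $|\xi|$ bounded below away from $1$, so $|\xi|^2-1$ is comparable to $\langle\xi\rangle^2$ for large $|\xi|$ and is bounded away from zero for moderate $|\xi|$. Differentiating $(1-\psi(|\xi|))(|\xi|^2-1)^{-1}$ and using that each $\xi$-derivative of $(|\xi|^2-1)^{-1}$ gains a power of $\langle\xi\rangle^{-1}$ while derivatives of $\psi$ are compactly supported gives $|\partial_\xi^\beta (b/\hsc^2)| \leq C_\beta \langle\xi\rangle^{-2-|\beta|}$, uniformly in $\hsc$; hence $b\in \hsc^2 S^{-2}$ and $\Op_\hsc(b)\in \hsc^2\Psi_\hsc^{-2}(\Rea^d)$.

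The only mild subtlety — and the closest thing to an obstacle — is making rigorous the manipulation of~\eqref{eq:resolvent}: the formula is stated there for $f$ with $\mathcal{F}_\hsc(f)\in H^s$, $s>1/2$, using a limiting absorption principle, whereas we want an honest pseudodifferential identity valid (say) on $\mathcal{S}$ or on the relevant Sobolev scale. This is handled by noting that multiplication by the smooth, polynomially-bounded (indeed decaying) function $(1-\psi(|\xi|))/(|\xi|^2-1)$ maps $L^2$-based spaces to $L^2$-based spaces continuously, so both sides are bounded operators on, e.g., $L^2(\Rea^d)$ and agree on the dense subspace where~\eqref{eq:resolvent} applies directly; alternatively one simply observes that, since $1-\psi$ vanishes near $|\xi|=1$, the $\ri\eps$ in~\eqref{eq:resolvent} plays no role and the identity can be read off termwise. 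Either way the argument is short, and the commutativity $\cR_k(1-\psi(|\hsc D|))=(1-\psi(|\hsc D|))\cR_k$ is immediate since both are Fourier multipliers. This completes the proof.
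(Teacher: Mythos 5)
Your proposal is correct and follows essentially the same route as the paper: both arguments apply the limiting-absorption formula \eqref{eq:resolvent} on the semiclassical Fourier side, observe that the factor $1-\psi(|\xi|)$ removes the singularity at $|\xi|=1$ so the $\eps\to 0^+$ limit and the commutation of the two Fourier multipliers are immediate, and conclude membership in $\hsc^2\Psi_\hsc^{-2}(\Rea^d)$ from the definition of the symbol class. The only difference is that you spell out the symbol estimates and the density/extension step explicitly, which the paper leaves as ``follows from the definition of $\Psi_\hsc^{-2}(\Rea^d)$''.
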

\begin{proof}
Since $(1-\psi(| \xi|)):H^s(\Rea^d)\to H^s(\Rea^d)$, for $f\in L^2(\Rea^d)$ with $\mathcal{F}_{\hsc}(f)\in H^s(\Rea^d)$,
$$
\cR_k(1-\psi(|\hsc D|))f=\lim_{\eps\to 0^+} \mathcal{F}_{\hsc}^{-1}\Big(\frac{\hsc^2\mathcal{F}_{\hsc}(f)(\xi)(1-\psi(|\xi|)}{|\xi|^2-(1+\ri\eps)^2}\Big)=\mathcal{F}_{\hsc}^{-1}\Big(\frac{\hsc^2\mathcal{F}_{\hsc}(f)(\xi)(1-\psi(|\xi|)}{|\xi|^2-1}\Big).
$$
A nearly identical argument implies that 
$$
(1-\psi(|\hsc D|))\cR_k=\cR_k(1-\psi(|\hsc D|))
$$
and the fact that $\cR_k(1-\psi(|\hsc D|))\in \hsc^2\Psi_\hsc^{-2}(\Rea^d)$ follows from the definition of $\Psi_\hsc^{-2}(\Rea^d)$. 
\end{proof}

The other choices of $A$ required to prove Lemma \ref{l:SDHHigh} (via Lemma \ref{l:computeLayer}) are covered by the following lemma.

\ble\label{l:resolveHighFreq2}
If $b\in S^m$ with $\supp (b)\cap \{|\xi|\leq 1\}=\emptyset$, $\Op_\hsc(b) \cR_k\in \hsc^2\Psi^{m-2}_\hsc(\Rea^d),$ $\cR_k\Op_\hsc(b)\in \hsc^2\Psi^{m-2}_\hsc(\Rea^d)$, and there is $\widetilde{\psi}\in C_c^\infty(\mathbb{R})$ with $\widetilde{\psi}\equiv 1$ in a neighbourhood of $[-1,1]$ such that 
\beq\label{eq:composition_pseudo_symbol1}
\Op_\hsc(b)\cR_k=\hsc^2\Op_\hsc(b)\Op_\hsc\Big(\frac{1-\widetilde{\psi}(|\xi|)}{|\xi|^2-1}\Big)+O(\hsc^\infty)_{\Psi^{-\infty}_\hsc},
\eeq
and
\beq\label{eq:composition_pseudo_symbol2}
\cR_k\Op_\hsc(b)=\hsc^2\Op_\hsc\Big(\frac{1-\widetilde{\psi}(|\xi|)}{|\xi|^2-1}\Big)\Op_\hsc(b)+O(\hsc^\infty)_{\Psi^{-\infty}_\hsc}.
\eeq
\ele

\bpf
Let $\widetilde{\psi}\in C_c^\infty(\mathbb{R})$ with $\widetilde{\psi} \equiv 1$ on a neighbourhood of $[-1,1]$ and such that 
$$
\big\{(x,\xi) \in \supp b\,:\, |\xi|\in \supp \widetilde{\psi}\big\}=\emptyset.
$$
Then, by Lemma~\ref{l:disjoint}, 
$$
\Op_\hsc(b)\cR_k=\Op_\hsc(b)(1-\widetilde{\psi}(|\hsc D|))\cR_k+O(\hsc^\infty)_{\Psi^{-\infty}_\hsc},
$$
and 
$$
 \cR_k \Op_\hsc(b)=\cR_k(1-\widetilde{\psi}(|\hsc D|))\Op_\hsc(b)+O(\hsc^\infty)_{\Psi^{-\infty}_\hsc}.
$$
By Lemma~\ref{l:resolveHighFreq}, $(1-\widetilde{\psi}(|\hsc D|))\cR_k,\,\cR_k(1-\widetilde{\psi}(|\hsc D|))\in \hsc^2\Psi^{-2}_\hsc$ and both are given by
$$
\hsc^2\Op_\hsc\Big(\frac{1-\widetilde{\psi}(|\xi|)}{|\xi|^2-1}\Big),
$$
which completes the proof.
\epf

\begin{proof}[Proof of Lemma \ref{l:SDHHigh}]
We apply Lemma \ref{l:computeLayer} and use the results of Lemmas \ref{l:resolveHighFreq} and \ref{l:resolveHighFreq2}.
For $S_k$, we let $A = \hsc^{-2} (1- \psi(|\hsc D|)) \cR_k=\hsc^{-2} \cR_k (1- \psi(|\hsc D|))$, which is in $\Psi^{-2}_\hsc(\Rea^d)$ by Lemma \ref{l:resolveHighFreq}, so that $\widetilde{S}^{L/R}_k =\widetilde{S}_k= \hsc^2 \gamma^\pm A\gamma^*$ by definition. 
Since $A = \Op_\hsc(a)$ with $a= (1-\psi(|\xi|))(|\xi|^2-1)^{-1}\in S^{-2}(T^*\Rea^d)$, Lemma \ref{l:computeLayer} applies with $m=-2$ and $a_j=0$.
Therefore $\widetilde{S}_k \in \hsc^2 \hsc^{-1}\Psi^{-1}_\hsc(\Gamma)= \hsc\Psi^{-1}_\hsc(\Gamma)$ and 
\begin{align*}
\sigma_\hsc(\widetilde{S}_k) 
&=\hsc^2 \lim_{x_1\to 0^\pm} \frac{1}{2\pi \hsc}\int_{-\infty}^\infty \frac{ \big(1-\psi\big(\sqrt{\xi_1^2+|\xi'|_g^2}\big)\big)}{\xi_1^2 + |\xi'|_g^2 - 1}
\re^{\ri \xi_1 x_1/\hsc}
\rd \xi_1.
\end{align*}
When $|\xi'|_g\geq 2$, the integrand has poles at $\pm \ri \sqrt{|\xi'|_g^2 -1}$ and evaluating the integral via the residues at these poles gives the first equation in \eqref{eq:HFsym1}.

With $n$ any extension of the normal vector field to $\Gamma$ to all of $\mathbb{R}^d$,
\beq\label{eq:symbolL}
\hsc L=\Op_{\hsc}(\ri\langle \xi,n\rangle),\qquad \hsc L^*=\Op_{\hsc}(-\ri\langle \xi,n\rangle-\hsc \operatorname{div}(n)).
\eeq
and thus $\hsc L$ and $\hsc L^* \in \Psi^1(\Rea^d)$.

For $(\DL'_k)^{\pm, R}$ we let $A= \hsc^{-1} L \cR_k(1-\psi(|\hsc D|)$, which is in $\Psi_\hsc^{-1}(\Rea^d)$ by Lemma \ref{l:resolveHighFreq} and Part (i) of Theorem \ref{thm:basicP}. For $(\DL'_k)^{\pm, L}$ we let $A= \hsc^{-1} (1-\psi(|\hsc D|)L \cR_k$; we now claim that this is in  $\Psi_\hsc^{-1}(\Rea^d)$.
Indeed, \eqref{eq:symbolL} and the composition formula for symbols \cite[Theorem 4.14]{Zw:12}, \cite[Proposition E.8]{DyZw:19} imply that
\beqs
\big(1- \psi(|\hsc D|)\big)\hsc L = \Op_\hsc(b) +O(\hsc^\infty)_{\Psi^{-\infty}_\hsc},
\eeqs 
where $b$ satisfies the conditions of Lemma \ref{l:resolveHighFreq2} with $m=1$; this lemma therefore implies that $(1-\psi(|\hsc D|)\hsc L \cR_k\in \hsc^2 \Psi^{-1}_\hsc(\Rea^d)$, and thus $A= \hsc^{-1} (1-\psi(|\hsc D|)L \cR_k\in\Psi_\hsc^{-1}(\Rea^d)$.

We now claim that, for both $(\DL'_k)^{\pm, R}$ and $(\DL'_k)^{\pm, L}$, Lemma \ref{l:computeLayer} holds with $m=-1$. 
Indeed, by \eqref{eq:composition_pseudo_symbol1} and \eqref{eq:composition_pseudo_symbol2}, in both cases
 $A=\Op_\hsc(a)$ with
\beqs
a(x,\xi):= \frac{\ri \langle\xi,n\rangle \big(1-\psi(|\xi|)\big)}{|\xi|^2-1} +\hbar r(x,\xi),
\eeqs
with $r\in S^{-2}$. In particular, for $|\langle\xi,n\rangle|\geq 2$, 
\beqs
a(x,\xi)= \frac{\ri \langle\xi,n\rangle}{|\xi|^2}\left( 1 + \frac{1}{|\xi|^2}+ \frac{1}{|\xi|^4} + \cdots\right)+\hsc r(x,\xi);
\eeqs
therefore, \eqref{e:classical} holds with $a_{-1}(x,\xi) = \ri \langle\xi,n\rangle/|\xi|^2$ -- observe that this is homogeneous of degree $-1$
and satisfies \eqref{e:classical2a} with $j=-1$.
Lemma \ref{l:computeLayer} with $m=-1$ then implies that $(\widetilde{\DL}'_k)^{\pm, L/R} \in \Psi^0_\hsc(\Gamma)$ with
\beqs
\sigma_\hsc\big((\widetilde{\DL}'_k)^{\pm, L/R}\big) =\hsc\, \sigma_\hsc(\gamma^\pm A \gamma^*) = \hsc \lim_{x_1\to 0^\pm} \frac{1}{2\pi \hsc}\int_{-\infty}^\infty \frac{ \ri \xi_1 \psi\big(\sqrt{\xi_1^2+|\xi'|_g^2}\big)}{\xi_1^2 + |\xi'|_g^2 - 1}
\re^{\ri \xi_1 x_1/\hsc}
\rd \xi_1;
\eeqs
evaluating the integral via residues gives the second equation in \eqref{eq:HFsym1}.

The proofs for $(\DL_k)^{\pm, L/R}$ are very similar to those for  $(\DL'_k)^{\pm, L/R}$; indeed, for 
 $(\DL_k)^{\pm, L}$ we let $A= \hsc^{-1} (1-\psi(|\hsc D|) \cR_k L^*$, which is in $\Psi_\hsc^{-1}(\Rea^d)$ by Lemma \ref{l:resolveHighFreq}, and 
for $(\DL_k)^{\pm, R}$, we let $A= \hsc^{-1} \cR_k L^*(1-\psi(|\hsc D|)$, which is in $\Psi_\hsc^{-1}(\Rea^d)$ using similar arguments to those used above for $\hsc^{-1} (1-\psi(|\hsc D|)L \cR_k$. The first equation in \eqref{eq:HFsym2} follows in a similar way to above,
since the symbol of $A$ for $(\DL_k)^{\pm, L/R}$ is now minus the symbol of $A$ for $(\DL'_k)^{\pm, L/R}$.

For $(\widetilde{H}_k)^{\pm,L}$ we let $A= (1-\psi(|\hsc D|)) L\cR_kL^*$ and for $(\widetilde{H}_k)^{\pm,R}$ we let $A= L\cR_kL^*(1-\psi(|\hsc D|))\in \Psi_\hsc^{0}(\Rea^d)$; note that in both cases $A\in \Psi_\hsc^{0}(\Rea^d)$ by the arguments above (using Lemma \ref{l:resolveHighFreq2}) and Part (i) of Theorem \ref{thm:basicP}.
Furthermore, in both cases, by the composition formula for symbols \cite[Theorem 4.14]{Zw:12}, \eqref{eq:symbolL},
\eqref{eq:composition_pseudo_symbol1}, and \eqref{eq:composition_pseudo_symbol2},
$A= \Op_\hsc(a)$ with
\beqs
a(x,\xi)= \frac{\langle\xi,n\rangle^2 \big(1- \psi(|\xi|)\big)}{|\xi|^2-1} -\hsc\ri\frac{  \operatorname{div}(n)\langle \xi,n\rangle \big(1-\psi(|\xi|)\big)}{|\xi|^2-1} -\hsc\ri \Big\langle \partial_\xi \frac{\langle \xi,n\rangle \big(1-\psi(|\xi|)\big)}{|\xi|^2-1},\partial_x\langle \xi,n\rangle\Big\rangle +\hsc r(x,\xi),
\eeqs
where $r\in S^{-2}$.
Therefore \eqref{e:classical} holds with  
$$
a_{-1}(x,\xi)= -\hsc \ri \Big(\frac{\operatorname{div} (n)\langle \xi,n\rangle}{|\xi|^2}+\Big\langle \partial_\xi \frac{\langle \xi,n\rangle}{|\xi|^2},\partial_x\langle \xi,n\rangle\Big\rangle\Big)
$$
and $a_{0}(x,\xi) = \langle\xi,n\rangle^2/|\xi|^2$; observe that $a_{-1}$ is homogeneous of degree $-1$, $a_0$ is homogeneous of degree $0$, and both satisfy \eqref{e:classical2a}.
\end{proof}

\begin{theorem}[The high-frequency components of the operators $S_k$, $\DL_k$, $\DL'_k$, and $H_k$.]\label{thm:HFSD}
Let $\chi \in C^\infty_{\rm comp}(\Rea)$ with $\chi(\xi)= 1$ for $|\xi|\leq 2$ and $\chi(\xi)=0$ for $|\xi|\geq 3$.
Then
\beqs
\begin{gathered}
\big(1-\chi\big(|\hsc D'|_g\big)\big) S_k ,\,\, \,S_k\big(1-\chi\big(|\hsc D'|_g\big)\big)\in \hsc \Psi_\hsc^{-1}(\Gamma),\\
\big(1-\chi\big(|\hsc D'|_g\big)\big) \DL_k' ,\,\, \, \DL_k'\big(1-\chi\big(|\hsc D'|_g\big)\big)\in \hsc \Psi_\hsc^{-1}(\Gamma),\\
\big(1-\chi\big(|\hsc D'|_g\big)\big) \DL_k ,\,\, \, \DL_k\big(1-\chi\big(|\hsc D'|_g\big)\big)\in \hsc \Psi_\hsc^{-1}(\Gamma),\\
\big(1-\chi\big(|\hsc D'|_g\big)\big) H_k ,\,\, \, H_k\big(1-\chi\big(|\hsc D'|_g\big)\big)\in \hsc^{-1} \Psi_\hsc^{1}(\Gamma).
\end{gathered}
\eeqs
Moreover,
\beqs
\sigma_\hsc\big(\big(1-\chi\big(|\hsc D'|_g\big)\big) S_k\big)=\sigma_\hsc\big(S_k\big(1-\chi\big(|\hsc D'|_g\big)\big)= \dfrac{\hsc\big(1-\chi(|\xi'|_g)\big)}{2\sqrt{|\xi'|_g^2-1}}
\eeqs
and
\beq\label{eq:H_prin_symb}
\sigma_\hsc\big(\big(1-\chi\big(|\hsc D'|_g\big)\big) H_k\big)=\sigma_\hsc\big(H_k\big(1-\chi\big(|\hsc D'|_g\big)\big)=-\big(1-\chi(|\xi'|_g)\big) \dfrac{\sqrt{|\xi'|_g^2-1}}{2\hsc}.
\eeq
\end{theorem}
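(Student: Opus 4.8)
The plan is to reduce the theorem to Lemma~\ref{l:SDHHigh} by showing that the tangential cutoff $1-\chi(|\hsc D'|_g)$ removes, modulo an $O(\hsc^\infty)_{\Psi_\hsc^{-\infty}}$ error, the low-frequency part of each of $S_k,\DL_k,\DL_k',H_k$. First I would record that $|\hsc D'|_g\in\Psi_\hsc^1(\Gamma)$ is (essentially) self-adjoint with principal symbol $|\xi'|_g$ and elliptic at infinity, so that by the semiclassical functional calculus $\chi(|\hsc D'|_g)\in\Psi_\hsc^{-\infty}(\Gamma)$ and hence $1-\chi(|\hsc D'|_g)\in\Psi_\hsc^0(\Gamma)$ with principal symbol $1-\chi(|\xi'|_g)$, which is supported in $\{|\xi'|_g\geq 2\}$.

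Next, taking $\psi$ as in Lemma~\ref{l:SDHHigh} -- so that $\psi(|\xi|)$ is supported in $\{|\xi|\leq 2-\delta_0\}$ for some $\delta_0>0$ -- I would insert $I=(1-\psi(|\hsc D|))+\psi(|\hsc D|)$ immediately to the right of the outermost trace in \eqref{eq:SHdef}--\eqref{eq:KK'def} to obtain
\[
S_k=\widetilde S^L_k+\gamma\psi(|\hsc D|)\cR_k\gamma^*,\qquad
\DL_k'=\pm\tfrac12 I+(\widetilde{\DL}'_k)^{\pm,L}+\gamma^\pm\psi(|\hsc D|)L\cR_k\gamma^*,
\]
and similarly $\DL_k=\mp\tfrac12 I+(\widetilde{\DL}_k)^{\pm,L}+\gamma^\pm\psi(|\hsc D|)\cR_kL^*\gamma^*$ and $H_k=(\widetilde H_k)^{\pm,L}+\gamma^\pm\psi(|\hsc D|)L\cR_kL^*\gamma^*$; inserting the partition of unity to the left of the inner $\gamma^*$ instead yields the ``$R$''-versions. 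The crux is the claim that
\[
\big(1-\chi(|\hsc D'|_g)\big)\gamma\psi(|\hsc D|)=O(\hsc^\infty)_{\Psi_\hsc^{-\infty}}
\qquad\text{and}\qquad
\psi(|\hsc D|)\gamma^*\big(1-\chi(|\hsc D'|_g)\big)=O(\hsc^\infty)_{\Psi_\hsc^{-\infty}},
\]
the second being the adjoint of the first ($\psi,\chi$ real, $\gamma^*$ the adjoint of $\gamma$); granting this, all the low-frequency terms above -- in which $\gamma^\pm$ may be replaced by $\gamma$ since $\psi(|\hsc D|)$ produces smooth functions -- and their ``$R$''-analogues are $O(\hsc^\infty)_{\Psi_\hsc^{-\infty}}$.

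I expect this claim to be where the main work lies. The point is that $\gamma\psi(|\hsc D|)$ sends any function to one whose semiclassical frequencies, measured in the metric $g_\Gamma$ induced on $\Gamma$, lie in $\{|\xi'|_g\leq 2-\delta_0\}$, which is disjoint from $\supp\big(1-\chi(|\xi'|_g)\big)\subset\{|\xi'|_g\geq 2\}$. Writing $1-\chi(|\hsc D'|_g)$, $\gamma$, and $\psi(|\hsc D|)$ as oscillatory integrals in local coordinates and composing, the total phase is stationary in the $\Gamma$-integration variable only when the cotangent variable of $1-\chi(|\hsc D'|_g)$ equals the pullback to $T^*\Gamma$ of the (Euclidean) frequency variable of $\psi(|\hsc D|)$; since $\Gamma\hookrightarrow\mathbb R^d$ is isometric, the transpose of its differential is norm-nonincreasing, so this forces $|\xi'|_g\leq 2-\delta_0$, i.e.\ no stationary point occurs on $\supp(1-\chi(|\xi'|_g))$, and repeated integration by parts gives the $O(\hsc^\infty)$ bound. (The spatial nonlocality of $\psi(|\hsc D|)$ is harmless: its kernel is $O(\hsc^\infty)$ outside any fixed neighbourhood of the diagonal, so one may insert spatial cutoffs equal to $1$ near $\Gamma$ and use the local bounds on $\cR_k$ of Theorem~\ref{thm:Newton} up to $O(\hsc^\infty)$.)

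Finally I would assemble the pieces using Theorem~\ref{thm:basicP}(i). Since $1-\chi(|\xi'|_g)$ is supported where the symbol formula \eqref{eq:HFsym1} for $\widetilde S_k^{L/R}$ is valid, $(1-\chi(|\hsc D'|_g))\widetilde S_k^{L/R}\in\hsc\Psi_\hsc^{-1}(\Gamma)$ with principal symbol $\hsc\big(1-\chi(|\xi'|_g)\big)/\big(2\sqrt{|\xi'|_g^2-1}\big)$; adding the $O(\hsc^\infty)$ term gives the claim for $S_k$, and the same argument with $(\widetilde H_k)^{\pm,L/R}$ and \eqref{eq:HFsym2} gives \eqref{eq:H_prin_symb}. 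For $\DL_k,\DL_k'$ the extra observation is that, by \eqref{eq:HFsym1}--\eqref{eq:HFsym2}, a representative of the principal symbol of $\pm\tfrac12 I+(\widetilde{\DL}'_k)^{\pm,L}$ (respectively $\mp\tfrac12 I+(\widetilde{\DL}_k)^{\pm,L}$) vanishes on $\{|\xi'|_g\geq 2\}$; multiplying by $1-\chi(|\hsc D'|_g)$, whose principal symbol is supported there, gives an operator with vanishing principal symbol, hence in $\hsc\Psi_\hsc^{-1}(\Gamma)$, and adding the $O(\hsc^\infty)$ term yields $(1-\chi(|\hsc D'|_g))\DL_k',(1-\chi(|\hsc D'|_g))\DL_k\in\hsc\Psi_\hsc^{-1}(\Gamma)$. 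The right-cutoff statements follow identically from the ``$R$''-operators of Lemma~\ref{l:SDHHigh} and the adjoint form of the crux claim.
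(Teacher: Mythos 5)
Your proposal is correct and follows essentially the same route as the paper: decompose each operator via $\psi(|\hsc D|)$, invoke Lemma \ref{l:SDHHigh} for the high-frequency pieces, and show the cross terms vanish to $O(\hsc^\infty)$ because the tangential cutoff $1-\chi(|\hsc D'|_g)$ and $\gamma\psi(|\hsc D|)$ have disjoint frequency supports (the paper proves this via the kernel of $T^*T$ and integration by parts, which is the same non-stationary-phase mechanism you describe). Your explicit symbol-cancellation argument for $\DL_k$ and $\DL_k'$ (the $\pm\tfrac12 I$ terms cancelling the principal symbols of the high-frequency parts on $\{|\xi'|_g\geq 2\}$) is exactly what the paper uses, just spelled out more fully.
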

\begin{proof}
We first claim that for $\chi$ as in the statement of the theorem and $\psi\in C_{\rm comp}^\infty((-2,2))$,
\begin{equation}
\label{e:leftSide1}
T:=\psi(|\hsc D|)\gamma^*(1-\chi (|\hsc D'|_g))=O(\hsc^\infty)_{H_\hsc^{-N}(\Gamma)\to H_\hsc^N(\Rea^d)}.
\end{equation}
Indeed, in the local coordinates described in \S\ref{sec:local}, the kernel of $T$ is given by
$$
T(x,y')=\frac{1}{(2\pi h)^{d}}\int_{\Rea^d} \re^{\frac{\ri}{\hsc}(\langle x'-y',\xi'\rangle+ x_1\xi_1)}a(y',\xi')b(x,\xi)\rd \xi
$$
where $\supp a\cap \supp b=\emptyset$. Hence, the kernel of $T^*T$ is given by
\begin{align*}
&(T^*T)(x',y')
=\int_{\Rea^d} \overline{T(z,x')} T(z,y') \rd z,
\\
&=\frac{1}{(2\pi \hsc)^{2d}}\int_{\Rea^d}\int_{\Rea^d}\int_{\Rea^d} \re^{\frac{\ri}{\hsc}(\langle x'-z',\xi'\rangle -z_1\xi_1+\langle z'-y',\eta'\rangle+ z_1\eta_1)} \bar{a}(x',\xi')\bar{b}(z,\xi)a(y',\eta')b(z,\eta)\rd \eta \rd z \rd \xi\\
&=\frac{1}{(2\pi \hsc)^{2d}}\int_{\Rea^d}\int_{\Rea^d}\int_{\Rea^d} \re^{\frac{\ri}{\hsc}(\langle x',\xi'\rangle +\langle z',\eta'-\xi'\rangle+ z_1(\xi_1-\eta_1)-\langle y',\eta'\rangle)} \bar{a}(x',\xi')\bar{b}(z,\xi)a(y',\eta')b(z,\eta)\rd \eta \rd z \rd \xi.
\end{align*}
Now, if $|\xi-\eta|>c>0$, then we can integrate by parts in $z$ to gain powers of $\hsc| \xi-\eta|^{-1}$ and hence obtain $O(\hsc^\infty)_{\Psi^{-\infty}_\hsc(\Gamma)}$. Similarly if $|z_1|>c$, $|z'-x'|$, or $|z'-y'|>c$, we can integrate by parts in respectively $\eta_1$, $\xi'$, or $\eta'$ to gain powers of $h|z_1|^{-1}$, $h|z'-x'|^{-1}$, or $h|z'-y'|^{-1}$. Since the integrand is compactly supported in $(x',y', \xi ,\eta)$ and when $\xi'=\eta'$, $x'=z'=y'$, and $z_1=0$, the integrand is 0, this implies \eqref{e:leftSide1}.
Taking adjoints of \eqref{e:leftSide1} implies also that 
\begin{equation}
\label{e:leftSide}
(1-\chi (|\hsc D'|_g))\gamma\psi(|\hsc D|)=O(\hsc^\infty)_{H_\hsc^{-N}(\Rea^d)\to H_\hsc^N(\Gamma)}.
\end{equation}

By \eqref{eq:SHdef} and \eqref{e:leftSide}, 
\begin{align*}
(1-\chi(|\hsc D'|_g))S_k=(1-\chi(|\hsc D'|_g))\gamma \cR_k\gamma^*
=(1-\chi(|\hsc D'|_g))\gamma (1-\psi(|\hsc D|)) \cR_k\gamma^*+O(\hsc^\infty)_{\Psi_\hsc^{-\infty}(\Gamma)},
\end{align*}
and the claim that $(1-\chi(|\hsc D'|_g))S_k \in \hsc \Psi_\hsc^{-1}(\Gamma)$ then follows from Lemma~\ref{l:SDHHigh}. 
Similarly, by \eqref{eq:KK'def}, and \eqref{e:leftSide},
\begin{align*}
(1-\chi(|\hsc D'|_g))\DL_k&=(1-\chi(|\hsc D'|_g))(\gamma^+\cR_kL^*\gamma^*-\tfrac{1}{2}I)\\
&=(1-\chi(|\hsc D'|_g))(\gamma^+(1-\psi(|\hsc D|))\cR_kL^*\gamma^*-\tfrac{1}{2}I)+O(\hsc^\infty)_{\Psi^{-\infty}_\hsc(\Gamma)},
\end{align*}
and the claim that $(1-\chi(|\hsc D'|_g))\DL_k \in  \hsc\Psi_\hsc^{-1}(\Gamma)$  follows from Lemma~\ref{l:SDHHigh}. The arguments for $(1-\chi(|\hsc D'|_g))\DL_k'$ and $(1-\chi(|\hsc D'|_g))H_k$ are similar.
To prove the results with cutoffs on the right of $S_k,\DL_k, \DL_k'$, and $H_k$, we argue similarly but with \eqref{e:leftSide} replaced by~\eqref{e:leftSide1}.
\end{proof}

We record the following corollary of Theorem \ref{thm:HFSD}, for specific use in the proof that $\Breg$ and $\Bregp$ are Fredholm (in Part (i) of Theorem \ref{thm:invert}).

\begin{corollary}\label{cor:hBEM}
Suppose that $\Reg$ satisfies Assumption \ref{ass:Reg}. If $\chi \in C^\infty_{\rm comp}(\Rea)$ with $\chi(\xi)= 1$ for $|\xi|\leq 2$ and $\chi(\xi)=0$ for $|\xi|\geq 3$, then 
\beqs
\big(1-\chi\big(|\hsc D'|_g\big)\big) \Reg H_k ,\,\, \, \big(1-\chi\big(|\hsc D'|_g\big)\big)H_k \Reg \in \Psi_{\hsc}^{0}(\Gamma),
\eeqs
and the semiclassical principal symbols of both these operators are real.
\end{corollary}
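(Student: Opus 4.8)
The plan is to derive both claims from Theorem \ref{thm:HFSD}, the composition calculus of Theorem \ref{thm:basicP}, and the multiplicativity of the principal symbol in \eqref{eq:multsymb}. First note that, by Assumption \ref{ass:Reg}, $\Reg = \hsc\widetilde{\Reg}$ with $\widetilde{\Reg}\in\Psi_\hsc^{-1}(\Gamma)$, so that $\Reg\in\hsc\Psi_\hsc^{-1}(\Gamma)$, and $\sigma_\hsc(\Reg)$ is real; also $1-\chi(|\hsc D'|_g)\in\Psi_\hsc^{0}(\Gamma)$ with real principal symbol $1-\chi(|\xi'|_g)$, which is smooth (because $\chi\equiv 1$ near $0$) and supported in $\{|\xi'|_g\geq 2\}$.

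Consider first $(1-\chi(|\hsc D'|_g))H_k\Reg$. By Theorem \ref{thm:HFSD}, $(1-\chi(|\hsc D'|_g))H_k\in\hsc^{-1}\Psi_\hsc^{1}(\Gamma)$ with principal symbol given by \eqref{eq:H_prin_symb}; composing on the right with $\Reg\in\hsc\Psi_\hsc^{-1}(\Gamma)$ and using Theorem \ref{thm:basicP}(i) gives $(1-\chi(|\hsc D'|_g))H_k\Reg\in\hsc^{-1}\Psi_\hsc^{1}(\Gamma)\cdot\hsc\Psi_\hsc^{-1}(\Gamma)=\Psi_\hsc^{0}(\Gamma)$. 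By \eqref{eq:multsymb} its principal symbol is $\sigma_\hsc\big((1-\chi(|\hsc D'|_g))H_k\big)\,\sigma_\hsc(\Reg)$, a product of real-valued symbols -- the first is $-(1-\chi(|\xi'|_g))\sqrt{|\xi'|_g^2-1}/(2\hsc)$, which is real since the factor $1-\chi(|\xi'|_g)$ vanishes for $|\xi'|_g\leq 2$ and so the square root is only evaluated where $|\xi'|_g>1$, and the second is real by Assumption \ref{ass:Reg} -- and is therefore real.

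Now consider $(1-\chi(|\hsc D'|_g))\Reg H_k$. The extra difficulty here is that $H_k$ is not a semiclassical pseudodifferential operator on $\Gamma$, so we cannot compose with it directly; instead we use that $(1-\chi(|\hsc D'|_g))\Reg\in\Psi_\hsc^{0}(\Gamma)\cdot\hsc\Psi_\hsc^{-1}(\Gamma)=\hsc\Psi_\hsc^{-1}(\Gamma)$ has full symbol supported in $\{|\xi'|_g\geq 2\}$. Fix $\widetilde{\chi}\in C^\infty_{\rm comp}(\Rea)$ with $\widetilde{\chi}\equiv 1$ on a neighbourhood of $[-1,1]$ and $\supp\widetilde{\chi}\subset(-2,2)$. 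Then $\widetilde{\chi}(|\hsc D'|_g)$ has symbol supported in $\{|\xi'|_g<2\}$, so by microlocality of pseudodifferential operators (cf.\ Lemma \ref{l:disjoint} and the more general statements referenced there) $(1-\chi(|\hsc D'|_g))\Reg\,\widetilde{\chi}(|\hsc D'|_g)=O(\hsc^\infty)_{\Psi_\hsc^{-\infty}}$, and hence
\beqs
(1-\chi(|\hsc D'|_g))\Reg H_k=\big[(1-\chi(|\hsc D'|_g))\Reg\big]\big[(1-\widetilde{\chi}(|\hsc D'|_g))H_k\big]+O(\hsc^\infty)_{\Psi_\hsc^{-\infty}}\,H_k.
\eeqs
Since $H_k$ is bounded between weighted Sobolev spaces with norm growing at most polynomially in $k$ (by the mapping properties \eqref{eq:mapping}), the last term is $O(\hsc^\infty)_{\Psi_\hsc^{-\infty}}$ and in particular lies in $\Psi_\hsc^{0}(\Gamma)$. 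The proof of Theorem \ref{thm:HFSD} applies verbatim to the cut-off $\widetilde{\chi}$ (it uses only that the cut-off equals $1$ on the support of the auxiliary cut-off $\psi$ of Lemma \ref{l:SDHHigh} and is compactly supported, both of which can be arranged), giving $(1-\widetilde{\chi}(|\hsc D'|_g))H_k\in\hsc^{-1}\Psi_\hsc^{1}(\Gamma)$ with real principal symbol $-(1-\widetilde{\chi}(|\xi'|_g))\sqrt{|\xi'|_g^2-1}/(2\hsc)$; composing with $(1-\chi(|\hsc D'|_g))\Reg\in\hsc\Psi_\hsc^{-1}(\Gamma)$ then yields $(1-\chi(|\hsc D'|_g))\Reg H_k\in\Psi_\hsc^{0}(\Gamma)$ with principal symbol $\big((1-\chi(|\xi'|_g))\sigma_\hsc(\Reg)\big)\big(-(1-\widetilde{\chi}(|\xi'|_g))\sqrt{|\xi'|_g^2-1}/(2\hsc)\big)$, again a product of real-valued symbols, hence real.

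The only step that is not a direct application of the pseudodifferential calculus is the cut-off insertion for $(1-\chi(|\hsc D'|_g))\Reg H_k$: because $H_k$ on its own is not a pseudodifferential operator on $\Gamma$, one first exploits the high-frequency localisation of $(1-\chi(|\hsc D'|_g))\Reg$ to move a cut-off next to $H_k$ and so reduce to Theorem \ref{thm:HFSD}. I expect this to be the only point needing care; the remaining assertions -- membership in $\Psi_\hsc^0(\Gamma)$ and reality of the principal symbols -- follow from Theorems \ref{thm:HFSD} and \ref{thm:basicP} together with \eqref{eq:multsymb}.
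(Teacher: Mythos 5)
Your proposal is correct and takes essentially the same route as the paper: the first operator is handled by composing Theorem \ref{thm:HFSD} with $\Reg$ via Theorem \ref{thm:basicP} and \eqref{eq:multsymb}, and for the second you insert an intermediate cutoff $\widetilde{\chi}$, use Lemma \ref{l:disjoint} to discard $(1-\chi(|\hsc D'|_g))\Reg\,\widetilde{\chi}(|\hsc D'|_g)$, and repeat the composition argument -- exactly the paper's proof (which likewise treats the $O(\hsc^\infty)$ remainder composed with $H_k$ as negligible). The only minor over-claim is the explicit symbol formula you state for $(1-\widetilde{\chi}(|\hsc D'|_g))H_k$ in the region $1<|\xi'|_g<2$, where Lemma \ref{l:SDHHigh} gives no formula; this is harmless since that region is annihilated by the factor $1-\chi(|\xi'|_g)$ in the product.
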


\bpf
The fact that $(1-\chi(|\hsc D'|_g))H_k \Reg\in \Psi_{\hsc}^{0}(\Gamma)$ follows immediately from Part (i) of Theorem \ref{thm:basicP}, Theorem \ref{thm:HFSD}, and Assumption \ref{ass:Reg}, and that its semiclassical principal symbol is real follows from \eqref{eq:H_prin_symb}, Assumption \ref{ass:Reg}, and the first equation in \eqref{eq:multsymb}.

To prove the result about $(1-\chi(|\hsc D'|_g))\Reg H_k$, let $\widetilde{\chi}\in C_{\rm comp}^\infty(\{\chi\equiv 1\})$ and $\widetilde{\chi}\equiv 1$ in a neighbourhood of $[-1,1]$. Then, by the composition formula for symbols \cite[Theorem 4.14]{Zw:12}, \cite[Proposition E.8]{DyZw:19} and Lemma~\ref{l:disjoint},
$$
(1-\chi\big(|\hsc D'|_g\big)\big) \Reg \widetilde{\chi}(|hD'|)=O(\hsc^\infty)_{\Psi_\hsc^{-\infty}(\Rea^d)}.
$$
Therefore 
$$
(1-\chi\big(|\hsc D'|_g\big)\big) \Reg H_k 
=(1-\chi\big(|\hsc D'|_g\big)\big) \Reg
\big(1- \widetilde{\chi}(|hD'|)\big)H_k+O(\hsc^\infty)_{\Psi_\hsc^{-\infty}(\Rea^d)};
$$
the result for $(1-\chi(|\hsc D'|_g))\Reg H_k$ then follows using the same arguments used above for $(1-\chi(|\hsc D'|_g))H_k \Reg$.
\epf

\begin{remark} Using an elliptic parametrix construction (see e.g.~\cite[Proposition E.32]{DyZw:19}) it is easy to check that every instance of $(1-\chi(|k^{-1}D'|_g))$ in Theorem~\ref{thm:HFSD} can be replaced by any semiclassical pseudodifferential operator whose wavefront set does not intersect $\{|\xi'|_g\leq 1\}$ (see~\cite[Definition E.27]{DyZw:19} for a definition of the wavefront set of a semiclassical pseudodifferential operator).

Indeed, if $A\in \Psi^\ell(\Gamma)$ has wavefront set contained in $\{|\xi'|_g> 1\}$, then an elliptic parametrix shows that there exists $Q\in \Psi_\hsc^\ell(\Gamma)$ such that
\beqs
A = Q\big(1-\chi(|k^{-1}D'|_g)\big) + O(\hsc^\infty)_{\Psi^{-\infty}_\hsc(\Gamma)}.
\eeqs
Acting on the right with the boundary-integral operators, we obtain Theorem \ref{thm:HFSD} with $(1-\chi(|k^{-1}D'|_g))$ replaced by $A$ (where we have used that the composition of an $O(\hsc^\infty)_{\Psi^{-\infty}_\hsc(\Gamma)}$ operator and an operator with norm bounded by $\hsc^{-N}$ for some $N>0$ is a $O(\hsc^\infty)_{\Psi^{-\infty}_\hsc(\Gamma)}$ pseudodifferential operator).

In particular, one can show that if $\psi \in C_{\rm comp}^\infty(\Rea)$ with $\psi \equiv 1$ in a neighbourhood of $[-1,1]$, 
then the wavefront set of $(1-\psi(-k^{-2}\Delta_\Gamma))$ is contained in $\{|\xi'|_g> 1\}$,
where $\Delta_\Gamma$ is the surface Laplacian, a.k.a.~the Laplace--Beltrami operator.
Therefore, one can replace $(1-\chi(|k^{-1}D'|_g))$ by $(1-\psi(-k^{-2}\Delta_\Gamma))$; this fact is used in \cite{GaSp:22} to present the results of Theorem \ref{thm:HFSD} without explicitly using pseudodifferential operators.
\end{remark}

\subsection{Bounds on $H_k$}

The following result improves the $k$-dependence of the bounds in \cite[Theorems 4.5 and 4.37]{Ga:19}. 

\begin{theorem}[Bounds on $H_k$]\label{thm:Hk}
If $\Gamma$ is Lipschitz and piecewise smooth, then given $k_0>0$ there exists $C_1>0$ such that
\beq\label{eq:H_kbound1}
\N{H_k}_{H^{t}_k(\Gamma)\rightarrow H^{t-1}_k(\Gamma)} \leq C_1 k \log (k+2) \quad\tfa k\geq k_0 \tand t \in [0,1].
\eeq
If $\Oi$ is convex and $\Gamma$ is $C^\infty$ and curved, then given $k_0>0$ there exists $C_2>0$ such that 
\beq\label{eq:H_kbound2}
\N{H_k}_
{H^{t}_k(\Gamma)\rightarrow H^{t-1}_k(\Gamma)}
\leq C_2 k \quad\tfa k\geq k_0 \tand t\in \Rea.
\eeq
\end{theorem}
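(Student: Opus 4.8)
The plan is to localise $H_k$ in the tangential frequency variable $\xi'$ around the glancing value $|\xi'|_g=1$. Away from glancing, $H_k$ is $k$ times a genuine semiclassical pseudodifferential operator of order $1$ (this is precisely Theorem~\ref{thm:HFSD}), so it is easy to bound; the whole difficulty is concentrated near the glancing set. Concretely, fix $\chi$ as in Theorem~\ref{thm:HFSD}. By that theorem, $(1-\chi(|\hsc D'|_g))H_k$ and $H_k(1-\chi(|\hsc D'|_g))$ lie in $\hsc^{-1}\Psi^1_\hsc(\Gamma)=k\Psi^1_\hsc(\Gamma)$, and hence map $H^t_k(\Gamma)\to H^{t-1}_k(\Gamma)$ with norm $\lesssim k$, uniformly in $t\in\bbR$, by Theorem~\ref{thm:basicP}(ii). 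Writing $H_k=\chi(|\hsc D'|_g)H_k\chi(|\hsc D'|_g)+E_k$, where $E_k$ gathers the three products containing a factor $1-\chi(|\hsc D'|_g)$ adjacent to $H_k$ (and so also lies in $k\Psi^1_\hsc(\Gamma)$), both \eqref{eq:H_kbound1} and \eqref{eq:H_kbound2} reduce to the corresponding bound for the near-glancing piece $\chi(|\hsc D'|_g)H_k\chi(|\hsc D'|_g)$. In the piecewise-smooth setting of part~(i), where the pseudodifferential calculus on $\Gamma$ is unavailable at the corners, I would carry out this reduction only away from the corners and treat the corner contribution by a separate, cruder estimate exploiting that it is supported in a small set, or bypass the localisation entirely and run the argument below directly.

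For the near-glancing piece I would pass to the imaginary wavenumber $\ri k$, where the integral kernel is non-oscillatory. Using the Calderón relation \eqref{eq:Calderon} at $\ri k$, namely $S_{\ri k}H_{\ri k}=-\tfrac14 I+K_{\ri k}^2$, and the resolvent identity $\cR_k-\cR_{\ri k}=2k^2\cR_k\cR_{\ri k}$, which via \eqref{eq:SHdef} gives $H_k=H_{\ri k}+2k^2\gamma^\pm L\cR_k\cR_{\ri k}L^*\gamma^*$, one has $H_{\ri k}=S_{\ri k}^{-1}\big(-\tfrac14 I+K_{\ri k}^2\big)$. Since $S_{\ri k}$ is coercive on $H^{-1/2}_k(\Gamma)$ (Theorem~\ref{thm:Sikcoer}), $S_{\ri k}\colon H^{t-1}_k(\Gamma)\to H^t_k(\Gamma)$ is invertible with $\|S_{\ri k}^{-1}\|\lesssim k$ for $t\in[0,1]$ (and, when $\Gamma$ is $C^\infty$, for all $t$, by Corollary~\ref{cor:Rbound}); moreover $-\tfrac14 I+K_{\ri k}^2=S_{\ri k}H_{\ri k}$ is an \emph{order-zero} operator, bounded on $H^t_k(\Gamma)$ with norm $\lesssim 1$ when $\Gamma$ is $C^\infty$ (uniformly in $t$), and with norm $\lesssim\log(k+2)$ when $\Gamma$ is piecewise smooth — this logarithm, traceable to the piecewise-smooth bounds recalled in Theorem~\ref{th:K_k_bounds} (and the associated $\Phi_k-\Phi_{\ri k}$ estimates), is the origin of the factor $\log(k+2)$ in \eqref{eq:H_kbound1} and of its absence in \eqref{eq:H_kbound2}. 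Thus $\|H_{\ri k}\|_{H^t_k\to H^{t-1}_k}\lesssim k$, resp.\ $\lesssim k\log(k+2)$. Finally the remainder $2k^2\gamma^\pm L\cR_k\cR_{\ri k}L^*\gamma^*$ is estimated directly: the two factors of $\cR$ supply smoothing and gain in $k$ (Theorem~\ref{thm:Newton}), and the trace/normal-derivative factors the expected loss (\eqref{e:basicTrace}, \eqref{e:basicTrace2}, and $\hsc L,\hsc L^*\in\Psi^1_\hsc$), so that after the $k^2$ prefactor this term also maps $H^t_k(\Gamma)\to H^{t-1}_k(\Gamma)$ with norm $\lesssim k$; in part~(ii) the convexity/curvature hypothesis is used here (and in the logarithm-free bound above) via Point~1 of Theorem~\ref{th:K_k_bounds}, which keeps the real-wavenumber double-layer contributions $O(1)$.

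The step I expect to be the main obstacle is this near-glancing estimate, in two respects. First, one must obtain the bounds on $-\tfrac14 I+K_{\ri k}^2$, on $S_{\ri k}^{-1}$ away from the coercivity space $H^{-1/2}_k(\Gamma)$, and on $H_k-H_{\ri k}$ with exactly the right powers of $k$ and $\log(k+2)$ — the careful bookkeeping of weighted Sobolev norms (the conversions between $H^s_k(\Gamma)$, $H^s(\Gamma)$, and the corresponding spaces on $\bbR^d$) is delicate, and getting the sharp $O(k)$ in the convex curved case rather than $O(k\log k)$ relies on the curvature-improved estimate of Theorem~\ref{th:K_k_bounds}. Second, for part~(ii) one must establish uniformity in $t\in\bbR$: since $t$ is not confined to a compact range, one cannot simply reduce to an $L^2(\Gamma)\to L^2(\Gamma)$ bound and absorb the frequency weights (the constants blow up with $|t|$), and must instead conjugate by the elliptic operators $\langle\hsc D'\rangle^{s}\in\Psi^{s}_\hsc(\Gamma)$ and use that genuine semiclassical pseudodifferential operators have mapping bounds uniform in the Sobolev index — which is where the $C^\infty$ hypothesis on $\Gamma$ in part~(ii), as opposed to the piecewise-smooth hypothesis of part~(i), enters essentially.
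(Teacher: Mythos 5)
Your first step (splitting off the part of $H_k$ away from glancing and bounding it by $Ck$ via Theorem~\ref{thm:HFSD} and Theorem~\ref{thm:basicP}) is sound and is essentially the first half of the paper's argument, which uses the cutoff $\psi(|\hsc D|)$ in the ambient frequency and Lemma~\ref{l:SDHHigh} to show the high-frequency piece lies in $\hsc^{-1}\Psi^1_\hsc(\Gamma)$. The gap is in your treatment of what remains. Your plan is to write $H_k=H_{\ri k}+2k^2\gamma^{\pm}L\cR_k\cR_{\ri k}L^*\gamma^*$ and estimate the correction ``directly'', but a direct composition of the available bounds does not give $Ck$: each trace costs $k^{1/2}$ and (just over) half an order (\eqref{e:basicTrace}, \eqref{e:basicTrace2}), each of $L,L^*$ costs $k$ and one order, $\cR_{\ri k}$ gains exactly $k^{-2}$, and the cut-off outgoing resolvent gains only $k^{-1}$ (Theorem~\ref{thm:Newton}); multiplying these out gives $\|\gamma L\cR_k\cR_{\ri k}L^*\gamma^*\|_{H^1_k(\Gamma)\to L^2(\Gamma)}\lesssim 1$, hence $O(k^2)$ after the $2k^2$ prefactor --- a full power of $k$ short of what you claim. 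That missing power cannot be recovered by smoothing bookkeeping: the factor $\gamma L\cR_k$ near glancing is exactly the hard object, and recovering $k\log(k+2)$ for it is precisely the content of the estimates the paper imports from \cite[Lemmas 4.6, 4.11]{Ga:19} for the low-frequency piece $\gamma^\pm L\cR_k\psi(|\hsc D|)L^*\gamma^*$ (and \cite[Lemmas 4.27, 4.36]{Ga:19} for the curved case). So your route re-encounters the original difficulty rather than resolving it.

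Two further claims in the near-glancing step are unsupported. First, the factorisation $H_{\ri k}=S_{\ri k}^{-1}\big(-\tfrac14 I+\DL_{\ri k}^2\big)$ requires $S_{\ri k}:H^{t-1}_k(\Gamma)\to H^{t}_k(\Gamma)$ to be invertible with $\|S_{\ri k}^{-1}\|\lesssim k$ for all $t\in[0,1]$ on a merely Lipschitz, piecewise-smooth $\Gamma$; coercivity (Theorem~\ref{thm:Sikcoer}) plus Lax--Milgram gives this only at $t=1/2$, and Corollary~\ref{cor:Rbound} only applies when $\Gamma$ is $C^\infty$ --- the endpoint cases $t=0,1$ need additional $k$-explicit regularity theory that neither the paper nor your proposal supplies (Theorem~\ref{thm:newSikbound} gives forward bounds on $S_{\ri k}$, not bounds on its inverse). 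Second, your attribution of the $\log(k+2)$ in \eqref{eq:H_kbound1} to bounds on $\DL_{\ri k}$ ``traceable to Theorem~\ref{th:K_k_bounds}'' is not right: that theorem concerns the double-layer operator at \emph{real} wavenumber and gives growth $k^{1/4}\log(k+2)$ for piecewise-smooth $\Gamma$, which fed into $\DL_{\ri k}^2$ would destroy the bound; at imaginary wavenumber one expects $O(1)$ behaviour, so on your route the logarithm would have to come from elsewhere. In the paper's proof the logarithm arises from the low-frequency part of $H_k$ at real wavenumber, and the reduction to $t=1$ (for $t\in[0,1]$) is done by the symmetry $\langle H_k\phi,\psi\rangle_{\Gamma,\Rea}=\langle\phi,H_k\psi\rangle_{\Gamma,\Rea}$ plus interpolation, which sidesteps the uniform-in-$t$ conjugation argument you sketch.
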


In the proof, and in the rest of the paper, $\langle\cdot,\cdot\rangle_{\Gamma, \Rea}$ denotes the \emph{real}-valued duality pairing between $H^s(\Gamma)$ and $H^{-s}(\Gamma)$, so that $\langle \phi,\psi\rangle_{\Gamma, \Rea}$ is the real-valued $L^2(\Gamma)$ inner product when $\phi,\psi\in\LtG$. 

\bpf
We first show that, for all $k>0$,
\beq\label{eq:Hdual}
\big\langle H_k \phi, \psi\big\rangle_{\Gamma,\Rea} = \big\langle\phi,H_k \psi \big\rangle_{\Gamma,\Rea} \quad \tfa \phi \in \LtG\tand\psi \in \HoG.
\eeq
By the density of $H^{1/2}(\Gamma)$ in $\LtG$ and the fact that $H_k$ is bounded $H^1(\Gamma)\rightarrow \LtG$ and $\LtG\rightarrow H^{-1}(\Gamma)$ by \eqref{eq:mapping}, we only need to show that \eqref{eq:Hdual} holds for all $\phi,\psi\in H^{1/2}(\Gamma)$.
Given $\phi,\psi\in H^{1/2}(\Gamma)$, let $u= \cK_k \phi$ and $v=\cK_k\psi$. 
The relation  \eqref{eq:Hdual} then holds by applying Green's second identity to $u$ and $v$ in both $\Oi$ and in $\Oe\cap B_R$ with $R>\diam(\Oi)$, subtracting the two resulting equations, using the third and fourth jump relations in \eqref{eq:jumprelations}, letting $R\tendi$, and using that
\beq\label{eq:NtDdual}
\big\langle \partial_n^+ u , \gamma^+ v\rangle_{\Gamma,\Rea} = \big\langle
 \partial_n^+ v , \gamma^+ u \rangle_{\Gamma,\Rea}, 
\eeq
which holds since both $u$ and $v$ satisfy the Sommerfeld radiation condition; note that here it is important that $\langle\cdot,\cdot\rangle_{\Gamma,\Rea}$ is the \emph{real}-valued duality pairing -- see \cite[Lemma 6.13]{Sp:15}.

By \eqref{eq:Hdual}, 
\beqs
\N{H_k}_{\LtG\rightarrow H^{-1}_k(\Gamma)} =
\N{H_k}_{H^1_k(\Gamma)\rightarrow\LtG}.
\eeqs
Using this and interpolation (see,  e.g., \cite[\S 2.3]{ChSpGiSm:20}, \cite[\S 4]{ChHeMo:15}), 
it is therefore sufficient to prove \eqref{eq:H_kbound1} for $t=1$.
Lemma~\ref{l:SDHHigh} and Part (ii) of Theorem \ref{thm:basicP} implies that 
\begin{equation}
\label{e:estTHk}
\big\|(\widetilde{H}_k)^\pm\big\|_{H_k^s(\Gamma)\to H_k^{s-1}(\Gamma)}\leq Ck.
\end{equation}
The bound
$$
\|\gamma^{\pm}L\cR_k\psi(|\hsc D|)L^*\gamma^*\|_{L^2(\Gamma)\to L^2(\Gamma)}\leq C \langle k\rangle \log (k+2)
$$
follows from~\cite[Lemmas 4.6, 4.11]{Ga:19}; combining this with~\eqref{e:estTHk} implies \eqref{eq:H_kbound1} with $t=1$.
 
The bound \eqref{eq:H_kbound2} when $\Oi$ is convex and $\Gamma$ is $C^\infty$ and curved follows from~\cite[Theorem 4.37]{Ga:19} (or, more precisely~\cite[Lemmas 4.27 and 4.36]{Ga:19}).
\epf

 \subsection{Bounds on \(S_{\ri k}\)}
 
\ble\label{lem:Sik}
$S_{\ri k}$ satisfies Assumption \ref{ass:Reg}. 
\ele

\bpf
We claim that $S_{\ri k}= k^{-1} \widetilde{S}$ where $\widetilde{S}\in \Psi_\hsc^{-1}(\Gamma)$, with 
\beq\label{eq:psSik}
\sigma_\hsc(\widetilde{S}) = \frac{1}{2\sqrt{|\xi'|^2+1}}.
\eeq
By the first equation in \eqref{eq:SHdef}, $S_{\ri k} = \gamma^\pm \cR_{\ri k} \gamma^*$, and by \eqref{eq:resolvent} $\cR_{\ri k}$ 
is the Fourier multiplier with (semiclassical) symbol $\hsc^2(|\xi|^2+1)^{-1}$.
Let $A= \hsc^2 \cR_{\ri k} \in \Psi^{-2}(\Rea^d)$, so that $\widetilde{S} = \hsc^{-1}S_{\ri k}= \hsc \gamma^\pm A\gamma^*$, 
Since $A = \Op_\hsc(a)$ with $a= (|\xi|^2+1)^{-1}\in S^{-2}$, Lemma \ref{l:computeLayer} applies with $m=-2$ and $a_j=0$, and implies that $\widetilde{S}\in\hsc \hsc^{-1} \Psi^{-1}_\hsc(\Gamma)= \Psi^{-1}_\hsc(\Gamma)$ with 
\beqs
\sigma_\hsc(\widetilde{S}) =\hsc \lim_{x_1\to 0^\pm} \frac{1}{2\pi \hsc}\int_{-\infty}^\infty \frac{ \exp( \ri \xi_1 x_1/\hsc)}{\xi_1^2 + |\xi'|_g^2 + 1}\rd \xi_1;
\eeqs
calculating the integral via residues gives \eqref{eq:psSik}.
\epf

The bounds in Corollary \ref{cor:Rbound} therefore hold with $R= S_{\ri k}$.
We now show that, modulo an additional $(\log (k+2))^{1/2}$ factor, the upper bound in \eqref{eq:Rbound1} holds when $R=S_{\ri k}$ and $\Gamma$ is piecewise $C^\infty$ (as opposed to $C^\infty$ in Corollary \ref{cor:Rbound}).

\begin{theorem}\label{thm:newSikbound}
Let $\Oi$ be Lipschitz with $\Gamma$ piecewise smooth (in the sense of Definition \ref{def:piecewisesmooth}).
Then given $k_0>0$ there is $C>0$ such that for all $k\geq k_0$
\begin{equation}
\label{e:singleBound}
\|S_{\ri k}\|_{L^2(\Gamma)\to H_k^1(\Gamma)}+\|S_{\ri k}\|_{H_k^{-1}(\Gamma)\to L^2(\Gamma)}\leq Ck^{-1}(\log (k+2))^{1/2}.
\end{equation}
\end{theorem}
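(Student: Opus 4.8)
The plan is to reduce the claimed estimate to a single operator bound, dispatch the $L^2(\Gamma)\to L^2(\Gamma)$ part by a Schur test, and then spend the real effort on the surface gradient. For the reduction: since $\Phi_{\ri k}(x,y)=\Phi_{\ri k}(y,x)$, the operator $S_{\ri k}$ is self-adjoint in the real-valued $L^2(\Gamma)$ inner product, and since $H^{-1}_k(\Gamma)$ is a realisation of the dual of $H^1_k(\Gamma)$ with $L^2(\Gamma)$ self-dual (Fact (i) of \S\ref{sec:weighted_norms}), we have $\|S_{\ri k}\|_{H^{-1}_k(\Gamma)\to L^2(\Gamma)}=\|S_{\ri k}\|_{L^2(\Gamma)\to H^1_k(\Gamma)}$, so it suffices to bound the latter. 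By \eqref{eq:1knorm} this reduces to proving
\[
\|S_{\ri k}\|_{L^2(\Gamma)\to L^2(\Gamma)}\lesssim k^{-1}
\qquad\text{and}\qquad
\|\nabla_\Gamma S_{\ri k}\|_{L^2(\Gamma)\to L^2(\Gamma)}\lesssim (\log (k+2))^{1/2}.
\]
The first bound is a routine Schur test: $\Phi_{\ri k}$ is a constant multiple of a Yukawa/modified-Bessel kernel, so $|\Phi_{\ri k}(x,y)|\lesssim |x-y|^{-(d-2)}\re^{-ck|x-y|}$ when $d\ge 3$ and $|\Phi_{\ri k}(x,y)|\lesssim (1+|\log(k|x-y|)|)\re^{-ck|x-y|}$ when $d=2$; combining these with the Ahlfors regularity of the Lipschitz surface (i.e.\ $|\Gamma\cap B(x,r)|\lesssim r^{d-1}$) and summing over dyadic distance scales gives $\sup_x\int_\Gamma|\Phi_{\ri k}(x,y)|\,\rd s(y)\lesssim k^{-1}$, and symmetrically in $y$.

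For the gradient bound, first discard the far field: the part of $\nabla_\Gamma S_{\ri k}$ with kernel supported in $\{|x-y|\ge 1\}$ has a smooth kernel of size $O(\re^{-ck})$ and so contributes an operator of norm $O(\re^{-ck})$. On $\{|x-y|<1\}$ use the splitting $\Phi_{\ri k}=\Phi_0+(\Phi_{\ri k}-\Phi_0)$, where $\Phi_0$ is the Laplace fundamental solution \eqref{eq:LaplaceFund}. The $\Phi_0$ contribution is controlled using the classical boundedness of the Laplace single-layer operator $S_0:L^2(\Gamma)\to H^1(\Gamma)$ on Lipschitz domains (see, e.g., \cite{Mc:00} and the references therein), together with the fact that restricting $\nabla_\Gamma S_0$ to $\{|x-y|\ge 1\}$ changes it only by an operator with bounded smooth kernel; so its norm is $O(1)$. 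For the remainder, Taylor expansion of the modified Bessel function at the origin yields the improved pointwise bound $|\nabla_x(\Phi_{\ri k}-\Phi_0)(x,y)|\lesssim \min(k^{d-1},|x-y|^{-(d-1)})$. Decompose the corresponding operator as $\sum_j T_j$, where the kernel of $T_j$ is supported in $\{|x-y|\sim 2^{-j}\}$; by the Schur test and Ahlfors regularity, $\|T_j\|_{L^2(\Gamma)\to L^2(\Gamma)}\lesssim 1$ for every $j$, and the single piece collecting all scales $\lesssim 1/k$ also has norm $O(1)$ (here the $\min$ in the kernel bound is essential). Since the surviving scales satisfy $1/k\lesssim 2^{-j}\lesssim 1$, there are $O(\log(k+2))$ of them, and summing the norms directly already yields $\|\nabla_\Gamma(S_{\ri k}-S_0)\|_{L^2(\Gamma)\to L^2(\Gamma)}\lesssim\log(k+2)$; the remaining task is to improve this to $(\log(k+2))^{1/2}$.

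This last square-root improvement is the main obstacle, and it must come from almost-orthogonality of the pieces $T_j$ rather than from an $\ell^1$ sum of their norms. To leading order the kernel $\nabla_x(\Phi_{\ri k}-\Phi_0)$ points in the radial direction $(x-y)/|x-y|$, so each $T_j$ has the cancellation characteristic of a Calder\'on--Zygmund operator; the standard almost-orthogonality estimate (integration by parts in the kernel of $T_j^*T_\ell$, using the disparity of the scales $2^{-j}$, $2^{-\ell}$ and the mean-zero property) then gives $\|T_j^*T_\ell\|_{L^2(\Gamma)\to L^2(\Gamma)}+\|T_jT_\ell^*\|_{L^2(\Gamma)\to L^2(\Gamma)}\lesssim 2^{-|j-\ell|}$. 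Since only $O(\log(k+2))$ indices $j$ are present,
\[
\Big\|\sum_j T_j\Big\|^2_{L^2(\Gamma)\to L^2(\Gamma)}\le\Big\|\sum_{j,\ell}T_j^*T_\ell\Big\|_{L^2(\Gamma)\to L^2(\Gamma)}\le\sum_{j,\ell}\|T_j^*T_\ell\|_{L^2(\Gamma)\to L^2(\Gamma)}\lesssim\log(k+2),
\]
whence $\|\nabla_\Gamma(S_{\ri k}-S_0)\|_{L^2(\Gamma)\to L^2(\Gamma)}\lesssim(\log(k+2))^{1/2}$, which with the preceding paragraph completes the proof. The delicate point is carrying out the almost-orthogonality estimate on a merely Lipschitz, piecewise-smooth $\Gamma$: one must verify that corners and edges do not destroy the cancellation, which they do not, since the estimate uses only the pointwise size of $\nabla_x(\Phi_{\ri k}-\Phi_0)$ and of its first derivatives away from the diagonal -- quantities controlled by the Euclidean distance $|x-y|$ -- while the curvature of the smooth faces contributes only lower-order errors.
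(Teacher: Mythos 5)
Most of your argument is sound and is a genuinely different (kernel-side) route from the paper, which instead writes $S_{\ri k}=\gamma(-\Delta+k^2)^{-1}\gamma^*$, splits in \emph{frequency} with cutoffs $\psi(k^{-1}|D|)$, and extracts the $(\log(k+2))^{1/2}$ from the explicit estimate $\|\psi(k^{-1}|D|)\gamma^*\|_{L^2(\Gamma)\to H^{-1/2}(\Rea^d)}\lesssim(\log(k+2))^{1/2}$ together with trace bounds and a Fourier-multiplier estimate for the high-frequency difference of resolvents. Your reduction to the $L^2\to H^1_k$ bound, the Schur-test proof of $\|S_{\ri k}\|_{L^2\to L^2}\lesssim k^{-1}$, the splitting $\Phi_{\ri k}=\Phi_0+(\Phi_{\ri k}-\Phi_0)$ with the Lipschitz boundedness of $S_0:L^2(\Gamma)\to H^1(\Gamma)$, the pointwise bound $|\nabla_x(\Phi_{\ri k}-\Phi_0)|\lesssim\min(k^{d-1},|x-y|^{-(d-1)})$, and the resulting $\ell^1$ estimate $\|\nabla_\Gamma(S_{\ri k}-S_0)\|_{L^2\to L^2}\lesssim\log(k+2)$ are all fine.

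The gap is the almost-orthogonality claim $\|T_j^*T_\ell\|+\|T_jT_\ell^*\|\lesssim 2^{-|j-\ell|}$, which carries the entire improvement from $\log$ to $(\log)^{1/2}$ and is asserted rather than proved. Any gain in $|j-\ell|$ must come from a cancellation (near-mean-zero) property of the fine-scale kernel integrated over $\Gamma$, combined with smoothness of the coarse-scale kernel; pointwise size bounds on the kernel and its derivatives give only the trivial estimate $\|T_j^*T_\ell\|\lesssim 1$. But the kernel $\nabla_{\Gamma,x}\big[(\Phi_{\ri k}-\Phi_0)(x,y)\big]$ is the surface gradient of a radial function, which is exactly odd only when $\Gamma$ is flat: on curved faces the surface divergence theorem produces curvature terms, and at edges and corners of a piecewise-smooth $\Gamma$ the ``mean curvature'' is a measure concentrated on the edges, so $\int_\Gamma K_\ell(z,y)\,\rd s(z)$ is not small in any way you have verified. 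Your closing sentence, asserting that the orthogonality estimate ``uses only the pointwise size of $\nabla_x(\Phi_{\ri k}-\Phi_0)$ and of its first derivatives'', is therefore self-undermining: if only size were used, no decay in $|j-\ell|$ could result. There is also an internal warning sign: if your bound $2^{-|j-\ell|}$ did hold uniformly, Cotlar--Stein would give $\|\sum_j T_j\|\lesssim\sum_m 2^{-|m|/2}\lesssim 1$, i.e.\ no logarithm at all -- a strictly stronger statement than the theorem (which the paper only obtains for smooth $\Gamma$, via Corollary \ref{cor:Rbound}), so the claim is doing far more work than acknowledged and cannot be waved through on a piecewise-smooth surface. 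As it stands your argument proves the bound with $\log(k+2)$ in place of $(\log(k+2))^{1/2}$, which is not \eqref{e:singleBound}.
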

\begin{proof}
We prove the first estimate in~\eqref{e:singleBound}; the second estimate follows since $S_{\ri k}$ is self-adjoint on $L^2(\Gamma)$ (see, e.g., \cite[Equation 2.37]{ChGrLaSp:12}).
Recall from the first equation in \eqref{eq:SHdef} that $S_{\ri k} = \gamma (-\Delta+k^2)^{-1}\gamma^*$ and recall
that, 
since $k^2(-\Delta+k^2)^{-1} \in \Psi_\hsc^{-2}(\Rea^d)$,
for $k\geq k_0$,
\begin{equation}
\label{e:basicElliptic}
\|(-\Delta+k^2)^{-1}\|_{H_{k}^s(\Rea^d)\to H_k^{s+2}(\Rea^d)}\leq Ck^{-2}.
\end{equation}
Therefore, fixing $0<\e<1/2$, and using 
\eqref{e:basicElliptic}, the trace bounds \eqref{e:basicTrace}, \eqref{e:basicTrace}, and the inequalities \eqref{eq:weightednorm}, we have   
\begin{align}\nonumber
\|S_{\ri k}\|_{L^2(\Gamma)\to L^2(\Gamma)}&\leq \|\gamma^*\|_{H_k^{-\e}(\Gamma)\to H_k^{-\frac{1}{2}-\e}(\Rea^d)}\|(-\Delta+k^2)^{-1}\|_{H_k^{-\frac{1}{2}-\e}(\Rea^d)\to H_k^{\frac{3}{2}-\e}(\Rea^d)}\|\gamma\|_{H_k^{\frac{3}{2}-\e}(\Rea^d)\to L^2(\Gamma)}
\\
&\leq Ck^{-1}.
\label{e:L2Bound}
\end{align}
By \eqref{eq:1knorm},
\beq\label{eq:Sik1knorm}
\|S_{\ri k}\|_{L^2(\Gamma)\to H_k^1(\Gamma)}\leq C\big(\|S_{\ri k}\|_{L^2\to L^2}+k^{-1}\|\nabla_{\Gamma}S_{\ri k}\|_{L^2(\Gamma)\to L^2(\Gamma)}\big),
\eeq
therefore, we only need to estimate $k^{-1}\nabla_{\Gamma}S_{\ri k}:L^2(\Gamma)\to L^2(\Gamma)$. For this, we let $\psi\in C_{\rm comp}^\infty(\mathbb{R})$ with $\psi \equiv 1$ on $[-1,1]$, and decompose $S_{\ri k}$ as follows: 
\begin{equation}
\label{e:decompose}
\begin{aligned}
S_{\ri k}&=S_{\ri} + (S_{\ri k}-S_\ri),\\
&= S_{\ri} + \gamma \Big[(-\Delta+k^2)^{-1}-(-\Delta+1)^{-1}\Big]\gamma^*,\\
&=S_{\ri}+\gamma\psi(k^{-1}|D|)(-\Delta+k^2)^{-1}\gamma^*\\
&\quad+\gamma(1-\psi(k^{-1}|D|))\Big[(-\Delta+k^2)^{-1}-(-\Delta+1)^{-1}\Big]\gamma^*-\gamma \psi(k^{-1}|D|)(-\Delta+1)^{-1}\gamma^*\\
&=: {\rm I+II+III+IV}
\end{aligned}
\end{equation}
We now estimate each term ${\rm I}$ through ${\rm IV}$ individually. 
First, for ${\rm I}$, we recall from \eqref{eq:mapping} that 
\begin{equation}
\label{e:singleI}
\|S_\ri\|_{L^2(\Gamma)\to H^1(\Gamma)}\leq C.
\end{equation}
To estimate ${\rm II}$, recall that, 
since $\psi$ has compact support, $\psi(k^{-1}|D|):H_k^{s}(\Gamma)\to H_k^{s+N}(\Gamma)$ for any $N$ is bounded (uniformly in $k$) (cf.~\eqref{eq:frequencycufoff}). 
Therefore,  using \eqref{eq:weightednorm}, the trace bounds \eqref{e:basicTrace} and \eqref{e:basicTrace2}, and \eqref{e:basicElliptic}, we have
\begin{equation}
\label{e:lowFreq}
\|{\rm II}\|_{L^2(\Gamma)\to H^1_k(\Gamma)}=\|\gamma \psi(k^{-1}|D|)(-\Delta+k^2)^{-1}\gamma^*\|_{L^2(\Gamma)\to H_k^1(\Gamma)}\leq Ck^{-1}.
\end{equation}
We now claim that 
\begin{equation}\label{eq:III}
\Big\|\big(1-\psi(k^{-1}|D|)\big)\big[(-\Delta+k^2)^{-1}-(-\Delta+1)^{-1}\big]\Big\|_{H_{k}^s(\Rea^d)\to H_k^{s+4}(\Rea^d)}\leq k^{-2}.
\end{equation}
and thus, using~\eqref{e:basicTrace}, \eqref{e:basicTrace2}, and \eqref{eq:weightednorm} again,
\begin{equation}
\label{e:highFreqDiff}
\|{\rm III}\|_{L^2(\Gamma)\to H^1_k(\Gamma)}=\big\|\gamma\big(1-\psi(k^{-1}|D|)\big)\big[(-\Delta+k^2)^{-1}-(-\Delta+1)^{-1}\big]\gamma^*\big\|_{L^2(\Gamma)\to H_k^{1}(\Gamma)}\leq Ck^{-1}.
\end{equation}
To prove \eqref{eq:III}, 
observe that $(1-\psi(k^{-1}|D|))[(-\Delta+k^2)^{-1}-(-\Delta+1)^{-1}]$ is a Fourier multiplier with (semiclassical) symbol
\beqs
 \big(1- \psi(\xi)\big)\hsc^2 \left[ \frac{1}{|\xi|^2+1} - \frac{1}{|\xi|^2 + \hsc^2}\right] = 
\frac{\hsc^2 (-1+\hsc^2)  \big(1- \psi(\xi)\big)}{
(|\xi|^2+1)(|\xi|^2 + \hsc^2)};
\eeqs
therefore 
\beqs
\Big\|\big(1-\psi(k^{-1}|D|)\big)\big[(-\Delta+k^2)^{-1}-(-\Delta+1)^{-1}\big]\Big\|_{H_{k}^s(\Rea^d)\to H_k^{s+4}(\Rea^d)}\leq 
\sup_{\xi\in \Rea^d} \left|
\langle\xi\rangle^4
\frac{\hsc^2 (-1+\hsc^2)  \big(1- \psi(\xi)\big)}{
(|\xi|^2+1)(|\xi|^2 + \hsc^2)}
\right|,
\eeqs
and  \eqref{eq:III} follows since $1- \psi(\xi)=0$ for $|\xi|\leq 1$.

To estimate ${\rm IV}$, we claim that
$$
\|\psi (k^{-1}|D|)\gamma^*\|_{L^2(\Gamma)\to H^{-1/2}(\Rea^d)}\leq C(\log (k+2))^{1/2}; 
$$
indeed,
\begin{align*}
\int_{\Rea^d} \langle \xi\rangle^{-1} \big|\cF_\hsc u (\xi')\big|^2 (\psi( k^{-1} |\xi|))^2 \rd \xi 
&\leq \int_{\Rea^d} \langle \xi_1\rangle^{-1} \big|\cF_\hsc u (\xi')\big|^2 (\psi( k^{-1} |\xi|))^2 \rd \xi_1\rd \xi'  \\
&\lesssim \log(k+2) \int_{\Rea^{d-1}} \big|\cF_\hsc u (\xi')\big|^2\rd \xi'.
\end{align*}
Then, since $(-\Delta+1)^{-1}:H^{-1/2}(\Rea^d)\to H^{3/2}(\Rea^d)$ and $\gamma:H^{3/2}(\Rea^d)\to H^1(\Gamma)$ are bounded,
\begin{equation}
\label{e:lowFreqI}
\|{\rm IV}\|_{L^2(\Gamma)\to H^1(\Gamma)}=\|\gamma(-\Delta+1)^{-1}\psi(k^{-1}|D|)\gamma^*\|_{L^2(\Gamma)\to H^1(\Gamma)}\leq C(\log (k+2))^{1/2}.
\end{equation}
Therefore, combining~\eqref{e:singleI},~\eqref{e:lowFreq},~\eqref{e:highFreqDiff}, and~\eqref{e:lowFreqI} with~\eqref{e:decompose}, we have
\begin{align}\nonumber
\|k^{-1}\nabla_{\Gamma}S_{\ri k}\|_{L^2(\Gamma)\to L^2(\Gamma)}&\leq k^{-1}\|{\rm I}\|_{L^2\to H^1}+\|{\rm II}\|_{L^2(\Gamma)\to H_k^{1}(\Gamma)}+\|{\rm III}\|_{L^2(\Gamma)\to H_k^1(\Gamma)}+k^{-1}\|{\rm IV}\|_{L^2(\Gamma)\to H^1(\Gamma)}\\
&\leq Ck^{-1}(\log (k+2))^{1/2};\label{e:derivativeBound}
\end{align}
the result then follows by combining \eqref{eq:Sik1knorm}, \eqref{e:L2Bound}, and \eqref{e:derivativeBound}.
\end{proof}

\section{New bounds on the exterior Neumann-to-Dirichlet map $\NtD$ and their proofs}\label{sec:NtD}

Recall that  $\NtD : \HmhG\rightarrow\HhG$ denotes the Neumann-to-Dirichlet map for the Helmholtz equation posed in $\Oe $ with the Sommerfeld radiation condition \eqref{eq:src}. 

\ble\label{lem:NtD1}
For all $k>0$, $\NtD$ has a unique extension to a bounded operator $\NtD: H^{-1}(\Gamma)\rightarrow \LtG$.
Furthermore $\NtD: H^{s-1/2}(\Gamma)\rightarrow H^{s+1/2}(\Gamma)$ and 
\beq\label{eq:NtDinter}
\N{ \NtD}_{H_k^{s-1/2}(\Gamma)\rightarrow H^{s+1/2}_k(\Gamma)}=
\N{ \NtD}_{L^{2}(\Gamma)\rightarrow H^{1}_k(\Gamma)}\quad\tfa |s|\leq 1/2.
\eeq
\ele

\bpf[Proof of Lemma \ref{lem:NtD1}]
The extension to $H^{-1}(\Gamma)\rightarrow \LtG$ follows from, e.g., \cite[Theorem 2.31]{ChGrLaSp:12}. By Green's second identity, the Sommerfeld radiation condition and the fact that $\langle\cdot,\cdot\rangle_{\Gamma,\Rea}$ is the \emph{real}-valued duality pairing (as opposed to the complex one; see \cite[Lemma 6.13]{Sp:15}).
\beq\label{eq:NtDdual2}
\big\langle \NtD \phi, \psi\rangle_{\Gamma,\Rea} = \big\langle\phi,\NtD \psi\rangle_{\Gamma,\Rea} \quad\tfa \phi,\psi \in H^{1/2}(\Gamma)
\eeq
(this relation was used in the form \eqref{eq:NtDdual} in the proof of Theorem \ref{thm:Hk}).
By density of $H^{1/2}(\Gamma)$ in $H^1(\Gamma)$, this last equation holds for all $\psi \in H^1(\Gamma)$, and thus
\beqs
\N{ \NtD}_{H^{-1}_k(\Gamma)\rightarrow L^{2}(\Gamma)}=\N{ \NtD}_{L^{2}(\Gamma)\rightarrow H^{1}_k(\Gamma)};
\eeqs
see, e.g., \cite[Lemma 2.3]{Sp:14}.
The bound \eqref{eq:NtDinter} then holds by interpolation; see, e.g., \cite[\S 2.3]{ChSpGiSm:20}, \cite[\S 4]{ChHeMo:15}.
\epf

Part (i) of the following theorem is from \cite[Theorem 1.5]{BaSpWu:16}; the other parts are stated and proved here for the first time (using the PDE results of \cite{LaSpWu:20, Bu:98, Vo:00}).

\begin{theorem}[Bounds on $\NtD$]\label{thm:NtD}

\

(i) If $\Oi$ is $C^\infty$ and nontrapping, then given $k_0>0$  there exists $C>0$ such that 
\beqs
\N{ \NtD}_{H_k^{s-1/2}(\Gamma)\rightarrow H^{s+1/2}_k(\Gamma)}
\leq C  k^{-\beta} \quad\tfa k\geq k_0 \text{ and for all } |s|\leq 1/2,
\eeqs
where $\beta=2/3$ if $\Gamma$ is curved, $\beta=1/3$ otherwise.

(ii) If $\Oi$ is Lipschitz then, given $k_0>0$ and $\delta>0$, there exists a set $J\subset [k_0,\infty)$ with $|J|\leq \delta$ such that given $\eps>0$ there exists $C=C(k_0,\delta,\eps)>0$ such that 
\beqs
\N{ \NtD}_{H_k^{s-1/2}(\Gamma)\rightarrow H^{s+1/2}_k(\Gamma)}
\leq C k^{5d/2+1 + \eps} \quad\tfa k \in [k_0,\infty)\setminus J  \text{ and for all } |s|\leq 1/2.
\eeqs 

(iii) If $\Oi$ is $C^{1,\sigma}$ for some $\sigma>0$, then, given $k_0>0$ and $\delta>0$, there exists a set $J\subset [k_0,\infty)$ with $|J|\leq \delta$ such that given $\eps>0$ there exists $C=C(k_0,\delta,\eps)>0$ such that 
\beqs
\N{ \NtD}_{H_k^{s-1/2}(\Gamma)\rightarrow H^{s+1/2}_k(\Gamma)}
\leq C k^{5d/2 + \eps} \quad\tfa k \in [k_0,\infty)\setminus J  \text{ and for all } |s|\leq 1/2.
\eeqs 

(iv) If $\Oi$ is $C^\infty$, then, given $k_0>0$ there exists $\alpha'>0$ and $C>0$ such that 
\beqs
\N{ \NtD}_{H_k^{s-1/2}(\Gamma)\rightarrow H^{s+1/2}_k(\Gamma)}
\leq C \exp (\alpha' k) \quad\tfa k\geq k_0  \text{ and for all } |s|\leq 1/2.
\eeqs 
\end{theorem}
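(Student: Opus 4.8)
The plan is to relate the Neumann-to-Dirichlet map $\NtD$ to the resolvent of the exterior Neumann Laplacian, reduce the $\LtG\to H^1_k(\Gamma)$ estimate (which by Lemma \ref{lem:NtD1} is all that is needed) to an interior-type bound on the scattered field, and then invoke the known frequency-explicit resolvent/solution-operator bounds for each geometric hypothesis. The starting point is the standard representation: given $g\in\LtG$ (identified with a Neumann datum), solve the exterior problem $(\Delta+k^2)u=0$ in $\Oe$, $\partial_n^+u=g$ on $\Gamma$, with the Sommerfeld condition \eqref{eq:src}, and set $\NtD g = \gamma^+u$. Cutting off and using the standard a priori machinery (multiply the equation by $\chi^2\bar u$, integrate by parts, use the radiation condition as in the Rellich/Morawetz identity), one reduces the weighted norm $\|u\|_{H^1_k(\Oe\cap B_R)}$ to $\|g\|_{\LtG}$ up to the norm of the exterior solution operator, i.e.\ one gets an estimate of the form $\|u\|_{H^1_k(\Omega_R)}\lesssim (1 + k\,\Csol)\|g\|_{\HmhG}$ where $\Csol$ is the $L^2\to L^2$ (cut-off) norm of the exterior resolvent $\cR_k$ restricted to a neighbourhood of $\Gamma$. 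Then the trace bound \eqref{e:basicTrace} (at the level $s=1$, costing $k^{1/2}$) gives $\|\gamma^+u\|_{H^{1/2}_k(\Gamma)}\lesssim k^{1/2}\|u\|_{H^1_k(\Omega_R)}$, and combining with the duality/interpolation identity \eqref{eq:NtDinter} yields the desired bound once the behaviour of $\Csol$ is inserted; more precisely one should do the ``square root'' trick used for Dirichlet-type operators (write $\NtD = \gamma^+ \chi \cR_k \chi (\text{something}) + \ldots$ and exploit self-adjointness \eqref{eq:NtDdual2} to split the $k^{1/2}$ loss on the two traces, or carry the loss through interpolation), so that the final power of $k$ is governed solely by $\Csol$ plus the $O(1)$ and $k^{1/2}$ contributions, which are subdominant in all four cases.

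With this reduction in hand, the four parts follow from plugging in the appropriate resolvent estimates. For (i), nontrapping obstacles satisfy $\|\chi\cR_k\chi\|_{L^2\to L^2}\lesssim k^{-1}$, and when $\Gamma$ is additionally curved one has the stronger ``exterior of a convex body'' bound giving an extra power; quantitatively the exterior Neumann solution operator is $O(1)$ in the nontrapping case and $O(k^{-1/3})$ in the curved case (this is exactly \cite[Theorem 1.5]{BaSpWu:16} in the nontrapping curved case; the general nontrapping case and the $\beta = 1/3$ vs $\beta = 2/3$ dichotomy come from the propagation-of-singularities / glancing analysis), so that after the $k^{1/2}$ trace losses one lands on $k^{-2/3}$ respectively $k^{-1/3}$. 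For (iv), one uses that for \emph{any} $C^\infty$ obstacle the meromorphic continuation of the exterior Neumann resolvent has no real poles and, by the a priori polynomial bound on the density of resonances together with the semiclassical maximum principle of Burq/Cardoso--Vodev (\cite{Bu:98, Vo:00}), one has $\|\chi\cR_k\chi\|_{L^2\to L^2}\leq C\exp(\alpha' k)$; feeding this in gives the exponential bound. For (ii) and (iii) one uses the recent ``most frequencies'' results of \cite{LaSpWu:20}: outside a set $J$ of frequencies of measure $\leq\delta$, the exterior solution operator is polynomially bounded, with the explicit exponent $5d/2$ (Lipschitz gains an extra $k$, so $5d/2+1$; $C^{1,\sigma}$ removes that extra power, so $5d/2$), and again the $k^{1/2}$ trace factors and the $k^{-\eps}$ bookkeeping are absorbed into the stated $k^\eps$.

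The main obstacle is the bookkeeping of the powers of $k$ so that the final exponents come out \emph{exactly} as stated rather than off by $1/2$ or $1$: the naive argument (interior elliptic regularity $\times$ two trace operators) loses $k^{1/2}$ twice on top of the solution-operator norm, which would be too lossy for part (i) in the curved case. The fix is the self-adjointness identity \eqref{eq:NtDdual2}, which upgrades $\|\NtD\|_{\HmhG\to\HhG}$-type estimates by interpolating between the $L^2\to H^1_k$ and $H^{-1}_k\to L^2$ bounds as in \eqref{eq:NtDinter}, so that effectively only the solution-operator norm (with its native $k$-power, e.g.\ $k^{-1}$ from $k\cR_k$ in the nontrapping/curved cases, cf.\ Theorem \ref{thm:Newton}) plus one net $k^{1/2}$-worth of trace loss remains, matched against the gain from $\Csol$. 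A second, more technical point is handling $\Oi$ that is merely Lipschitz or $C^{1,\sigma}$ in parts (ii)--(iii): there the interior regularity estimates near $\Gamma$ must be replaced by the low-regularity trace theory of \cite[Theorem 3.38]{Mc:00} (valid for Lipschitz $\Gamma$, $1/2<s<3/2$, hence \eqref{e:basicTrace}), and one must take care that all constants depend only on $k_0,\delta,\eps$ and not on $k$; this is where invoking \cite{LaSpWu:20} essentially verbatim is cleanest.
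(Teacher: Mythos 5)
Your overall frame—reduce to the $\LtG\to H^1_k(\Gamma)$ bound via Lemma \ref{lem:NtD1}, then feed in frequency-explicit solution-operator estimates (\cite{LaSpWu:20} for parts (ii)--(iii), \cite{Bu:98,Vo:00} for part (iv))—is the paper's route for parts (ii)--(iv), but there are two genuine gaps. First, part (i) cannot be obtained the way you describe. Transferring an interior resolvent estimate to the boundary gives at best $\|\NtD\|_{\LtG\to H^1_k(\Gamma)}\lesssim K(k)$ with $K(k)\sim 1$ in the nontrapping case; this is exactly Lemma \ref{lem:res}, and the paper remarks immediately after that lemma that this transfer is \emph{not} sharp, since the true bound is $k^{-1/3}$ (and $k^{-2/3}$ when $\Gamma$ is curved). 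No combination of trace bounds costing $k^{1/2}$ with the self-adjointness/interpolation identity \eqref{eq:NtDinter} can manufacture these negative powers: \eqref{eq:NtDdual2} only equates the $\LtG\to H^1_k(\Gamma)$ and $H^{-1}_k(\Gamma)\to\LtG$ norms, it does not cancel trace losses, and your arithmetic (solution operator $O(1)$, resp.\ $O(k^{-1/3})$, ``after the $k^{1/2}$ trace losses one lands on $k^{-2/3}$ resp.\ $k^{-1/3}$'') does not close. The sharp decay in part (i) needs the dedicated boundary analysis of \cite[Theorem 1.8]{BaSpWu:16}, which the paper simply cites for $s=1/2$ and then extends to $|s|\le 1/2$ by Lemma \ref{lem:NtD1}.

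Second, even for parts (ii)--(iii) the transfer from the resolvent bound \eqref{eq:res} to the $\NtD$ bound must be loss-free: if you pay even one factor $k^{1/2}$ from the trace estimate \eqref{e:basicTrace}, you obtain exponents $5d/2+3/2+\eps$ and $5d/2+1/2+\eps$ rather than those stated, and such a loss is not ``subdominant'' (only part (iv) would tolerate it). The paper's Lemma \ref{lem:res}, following \cite[Theorem 1.5]{Sp:14}, achieves the lossless transfer by Green's-identity and Rellich-identity boundary arguments rather than the crude trace theorem—the same mechanism is written out in the proof of Lemma \ref{lem:ItDext}, where the imaginary part of Green's identity controls $\|\gamma^+u\|_{\LtG}$ and $k^{-1}\|\partial_n^+u\|_{\LtG}$, and a Rellich-type identity controls $k^{-1}\|\nabla_\Gamma\gamma^+u\|_{\LtG}$. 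To repair your argument you should either invoke Lemma \ref{lem:res} (i.e.\ \cite{Sp:14}) directly or reproduce that boundary argument; the ``square root trick'' you propose does not substitute for it.
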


Parts (ii)-(iv) of Theorem \ref{thm:NtD} are proved using Lemma \ref{lem:NtD1} and the following lemma.

\ble\label{lem:res}
Assume that, given $k_0>0$ and \(f\in L^2(\Oe )\) with support in $B_R$ for some $R>0$, the solution \(u\in H^1_{\mathrm{loc}}(\Oe )\) of the Helmholtz equation \(\Delta u + k^2 u=-f\) in \(\Oe \) that satisfies the Sommerfeld radiation condition \eqref{eq:src} and $\partial_n u=0$ on $\Gamma$ satisfies
\beq\label{eq:res}
\N{ \nabla u }_{L^2(\Oe \cap B_R)}+k \N{ u }_{L^2(\Oe \cap B_R)} \leq C_1\, K(k)\N{ f}_{L^2(\Oe )}
\eeq
for $k$ in some subset of $[k_0,\infty)$ and $C_1>0$ independent of $k$.
Then there exists $C_2>0$ 
such that, for $k$ in the same subset of $[k_0,\infty)$,
\beqs
\N{\NtD}_{\LtG\rightarrow H^1_k(\Gamma)} \leq C_2\, K(k). 
\eeqs
\ele

This result is analogous to the Dirichlet result in \cite[Lemma 4.2]{ChSpGiSm:20}. However, whilst the $k$-dependence in \cite{ChSpGiSm:20} is sharp, the $k$-dependence in Lemma \ref{lem:res} is not. Indeed, 
when $\Oi $ is nontrapping $K(k)\sim 1$ by the results of \cite{Va:75, MeSj:82} (see, e.g., the discussion in \cite[\S1.2]{BaSpWu:16}), but the sharp bound on $\NtD$ in this case is $\|\NtD\|_{\LtG\rightarrow H^1_k(\Gamma)} \leq C k^{-1/3}$ 
given by Part (i) of Theorem \ref{thm:NtD}.

 \bpf[Proof of Lemma \ref{lem:res}]
This result with $K(k) =1$ is proved in \cite[Theorem 1.5]{Sp:14}. The result for more general $K(k)$ follows in exactly the same way.
\epf

\bpf[Proof of Theorem \ref{thm:NtD}]
Part (i) is proved for $s=1/2$ in \cite[Theorem 1.8]{BaSpWu:16}, and then holds for all $|s|\leq 1/2$ by Lemma \ref{lem:NtD1}.
Under the assumptions of Part (ii), by \cite[Theorem 1.1 and Lemma 2.1]{LaSpWu:20}, 
given $k_0>0$ and $\delta>0$, there exists $J\subset [k_0,\infty)$ with $|J|\leq \delta$ such that 
 \eqref{eq:res} holds for $k\in [k_0,\infty)\setminus J$ with $K(k) = k^{5d/2+1+\eps}$.
  Under the assumptions of Part (iii), an analogous result holds with $K(k) = k^{5d/2+\eps}$ by \cite[Corollary 3.7]{LaSpWu:20}.
 Under the assumptions of Part (iv), \eqref{eq:res} holds for all $k$ with $K(k)= \exp(\alpha k)$, for some $\alpha>0$, by \cite[Theorem 2]{Bu:98}, \cite{Vo:00}.
 \epf

 \section{New results about the interior impedance-to-Dirichlet map $\ItDS$ and their proofs}\label{sec:ItDS}

\begin{lemma}[Existence of $\ItDR:\HhG\rightarrow \HhG$]\label{lem:ItDS1}
Let $\Gamma$ be Lipschitz and $s>1/2$. Suppose $R:H^{-1/2}(\Gamma)\to H^{1/2}(\Gamma)$ is bounded, there exist $C_1, C_2>0$ such that
\beq\label{eq:Rcoer}
|\Re\big\langle \Reg\psi,\psi \big\rangle_{\Gamma}|\geq C_1 \N{\psi}^2_{\HmhG} \quad\tfa \psi \in \HmhG, 
\eeq
\beq\label{eq:ImR}
|\Im\big\langle \Reg\psi,\psi \big\rangle_{\Gamma}|\leq C_2 \N{\psi}^2_{H^{-s}(\Gamma)} \quad\tfa \psi \in H^{-1/2}(\Gamma), 
\eeq
and $R^{-1}:H^{1-s}(\Gamma)\to H^{-s}(\Gamma)$ is bounded.

Then, given $g\in \HhG$, $k>0$, and $\eta\in \Rea\setminus\{0\}$, there exists a unique solution $u\in H^1(\Oi)$ to the boundary-value problem 
\beq\label{eq:ItDSdef2}
\Delta u + k^2 u =0 \quad\tin \Oi  \quad\tand \quad R\partial_n^- u - \ri \eta \gamma^- u = g \quad\ton \Gamma,
\eeq
and thus $\ItDR:\HhG\rightarrow\HhG$ is well-defined.
\ele

\bpf
The variational formulation of the boundary-value problem \eqref{eq:ItDSdef} with $g\in H^{1/2}(\Gamma)$ is: 
\beq\label{eq:vp}
\text{ find } u\in H^1(\Oi) \tst a(u,v)= F(v) \quad\tfa v\in H^1(\Oi), 
\eeq
where
\beqs
a(u,v):= \int_\Oi \Big(\nabla u \cdot \overline{\nabla v} - k^2 u\,\overline{v} \Big)- \ri \eta \big\langle \Reg^{-1} \gamma^- u , \gamma^- v\big\rangle_\Gamma
\quad\tand\quad
F(v):=  \big\langle \Reg^{-1}g , \gamma^- v\big\rangle_\Gamma.
\eeqs
We now show that the solution of this problem, if it exists, is unique.  
By \eqref{eq:Rcoer} 
if $g=0$, then
\begin{align*}
0=|\Im a(u,u) |= \big| \eta \Re\big\langle \Reg^{-1} \gamma^- u,\gamma^- u\big\rangle_\Gamma\big| \geq  \eta \,C_1\N{R^{-1}\gamma^- u}^2_{H^{-1/2}(\Gamma)},
\end{align*}
and uniqueness follows since $R$ is invertible from $\HmhG\rightarrow \HhG$ by \eqref{eq:Rcoer} and the Lax--Milgram theorem (see, e.g., \cite[Lemma 2.32]{Mc:00}).

To prove existence, first observe that, by \eqref{eq:ImR} and the assumption that $R^{-1}:H^{1-s}(\Gamma)\to H^{-s}(\Gamma)$ is bounded,
for $\psi\in H^{1/2}(\Gamma)$, 
\begin{align*}
\big|\Im \big\langle R^{-1}\psi, \psi\big\rangle_{\Gamma}\big|&= \big|\big\langle (R^{-1})^*(\Im R)R^{-1} \psi,\psi\big\rangle_{\Gamma}\big|\\
&= \big|\big\langle (\Im R)R^{-1} \psi,R^{-1}\psi\big\rangle_{\Gamma}\big|\leq C\|R^{-1}\psi\|_{H^{-s}(\Gamma)}^2\leq C\|\psi\|_{H^{1-s}(\Gamma)}^2.
\end{align*}
Therefore, for $v\in H^1(\Oi)$ and $s>1/2$,
\begin{align*}
\Re a(v,v) = \N{\nabla v}^2_{L^2(\Oi)} - k^2 \N{v}^2_{L^2(\Oi)}+\eta \Im \langle R^{-1}\gamma^{-}v,\gamma^{-}v\rangle_{\Gamma} \geq \|v\|_{H^1(\Oi)}^2-C\|v\|_{H^{\frac{3}{2}-s}(\Oi)}^2.
\end{align*}
Since $H^1(\Oi)$ is compactly contained in $H^{3/2-s}(\Oi)$ with $s>1/2$ (see, e.g., \cite[Theorem 3.27]{Mc:00}),
the solution of the variational problem \eqref{eq:vp} exists and is unique in $H^1(\Oi)$ by Fredholm theory (see, e.g., \cite[Theorem 2.34]{Mc:00}).
\epf

We now show that if $\Gamma$ is $C^\infty$  and $\Reg$ satisfies Assumption \ref{ass:Reg} then $\Reg$ satisfies the assumptions of Lemma \ref{lem:ItDS1} for sufficiently large $k$, and hence that $\ItDS:\HhG\rightarrow \HhG$ exists for sufficiently large $k$.

\ble\label{lem:Rcoer}
If $\Gamma$ is $C^\infty$ and $\Reg$ satisfies Assumption \ref{ass:Reg}, then there exists $k_0>0$ and $C>0$ such that, for all $k \geq k_0$, 
\beq\label{eq:Rcoer2}
\pm \Re \big\langle \Reg\psi,\psi \big\rangle_{\Gamma}\geq C k^{-1}\N{\psi}^2_{H^{-1/2}_{k}(\Gamma)} \quad\tfa \psi \in \HmhG,
\eeq
where the plus sign is chosen if $\sigma_\hsc(R)$ is positive, and the minus sign is chosen if $\sigma_\hsc(R)$ is negative. Moreover, $R^{-1}:H^{1-s}_k\to H^{-s}_k$ for any $s$ and 
\beq
\label{e:mitten}
|\Im \langle R\psi,\psi\rangle_{\Gamma}|\leq Ck^{-1}\|\psi\|_{H_k^{-1}(\Gamma)}^2\quad \tfa \psi\in H^{-1/2}(\Gamma).
\eeq
\ele

\bpf
Let $\widetilde{R}=\hsc^{-1}R$. Since $\sigma_\hsc(\widetilde{R})$ is real and $\widetilde{R}\in \Psi_{\hsc}^{-1}$ is elliptic,
\beq\label{eq:Rcoer3}
\pm \sigma_\hsc(\widetilde{R}) \geq \frac{C_1}{\langle \xi' \rangle} \quad\tfa (x',\xi')\in T^*\Gamma.
\eeq
If \eqref{eq:Rcoer3} holds with the plus sign, then $A:= \widetilde{R} - C_1/\langle \xi'\rangle$ satisfies the assumption of Theorem \ref{thm:sharpG} with $\ell=-1$, and thus
\beqs
\Re \left\langle \left( \widetilde{R} - \frac{C_1}{\langle \xi'\rangle}\right) \phi,\phi\right\rangle_\Gamma \geq - C_2 \hsc \N{\phi}^2_{H^{-1}_\hsc(\Gamma)} \quad \tfa \phi \in H^{-1/2}(\Gamma).
\eeqs
Therefore, by the definition of the $H^{-1/2}_\hsc(\Gamma)$ norm,
\beqs
\Re \big\langle  \widetilde{R} \phi,\phi\big\rangle_\Gamma \geq C_3 \N{\phi}^2_{H^{-1/2}_\hsc(\Gamma)}- C_2 \hsc \N{\phi}^2_{H^{-1}_\hsc(\Gamma)} \quad \tfa \phi \in H^{-1/2}(\Gamma),
\eeqs
and the bound \eqref{eq:Rcoer2} with the plus sign follows if $\hsc$ is sufficiently small (i.e., for all $k$ sufficiently large). 
If \eqref{eq:Rcoer3} holds with the minus sign, then $A:= -\widetilde{R} - C/\langle \xi\rangle$ satisfies the assumption of Theorem \ref{thm:sharpG} with $\ell=-1$, and the bound \eqref{eq:Rcoer2} with the minus sign follows in a similar way to above.
 
 The fact that $R^{-1}:H_{\hsc}^{1-s}\to H^{-s}_{\hsc}$ follows from the fact that $R^{-1} \in \Psi_\hsc^{1}(\Gamma)$ by Lemma~\ref{thm:elliptic} and the mapping property in Part (ii) of Theorem \ref{thm:basicP}. To check that \eqref{e:mitten} holds, observe that since $\sigma^{-1}_{\hbar}(\widetilde{R})$ is real, by the second equation in \eqref{eq:multsymb},
 $$
 \sigma^{-1}_{\hbar}(\widetilde{R}-\widetilde{R}^*)=0.
 $$
Therefore  $\Im \widetilde{R}=(\widetilde{R}-\widetilde{R}^*)/(2\ri) \in \Psi_{\hbar}^{-2}(\Gamma)$ and hence, by the 
boundedness properties of elements of $\Psi_{\hsc}^{-2}$,
 $$
 |\Im \langle R\psi,\psi\rangle_{\Gamma}|=
\hsc |\Im \langle \widetilde{R}\psi,\psi\rangle_{\Gamma}|\leq \hsc\|\Im \widetilde{R}\psi\|_{H_\hbar^{1}}\|\psi\|_{H_\hbar^{-1}}\leq C\hsc\|\psi\|_{H_\hbar^{-1}}^2,
 $$
 which is \eqref{e:mitten}.
\epf

In \S\ref{sec:proofinvert}, Lemma \ref{lem:ItDS1} is used to prove invertibility of $\Breg$ and $\Breg'$ when $\Reg$ satisfies Assumption \ref{ass:Reg}, $k>0$, and $\eta \in \Rea\setminus\{0\}$ (i.e., Part (i) of Theorem \ref{thm:invert}).
We now use this invertibility of $\Breg'$ to prove the following result about $\ItDS$ and $\ItDSp$.

Recall that \(\langle\cdot , \cdot \rangle_{\Gamma,\Rea}\) denotes the real-valued duality pairing on $\Gamma$ between $H^{s}(\Gamma)$ and $H^{-s}(\Gamma)$.

\ble\label{lem:ItDS2}
Assume that $\Gamma$ is $C^\infty$, $\Reg$ satisfies Assumption \ref{ass:Reg}, $\eta \in \Rea\setminus\{0\}$, and $k>0$ is sufficiently large so that $\ItDS:\HhG\rightarrow \HhG$ exists.

(i) $\ItDS$ and $\ItDSp$ have unique extensions to bounded operators
$H^1(\Gamma)\rightarrow H^1(\Gamma)$, 
where $\Reg'$ denotes the adjoint of $\Reg$ with respect to the real-valued $L^2$ inner product.

(ii) $(\Reg^{-1}  \ItDS)'= (\Reg')^{-1}\ItDSp$, i.e., 
    \begin{align}\label{eq:ItDdual}
        \big\langle \Reg^{-1}  \ItDS \phi, \,\psi\big\rangle_{\Gamma,\Rea}=\big\langle\phi,\, (\Reg')^{-1}\ItDSp \psi\big\rangle_{\Gamma,\Rea} \quad\tfa \phi,\psi\in \HhG.
    \end{align}

(iii) 
\beqs
 \N{\Reg^{-1} \ItDS}_{\LtG \rightarrow H^{-1}_k(\Gamma)}=\N{(\Reg')^{-1}\ItDSp}_{H^1_k(\Gamma)\rightarrow L^2(\Gamma)},
\eeqs
and thus both $\ItDS$ and $\ItDSp$ are also bounded operators $\LtGt$.
\end{lemma}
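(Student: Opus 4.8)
The plan is to obtain all three parts from one representation of $\ItDS$ and $\ItDSp$ through the inverses of $\Breg$ and of $B_{k,\eta,\Reg'}$, together with Green's second identity. I begin with two preliminaries. First, the adjoint $\Reg'$ of $\Reg$ with respect to the real $L^2(\Gamma)$ pairing is again a semiclassical pseudodifferential operator of the same order, with principal symbol $\sigma_\hsc(\Reg')(x',\xi')=\overline{\sigma_\hsc(\Reg)(x',-\xi')}$; since $\sigma_\hsc(\Reg)$ is real and elliptic, so is $\sigma_\hsc(\Reg')$, so $\Reg'$ also satisfies Assumption \ref{ass:Reg}, and hence by Lemmas \ref{lem:Rcoer} and \ref{lem:ItDS1} the map $\ItDSp:\HhG\to\HhG$ exists once $k$ is large (enlarging the threshold in the hypothesis if necessary). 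Second, $(\Reg^{-1})'=(\Reg')^{-1}$. Now seek the solution of \eqref{eq:ItDSdef2} as a double-layer potential $u=\cK_k\psi$: by the third and fourth jump relations in \eqref{eq:jumprelations} one has $\gamma^-(\cK_k\psi)=(-\tfrac12 I+\DL_k)\psi$ and $\Reg\partial_n^-(\cK_k\psi)-\ri\eta\gamma^-(\cK_k\psi)=\Breg\psi$; hence, once $\Breg$ is invertible on the relevant space, the uniqueness in Lemma \ref{lem:ItDS1} gives $u=\cK_k(\Breg)^{-1}g$ and
\[
\ItDS=\big(-\tfrac12 I+\DL_k\big)(\Breg)^{-1}, \qquad \ItDSp=\big(-\tfrac12 I+\DL_k\big)\big(B_{k,\eta,\Reg'}\big)^{-1},
\]
the second formula being the first with $\Reg$ replaced by $\Reg'$.

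To use these representations I would establish that $\Breg$ and $B_{k,\eta,\Reg'}$ are invertible on $H^s(\Gamma)$ for $s\in\{0,\tfrac12,1\}$, with mutually consistent inverses. Invertibility on $\LtG$ for $k$ large is Theorem \ref{thm:invert}(i). For the other spaces write $\Breg=\tfrac{\ri\eta}{2}I+\Reg H_k-\ri\eta\DL_k$: on the $C^\infty$ surface $\Gamma$, $\DL_k$ is a classical pseudodifferential operator of order $-1$, hence bounded and compact on every $H^s(\Gamma)$, while $\tfrac{\ri\eta}{2}I+\Reg H_k$ is elliptic of order $0$ — its homogeneous principal symbol is $\tfrac{\ri\eta}{2}+\sigma(\Reg H_k)$, and $\sigma(\Reg H_k)$ is real (by Assumption \ref{ass:Reg} and the real, homogeneous degree-one principal symbol of $H_k$) and bounded away from $0$, so for real $\eta\neq0$ the symbol never vanishes. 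Thus $\Breg$ is Fredholm of index $0$ on each $H^s(\Gamma)$ (the index is constant in $s$, equal to its value $0$ on $\LtG$) and, being injective on $\LtG$, is injective and hence invertible on each $H^s(\Gamma)$; the same holds for $B_{k,\eta,\Reg'}$. Composing with the fact that $-\tfrac12 I+\DL_k$ is bounded on $H^1(\Gamma)$ and on $\LtG$ then yields Part (i) and the boundedness of $\ItDS,\ItDSp:\LtGt$ asserted in Part (iii).

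Part (ii) I would prove directly from Green's second identity rather than from the representations. For $\phi,\psi\in\HhG$ let $u\in H^1(\Oi)$ solve \eqref{eq:ItDSdef2} with regulariser $\Reg$ and data $\phi$, and $v\in H^1(\Oi)$ solve it with regulariser $\Reg'$ and data $\psi$, so $\ItDS\phi=\gamma^-u$ and $\ItDSp\psi=\gamma^-v$. Since $\Delta u+k^2u=\Delta v+k^2v=0$ in $\Oi$, Green's second identity (legitimate since $u,v\in H^1(\Oi)$ with $\Delta u,\Delta v\in L^2(\Oi)$, and using the \emph{real} pairing $\langle\cdot,\cdot\rangle_{\Gamma,\Rea}$) gives $\langle\gamma^-u,\partial_n^-v\rangle_{\Gamma,\Rea}=\langle\gamma^-v,\partial_n^-u\rangle_{\Gamma,\Rea}$. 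Substituting the boundary conditions as $\partial_n^-u=\Reg^{-1}(\phi+\ri\eta\gamma^-u)$ and $\partial_n^-v=(\Reg')^{-1}(\psi+\ri\eta\gamma^-v)$, and moving the regularisers across the pairing via $(\Reg^{-1})'=(\Reg')^{-1}$ and the symmetry of the bilinear pairing, the two $\ri\eta$-terms are equal and cancel, leaving $\langle\Reg^{-1}\gamma^-u,\psi\rangle_{\Gamma,\Rea}=\langle\phi,(\Reg')^{-1}\gamma^-v\rangle_{\Gamma,\Rea}$, which is \eqref{eq:ItDdual}.

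Finally, for the norm identity in Part (iii): by Corollary \ref{cor:Rbound} applied to $\Reg$ and to $\Reg'$, together with the mapping properties from Part (i) and the $\LtGt$ bound just obtained, $\Reg^{-1}\ItDS$ is bounded $\LtG\to H^{-1}_k(\Gamma)$ and $(\Reg')^{-1}\ItDSp$ is bounded $H^1_k(\Gamma)\to\LtG$. Equation \eqref{eq:ItDdual}, valid on the common dense subspace $C^\infty(\Gamma)$ of $\LtG$ and $H^1_k(\Gamma)$, then identifies $(\Reg')^{-1}\ItDSp$ as the adjoint of $\Reg^{-1}\ItDS$ with respect to the real pairing; since $H^{-1}_k(\Gamma)$ is the dual of $H^1_k(\Gamma)$ and $\LtG$ is self-dual (\S\ref{sec:weighted_norms}), the asserted equality of norms is the standard equality of the norm of a bounded operator and of its adjoint (cf.\ \cite[Lemma 2.3]{Sp:14}). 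The main obstacle is Part (i): the only step that is not routine is upgrading the invertibility of $\Breg$ from $\LtG$ (Theorem \ref{thm:invert}(i)) to $H^1(\Gamma)$, which hinges on the order-$0$ ellipticity of $\tfrac{\ri\eta}{2}I+\Reg H_k$ and hence on the reality of the principal symbol of $\Reg$ in Assumption \ref{ass:Reg}; one must also be careful with the sign conventions in the jump and boundary relations entering Part (ii), and with keeping every pairing over the correct Sobolev duality.
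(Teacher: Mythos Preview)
Your proof is correct; Part (ii) is identical to the paper's argument (Green's second identity applied to the two solutions). For Parts (i) and (iii) you take a genuinely different route that is worth comparing.

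For (i), the paper takes the \emph{Neumann} trace of Green's integral representation of the BVP solution and obtains
\[
\ItDS = -\Reg\,(\Bregp)^{-1}\Big(\tfrac12 I - \DL_k'\Big)\Reg^{-1},
\]
which maps $H^1(\Gamma)\to H^1(\Gamma)$ as soon as $\Bregp$ is invertible on $L^2(\Gamma)$, because the outer $\Reg$ and $\Reg^{-1}$ do the Sobolev shifting. Your double-layer ansatz gives the simpler-looking formula $\ItDS = (-\tfrac12 I + \DL_k)(\Breg)^{-1}$, but this forces you to upgrade the invertibility of $\Breg$ from $L^2$ (Theorem~\ref{thm:invert}(i)) to $H^{1/2}$ and $H^1$. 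Your Fredholm argument for this is correct in outline, but the step ``$\sigma(\Reg H_k)$ is real by Assumption~\ref{ass:Reg}'' needs a word of care: Assumption~\ref{ass:Reg} is a \emph{semiclassical} statement, and symbols in $S^{-1}$ need not be polyhomogeneous, so a homogeneous principal symbol of $\Reg$ is not a priori defined. What does hold is that, choosing a real representative for $\sigma_\hsc(\Reg)$, the classical full symbol of $\Reg H_k$ is real modulo $S^{-1}$, so $|\tfrac{\ri\eta}{2}+\sigma(\Reg H_k)(x',\xi')|\geq |\eta|/2 - C\langle\xi'\rangle^{-1}$ for large $|\xi'|$, which gives ellipticity in the Kohn--Nirenberg sense; alternatively one can reuse the semiclassical decomposition in the proof of Theorem~\ref{thm:invert}(i), which already shows $\Breg$ is (invertible $+$ compact) on every $H^s$.

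For (iii), the paper deduces the $L^2\to H^{-1}$ boundedness of $\Reg^{-1}\ItDS$ from Part (ii) by a density argument (approximating $\phi\in L^2$ by $\phi_j\in H^{1/2}$ and using the already-established $H^1\to H^1$ bound for $\ItDSp$), and then reads off the norm equality. You instead get $\ItDS:\LtGt$ directly from your representation (since $\Breg^{-1}$ and $\DL_k$ are bounded on $L^2$), which is a nice dividend of your formula, and then use duality for the norm equality. Both are fine; the paper's route is more economical in that it never needs $\Breg$ invertible anywhere except $L^2$ (and $\Bregp$ on $H^{-1/2}$, which is immediate from the injectivity argument in \S\ref{sec:proofinvert}).
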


\bpf
(i) 
The plan is to express $\ItDS$ as an operator on $\HhG$ in terms of $(\Bregp)^{-1}$; indeed we show that
\beq\label{eq:ItDSformula}  
\ItDS = -R\big(\Bregp\big)^{-1}\left( \frac{1}{2}I - \DL_k'\right) \Reg^{-1},
\eeq
and then show that this expression extends $\ItDS$ to an operator on $\HoG$.

Given $g\in \HhG$, let $u$ be the solution to \eqref{eq:ItDSdef2}. By Green's integral representation (see, e.g., \cite[Theorem 2.20]{ChGrLaSp:12}), 
\beqs
u = \cS_k \partial_n^- u - \cK_k \gamma^- u.
\eeqs
Taking the Neumann trace and using the jump relations \eqref{eq:jumprelations}, we find that
\beqs
\left( \frac{1}{2}I-\DL_k'\right)\partial_n^- u + H_k \gamma^- u=0.
\eeqs
The boundary condition in \eqref{eq:ItDSdef} implies that 
\beqs
\partial_n^- u = \ri \eta \Reg^{-1} \gamma^- u + \Reg^{-1} g, 
\eeqs
and combining the last two displayed equations, we find that 
\beqs
\ri \eta\left( \frac{1}{2}I - \DL_k'\right) \big( \Reg^{-1} \gamma^-u\big) + H_k \gamma_- u = -\left( \frac{1}{2}I - \DL_k'\right)\Reg^{-1}g,
\eeqs
that is, by the definition of $\Bregp$ \eqref{eq:BIEs},  
\beq\label{eq:rain1}
\Bregp \big( \Reg^{-1} \gamma^-u\big)= -\left( \frac{1}{2}I - \DL_k'\right) \Reg^{-1}g.
\eeq

The arguments in the proof of Theorem \ref{thm:invert} (in \S\ref{sec:proofinvert}) that show $\Bregp$ is invertible from $\LtGt$ also show that 
$\Bregp$ is invertible from $\HmhG$ to $\HmhG$ (indeed, the proof of Theorem \ref{thm:invert} shows injectivity on $\HmhG$, and the proof that $\Bregp$ is Fredholm on $\LtG$ also shows that $\Bregp$ is Fredholm on $\HmhG$). 
Combining this result with the mapping properties of $\Reg, \Reg^{-1}$, and $\DL_k'$ (see \eqref{eq:mapping}), we see that, given $g \in \HhG$, 
\beqs
\Reg\big(\Bregp)^{-1} \left( \frac{1}{2}I - \DL_k'\right) \Reg^{-1}g \in \HhG.
\eeqs
 By Lemma \ref{lem:ItDS1} and Assumption \ref{ass:Reg}, $\ItDS$ is well-defined on $\HhG$.
Therefore, \eqref{eq:rain1} implies that \eqref{eq:ItDSformula} holds
with both sides well-defined on $\HhG$.

Using that $\Reg:\LtG\rightarrow \HoG$ is bounded and invertible, $K_k'$ is bounded on $\LtG$, and $\Bregp$ is bounded and invertible on $\LtG$, 
we find that \eqref{eq:ItDSformula} extends $\ItDS$ to a well-defined operator on $\HoG$.
Since $\Reg'$ also satisfies Assumption \ref{ass:Reg}, $\ItDSp$ also extends to a well-defined operator on $\HoG$.

(ii) To prove \eqref{eq:ItDdual}, 
   let $u$ be the solution of   \eqref{eq:ItDSdef} with data $\phi$ and 
   let $v$ be the solution of   \eqref{eq:ItDSdef} with data $\psi$ and $\Reg$ replaced by $\Reg'$; 
   then
   \beq\label{eq:uv}
   \gamma^- u = \ItDS \phi \quad\tand\quad    \gamma^- v = \ItDSp \psi.
   \eeq
    By Green's second identity applied in $\Oi$ (see, e.g., \cite[Theorem 2.19]{ChGrLaSp:12}),
    \begin{align*}
\big\langle \gamma^- u, \partial_n^- v \big\rangle_{\Gamma,\Rea} - \langle \gamma^- v, \partial_n^- u \rangle_{\Gamma,\Rea} = 
 \int_{\Oi } u \Delta v - v \Delta u = 0,
    \end{align*}
and thus, by using the boundary conditions satisfied by $u$ and $v$,
\beqs
        \big\langle \gamma^-u , \,(\Reg')^{-1} (\psi + \ri \eta \gamma^-v)\big \rangle_{\Gamma,\Rea}  = \big\langle \gamma^-v , \,\Reg^{-1} (\phi + \ri \eta \gamma^-u) \big\rangle_{\Gamma,\Rea}.
 \eeqs       
By the definition of $\Reg'$,
the last equality becomes 
        \beqs
        \big\langle \gamma^-u , (\Reg')^{-1} \psi \big\rangle_{\Gamma,\Rea}  = \big\langle \gamma^-v , \Reg^{-1} \phi \big\rangle_{\Gamma,\Rea},
        \eeqs
and then \eqref{eq:ItDdual} follows by using \eqref{eq:uv}.
           
\sloppy (iii) To prove $\Reg^{-1}\ItDS$ is bounded $L^2(\Gamma)\rightarrow H^{-1}(\Gamma)$, it is sufficient to show that
$\langle \Reg^{-1}\ItDS\phi,\psi\rangle_{\Gamma,\Rea}$ is bounded for all $\phi \in \LtG$ and $\psi\in \HoG$.
Given $\phi\in\LtG$, there exists $\phi_j\in \HhG$ such that $\phi_j\rightarrow \phi$ in $\LtG$ as $j\tendi$. Then, by \eqref{eq:ItDdual}, for all $\psi\in \HoG$,
\beqs
        \big\langle \Reg^{-1} \ItDS \phi_j, \psi \big\rangle_{\Gamma,\Rea} =         \big\langle (\Reg')^{-1} \ItDSp \psi, \phi_j \big\rangle_{\Gamma,\Rea} \rightarrow \big\langle (\Reg')^{-1} \ItDSp \psi, \phi \big\rangle_{\Gamma,\Rea} \quad\tas j\tendi,
\eeqs        
since $\ItDSp: \HoG\rightarrow\HoG$ is bounded, and thus $ \Reg^{-1} \ItDS:\HoG\rightarrow \LtG$ is bounded. 
        
Therefore, \eqref{eq:ItDdual} holds for all $\phi\in\LtG$ and $\psi\in\HoG$, and thus
    \begin{align*}
        \N{\Reg^{-1} \ItDS}_{L^2(\Gamma)\rightarrow H^{-1}_k(\Gamma) } 
        &=\sup_{\substack{\phi \in L^2(\Gamma),\psi \in\HoG}} \dfrac{\big\langle \Reg^{-1}\ItDS \phi , \psi \big\rangle_{\Gamma,\Rea}}{\N{ \phi}_{L^2(\Gamma)}\N{\psi}_{H^1_k(\Gamma)}} \\
        &=\sup_{\substack{\phi \in L^2(\Gamma),\psi \in\HoG}} \dfrac{\big\langle (\Reg')^{-1}\ItDSp \psi , \phi \big\rangle_{\Gamma,\Rea}}{\N{ \phi}_{L^2(\Gamma)}\N{\psi}_{H^1_k(\Gamma)}} \\
        &=        \N{(\Reg')^{-1} \ItDSp}_{H^1_k(\Gamma)\rightarrow L^2(\Gamma) }. 
                    \end{align*}
\end{proof}

\begin{corollary}\label{cor:ItDS}
If $\Gamma$ is $C^\infty$,  $k>0$, and $\Re \eta \neq 0$, then
\beq\label{eq:ItDSnormequiv}
    \N{\ItDS}_{\LtGt} \sim    \N{\ItDSp}_{H^1_k(\Gamma)\rightarrow H^1_k(\Gamma)}.
\eeq
\end{corollary}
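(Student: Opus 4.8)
The plan is to deduce the equivalence \eqref{eq:ItDSnormequiv} from Part (iii) of Lemma \ref{lem:ItDS2} by stripping off one factor of $\Reg$ (respectively $\Reg'$) from $\ItDS$ (respectively $\ItDSp$), controlled by the two-sided bounds of Corollary \ref{cor:Rbound}. We work in the setting of \S\ref{sec:ItDS}, so that $\Reg$ — and hence also $\Reg'$ — satisfies Assumption \ref{ass:Reg}, and $k\geq k_0$ is large enough that $\ItDS$ and $\ItDSp$ are defined and bounded on $\LtG$ and on $H^1_k(\Gamma)$ by Lemma \ref{lem:ItDS2} (recall $H^0_k(\Gamma)=\LtG$ with equal norms).

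First I would write $\ItDS=\Reg\circ(\Reg^{-1}\ItDS)$ and $\Reg^{-1}\ItDS=\Reg^{-1}\circ\ItDS$, and apply Corollary \ref{cor:Rbound} with $t=-1$, i.e.\ $\N{\Reg}_{H^{-1}_k(\Gamma)\to \LtG}\sim k^{-1}$ and $\N{\Reg^{-1}}_{\LtG\to H^{-1}_k(\Gamma)}\sim k$; composing operators then gives
\[
\N{\ItDS}_{\LtGt}\sim k^{-1}\N{\Reg^{-1}\ItDS}_{\LtG\to H^{-1}_k(\Gamma)}.
\]
Symmetrically, writing $\ItDSp=\Reg'\circ((\Reg')^{-1}\ItDSp)$ and $(\Reg')^{-1}\ItDSp=(\Reg')^{-1}\circ\ItDSp$ and applying Corollary \ref{cor:Rbound} to $\Reg'$ with $t=0$ (so $\N{\Reg'}_{\LtG\to H^1_k(\Gamma)}\sim k^{-1}$ and $\N{(\Reg')^{-1}}_{H^1_k(\Gamma)\to\LtG}\sim k$) gives
\[
\N{\ItDSp}_{H^1_k(\Gamma)\to H^1_k(\Gamma)}\sim k^{-1}\N{(\Reg')^{-1}\ItDSp}_{H^1_k(\Gamma)\to\LtG}.
\]

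Finally I would invoke Part (iii) of Lemma \ref{lem:ItDS2}, which asserts $\N{\Reg^{-1}\ItDS}_{\LtG\to H^{-1}_k(\Gamma)}=\N{(\Reg')^{-1}\ItDSp}_{H^1_k(\Gamma)\to\LtG}$; combining this with the two displayed equivalences and cancelling the common factor $k^{-1}$ yields \eqref{eq:ItDSnormequiv}. No step here is deep, and I do not anticipate a serious obstacle; the one point that genuinely needs attention is that all of these comparisons be \emph{uniform} in $k$, which is precisely what the matching lower bounds in Corollary \ref{cor:Rbound} provide — an upper bound on $\N{\Reg}$ (and the corresponding lower bound on $\N{\Reg^{-1}}$) alone would only give one of the two inequalities hidden in the symbol $\sim$ in \eqref{eq:ItDSnormequiv}.
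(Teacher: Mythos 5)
Your proposal is correct and is essentially the paper's own proof: factor $\ItDS=\Reg(\Reg^{-1}\ItDS)$ and $\ItDSp=\Reg'\big((\Reg')^{-1}\ItDSp\big)$, use Corollary \ref{cor:Rbound} (with $t=-1$ for $\Reg$ and $t=0$ for $\Reg'$) to get $\N{\ItDS}_{\LtGt}\sim k^{-1}\N{\Reg^{-1}\ItDS}_{\LtG\to H^{-1}_k(\Gamma)}$ and $\N{\ItDSp}_{H^1_k(\Gamma)\to H^1_k(\Gamma)}\sim k^{-1}\N{(\Reg')^{-1}\ItDSp}_{H^1_k(\Gamma)\to\LtG}$, and conclude via Part (iii) of Lemma \ref{lem:ItDS2}. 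One small remark: the two-sidedness actually rests on the \emph{upper} bounds in both \eqref{eq:Rbound1} and \eqref{eq:Rbound2} (i.e.\ $\N{\Reg}\lesssim k^{-1}$ together with $\N{\Reg^{-1}}\lesssim k$), not on the lower bounds as your closing comment suggests, which is exactly how the paper cites Corollary \ref{cor:Rbound}.
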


\bpf[Proof of Corollary \ref{cor:ItDS}]
We claim that 
\beqs
k \N{\ItDSp}_{H^1_k(\Gamma)\rightarrow H^1_k(\Gamma)} \lesssim \N{(\Reg')^{-1} \ItDSp}_{H^1_k(\Gamma)\rightarrow \LtG}
\lesssim k \N{\ItDSp}_{H^1_k(\Gamma)\rightarrow H^1_k(\Gamma)} 
\eeqs
and 
\beqs
k \N{\ItDS}_{\LtGt} \lesssim \N{\Reg^{-1} \ItDS}_{\LtG\rightarrow H^{-1}_k(\Gamma)}
\lesssim k \N{\ItDS}_{\LtGt}.
\eeqs
Indeed, the first bound in each inequality follows from the upper bound in \eqref{eq:Rbound1} (first applied to $\Reg'$ with $t=0$ and then applied to $\Reg$ with $t=-1$), and the second bound in each inequality follows from the upper bound in \eqref{eq:Rbound2} (first applied to $\Reg'$ with $t=0$ and then applied to $\Reg'$ with $t=-1$); the result then follows from Lemma \ref{lem:ItDS2}.
\epf

\begin{theorem}\label{thm:ItDS}
Suppose $\Gamma$ is $C^\infty$, $R$ satisfies Assumption \ref{ass:Reg}, and $\eta\in\mathbb{R}\setminus\{0\}$ is independent of $k$. Then there exists $k_0>0$ and $C>0$ 
such that, for all $k\geq k_0$,  
\beqs
\N{\ItDS}_{\LtGt} \leq C.
\eeqs
\end{theorem}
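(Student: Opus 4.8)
The plan is to combine a boundary energy identity — uniform in $k$, but only at the level of $H^{1/2}_k(\Gamma)$ — with a parametrix analysis of the interior impedance problem \eqref{eq:ItDSdef2} in the evanescent regime $\{|\xi'|_g\geq C\}\subset T^*\Gamma$; these two ingredients control respectively the propagating/glancing and the evanescent frequency components of $\ItDS g$. The nontrivial point is that for trapping $\Oi$ the relevant solution operator — and likewise $\|(\Bregp)^{-1}\|_{\LtGt}$, cf.\ Theorems~\ref{thm:lower_bound_inverse} and~\ref{thm:ellipse} — may grow exponentially in $k$, so the evanescent part cannot be dealt with by inverting a global operator.

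First I would establish the uniform bound $\|\ItDS\|_{H^{1/2}_k(\Gamma)\to H^{1/2}_k(\Gamma)}\lesssim|\eta|^{-1}$. Given $g\in\HhG$, let $u\in H^1(\Oi)$ solve \eqref{eq:ItDSdef2} (which exists for $k$ large by Lemmas~\ref{lem:ItDS1} and~\ref{lem:Rcoer}), so $\ItDS g=\gamma^- u$; testing the variational formulation in the proof of Lemma~\ref{lem:ItDS1} with $v=u$ and taking imaginary parts gives
\begin{equation*}
\eta\,\Re\big\langle R^{-1}\gamma^- u,\gamma^- u\big\rangle_\Gamma \;=\; -\,\Im\big\langle R^{-1}g,\gamma^- u\big\rangle_\Gamma .
\end{equation*}
Since $R=\hbar\widetilde R$ with $\widetilde R=kR\in\Psi_\hbar^{-1}(\Gamma)$ elliptic with real principal symbol (Assumption~\ref{ass:Reg}), its inverse $\widetilde R^{-1}=kR^{-1}\in\Psi_\hbar^{1}(\Gamma)$ is elliptic with real principal symbol; the sharp G\aa rding inequality (Theorem~\ref{thm:sharpG}), applied exactly as in the proof of Lemma~\ref{lem:Rcoer}, then yields $|\Re\langle R^{-1}\psi,\psi\rangle_\Gamma|\geq c\,k\,\|\psi\|_{H^{1/2}_k(\Gamma)}^2$ for all $\psi$ and all large $k$. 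Combining this with $\|R^{-1}\|_{H^{1/2}_k(\Gamma)\to H^{-1/2}_k(\Gamma)}\lesssim k$ (Corollary~\ref{cor:Rbound}) and the duality pairing bound $|\Im\langle R^{-1}g,\gamma^- u\rangle_\Gamma|\leq\|R^{-1}g\|_{H^{-1/2}_k(\Gamma)}\|\gamma^- u\|_{H^{1/2}_k(\Gamma)}$ in the displayed identity gives the claim.

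Next, fix $\chi\in C^\infty_{\rm comp}(\Rea)$ with $\chi\equiv1$ on $[-C,C]$, for a large constant $C$ chosen below. Since the symbol of $\chi(|\hbar D'|_g)$ is supported where $\langle\xi'\rangle\leq\langle C\rangle$, one has $\|\chi(|\hbar D'|_g)\|_{\LtG\to H^{1/2}_k(\Gamma)}\lesssim_C1$, so the bound above gives $\|\ItDS\,\chi(|\hbar D'|_g)\|_{\LtGt}\lesssim_C1$ uniformly in $k$, and it remains to bound $\|\ItDS(1-\chi(|\hbar D'|_g))\|_{\LtGt}$, i.e.\ to estimate $\gamma^- u$ when the data $g$ in \eqref{eq:ItDSdef2} is microlocalised to $\{|\xi'|_g\geq C\}$. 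On this set the Helmholtz operator is elliptic in the normal variable, so near $\Gamma$ the solution is microlocally a superposition of modes decaying like $\exp\!\big(x_1\sqrt{|\xi'|_g^2-1}/\hbar\big)$ into $\Oi$; imposing the impedance boundary condition — using Theorem~\ref{thm:HFSD}, according to which on $\{|\xi'|_g\geq C\}$ the operator $\tfrac12 I-\DL_k'$ is elliptic of order $0$ and $H_kR\in\Psi_\hbar^{0}(\Gamma)$ has real principal symbol, so that $\Bregp$ has principal symbol $\tfrac{\ri\eta}{2}-\tfrac12\sigma_\hbar(\widetilde R)\sqrt{|\xi'|_g^2-1}$, which is elliptic since its imaginary part equals $\eta/2\neq0$ — produces a local parametrix writing $(1-\chi(|\hbar D'|_g))\gamma^- u$ as an order-$0$ semiclassical pseudodifferential operator applied to $g$, up to a remainder. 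That remainder consists of the $O(\hbar^\infty)_{\Psi_\hbar^{-\infty}}$ parametrix errors together with the ``reflection'' of the evanescent modes off the bulk of $\Oi$; the latter is $O\!\big(\exp(-d\sqrt{C^2-1}\,k)\big)$ for a fixed $d>0$ (a tubular-neighbourhood width for $\Gamma$), multiplied by the solution operator of \eqref{eq:ItDSdef2}, which grows at most like $\exp(\alpha k)$ for some $\alpha=\alpha(\Oi)$ (cf.\ \cite{GaLaSp:21}). Choosing $C$ with $d\sqrt{C^2-1}>\alpha$ makes this remainder $o(1)$ in operator norm, whence $\|\ItDS(1-\chi(|\hbar D'|_g))\|_{\LtGt}\lesssim1$; adding the two pieces proves the theorem.

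The hard part is this last step: constructing the evanescent parametrix so that its remainder is exponentially small in $1/\hbar$ (rather than merely $O(\hbar^\infty)$), and verifying that a single fixed-width evanescent layer absorbs the feedback from the interior solution operator. It is precisely the possible exponential growth of that operator for trapping $\Oi$ that forces $C$ to be taken large depending on the trapping rate, rather than merely large enough to separate the evanescent region from the glancing set — which is also why the decomposition is into $H^{1/2}_k$-controlled low frequencies and exponentially-parametrix-controlled high frequencies, with the technically delicate glancing zone falling (harmlessly, since there $H^{1/2}_k\sim L^2$) into the former.
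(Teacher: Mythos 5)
Your first two steps are sound: the identity from testing with $v=u$ together with sharp G\aa rding applied to $\widetilde{R}^{-1}\in\Psi^1_\hsc(\Gamma)$ does give $\N{\ItDS}_{H^{1/2}_k(\Gamma)\to H^{1/2}_k(\Gamma)}\lesssim |\eta|^{-1}$ (this is essentially Lemma \ref{lem:generalboundeta}), and composing with $\chi(|\hsc D'|_g)$ then controls the low-frequency part of $\ItDS$ on $\LtG$. The gap is in the elliptic/evanescent half. With $\Gamma$ only $C^\infty$, a microlocal parametrix in $\{|\xi'|_g\geq C\}$ has remainder $O(\hsc^\infty)$, i.e.\ $O(k^{-N})$ for every $N$, and nothing better: exponentially small ($O(\re^{-ck})$) remainders require analytic or Gevrey symbol classes and an analytic boundary, which are not assumed. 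So the quantity you must control is (at best) $O(\hsc^\infty)$ times the norm of the interior solution operator of \eqref{eq:ItDSdef2}, and your proposed mechanism --- taking the cutoff $C$ so large that $d\sqrt{C^2-1}>\alpha$ beats an $\exp(\alpha k)$ bound --- is not available, because the $\exp(-d\sqrt{C^2-1}\,k)$ accuracy you invoke cannot be achieved in the $C^\infty$ category. Consequently the argument closes only if you already know a \emph{polynomial} (in fact, for the paper's purposes, $k$-uniform) a priori bound on the solution operator of \eqref{eq:ItDSdef2}; but that bound is precisely the hard content of the theorem, and your proposal supplies no proof of it.

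The motivating premise is also off: the possible exponential growth in Theorems \ref{thm:lower_bound_inverse} and \ref{thm:ellipse} is an \emph{exterior} phenomenon --- it enters $(\Breg)^{-1}$ through $\NtD$ in \eqref{eq:fav_formula_reg}, not through $\ItDS$ --- and ``trapping'' of $\Oi$ refers to billiards in $\Oe$, which is irrelevant to the interior impedance problem. Indeed Theorem \ref{thm:ItDS} asserts a $k$-uniform bound with no nontrapping hypothesis, because the impedance condition $\hsc\partial_n u/\ri-\eta\widetilde{R}^{-1}u=\widetilde g$ is dissipative (its ``impedance symbol'' is elliptic with nonvanishing real part when $\eta\in\Rea\setminus\{0\}$ and $\sigma_\hsc(\widetilde R)$ is real), so no quasimodes can form inside $\Oi$. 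This is how the paper proceeds: after reducing via the duality statements (Lemma \ref{lem:ItDS2}, Corollary \ref{cor:ItDS}) to an $H^1_k(\Gamma)\to H^1_k(\Gamma)$ bound for $\ItDSp$ and rescaling semiclassically, it quotes the uniform estimate $\N{u}_{H^1_\hsc(\Oi)}\lesssim\N{\widetilde g}_{L^2(\Gamma)}$ for this nonstandard impedance problem from \cite[Theorem 4.6]{GaLaSp:21} (with \cite[Lemma 3.3]{GaMaSp:21} to remove the curvature hypothesis) together with the $k$-explicit trace estimate \cite[Theorem 4.1]{GaLaSp:21}. Your elliptic-region analysis would have to reproduce the substance of those estimates (or at least a polynomial bound for the full operator, including the propagating and glancing regions of the \emph{interior} dynamics, which your $H^{1/2}_k$ step does not by itself control once the remainder feeds back through the interior), so as written the proof is incomplete.
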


\begin{proof}
By Lemma \ref{lem:ItDS2} and \eqref{eq:ItDSnormequiv}, it is sufficient to show that 
\beqs
        \N{\ItDSp}_{H^1_k(\Gamma)\rightarrow H^1_k(\Gamma)}\leq C;
\eeqs
i.e., that 
given $g\in H^1(\Gamma)$, the solution $u$ to \eqref{eq:ItDSdef} with $\Reg$ replaced by $\Reg'$ exists and satisfies
        \beq\label{eq:PItD2}
        \N{u}_{H^1_k(\Gamma)}\leq C \N{g}_{H^1_k(\Gamma)}
        \eeq
        with $C>0$ independent of $k$.
        By Assumption \ref{ass:Reg}, $\Reg' = \hsc \widetilde{\Reg}$ with $\widetilde{\Reg} \in \Psi_{\hsc}^{-1}(\Gamma)$. Using this 
 in the boundary-value problem defining $u$ \eqref{eq:ItDSdef} and multiplying by $\widetilde{\Reg}^{-1}$, we obtain that 
        \beq\label{eq:PItD3}
        (-\hsc^2\Delta -1)u =0 \quad\tin \Oi  \quad\tand \quad \frac{1}{\ri}\hsc\partial_n u -(\widetilde{\Reg})^{-1} \eta u =\widetilde{g}\quad\ton \Gamma,
        \eeq
        where 
        \beqs
        \widetilde{g}:= \frac{1}{\ri} (\widetilde{\Reg})^{-1} g \in \LtG.
        \eeqs
By Part (ii) of Theorem \ref{thm:basicP} (or equivalently Corollary \ref{cor:Rbound}),
$\|\widetilde{\Reg}^{-1}\|_{H^1_\hsc(\Gamma)\to L^2(\Gamma)}\leq C$, and thus 
        \beq\label{eq:gtilde}
        \big\|\widetilde{g}\big\|_{L^2(\Gamma)} \leq C\N{g}_{H^1_\hsc(\Gamma)}.
        \eeq

        The boundary-value problem \eqref{eq:PItD3} fits in the framework studied in \cite[Section 4]{GaLaSp:21} with, in the notation of \cite{GaLaSp:21}, $\mathcal{N} = 1, \mathcal{D}=\eta(\widetilde{\Reg})^{-1}$, $m_1=0$, and $m_0=1$.
Whereas \cite[Section 4]{GaLaSp:21} studies this problem when $\Oi$ is curved, the results hold for general smooth $\Oi$ if \cite[Lemma 3.3]{GaMaSp:21} is used instead of \cite[Lemma 4.8]{GaLaSp:21}. When applying \cite[Lemma 3.3]{GaMaSp:21} to the set up in \cite{GaLaSp:21}, we note that, since $\Oi$ is bounded, the set $A$ in \cite[Lemma 3.3]{GaMaSp:21} is the whole of $S^*_{\Oi}\Rea^d$. 
Therefore, when $\eta\, \sigma_\hsc(\widetilde{\Reg})>0$, 
        the result \cite[Theorem 4.6]{GaLaSp:21} (combined with \cite[Lemma 3.3]{GaMaSp:21} as indicated above) shows that 
the solution  $u$ to \eqref{eq:ItDSdef} with $\Reg$ replaced by $\Reg'$ exists and satisfies
        \beq\label{eq:PItD4}
        \N{u}_{H^1_\hsc(\Oi )}\leq C \N{\widetilde{g}}_{L^2(\Gamma)},
        \eeq
        with $C>0$ is independent of $\hsc$.
        Then, inputting $m_1=0$ and $m_0=1$ into the trace result \cite[Theorem 4.1]{GaLaSp:21} and choosing $\ell=0$, we find that, 
                for $C>0$ (independent of $\hsc$), 
        \beq\label{eq:PItD5}
        \N{u}_{H^1_\hsc(\Gamma)} \leq C \big( \N{u}_{L^2(\Oi )} + \N{\widetilde{g}}_{L^2(\Gamma)}\big).
        \eeq

                The combination of  \eqref{eq:PItD5}, \eqref{eq:PItD4}, and \eqref{eq:gtilde} therefore gives the required result \eqref{eq:PItD2} when $\eta\, \sigma_\hsc(\widetilde{\Reg})>0$.
When $\eta\, \sigma_\hsc(\widetilde{\Reg})<0$, these results of \cite[Section 4]{GaLaSp:21} 
(combined with \cite[Lemma 3.3]{GaMaSp:21} as indicated above) 
apply to the boundary-value problem
        \beqs
        (-\hsc^2\Delta -1)v =0 \quad\tin \Oi  \quad\tand \quad \frac{1}{\ri}\hsc\partial_n v +(\widetilde{\Reg}^*)^{-1} \eta v =\overline{\widetilde{g}}\quad\ton \Gamma.
        \eeqs
        Since $v = \overline{u}$ where $u$ is the solution of \eqref{eq:PItD3}, the bound \eqref{eq:PItD2} also holds when 
        $\eta\, \sigma_\hsc(\widetilde{\Reg})<0$, and the proof is complete.
    \end{proof}

In \S\ref{sec:eta} (the discussion on the choice of $\eta$) we use the following lemma.

\ble\label{lem:generalboundeta}
If $\eta \in \Rea\setminus\{0\}$ and $\Reg$ satisfies Assumption \ref{ass:Reg} then there exists $k_0>0$ and $C>0$ such that for all $k\geq k_0$
\beq\label{eq:generalboundeta}
\big\|\ItDS\big\|_{H^{1/2}_k(\Gamma) \rightarrow H^{1/2}_k(\Gamma)} \leq \frac{C }{|\eta|}.
\eeq
\ele

\bpf 
By Lemmas \ref{lem:ItDS1} and \ref{lem:Rcoer}, the solution $u$ of the variational problem \eqref{eq:vp} defining $\ItDS$ exists. Choosing $v=u$ \eqref{eq:vp} and taking the imaginary part, we obtain that
\beqs
\eta \Re \big\langle R^{-1} \gamma^- u, \gamma^- u\big\rangle_\Gamma = \Im \big\langle R^{-1} g, \gamma^-u \big\rangle_\Gamma.
\eeqs
By the coercivity of $R$ \eqref{eq:Rcoer2},
\beqs 
C |\eta| k^{-1} \N{R^{-1}\gamma^- u }^2_{H^{-1/2}_k(\Gamma)} \leq \N{R^{-1}g}_{H^{-1/2}_k(\Gamma)} \N{\gamma^- u}_{H^{1/2}_k(\Gamma)}.
\eeqs
where $C$ is as in \eqref{eq:Rcoer2}; the result then follows by using both the upper- and lower-bounds on $R^{-1}$ in \eqref{eq:Rbound2}.
\epf

By considering Neumann eigenfunctions, we see that the bound \eqref{eq:generalboundeta} is sharp in its $\eta$-dependence. Indeed, if $k^2$ is a Neumann eigenvalue of the Laplacian with $u$ the corresponding eigenfunction, then 
$$
\Reg \partial_n^-u-\ri\eta \gamma^-u=-\ri\eta \gamma^-u
\quad\text{ and thus } \quad
\ItDS \gamma^- u=\frac{\ri}{\eta}\gamma^-u.
$$

\section{Proofs of the main results}\label{sec:mainproofs}

\subsection{Proof of Theorem \ref{thm:upper_bound_norm}}

\bpf[Proof of Theorem \ref{thm:upper_bound_norm}]
By the triangle inequality, 
\begin{align*}
    \N{\Breg}_{\LtGt} 
    \leq |\eta| \big( 1/2 +  \N{ \DL_k}_{L^2(\Gamma)\rightarrow L^2(\Gamma)}\big) + \N{ \Reg}_{H^{-1}_k(\Gamma)\rightarrow L^2(\Gamma)}\N{ H_k}_{L^2(\Gamma)\rightarrow H^{-1}_k(\Gamma)}.
\end{align*}
The bounds on $\|\Breg\|_{\LtGt}$ in Theorem~\ref{thm:upper_bound_norm} then follow by using the bounds in Theorems~\ref{th:\DL_k_bounds}, Theorem~\ref{thm:Hk}, and \ref{thm:newSikbound}. The bounds on $\|\Bregp\|_{\LtGt}$ follow similarly.
\epf

Because of the interest in choosing $R=S_0$ (see the discussion in \S\ref{sec:rationale}), we also record the following bound on $\|B_{k,\eta,S_0}\|_{\LtGt}$ and $\|B'_{k,\eta,S_0}\|_{\LtGt}$ (which we refer to in \S\ref{sec:num}). For simplicity we assume that $|\eta|$ is bounded independently of $k$ (since then the norm of $S_0 H_k$ dominates for all geometries), but it is straightforward to obtain bounds for general $\eta$ analogous to those in Theorem \ref{thm:upper_bound_norm}.

\ble\label{lem:S0}
If $|\eta|\leq c$ with $c$ independent of $k$ and $\Gamma$ is piecewise smooth, then given $k_0>0$ there exists $C>0$ such that for all $k\geq k_0$
\beqs
    \N{B_{k,\eta,S_0}}_{\LtGt} +    \N{B'_{k,\eta,S_0}}_{\LtGt} \leq C k\log (k+2).
\eeqs
If, in addition, $\Gamma$ is $C^\infty$ and curved then given $k_0>0$ there exists $C>0$ such that for all $k\geq k_0$
\beqs
    \N{B_{k,\eta,S_0}}_{\LtGt} +    \N{B'_{k,\eta,S_0}}_{\LtGt} \leq C k.
\eeqs
\ele

\bpf
By the triangle inequality, 
\begin{align*}
    \N{B_{k,\eta,S_0}}_{\LtGt} 
    \leq |\eta| \big( 1/2 +  \N{ \DL_k}_{L^2(\Gamma)\rightarrow L^2(\Gamma)}\big) + \N{ S_0}_{H^{-1}(\Gamma)\rightarrow L^2(\Gamma)}\N{ H_k}_{L^2(\Gamma)\rightarrow H^{-1}(\Gamma)}.
\end{align*}
The result then follows from Theorems~\ref{th:\DL_k_bounds} and Theorem~\ref{thm:Hk}, using the fact that
\beqs
\N{H_k}_{L^2(\Gamma)\rightarrow H^{-1}(\Gamma)} = \N{H_k}_{H^1(\Gamma)\rightarrow \LtG}\leq k_0^{-1}
 \N{H_k}_{H^1_k(\Gamma)\rightarrow \LtG},
\eeqs
where the equality follows from \eqref{eq:Hdual} and the inequality holds since $\|\phi\|_{H^1_k(\Gamma)}\leq k_0^{-1}\|\phi\|_{H^1(\Gamma)}$ for $k\geq k_0$ and all $\phi \in H^1(\Gamma)$.
\epf

\subsection{Proof of Theorem \ref{thm:invert}}\label{sec:proofinvert}

\ble\label{lem:quasiadjoint}
\beq\label{eq:norms2}
\big\langle \Breg \phi, \psi\big\rangle_{\Gamma,\Rea} = \big\langle\phi, B_{k,\eta,R'}' \psi \big\rangle_{\Gamma,\Rea} \quad \tfa \phi,\psi\in \LtG;
\eeq
i.e., $B_{k,\eta,R'}'$ is the adjoint of $\Breg$ with respect to the real-valued $\LtG$ inner product. 
\ele

\bpf
By, e.g., \cite[Equation 2.40]{ChGrLaSp:12}, $\langle \DL_k \phi,\psi\rangle_{\Gamma,\Rea} = \langle \phi, \DL'_k \psi\rangle_{\Gamma,\Rea}$ for all $\phi,\psi\in\LtG$.
Since $R:\LtG\rightarrow \HoG$ is bounded, 
the result then follows from \eqref{eq:Hdual} and the definitions of $\Breg$ and $B_{k,\eta,R'}'$ \eqref{eq:BIEs}.
\epf

\begin{corollary}\label{cor:norms}
\beqs
\N{\Breg}_{\LtGt} = \N{B_{k,\eta,R'}'}_{\LtGt}
\eeqs
Furthermore, if $\Breg$ is invertible on $\LtG$, then 
\beqs
\N{(\Breg)^{-1}}_{\LtGt} = \N{(B_{k,\eta,R'}')^{-1}}_{\LtGt}.
\eeqs
\end{corollary}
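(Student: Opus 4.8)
\textbf{Proof proposal for Corollary \ref{cor:norms}.}
The plan is to deduce both identities directly from Lemma \ref{lem:quasiadjoint} together with the elementary fact that, on a complex Hilbert space, an operator and its adjoint have the same norm, and the inverse of the adjoint is the adjoint of the inverse. The one point requiring (minor) care is that $\langle\cdot,\cdot\rangle_{\Gamma,\Rea}$ is the \emph{bilinear} real-valued pairing, not the sesquilinear inner product of $\LtG$; so ``adjoint with respect to $\langle\cdot,\cdot\rangle_{\Gamma,\Rea}$'' is not literally the Hilbert-space adjoint, but differs from it by complex conjugation.

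\emph{Step 1: relate the two notions of adjoint.} Write $\langle f,g\rangle_\Gamma := \int_\Gamma f\,\overline g$ for the sesquilinear inner product on $\LtG$, and let $C:\LtG\to\LtG$ denote complex conjugation, $Cf=\overline f$; this is an antilinear isometric bijection with $C^2=I$. For any bounded $T:\LtGt$, Lemma \ref{lem:quasiadjoint} with the substitution of the genuine adjoint $T^*$ (defined by $\langle T\phi,\psi\rangle_\Gamma=\langle \phi,T^*\psi\rangle_\Gamma$) gives $\langle T\phi,\psi\rangle_{\Gamma,\Rea}=\langle \phi,\,CT^*C\psi\rangle_{\Gamma,\Rea}$ for all $\phi,\psi\in\LtG$. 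Hence the operator appearing in \eqref{eq:norms2}, namely $B'_{k,\eta,R'}$, is exactly $C(\Breg)^*C$. Since $C$ is an isometry, $\N{B'_{k,\eta,R'}}_{\LtGt}=\N{(\Breg)^*}_{\LtGt}=\N{\Breg}_{\LtGt}$, which is the first identity. (One should check at the outset that $\Breg,B'_{k,\eta,R'}:\LtGt$ are bounded, so that $\langle\cdot,\cdot\rangle_{\Gamma,\Rea}$ makes sense as an $L^2$ pairing; this follows from Assumption \ref{ass:Reg} and the mapping properties of $\DL_k$ and $H_k$, exactly as noted after the statement of Assumption \ref{ass:Reg}, and also applies with $R$ replaced by $R'$ since $R'$ again satisfies Assumption \ref{ass:Reg}.)

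\emph{Step 2: the inverses.} Suppose $\Breg$ is invertible on $\LtG$. Then $(\Breg)^*$ is invertible with $((\Breg)^*)^{-1}=((\Breg)^{-1})^*$, and therefore, using $B'_{k,\eta,R'}=C(\Breg)^*C$ and $C^2=I$, the operator $B'_{k,\eta,R'}$ is invertible with
\[
(B'_{k,\eta,R'})^{-1}=C\big((\Breg)^*\big)^{-1}C=C\big((\Breg)^{-1}\big)^*C.
\]
Taking norms and using again that $C$ is an isometry and $\N{S^*}=\N{S}$,
\[
\N{(B'_{k,\eta,R'})^{-1}}_{\LtGt}=\N{\big((\Breg)^{-1}\big)^*}_{\LtGt}=\N{(\Breg)^{-1}}_{\LtGt},
\]
which is the second identity.

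\emph{Expected difficulty.} There is essentially no obstacle here: the content is entirely in Lemma \ref{lem:quasiadjoint}, and Corollary \ref{cor:norms} is a one-line functional-analytic consequence once the real-pairing-versus-inner-product bookkeeping in Step 1 is done correctly. The only thing to be vigilant about is not conflating $\langle\cdot,\cdot\rangle_{\Gamma,\Rea}$ with $\langle\cdot,\cdot\rangle_\Gamma$; inserting the conjugation operator $C$ handles this cleanly and makes both norm identities transparent.
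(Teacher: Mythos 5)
Your proposal is correct and follows essentially the same route as the paper: the paper's proof simply invokes Lemma \ref{lem:quasiadjoint} together with a citation (to \cite[Remark 2.24]{ChGrLaSp:12}) for the fact that an operator and its adjoint with respect to the real bilinear pairing have equal $\LtG$ norms (and equal inverse norms). Your conjugation-operator argument, identifying $B'_{k,\eta,R'}=C(\Breg)^*C$, is exactly the standard verification of that cited fact, so you have merely made the paper's one-line proof self-contained.
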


\bpf
This follows from Lemma \ref{lem:quasiadjoint} and, e.g., \cite[Remark 2.24]{ChGrLaSp:12}.
\epf

\bpf[Proof of Theorem \ref{thm:invert}]
We first prove that if $R$ is 
satisfies the assumptions of Lemma  \ref{lem:ItDS1}, then $\Bregp$ and $\Breg$ are injective on $\HmhG$; 
 injectivity of $\Bregp$ on $\LtG$ immediately follows.
Suppose $\phi\in \HmhG$ is such that $\Bregp\phi=0$. Let $u = (\cK_k R - \ri \eta \cS_k)\phi$.
The jump relations \eqref{eq:jumprelations} imply that $\partial_n^+ u = \Bregp \phi=0$ (this is the same argument 
used to derive the BIE \eqref{eq:direct}). Since $u$ satisfies the Sommerfeld radiation condition \eqref{eq:src}, and the solution of the exterior Neumann problem is unique, $u=0$ in $\Oe$, and thus $\gamma^+ u=0$. The jump relations imply that
\beq\label{eq:jump3}
\partial_n^+ u -\partial_n^- u = \ri \eta \phi \quad\tand\quad \gamma^+ u- \gamma^- u = R\phi,
\eeq
and thus $\partial_n^- u = -\ri \eta \phi$ and $\gamma^- u = -R\phi$. Therefore, $u$ solves the boundary-value problem \eqref{eq:ItDSdef2} with $g=0$. By Lemma \ref{lem:ItDS1}, $u=0$ in $\Oi $. Therefore $\partial_n^- u =0$, and the first equation in \eqref{eq:jump3} implies that $\phi =0$. 
Now suppose $\phi\in \HmhG$ is such that $\Breg\phi=0$. Let $u = \cK_k \phi$; the third and fourth jump relations in \ref{eq:jumprelations} imply that
        \begin{align}\label{eq:jumpproof}
            \gamma^\pm u = \left(\pm \frac{1}{2}I + \DL_k\right)\phi, \quad \partial^\pm_n u= H_k \phi.
        \end{align}
Therefore, since $\Breg\phi=0$, $R\partial_n^- u - \ri \eta \gamma^- u=0$. Similar to above, $u=0$ in $\Oi $ by Lemma \ref{lem:ItDS1}, and thus $\partial^-_n u=0$. By \eqref{eq:jumpproof}, $\partial_n^+ u=0$, and by uniqueness of the Helmholtz exterior Neumann problem with the radiation condition \eqref{eq:src} (which holds when $\Gamma$ is Lipschitz by, e.g.,\cite[Corollary 2.9]{ChGrLaSp:12}), $u=0$ in $\Oe $. Therefore, $\phi = \gamma^+ u - \gamma^- u=0$.

We now need to check that $R$ satisfies the assumptions of Lemma  \ref{lem:ItDS1} if one of the following three conditions holds:~(a) $R$ satisfies Assumption \ref{ass:Reg} and $k$ is sufficiently large, (b) $R=S_{\ri k}$ and $k>0$, and (c) $R=S_0$ and (in 2-d) the constant $a$ in \eqref{eq:LaplaceFund} is larger than the capacity of $\Gamma$. Indeed, the assumptions of Lemma  \ref{lem:ItDS1} hold under (a) by Lemma \ref{lem:Rcoer}, the assumptions of Lemma  \ref{lem:ItDS1} hold under (b) by Theorem \ref{thm:Sikcoer} and \cite[Theorem 7.17]{Mc:00}, and 
the assumptions of Lemma  \ref{lem:ItDS1} hold under (c) by \cite[Corollary 8.13, Theorem 8.16, and Theorem 7.17]{Mc:00}); the injectivity results in both Parts (i) and (ii) therefore follow.

We now complete the proof of Part (i) by showing that, when $\Gamma$ is $C^\infty$ and $\Reg$ satisfies Assumption \ref{ass:Reg}, $\Breg$ and $\Bregp$ are Fredholm on $\LtG$ for $k$ sufficiently large.
Let $\chi\in C_{\rm comp}^\infty(\mathbb{R})$ 
be as in Theorem \ref{thm:HFSD} (i.e., $\chi(\xi)= 1$ for $|\xi|\leq 2$ and $\chi(\xi)=0$ for $|\xi|\geq 3$).
By the definition of $\Bregp$ \eqref{eq:BIEs}, $\Bregp:= B'_1 + B'_2$, where
\beqs
B_1':= \frac{\ri \eta}{2}I + \big(1 - \chi(|\hsc D'|_g)\big) H_k R
\quad\tand\quad
B_2':= \chi(|\hsc D'|_g) H_k R - \ri \eta K_k'.
\eeqs
We first claim that $B_2'$ is compact on $\LtG$; indeed, this follows since $H_kR:\LtG\to\LtG$ is bounded, $\chi(|\hsc D'|_g):\LtG\to H_k^{1}(\Gamma)$ is bounded (cf.~\eqref{eq:frequencycufoff}) and hence compact on $\LtG$, and $K_k'$ is compact on $\LtG$ by \cite[Theorem 1.2]{FaJoRi:78}. We next claim that $B_1'$ is invertible on $\LtG$; indeed, by Corollary \ref{cor:hBEM}, 
$(1 - \chi(|\hsc D'|_g)) H_k R\in \Psi^0_\hsc(\Gamma)$ with real-valued semiclassical principal symbol. Since $\eta \in \Rea\setminus \{0\}$, $B_1'$ is therefore elliptic and hence invertible for sufficiently large $k$ by Theorem \ref{thm:elliptic}. Thus $\Bregp$ is the sum of an invertible operator ($B_1'$) and a compact operator ($B_2'$) and so is Fredholm; the result for $\Breg$ follows either from very similar arguments (using the result in Corollary \ref{cor:hBEM} about 
$(1 - \chi(|\hsc D'|_g)) \Reg H_k$) or from the adjoint relation \eqref{eq:norms2}.

To complete the proof of Part (ii), we prove that $\Breg$ and $\Bregp$ are second-kind when $R=S_{\ri k}$ and $\Gamma$ is $C^1$; the proof for $R=S_0$ is very similar. Observe that
\beqs
\Breg= \left(\frac{\ri \eta}{2} - \frac{1}{4}\right)I + L_{k,\eta}
\quad
\tand
\quad
\Bregp= \left(\frac{\ri \eta}{2} - \frac{1}{4}\right)I + L'_{k,\eta}
\eeqs
where
\beqs
L_{k,\eta}:= -\ri \eta \DL_k +S_{\ri k}H_k + \frac{1}{4}I
\quad
\tand
\quad
L_{k,\eta}':= -\ri \eta \DL'_k +H_k S_{\ri k} + \frac{1}{4}I.
\eeqs
By the Calder\'on relations \eqref{eq:Calderon},
\beqs
L_{k,\eta}= -\ri \eta \DL_k + \big(S_{\ri k}-S_k\big)H_k + (\DL_k)^2,
\eeqs 
and
\beqs
L'_{k,\eta}= -\ri \eta \DL_k' + \big(H_k-H_{\ri k}\big)S_{\ri k} + (\DL_k')^2,
\eeqs 
 When $\Gamma$ is $C^1$, $\DL_k$ and $\DL_k'$ are compact on $\LtG$ by \cite[Theorem 1.2]{FaJoRi:78}. 
By this, and the mapping properties of $S_{\ri k}$ and $H_k$ from \eqref{eq:mapping}, to show that $L_{k,\eta}$ and $L_{k,\eta}'$ are compact it is sufficient to prove that
 (a) $ S_{k}-S_{\ri k}:H^{-1}(\Gamma)\rightarrow \LtG$ is compact, and 
(b) $ H_{k}-H_{\ri k}:H^{1}(\Gamma)\rightarrow \LtG$ is compact.
Since $\Phi_k - \Phi_{\ri k} = (\Phi_k - \Phi_0) - (\Phi_{\ri k }- \Phi_0)$, the bounds on $\Phi_k-\Phi_0$
 in \cite[Equation 2.25]{ChGrLaSp:12} (valid for $k\in \Com$) show that $S_{\ri k}-S_k: H^{-1}(\Gamma) \rightarrow H^1(\Gamma)$ and  
 $ H_{k}-H_{\ri k}:H^{1}(\Gamma)\rightarrow \HoG$. Since the inclusion $\HoG\rightarrow \LtG$ is compact
 (see, e.g., \cite[Theorem 3.27]{Mc:00}), both the properties (a) and (b) hold\footnote{In fact, when $\Gamma$ is $C^{1,1}$ (so that $H^2(\Gamma)$ is well-defined) these bounds on $\Phi_k-\Phi_0$ show that $S_{\ri k}-S_k:
 H^{-1}(\Gamma) \rightarrow H^2(\Gamma)$ and  $H_{\ri k}-H_k:
 H^{1}(\Gamma) \rightarrow H^2(\Gamma)$.};
see also \cite[Theorem 2.2]{BoDoLeTu:15} for a proof of these mapping properties using regularity results about transmission problems and standard trace results.
\epf

\subsection{Proof of Theorem \ref{thm:upper_bound_inverse}}

\begin{lemma}\label{lem:fav_formula_reg}
    \begin{align}\label{eq:fav_formula_reg}
        (\Breg)^{-1}= \NtD \Reg^{-1} - (I-\ri\eta \NtD \Reg^{-1})\ItDR
    \end{align}
    and
    \begin{align}\label{eq:fav_formula_reg2}
        (\Bregp)^{-1}= \Reg^{-1} \NtD -  \Reg^{-1}\ItDR( \Reg-\ri\eta\NtD).
    \end{align}
    \end{lemma}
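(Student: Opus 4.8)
The plan is to verify the two identities directly, by realising $(\Breg)^{-1}f$ and $(\Bregp)^{-1}f$ through layer-potential ansätze and then reading off the claimed formulas from the jump relations \eqref{eq:jumprelations} together with the defining properties of $\NtD$ and $\ItDR$. All the ingredients are already in place: under the assumptions in force in this subsection (those of Theorem \ref{thm:upper_bound_inverse} with $\Oi$ smooth, for $k$ sufficiently large), Theorem \ref{thm:invert} guarantees that $\Breg$ and $\Bregp$ are invertible on $\LtG$, Lemmas \ref{lem:ItDS1} and \ref{lem:Rcoer} that $\ItDR$ is well defined (and, by Lemma \ref{lem:ItDS2}, bounded $\LtGt$), and Theorem \ref{thm:elliptic} together with Assumption \ref{ass:Reg} that $\Reg$ is invertible with $\Reg^{-1}\in\Psi_\hsc^{1}(\Gamma)$. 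I would prove both operator identities first on the dense subspace $\HhG\subset\LtG$ — on which the relevant potentials lie in $H^1(\Oi)$ and $H^1_{\rm loc}(\Oe)$ and all traces are classical — and then extend them to $\LtG$ by continuity, using the $\LtG$-boundedness of every operator appearing on the right-hand sides.

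For \eqref{eq:fav_formula_reg} I would fix $f\in\HhG$, set $\phi:=(\Breg)^{-1}f$ and $u:=\cK_k\phi$, and then argue as in the proof of Theorem \ref{thm:invert}: the third and fourth jump relations in \eqref{eq:jumprelations} give $\gamma^\pm u=(\pm\tfrac12 I+\DL_k)\phi$ and $\partial_n^\pm u=H_k\phi$, so $\gamma^+u-\gamma^-u=\phi$ and $\partial_n^+u=\partial_n^-u$. By the definition \eqref{eq:BIEs} of $\Breg$, $\Breg\phi=\Reg\,\partial_n^-u-\ri\eta\,\gamma^-u=f$, so $u|_{\Oi}$ solves the interior problem \eqref{eq:ItDSdef2} with data $f$ and hence $\gamma^-u=\ItDR f$; and since $u|_{\Oe}$ satisfies the Helmholtz equation and the radiation condition, it solves the exterior Neumann problem with Neumann data $\partial_n^+u$, so $\gamma^+u=\NtD\,\partial_n^+u=\NtD\,\partial_n^-u$. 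The boundary condition gives $\partial_n^-u=\Reg^{-1}(f+\ri\eta\,\gamma^-u)=\Reg^{-1}f+\ri\eta\,\Reg^{-1}\ItDR f$, and substituting into $\phi=\gamma^+u-\gamma^-u$ yields $\phi=\NtD\Reg^{-1}f+\ri\eta\,\NtD\Reg^{-1}\ItDR f-\ItDR f$, which is \eqref{eq:fav_formula_reg}.

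For \eqref{eq:fav_formula_reg2} I would fix $f\in\HhG$, set $\phi:=(\Bregp)^{-1}f$ and $w:=(\cK_k\Reg-\ri\eta\cS_k)\phi$ (the indirect ansatz of \eqref{eq:indirect}), and compute: the jump relations give $\partial_n^+w-\partial_n^-w=\ri\eta\phi$, $\gamma^+w-\gamma^-w=\Reg\phi$, and, by the definition \eqref{eq:BIEs} of $\Bregp$, $\partial_n^+w=\Bregp\phi=f$. Hence $w|_{\Oe}$ solves the exterior Neumann problem with Neumann data $f$, giving $\gamma^+w=\NtD f$, while in $\Oi$ we have $\partial_n^-w=f-\ri\eta\phi$ and $\gamma^-w=\NtD f-\Reg\phi$, so that $\Reg\,\partial_n^-w-\ri\eta\,\gamma^-w=(\Reg-\ri\eta\NtD)f$ and therefore $\gamma^-w=\ItDR(\Reg-\ri\eta\NtD)f$. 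Equating the two expressions for $\gamma^-w$ gives $\Reg\phi=\NtD f-\ItDR(\Reg-\ri\eta\NtD)f$, i.e.\ \eqref{eq:fav_formula_reg2}.

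These computations are mechanical, so the main obstacle is the functional-analytic bookkeeping rather than any algebra: one must check that $\cK_k\phi$ and $(\cK_k\Reg-\ri\eta\cS_k)\phi$ are regular enough for $\NtD$ and $\ItDR$ to be applied to their traces — using the mapping properties \eqref{eq:mapping} of $\DL_k,H_k,S_k$, the facts that $\Reg:H^{-1/2}(\Gamma)\to H^{1/2}(\Gamma)$, $\Reg^{-1}:H^{1/2}(\Gamma)\to H^{-1/2}(\Gamma)$ and $\NtD:H^{-1}(\Gamma)\to\LtG$, and the invertibility of $\Breg$ and $\Bregp$ on $\HhG$ (which follows from Theorem \ref{thm:invert}, since $\Reg'$ also satisfies Assumption \ref{ass:Reg}) — and that the identities proved on $\HhG$ persist on $\LtG$, which follows from the density of $\HhG$ in $\LtG$ and the $\LtG$-boundedness of $\NtD\Reg^{-1}$, $\ItDR$, $\Reg^{-1}$ and $\Reg$ supplied by Corollary \ref{cor:Rbound} and Lemmas \ref{lem:NtD1} and \ref{lem:ItDS2}.
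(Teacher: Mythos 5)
Your derivation of \eqref{eq:fav_formula_reg} is essentially the paper's own argument (direct BIE, $u=\cK_k\phi$, jump relations, then reading off $\gamma^-u=\ItDR f$ and $\gamma^+u=\NtD\partial_n^+u$), so there is nothing to add there. For \eqref{eq:fav_formula_reg2}, however, you take a genuinely different route: you derive it directly from the indirect ansatz $w=(\cK_k\Reg-\ri\eta\cS_k)\phi$, using $\partial_n^+w=\Bregp\phi$, the jumps $\partial_n^+w-\partial_n^-w=\ri\eta\phi$, $\gamma^+w-\gamma^-w=\Reg\phi$, and the observation that $w|_{\Oi}$ solves \eqref{eq:ItDSdef2} with data $(\Reg-\ri\eta\NtD)f$; your algebra checks out and reproduces the stated formula. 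The paper instead deduces \eqref{eq:fav_formula_reg2} from \eqref{eq:fav_formula_reg} by duality: it uses Lemma \ref{lem:quasiadjoint} to write $(\Bregp)^{-1}=\bigl((B_{k,\eta,\Reg'})^{-1}\bigr)'$, together with $(\NtD)'=\NtD$ (from \eqref{eq:NtDdual2}) and $((\Reg')^{-1}\ItDSp)'=\Reg^{-1}\ItDS$ (Lemma \ref{lem:ItDS2}(ii)), and then takes the transpose of \eqref{eq:fav_formula_reg} with $\Reg$ replaced by $\Reg'$. Your route buys independence from the adjoint machinery (no appeal to Lemma \ref{lem:quasiadjoint} or \eqref{eq:eye2}) and gives a self-contained potential-theoretic proof of both identities; the paper's route is shorter given that those adjoint identities are needed elsewhere anyway, and it automatically keeps the two formulas consistent under $\Reg\mapsto\Reg'$. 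One small caveat in your bookkeeping: the claim that $\Breg$ and $\Bregp$ are invertible on $\HhG$ does not follow from the statement of Theorem \ref{thm:invert} (which is on $\LtG$) nor from ``$\Reg'$ also satisfies Assumption \ref{ass:Reg}''; it requires rerunning the injectivity-plus-Fredholm argument on the relevant Sobolev scale (as the paper does for $\Bregp$ on $\HmhG$ in the proof of Lemma \ref{lem:ItDS2}), or, more simply, you can avoid it by proving the identity for $g=\Breg\varphi$ (resp.\ $f=\Bregp\phi$) with $\varphi,\phi\in\HhG$, noting that such $g,f$ are dense in $\LtG$, and then concluding by the $\LtG$-boundedness of both sides.
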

    
    \begin{proof}[Proof of Lemma \ref{lem:fav_formula_reg}]
We first show that \eqref{eq:fav_formula_reg2} follows from \eqref{eq:fav_formula_reg}.
By Lemma \ref{lem:quasiadjoint}, 
\beq\label{eq:eye1}
(\Bregp)^{-1} = \big( (B_{k, \eta, \Reg'})')^{-1} = \big( (B_{k,\eta,\Reg'})^{-1}\big)'.
\eeq
By \eqref{eq:NtDdual2}, $(\NtD)' = \NtD$. By Part (ii) of Lemma \ref{lem:ItDS2}, $((\Reg')^{-1}\ItDSp)'=\Reg^{-1}\ItDS $ and thus 
\beq\label{eq:eye2}
(\ItDSp)' =\Reg^{-1} \ItDS \Reg.
\eeq
Replacing $\Reg$ by $\Reg'$ in  \eqref{eq:fav_formula_reg}, taking the $'$, and using \eqref{eq:eye1} and \eqref{eq:eye2}, the result \eqref{eq:fav_formula_reg2} follows.

We now prove \eqref{eq:fav_formula_reg}.
    Given $g$ and $\varphi$ satisfying  \(\Breg \varphi = g\), let  \(u :=\cK_k \varphi\); the motivation for this choice is that $\Breg$ is the direct BIE arising from Green's integral representation where, for the Neumann problem, $u$ is the sum of a double-layer potential and $u^I$; see \eqref{eq:Green}.
     Our goal is to express \(\varphi\) as a function of \(g\). 
The equation \(\Breg \varphi = g\) and the jump relations \eqref{eq:jumpproof} then imply that \(\Reg \partial^-_n u - \ri \eta  \gamma^- u = g\). By the definition of $\ItDR$, \(\gamma^- u = \ItDR g\). Then, using this last equation, \eqref{eq:jumpproof}, and the fact that $\partial_n^+ u = \partial_n^- u$, we find that
        \begin{align*}
            \varphi = \gamma^+ u - \gamma^- u = \NtD (\partial^+_n u) - \ItDR g
            &= \NtD \Reg^{-1}(g + \ri \eta \gamma^- u) - \ItDR g,  \\
            &= \Big(\NtD \Reg^{-1} - (I - \ri\eta \NtD \Reg^{-1} )\ItDR\Big)g,
        \end{align*}
and the result \eqref{eq:fav_formula_reg} follows.
    \end{proof}

\bpf[Proof of Theorem \ref{thm:upper_bound_inverse}]
We prove the upper bounds on  $\| (\Breg)^{-1}\|_{\LtGt}$.
The same argument proves upper bounds on $\| (B_{k,\eta,\Reg'})^{-1}\|_{\LtGt}$ with identical $k$-dependence, and the upper bounds on $\| (\Bregp)^{-1}\|_{\LtGt}$ then follows from Corollary \ref{cor:norms}. 

By \eqref{eq:fav_formula_reg} and the triangle inequality,
\begin{align*}
    \N{ (\Breg)^{-1}}_{L^2(\Gamma)\rightarrow L^2(\Gamma)}&\leq \N{ \NtD \Reg^{-1}}_{L^2(\Gamma)\rightarrow L^2(\Gamma)} \\ &\quad+ \big\| 
    \ItDS\big\|_{L^2(\Gamma)\rightarrow L^2(\Gamma)}\Big(1+\lvert \eta\rvert \N{ \NtD \Reg^{-1}}_{L^2(\Gamma)\rightarrow L^2(\Gamma)}\Big).
\end{align*}
Furthermore,
\begin{align*}
    \N{ \NtD \Reg^{-1}}_{L^2(\Gamma)\rightarrow L^2(\Gamma)} \leq \N{ \NtD }_{H^{-1}_k(\Gamma)\rightarrow L^2(\Gamma)}\N{   \Reg^{-1}}_{L^2(\Gamma)\rightarrow H_k^{-1}(\Gamma)}.
\end{align*}
Combining these last two inequalities, we obtain that
\begin{align*}
&    \lVert (\Breg)^{-1} \rVert_{L^2(\Gamma)\rightarrow L^2(\Gamma)}\leq \big\| \ItDS\big\|_{L^2(\Gamma)\rightarrow L^2(\Gamma)} \\
    &\hspace{4cm}+ (1+\lvert \eta \rvert \big\| \ItDS\big\|_{L^2(\Gamma)\rightarrow L^2(\Gamma)} )\N{ \NtD }_{H^{-1}_k(\Gamma)\rightarrow L^2(\Gamma)}\N{   \Reg^{-1}}_{L^2(\Gamma)\rightarrow H_k^{-1}(\Gamma)}.
\end{align*}
By Theorem~\ref{thm:ItDS} and Corollary~\ref{cor:Rbound},
\begin{align}\label{eq:hot1}
    \N{ (\Breg)^{-1}}_{L^2(\Gamma)\rightarrow L^2(\Gamma)} \leq C \Big( 1+ (1+\lvert \eta \rvert)k \N{ \NtD}_{H^{-1}_k(\Gamma)\rightarrow L^2(\Gamma)}\Big).
\end{align}
Theorem~\ref{thm:upper_bound_inverse} can then be obtained by using the upper bounds on \(\N{ \NtD}_{H^{-1}_k(\Gamma)\rightarrow L^2(\Gamma)}\) from Theorem~\ref{thm:NtD}.
\epf

\subsection{Proof of Theorem \ref{thm:lower_bound_inverse}} 

This proof follows the same ideas as the proof of the corresponding result for the BIEs \eqref{eq:Aketa} (used to solve the exterior Dirichlet problem) in \cite[Theorem 2.8]{BeChGrLaLi:11}; see also \cite[Pages 222 and 223]{ChGrLaSp:12}.
 
We prove the lower bound on  $\| (\Breg)^{-1}\|_{\LtGt}$;
the same argument proves the analogous lower bound on $\| (B_{k,\eta,\Reg'})^{-1}\|_{\LtGt}$, and then the lower bound on $\| (\Bregp)^{-1}\|_{\LtGt}$ follows from Corollary \ref{cor:norms} and the fact that $\|R\|_{\LtGt}= \|R'\|_{\LtGt}$.

Let $(u_j,k_j)_{m=1}^\infty$ be a quasimode with \(\supp u_j \subset \mathcal{K} \subset \Oe \), \(\N{ u_j}_{L^2(\Oe )}=1\), and 
$    \Delta u_j +k_j^2 u_j = g_j$,
where (by definition) 
\beq\label{eq:gbound}
\N{ g_j}_{L^2(\Oe )}\leq\QMC(k_j).
\eeq
 Let
\begin{align*}
    u_j^I (x):= \cR_k g_j (x)= \int_{\Oe } \Phi_{k_j} (x,y)g_j (y)\dif y.
\end{align*}
By standard properties of the free resolvent $\cR_k$ (see, e.g., \cite[Pages 197, 282]{Mc:00}),
\begin{align*}
    \Delta u_j^I +k_j^2 u_j^I =  g_j.
\end{align*}
We now think of \(u^I_j\) as an incident field and observe that, since $u_j^I$  and $u_j$ satisfy the Sommerfeld radiation condition and $\partial_n^+u_j=0$, $u_j$ is the corresponding solution of the scattering problem \eqref{eq:Helmholtz} and \eqref{eq:src}.
Green's integral representation theorem~\cite[Theorem 2.21]{ChGrLaSp:12} implies that
\begin{align*}
    -(\calS_{k_j} \partial_n u_j^S)(x) + (\cK_{k_j} \gamma u_j^S)(x) = 
    \left\{
        \begin{aligned}
            u_j^S & \quad \text{for } x\in \Oe ,\\
            0 & \quad \text{for } x\in \Oi ,
        \end{aligned}
    \right. \\
    -(\calS_{k_j} \partial_n u_j^I)(x) + (\cK_{k_j} \gamma u_j^I)(x) =
    \left\{
        \begin{aligned}
          0& \quad \text{for } x\in \Oe ,\\
            u_j^I & \quad \text{for } x\in \Oi .
        \end{aligned}
    \right.
\end{align*}
Adding the two equations in $\Oe$, and using the fact that $\partial_n^+ u_j=0$, we find that 
\beq\label{eq:uGreen}
u_j= u_j^I +\cK_{k_j} \gamma^+ u_j \quad\tin \Oe .
\eeq
Applying the operator \(\Reg \partial^+_n - \ri \eta \gamma^+\) and using the jump relations \eqref{eq:jumprelations}, we obtain that
\begin{align}\label{eq:fm}
    \Breg (\gamma^+u_j) = f_j, \quad\text{ where } \quad f_j:=-\big(\Reg\partial^+_n - \ri \eta \gamma^+\big)u^I.
\end{align}
Therefore, to prove Theorem~\ref{thm:lower_bound_inverse}, we only need to show that
\begin{align}\label{eq:quasimode_growth}
 \N{ \gamma^+ u_j }_{L^2(\Gamma)}\geq C    \N{ f_j }_{L^2(\Gamma)} k_j^{1/2}\Big(\lVert \Reg\rVert_{L^2(\Gamma)\rightarrow L^2(\Gamma)} k_j + \lvert\eta \rvert\Big)^{-1} \left(\frac{1}{\QMC(k_j)} - \frac{1}{k_j}\right).
\end{align}
By \eqref{eq:uGreen} and the definition of the quasimode,
\begin{align}\nonumber
    1= \N{ u_j }_{L^2(\Oe )}=\N{ u_j }_{L^2(\calK)}\leq  \N{\calD_{k_j} \gamma^+ u_j }_{L^2(\calK)}+  \N{ \cR g_j}_{L^2(\calK)} 
    \leq  & C\Big(\N{ \gamma^+ u_j }_{L^2(\Gamma)}+ \frac{1}{k_j} \N{ g_j}_{L^2(\calK)}\Big)\\
    \leq & C\Big(\N{ \gamma^+ u_j }_{L^2(\Gamma)}+ \frac{1}{k_j}\epsilon(k_j)\Big),\label{eq:quasimode_lower_bound_trace}
\end{align}
where we used Theorem \ref{thm:HT} to bound $\cK_k$, Theorem \ref{thm:Newton} to bound $\cR_k$, and the bound \eqref{eq:gbound} on $g_j$.

Having proved the bound \eqref{eq:quasimode_lower_bound_trace} on $\gamma^+ u_j$ from below, to prove \eqref{eq:quasimode_growth}, we now need an upper bound on $\|f_j\|_\LtG$.
Let $\chi \in C^\infty_{\rm comp}(\Rea^d)$ with $\chi = 1$ on a neighbourhood of $\Oi$. By the norm relation \eqref{eq:weightednorm}, 
the trace bound \eqref{e:basicTrace}, and Theorem \ref{thm:Newton},
\beqs
    \N{ \gamma^+ u^I_j}_{L^2(\Gamma)} \leq 
    \N{ \gamma^+ u^I_j}_{H^{1/2}_{k_j}(\Gamma)} \leq C k_j^{1/2} \N{\chi u^I_j}_{H^1_{k_j}(\Oe)} \leq C k_j^{-1/2} \N{ g_j }_{L^2(\Oe )}.
\eeqs
Similarly, 
\begin{align*}
    \N{ \partial^+_n u^I_j}_{L^2(\Gamma)}  \leq
        C \N{\gamma^+ \nabla (\chi u_j^I)}_{H^{1/2}_{k_j}(\Gamma)}\leq C k^{1/2} \N{\nabla (\chi u^I)}_{H^1_k(\Oe)} 
        &\leq C k^{3/2}     \N{\chi u^I}_{H^2_k(\Oe)}\\
&    \leq  C k_j^{1/2}\N{ g_j }_{L^2(\Oe )}.
\end{align*}
Using these last two displayed bounds in the definition of $f_j$ \eqref{eq:fm}, we find that 
\beqs
\N{ f_j }_{L^2(\Gamma)}\leq C k_j^{-1/2} \Big(k_j \lVert \Reg \rVert_{L^2(\Gamma)\rightarrow L^2(\Gamma)}  + \lvert\eta \rvert\Big) \QMC(k_j).
\eeqs
Combining this last inequality with \eqref{eq:quasimode_lower_bound_trace}, we obtain \eqref{eq:quasimode_growth} and the result follows.

\section{Numerical experiments illustrating the main results}\label{sec:num}	

\subsection{Obstacles considered}

\begin{figure}
    \begin{subfigure}[t]{0.45\textwidth}
        \includegraphics[width=\textwidth]{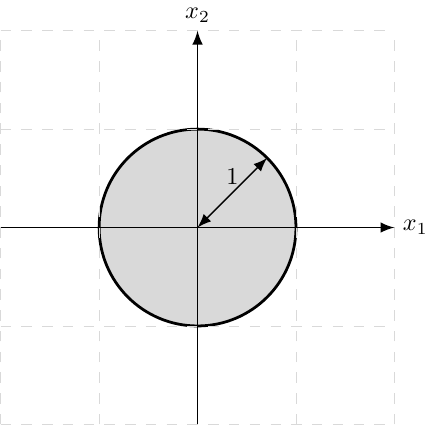}
        \caption{Circle}\label{fig:circle}
    \end{subfigure}
    \hfill
    \vspace{0.5cm}
    \noindent
    \begin{subfigure}[t]{0.45\textwidth}
        \includegraphics[width=\textwidth]{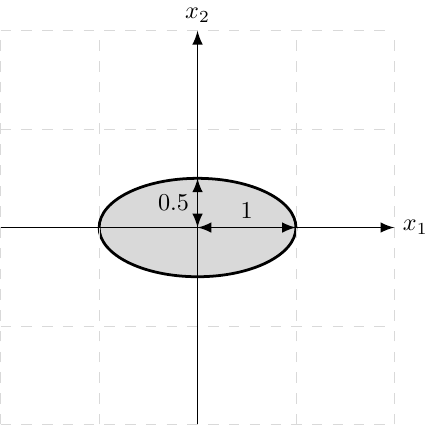}
        \caption{Ellipse}\label{fig:ellipse}
    \end{subfigure}
    \begin{subfigure}[t]{0.45\textwidth}
        \includegraphics[width=\textwidth]{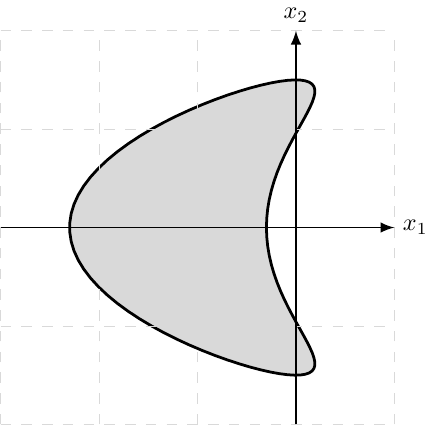}
        \caption{Kite}\label{fig:kite}
    \end{subfigure}
    \hfill
    \vspace{0.5cm}
    \noindent
    \begin{subfigure}[t]{0.45\textwidth}
        \includegraphics[width=\textwidth]{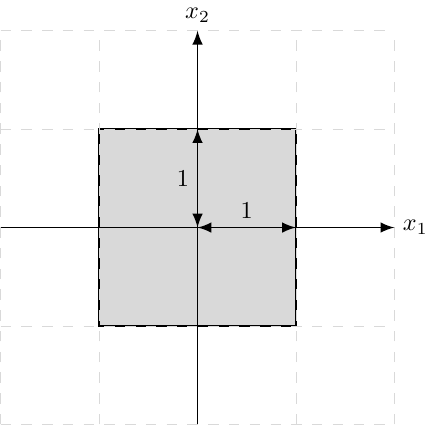}
        \caption{Square}\label{fig:square}
    \end{subfigure}
    \begin{subfigure}[t]{0.45\textwidth}
        \includegraphics[width=\textwidth]{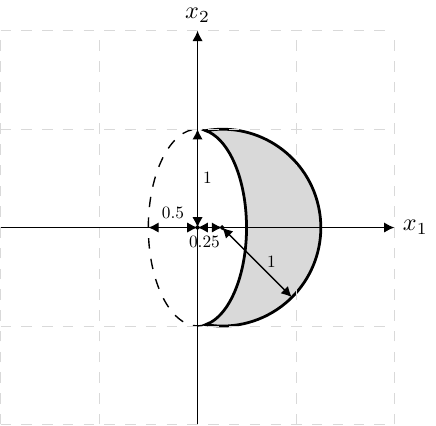}
        \caption{Moon}\label{fig:moon}
    \end{subfigure}
    \hfill
    \vspace{0.5cm}
    \noindent
    \begin{subfigure}[t]{0.45\textwidth}
        \includegraphics[width=\textwidth]{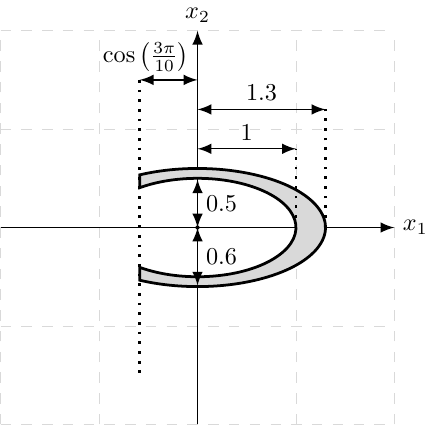}
        \caption{Elliptic cavity}\label{fig:elliptic_cavity}
    \end{subfigure}
    \caption{Obstacles considered in the numerical experiments}\label{fig:geometries}
\end{figure}

We consider the following obstacles $\Oi$, shown in Figure \ref{fig:geometries}, and inspired by those considered in the experiments in \cite{BeChGrLaLi:11}.
\bit
\item The unit circle and an ellipse whose minor and major axis are respectively 0.5 and 1; these are both examples where $\Gamma$ is $C^\infty$ and curved (in the sense of Definition \ref{def:curved}).
\item The ``kite'' domain defined by \((\cos (t)-0.65 \cos (2t)-0.65,1.5\sin(t))\) with \(t\in [0,2\pi]\); this $\Gamma$ is smooth.
\item A square with side length \(2\); this $\Gamma$ is piecewise smooth (in the sense of Definition \ref{def:piecewisesmooth}).
\item The ``moon'' domain defined as the union of an elliptic arc and a circular arc, where the particular ellipse is \((0.5 \cos (t),\sin(t))\) with \(t\in [-\pi/2,\pi/2]\) and the particular circle is \((\cos (t)+0.25,\sin(t))\) with \(t\in [-\pi/2,\pi/2]\); this $\Gamma$ is both piecewise smooth and piecewise curved  (in the sense of Definition \ref{def:piecewisecurved}).
\item The ``elliptic cavity'' defined as the region between the two elliptic arcs
\begin{align*}
    &(\cos (t), 0.5 \sin (t)), \quad t\in [-\phi_0,\phi_0]
    \quad\tand \quad 
    (1.3\cos (t), 0.6 \sin (t)),\quad t\in [-\phi_1,\phi_1] \\
    &\qquad\qquad\text{ with } \phi_0=7\pi/10\quad\tand \quad \phi_1= \arccos \left(\frac{1}{1.3}\cos (\phi_0)\right);
\end{align*}
this corresponds to the shared interior of the solid lines in Figure~\ref{fig:elliptic_cavity}. 
\eit 

All these $\Oi$ are nontrapping (in the sense of Definition \ref{def:nontrapping}), apart from the elliptic cavity, which is trapping. The elliptic cavity also satisfies the assumptions of Part (ii) of Theorem \ref{thm:ellipse}, and so there exists a quasimode with exponentially-small quality.

When considering $R=S_0$, we choose the constant $a$ in the Laplace fundamental solution \eqref{eq:LaplaceFund} to be $4$. Since the maximal diameter of the considered $\Oi$ is $\leq 3$ and the capacity of $\Oi$ is $\leq {\rm diam}(\Oi)$ (see \cite[Exercise 8.12]{Mc:00}), this choice of $a$ ensures that $S_0$ is coercive and that $(B_{k,\eta,S_0})^{-1}$ exists when $\Gamma$ is $C^1$ by Part (ii) of Theorem \ref{thm:invert}.

For all nontrapping domains, we compute norms of $\Breg$ and $(\Breg)^{-1}$ for \(k=5\times 2^n\) with \(n=0,1,\dots,8\), 
i.e., $k\in (5,1280)$. For the elliptic cavity, we compute at \(k=5\times 2^n\) with \(n=0,1,\dots,7\), 
i.e., $k\in (5,640)$, but we also compute at (approximations of) particular  frequencies in the quasimode. The particular frequencies are denoted \(k_{m,0}^e\), with this notation explained in the following remark.

\begin{remark}[The quasimode frequencies \(k_{m,0}^e\)]\label{rk:freq_quasimode_computations}
        The functions \(u_j\) in the Neumann quasimode construction in
Part (ii) of Theorem~\ref{eq:ellipse} (from \cite[Theorem 3.1]{NgGr:13} and analogous to the Dirichlet quasimode construction in \cite{BeChGrLaLi:11})
        are based on the family of eigenfunctions of the Laplacian operator in the ellipse \(E\) \eqref{eq:ellipse} localising around the periodic orbit \(\{(0,x_2) : \lvert x_2 \rvert \leq a_2\}\), i.e., the minor axis of the ellipse; the square root of the associated eigenvalues defines frequencies in the quasimode.
        We use the method introduced in~\cite{Wi:06} and the associated MATLAB toolbox to compute the eigenvalues of the ellipse, and hence the frequencies in the quasimode. 
In this paper we consider  the frequencies  \(k_{m,0}^e\); the superscript `e' is because the associated eigenfunctions are even functions of the ``angular'' variable, the subscript `$m,0$' means that the associated eigenfunction has no zeros when the angular variable is in $[0,\pi)$ and $m$ zeros when the radial variable, $\mu$, is in $(0,\mu_0)$, where $\mu_0:= \cosh^{-1}(a_1/\sqrt{a_1^2-a_2^2})$ and the boundary of the ellipse is  $\mu=\mu_0$; see~\cite[Appendix E]{MaGaSpSp:21} for more details.
        \end{remark}

\subsection{Description of the discretisation used for the experiments}\label{sec:82}

We consider $\Breg$ \eqref{eq:direct} with $R=S_{\ri k}$ and $R=S_0$.
These operators are discretised using the boundary-element method (BEM) with continuous piecewise-linear functions. We choose \(\eta=1/2\) as in~\cite[Equation 23]{BrElTu:12}, and we define the mesh using ten points per wavelength. In more detail: given a finite-dimensional subspace $V_n\subset \LtG$, the Galerkin method is
\beq\label{eq:Galerkin}
\text{ find } v_n \in V_n \tst \big(\Breg v_n, w_n\big)_\LtG = \big(f, w_n\big)_\LtG \quad\tfa w_n \in V_n,
\eeq
where $f$ denotes the right-hand side of the BIE in \eqref{eq:direct}; the Galerkin solution $v_n$ is then an approximation to $\gamma^+  u$. We denote the continuous piecewise-linear basis functions by \(\phi_j \in V_n\) for \(j=1,\ldots,n\). The matrix of the Galerkin linear system \eqref{eq:Galerkin} can be written
\begin{align*}
    \mathbf{B}_{k,\eta,R}:= \ri \eta \left(\frac{1}{2}\mathbf{M} - \mathbf{K}_k\right) + \mathbf{R}\mathbf{M}^{-1} \mathbf{H}_k,
\end{align*}
where \((\mathbf{B}_{k,\eta, R})_{j,k}=\big(\Breg \phi_k, \phi_j\big)_\LtG\); the matrices arising from the operators \(K_k\), \(R_k\) and \(H_k\) are defined similarly, and the mass matrix \(\mathbf{M}\) is the discretisation of the \(L^2\) scalar product on \(V_n\).
The meshwidth $h$ was chosen so that $10 h = 2\pi/k$; this corresponds to having ten gridpoints per wavelength, which, at least empirically, ensures the accuracy does not deteriorate as $k\tendi$ (but see \cite{GaSp:22} for more discussion on this). 
To check the accuracy of this choice, we re-ran all the experiments for twenty gridpoints per wavelength, and this resulted in essentially no visible changes in the plots of the norms.
This choice of $h$ means that $n\sim h^{-(d-1)} \sim k^{d-1}$.

Approximations to the norms of $\Breg$ and $(\Breg)^{-1}$ are computed as the maximal singular value of $\mathbf{M}^{-1}\mathbf{B}_{k,\eta,R}$ and the inverse of the minimum singular value of $\mathbf{M}^{-1}\mathbf{B}_{k,\eta,R}$, respectively. As $h\tendo$ for fixed $k$, we expect these approximations to converge by the following lemma combined with (a) the fact that $\cond(\bfM)$ is bounded independently of $h$ for standard BEM spaces (see \cite[Theorem 4.4.7 and Remark 4.5.3]{SaSc:11} and \cite[Corollary 10.6]{St:08}), and (b) the fact that $\Breg$ is a compact perturbation of a multiple of the identity when $\Gamma$ is $C^1$ by Part (ii) of Theorem \ref{thm:invert}.

\ble\mythmname{\cite[Lemma B.1]{MaGaSpSp:21}}
\label{lem:discon}
Let $V_n\subset \LtG$ be a finite-dimensional space with real basis $\{\phi_j\}_{j=1}^n$. Given $A:\LtGt$, let $\bfA$ be defined by \((\bfA)_{j,k}=(A\phi_k,\phi_j)_{L^2(\Gamma)}\).
Let $P_h:\LtG\rightarrow V_h$ be the orthogonal projection, and let 
\beqs
\widetilde{A}:= P_h A|_{V_h}.
\eeqs

(i) 
\beqs
\N{\bfM^{-1}\bfA}_2 \leq \sqrt{ \cond(\bfM)}\,\, \big\|\widetilde{A}\big\|_{\LtGt}
\eeqs
where $\cond(\bfM):= \|\bfM\|_2 \|\bfM^{-1}\|_2$, and if $(\bfM^{-1}\bfA)^{-1}$ exists, then
\beqs
\N{(\bfM^{-1}\bfA)^{-1}}_2 \leq \sqrt{ \cond(\bfM)}\,\, \big\|\widetilde{A}^{-1}\big\|_{\LtGt}.
\eeqs

(ii) If $P_h \phi \rightarrow \phi$ as $h\tendo$ for all $\phi\in\LtG$, then
\beqs
\big\|\widetilde{A}\big\|_{\LtGt} \rightarrow \N{A}_{\LtGt} \quad\tas h\tendo;
\eeqs
if, in addition, $A = a I + K$, where $a\neq 0$ and $K$ is compact, then 
\beqs
\big\|\widetilde{A}^{-1}\big\|_{\LtGt} \rightarrow \big\|A^{-1}\big\|_{\LtGt} \quad\tas h\tendo.
\eeqs
\ele

The numerical experiments were conducted with the software FreeFEM~\cite{Hecht2012} using
\begin{itemize}
    \item the  interface of FreeFEM with BemTool\footnote{\url{https://github.com/xclaeys/BemTool}} and HTool\footnote{\url{https://github.com/htool-ddm/htool}} to generate the dense matrices stemming from the BEM discretisation of the considered operators, and
    \item the  interface of FreeFEM  with PETSc~\cite{petsc-efficient,petsc-user-ref} and SLEPc~\cite{slepc-toms,slepc-manual} to solve singular value problems; in particular, we used ScaLAPACK~\cite{Blackford1997} to obtain the largest and smallest singular values of $\mathbf{M}^{-1}\mathbf{B}_{k,\eta,R}$  and in Figure \ref{fig:5} (for the kite obstacle) we used the \texttt{cross} method to compute the largest singular values of the Galerkin matrices of $D_k, S_0 H_{ k}$, and $S_{\ri k}H_{k}$.
\end{itemize}

\subsection{Numerical results}

With $R=S_{\ri k}$ and $R=S_0$,
the maximum singular value of $\mathbf{M}^{-1}\mathbf{B}_{k,\eta,R}$ and the inverse of the minimum singular value of 
$\mathbf{M}^{-1}\mathbf{B}_{k,\eta,R}$ (which equals the maximum singular value of $(\mathbf{M}^{-1}\mathbf{B}_{k,\eta,R})^{-1}$) are plotted in Figure \ref{fig:1} (circle and ellipse), Figure \ref{fig:2} (moon), Figure \ref{fig:3} (kite and square), and Figure \ref{fig:4} (elliptic cavity).
In the captions of the figures we abuse notation and write ``$\sigma_{\max}$ for $B_{k,\eta,S_{\ri k}}$'' and 
``$1/\sigma_{\min}$ for $B_{k,\eta,S_{\ri k}}$''.
The computed growth rates with $k$ are summarised in Tables \ref{table:sum_results_curved}, \ref{table:sum_results_piecewise_curved}, and \ref{table:sum_results_piecewise_smooth}, and compared with those in the bounds in \S\ref{sec:main_results}. 

We now discuss separately (i) the norms for all $\Oi$ other than the elliptic cavity, (ii) the norms of the inverses for all $\Oi$ other than the elliptic cavity, and (iii) the norms and the norms of the inverses for the elliptic cavity.

\begin{table}[h!]
    \centering
    \begin{tabular}{ccccccc} 
        \toprule
        & \multicolumn{2}{c}{Circle} &\phantom{a}& \multicolumn{2}{c}{Ellipse}  \\
        \cmidrule{2-3}\cmidrule{5-6}
        & Observed & Proved && Observed & Proved  \\ 
        \midrule
        \(\lVert \BregSik \rVert \) & \(\sim 1\) & \(\lesssim 1\) && \(\sim 1\) & \(\lesssim 1\)\\
        \(\lVert (\BregSik)^{-1} \rVert \) &\(\sim k^{0.34}\) & \(\lesssim k^{1/3}\) &&  \(\sim k^{0.28}\) & \(\lesssim k^{1/3}\)\\
        \(\lVert \BregSO \rVert \) & \(\sim k^{0.94}\) & \(\lesssim k\)  && \(\sim k^{0.99}\) & \(\lesssim k\)\\
        \bottomrule
    \end{tabular}
    \caption{Comparison of the $k$-dependence of the computed norms for the circle 
   (Figure~\ref{fig:circle}) and the ellipse (Figure~\ref{fig:ellipse}) (column ``Observed'')
   and the bounds in Section~\ref{sec:main_results} (column  ``Expected'') 
   }
    \label{table:sum_results_curved}
\end{table}
\begin{table}[h!]
    \centering
    \begin{tabular}{ccccc} 
        \toprule
        & \multicolumn{2}{c}{Moon} &\phantom{a}  \\
        \cmidrule{2-3}
        & Observed & Proved  \\ 
        \midrule
        \(\lVert \BregSik \rVert \) & \(\sim k^{0.15}\) & \(\lesssim k^{1/6}\log(k+2)\)  \\
        \(\lVert (\BregSik)^{-1} \rVert \) & \(\sim k^{0.41}\) & \text{ None } \\
        \(\lVert \BregSO \rVert \) & \(\sim k^{1.00}\) & \(\lesssim k \log(k+2)\)\\
        \bottomrule
    \end{tabular}
    \caption{Comparison of the $k$-dependence of the computed norms for the moon obstacle in 
   Figure~\ref{fig:moon} (column ``Observed'')
   and the bounds in Section~\ref{sec:main_results} (column  ``Expected'').
(``None'' appears in the ``Proved'' column for $(\BregSik)^{-1}$, since Theorem  \ref{thm:upper_bound_inverse} does not apply to the moon obstacle since $\Gamma$ is not $C^\infty$.)    }
    \label{table:sum_results_piecewise_curved}
\end{table}
\begin{table}[h!]
    \centering
    \begin{tabular}{ccccccc} 
        \toprule
        & \multicolumn{2}{c}{Kite} &\phantom{a}& \multicolumn{2}{c}{Square}  \\
        \cmidrule{2-3}\cmidrule{5-6}
        & Observed & Proved && Observed & Proved  \\ 
        \midrule
        \(\lVert \BregSik \rVert \) & \(\sim k^{0.21}\) & \(\lesssim k^{1/4}\log(k+2)\) && \(\sim k^{0.16}\) & \(\lesssim k^{1/4}\log(k+2)\) \\
        \(\lVert (\BregSik)^{-1} \rVert \) & \(\sim k^{0.41}\) & \(\lesssim k^{2/3}\) && \(\sim k^{0.13}\) & \text{ None} \\
        \(\lVert \BregSO \rVert \) & \(\sim k^{1.00}\) & \(\lesssim k \log(k+2)\) && \(\sim k^{0.98}\) & \(\lesssim k \log(k+2)\)\\
        \bottomrule
    \end{tabular}
        \caption{Comparison of the $k$-dependence of the computed norms for the kite 
   (Figure~\ref{fig:kite}) and the square (Figure~\ref{fig:square}) (column ``Observed'')
   and the bounds in Section~\ref{sec:main_results} (column  ``Proved'').
(``None'' appears in the ``Proved'' column for $(\BregSik)^{-1}$, since Theorem  \ref{thm:upper_bound_inverse} does not apply to the square since $\Gamma$ is not $C^\infty$.)
}
    \label{table:sum_results_piecewise_smooth}
\end{table}

\paragraph{Discussion of the norms of $B_{k,\eta,S_0}$ and $B_{k,\eta,S_{\ri k}}$}

The computed norms of $B_{k,\eta,S_0}$ and $B_{k,\eta,S_{\ri k}}$ agree well with the theory for all obstacles apart from the square and the elliptic cavity, where the norms of $B_{k,\eta,S_{\ri k}}$ grow slightly slower than expected. The explanation of this discrepancy for the elliptic cavity is given below while for the square it appears that we have not computed large enough frequencies to reach the asymptotic regime (we highlight that, as mentioned in \S\ref{sec:82}, the plots remain essentially unchanged when re-computed with twenty points per wavelength as opposed to ten).

Figure \ref{fig:5} plots, for the kite obstacle, the norms of $B_{k,\eta,S_0}$ and $B_{k,\eta,S_{\ri k}}$ and the norms of their component parts, i.e., $S_0 H_{k}$ and $\DL_k$ for $B_{k,\eta,S_0}$ and $S_{\ri k} H_k$ and $\DL_k$ for $B_{k,\eta,S_{\ri k}}$. For $B_{k,\eta,S_0}$ we see that the $\|S_0 H_k\|_{\LtGt}$ grows like $k$ (as expected from Lemma \ref{lem:S0}) and dominates $\|\DL_k\|_{\LtGt}$, which grows just slightly slower than the $k^{1/4}$ predicted by Theorem \ref{th:K_k_bounds} (in this discussion we ignore the $\log k$ terms in the bounds, since these are essentially impossible to see numerically).
For $B_{k,\eta,S_{\ri k}}$ we see that $\|S_{\ri k} H_k\|_{\LtGt}$ is bounded independently of $k$ (as expected from Theorems \ref{thm:Hk} and \ref{thm:newSikbound}) and $B_{k,\eta,S_{\ri k}}$ is determined by $\|\DL_k\|_{\LtGt}$.

\paragraph{Discussion of the norms of the inverses of $B_{k,\eta,S_0}$ and $B_{k,\eta,S_{\ri k}}$}

Since $S_0$ does not satisfy Assumption \ref{ass:Reg}, this paper does not prove any bounds on $\|(B_{k,\eta,S_0})^{-1}\|_{\LtGt}$. However, for all the considered $\Oi$, the norm of $\|(B_{k,\eta,S_0})^{-1}\|_{\LtGt}$ grows with $k$ at approximately at the same rate as $\|(B_{k,\eta,S_{\ri k}})^{-1}\|_{\LtGt}$.

For the curved domains (i.e., the circle and ellipse), the experiments show $\|(B_{k,\eta,S_{\ri k}})^{-1}\|_{\LtGt}$ growing approximately like $k^{1/3}$, exactly as in the upper bound \eqref{eq:ballinverse}.
The upper bound \eqref{eq:smoothinversebound} for general smooth nontrapping domains shows that $\|(B_{k,\eta,S_{\ri k}})^{-1}\|_{\LtGt}$ grows at most like $k^{2/3}$, but the largest growth observed is $k^{0.41}$ for both the moon and the kite.

\paragraph{Discussion of the experiments for the elliptic cavity.}

The left-hand plot of Figure \ref{fig:4} shows $\|B_{k,\eta,S_0}\|_{\LtGt}$ growing like $k$, which is as expected from the discussion above. 
The left-hand plot also shows $\|B_{k, \eta, S_{\ri k}}\|_{\LtGt}$ being essentially constant for the range of $k$ considered, although,
 at least in 2-d, $\|B_{k, {\rm reg}}\|_{\LtGt} \gtrsim k^{1/4}$ for large enough $k$ (i.e., we have not computed large enough frequencies to reach the asymptotic regime). Indeed, \cite[Theorem 4.6]{ChGrLaLi:09} shows that $\|D_k'\|_{\LtGt}\gtrsim k^{1/4}$ for a certain class of 2-d domains (to see that the elliptic cavity falls in this class, take the points $x_1$ and $x_2$ in the statement of \cite[Theorem 4.6]{ChGrLaLi:09} to lie on one of the flat ends of the cavity, with $x_2$ in the middle of this end, and $x_1$ at one of the corners)
and 
Theorems \ref{thm:Hk} and \ref{thm:newSikbound} show that $\|S_{\ri k }H_k\|_{\LtGt}\lesssim (\log k)^{3/2}$. 

Although Theorems \ref{thm:lower_bound_inverse} and \ref{thm:ellipse} predict exponentially growth of $\|(B_{k,\eta,S_0})^{-1}\|_{\LtGt}$ and 
$\|(B_{k,\eta,S_{\ri k}})^{-1}\|_{\LtGt}$ through $k=k^{e}_{m,0}$, we do not see this in the right-hand plot of Figure \ref{fig:4}. This feature is partially explained by the bound \eqref{eq:inverse_bound_formost}; indeed, this bound shows that $\|(B_{k,\eta,S_{\ri k}})^{-1}\|_{\LtGt}$ is bounded polynomially in $k$ for all $k$ except an arbitrarily-small set, demonstrating that the growth of $\|(B_{k,\eta,S_{\ri k}})^{-1}\|_{\LtGt}$ is very sensitive to the precise value of $k$. This result indicates that the exponential growth of  $\|(B_{k,\eta,S_{\ri k}})^{-1}\|_{\LtGt}$ through $k=k^{e}_{m,0}$ is not captured in Figure \ref{fig:4} due to discretisation error; see \cite{MaGaSpSp:21} for further discussion and results on this feature.

\begin{figure}[h!]
    \begin{subfigure}[t]{0.9\textwidth}
        \includegraphics[width=\textwidth]{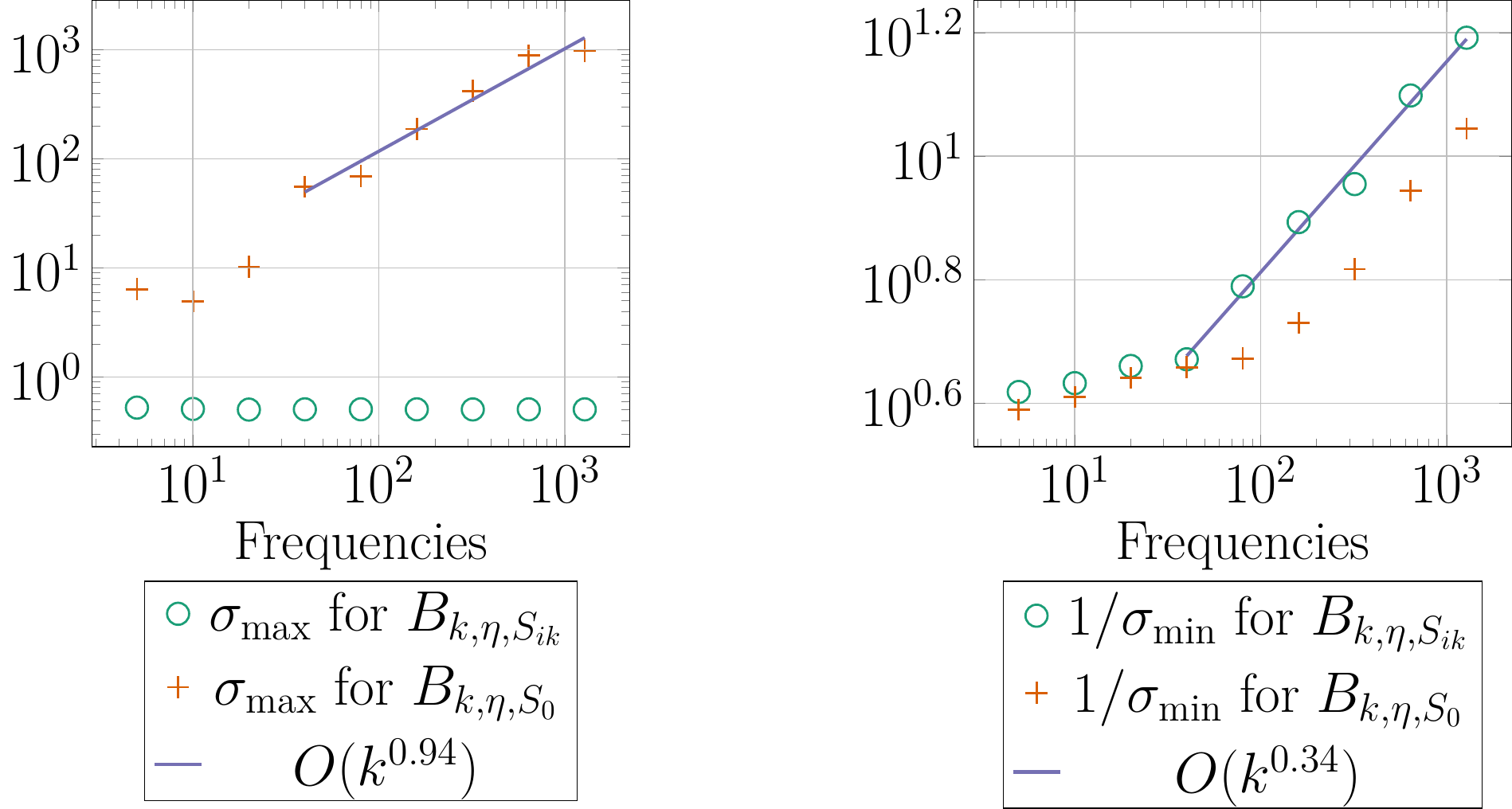}
        \caption{Circle}
    \end{subfigure}
    \hfill
    \vspace{0.5cm}
    \noindent
    \begin{subfigure}[t]{0.9\textwidth}
        \includegraphics[width=\textwidth]{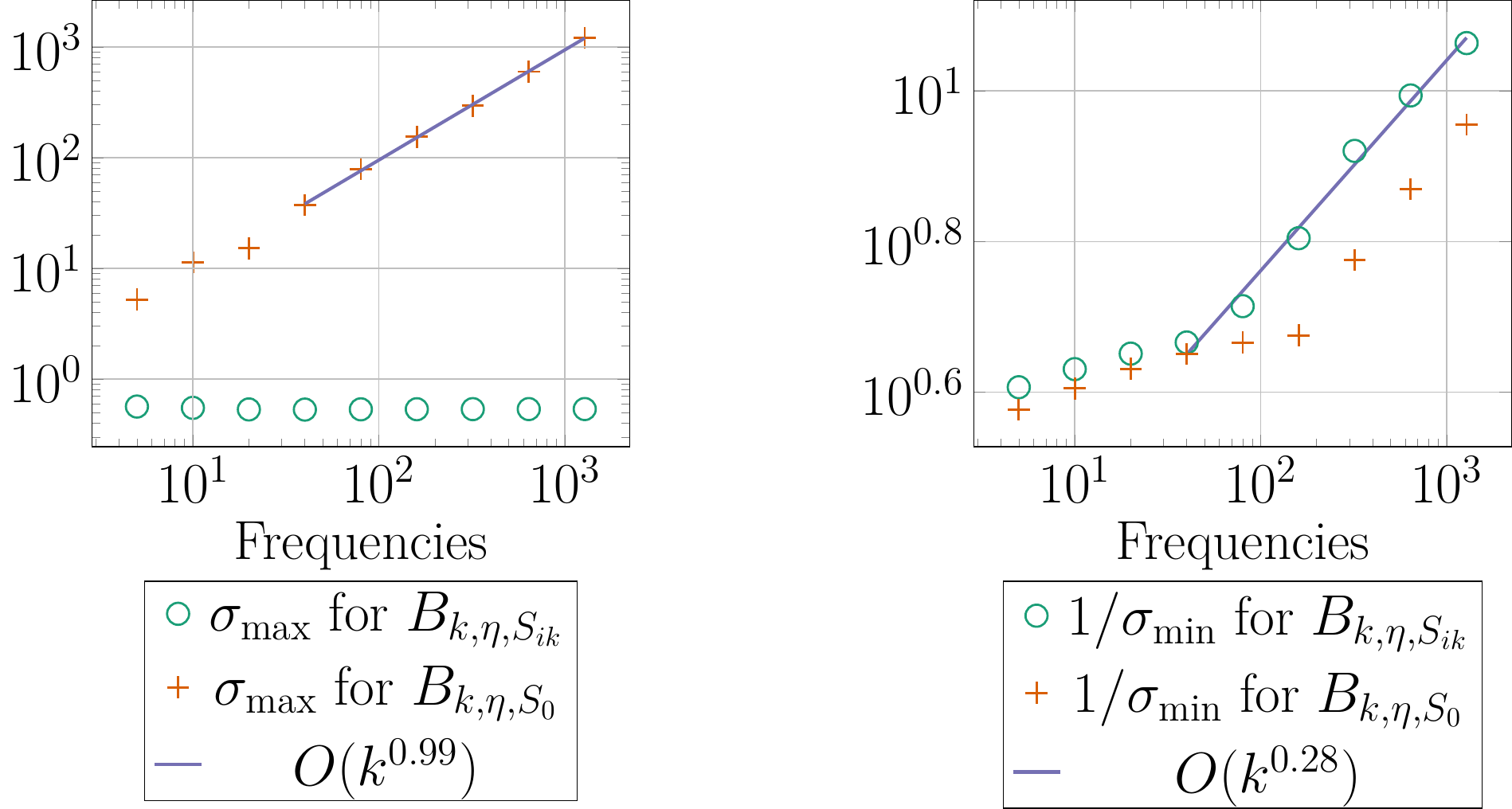}
        \caption{Ellipse}
    \end{subfigure}
    \caption{The computed norms for the circle and ellipse}\label{fig:1}
\end{figure}

\begin{figure}[h!]
    \includegraphics[width=\textwidth]{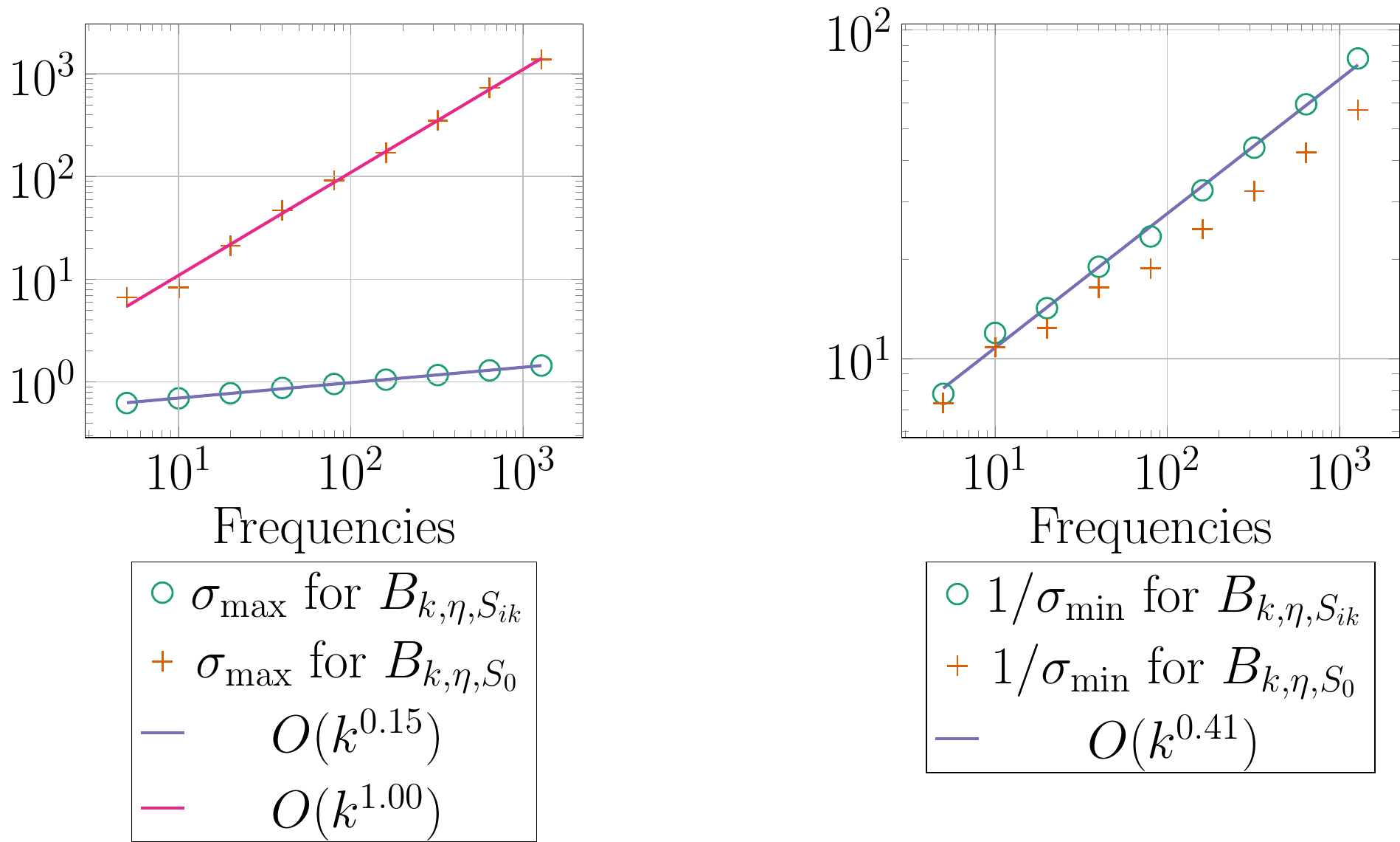}
    \caption{The computed norms for the moon obstacle}
    \label{fig:2}
\end{figure}

\begin{figure}[h!]
    \begin{subfigure}[t]{0.9\textwidth}
        \includegraphics[width=\textwidth]{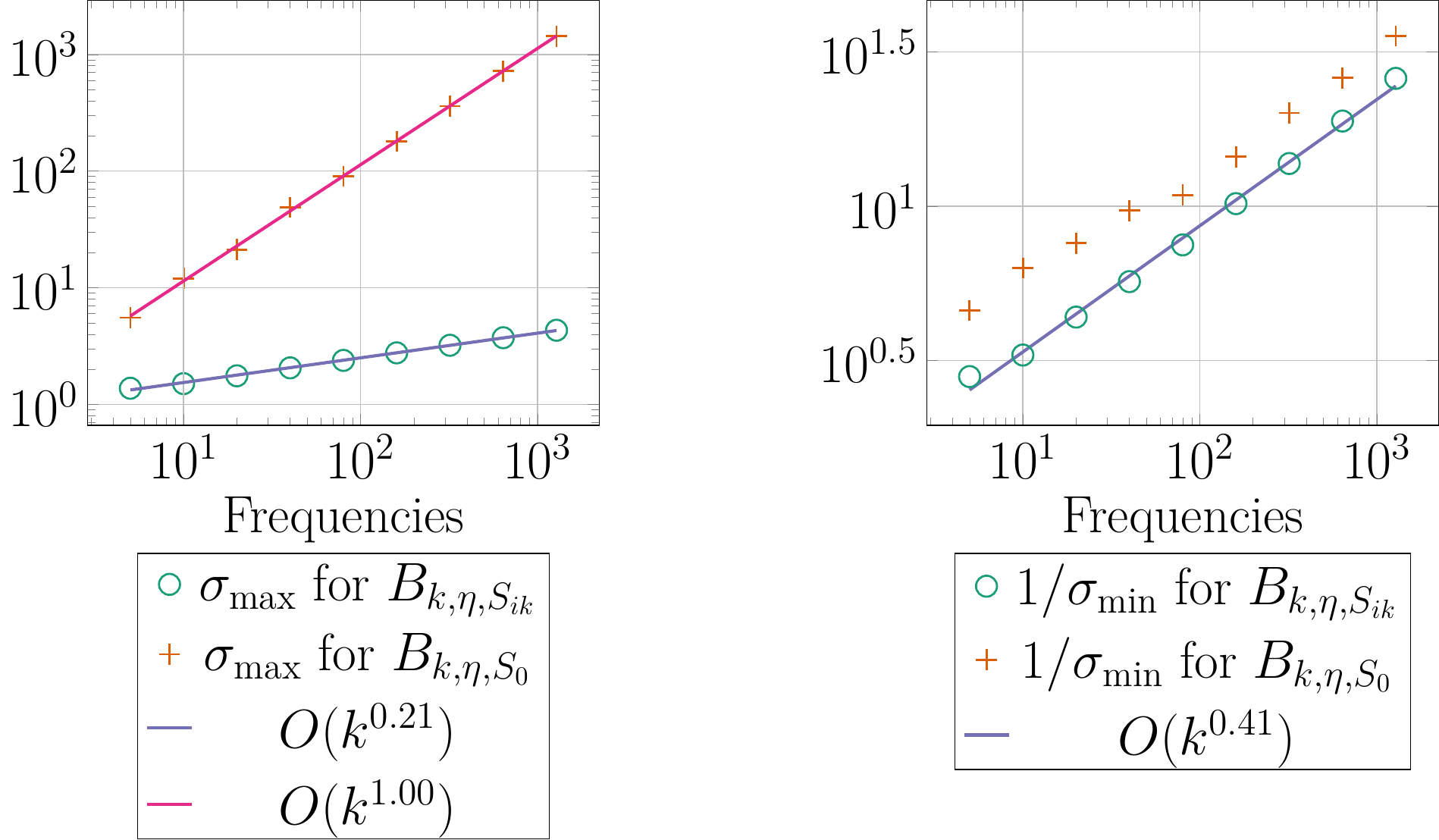}
        \caption{Kite}
    \end{subfigure}
    \hfill
    \vspace{0.5cm}
    \noindent
    \begin{subfigure}[t]{0.9\textwidth}
        \includegraphics[width=\textwidth]{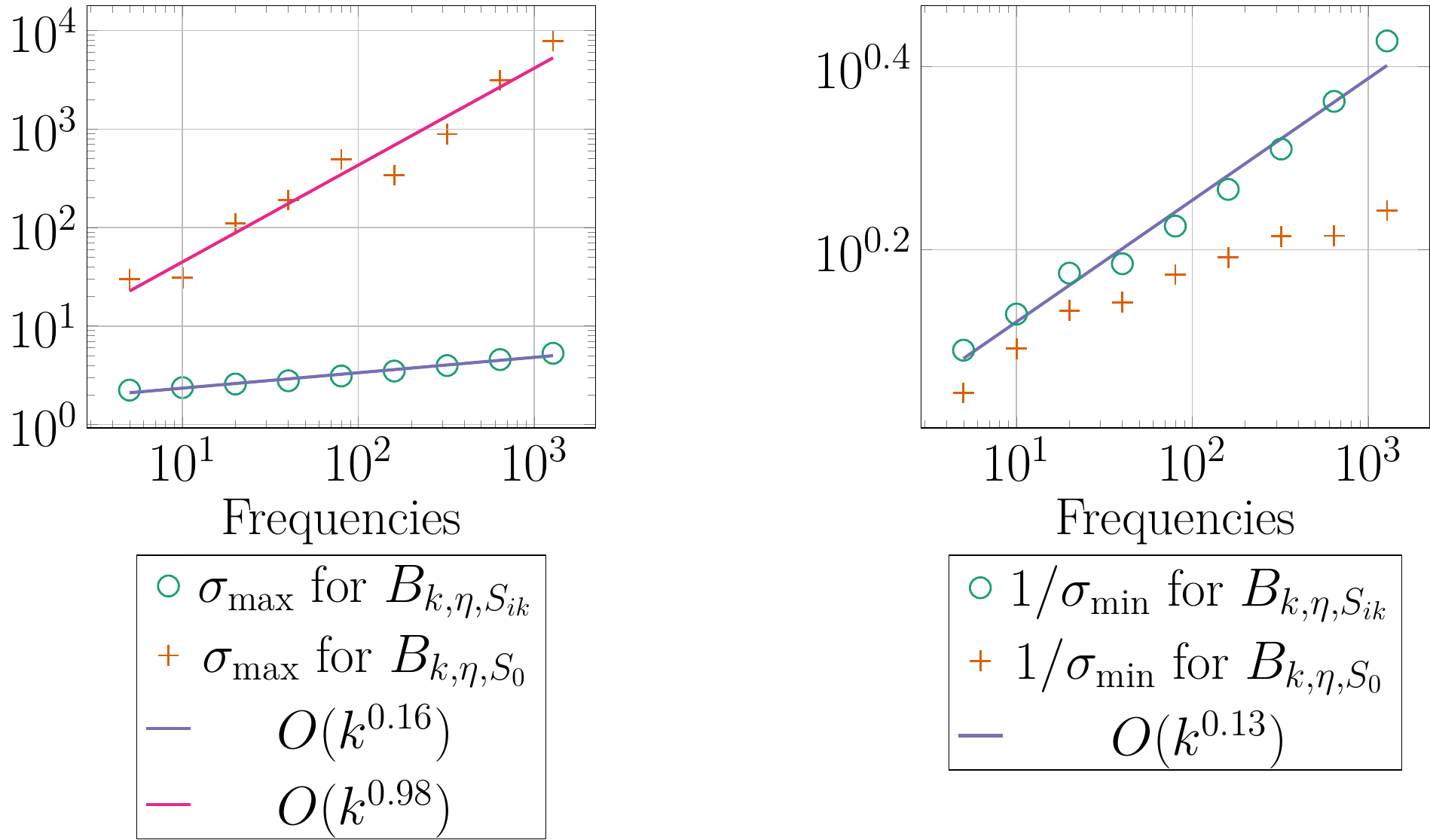}
\caption{Square}
    \end{subfigure}
        \caption{The computed norms for the kite and the square}
    \label{fig:3}
\end{figure}

\begin{figure}[h!]
    \centering
    \includegraphics[width=\textwidth]{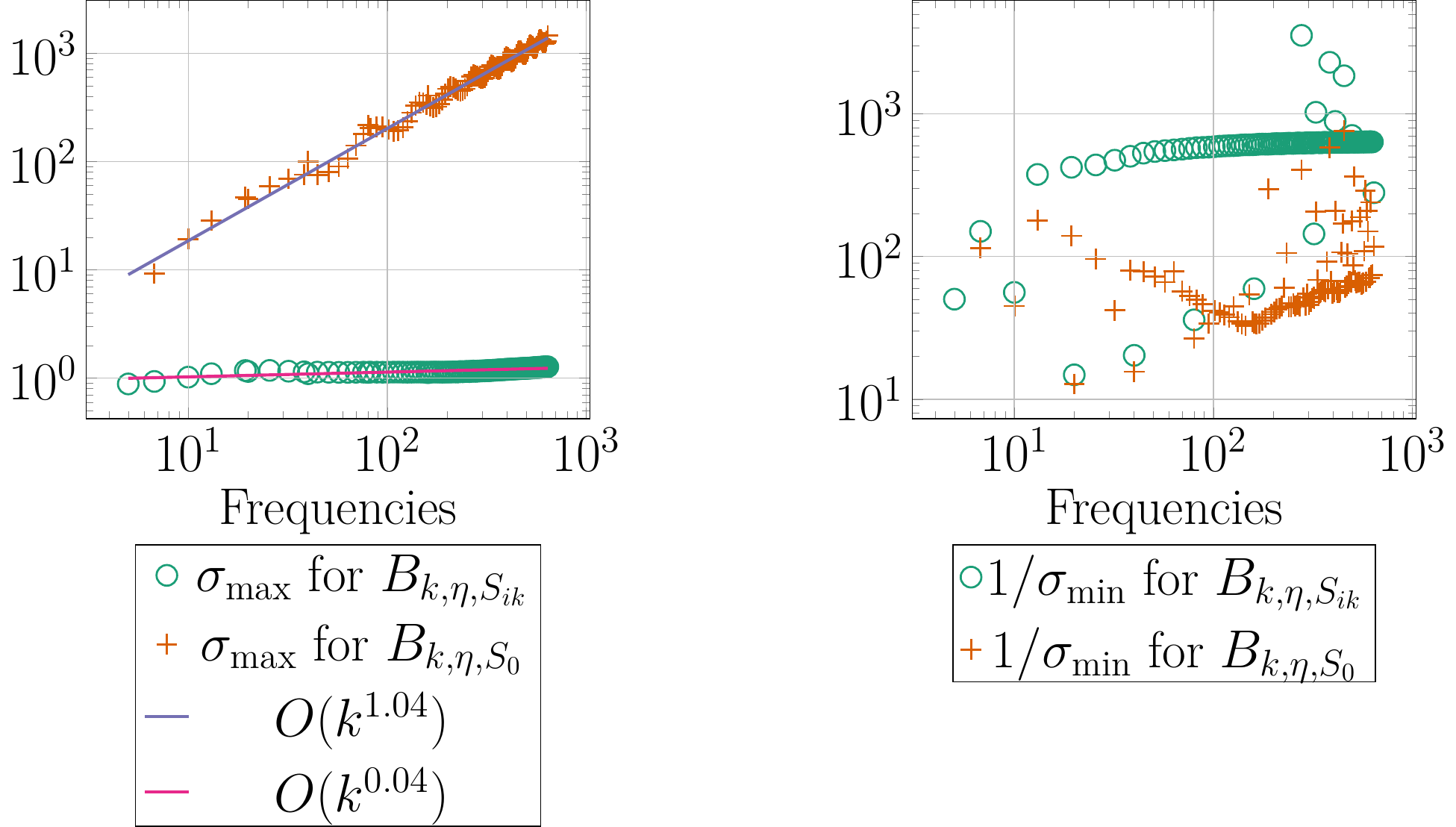}
    \caption{The computed norms for the elliptic cavity}\label{fig:4}
\end{figure}

\begin{figure}[h!]
    \centering
    \includegraphics[width=\textwidth]{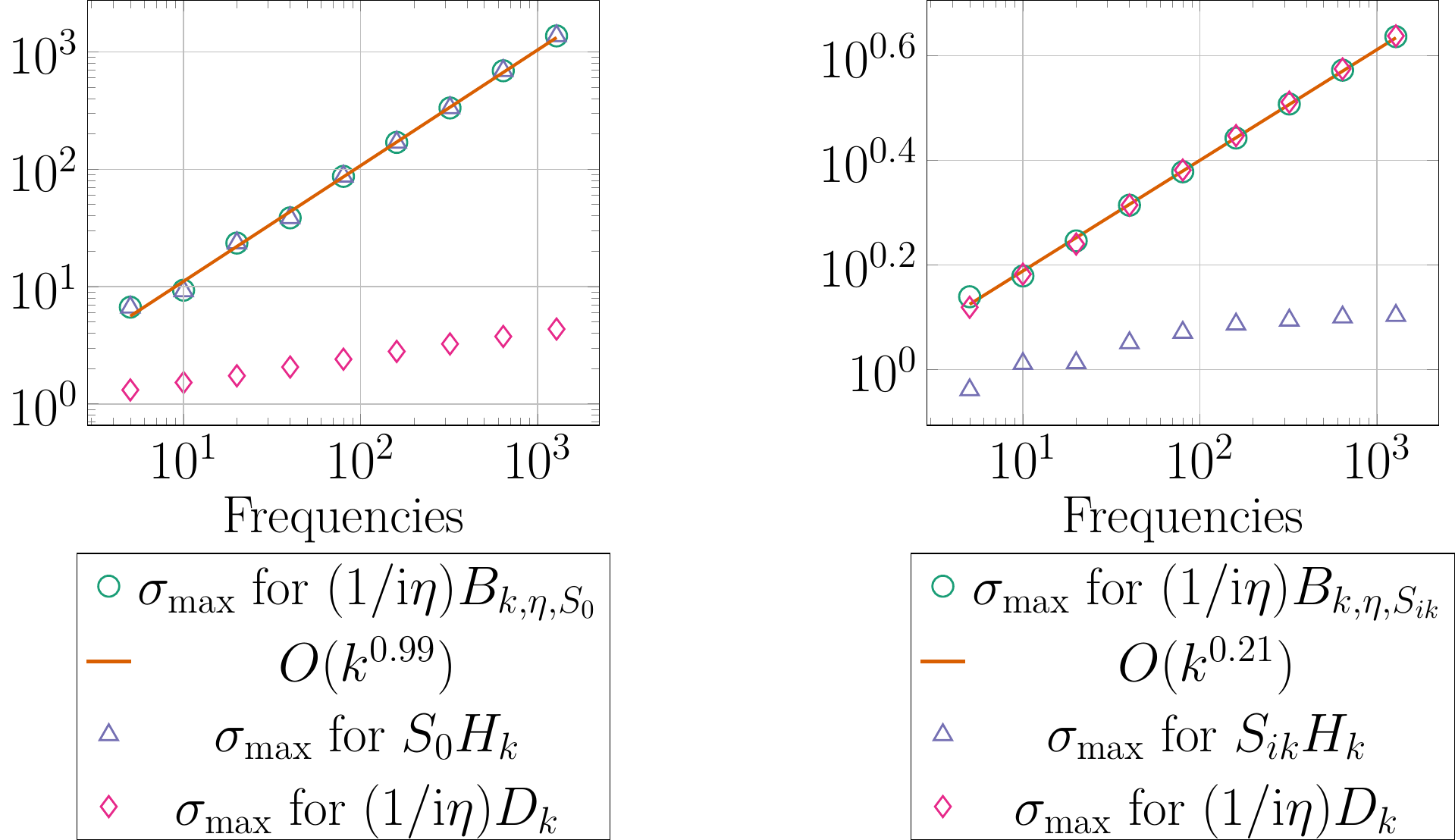}
    \caption{The norms of $B_{k,\eta,S_0}$ and $B_{k,\eta,S_{\ri k}}$, and the norms of their component parts, for the kite obstacle (note the different scales on the $y$-axes).}\label{fig:5}
\end{figure}

\section{The choice of $\eta$: heuristic discussion and numerical experiments}\label{sec:eta}

The bounds on $\|(\Breg)^{-1}\|_{\LtGt}$ in 
Theorem \ref{thm:upper_bound_inverse} are proved under the assumption that $\eta$ is independent of $k$; the reason for this is that we only have upper bounds on $\|\ItDS\|_{\LtGt}$ for this choice of $\eta$ (see Theorem \ref{thm:ItDS}).

The purpose of this section is 
to provide evidence that non-constant choices of $\eta$ can give slower rates of growth of the condition number and the number of GMRES iterations than constant $\eta$. More specifically, we show 
the following.
\bit
\item Under the assumption that $\|\ItDS\|_{\LtGt}\lesssim |\eta|^{-1}$ (which is plausible because of Lemma \ref{lem:generalboundeta}), the bounds in \S\ref{sec:main_results} indicate that $\cond(\Breg)$ (defined by \eqref{eq:conditionnumber}) is smaller for certain choices of $|\eta|$ that decrease with $k$ than for $|\eta|\sim 1$. This is confirmed by numerical experiments for the kite domain of Figure \ref{fig:kite}.

\item For the kite domain, when GMRES is applied to $\mathbf{M}^{-1}  \mathbf{B}_{k,\eta,R}$, the number of iterations grows more slowly for 
certain non-constant choices of $\eta$ than for constant $\eta$.
\eit 

These observations are particularly interesting because (as recalled in Remark \ref{rem:eta1}) \cite{BrElTu:12,BoTu:13} advocated that
choosing $\eta$ constant leads to a ``small number''/``nearly optimal numbers'' of GMRES iterations.

\paragraph{Bounding the condition number assuming $\|\ItDS\|_{\LtGt}\leq C|\eta|^{-1}$.}

\ble
Assume that there exists $k_0>0$ and $C>0$ such that $\|\ItDS\|_{\LtGt}\leq C|\eta|^{-1}$ for all $k\geq k_0$. Then there exists $k_1>0$ and $C'>0$ such that for all $k\geq k_1$
\beq\label{eq:cond_bound}
\cond(\Breg) \leq C' \left( |\eta|\big( \N{\DL_k}_{\LtGt} + 1 \big) + \log k \right) \left( k\N{\NtD}_{L^2(\Gamma)\rightarrow H^1_k(\Gamma)} + \frac{1}{|\eta|}\right).
\eeq
\ele

\bpf
$\|\Breg\|_{\LtGt}$ is bounded by a $k$- and $\eta$-independent multiple of the terms in the first set of brackets on the right-hand side of \eqref{eq:cond_bound} by the definition of $\Breg$, Corollary \ref{cor:Rbound}, and Theorem \ref{thm:Hk}. Furthermore, 
$\|\Breg^{-1}\|_{\LtGt}$ is bounded by a multiple of the terms in the second set of brackets on the right-hand side of \eqref{eq:cond_bound}
by \eqref{eq:fav_formula_reg}, the assumption $\|\ItDS\|_{\LtGt}\leq C|\eta|^{-1}$, Corollary \ref{cor:Rbound}, and the equality of norms 
\eqref{eq:NtDinter}.
\epf

\paragraph{The $k$-dependence of $|\eta|$ that minimises the upper bound in \eqref{eq:cond_bound}.}
Observe that 
\beqs
\big( a|\eta| + \log k\big) \big( b + |\eta|^{-1}\big) = |\eta| ab + a + b\log k + |\eta|^{-1}\log k
\eeqs
achieves its minimum over $|\eta|>0$ of 
\beqs
2 (a b \log k)^{1/2} + a + b \log k
\quad\text{ when } \quad
|\eta|= \left(\frac{\log k }{ab}\right)^{1/2}.
\eeqs
Therefore, the upper bound in \eqref{eq:cond_bound} is minimised when 
\beq\label{eq:opt_eta}
|\eta|\sim \left(\frac{\log k}{k\|\NtD\|_{L^2(\Gamma)\rightarrow H^1_k(\Gamma)} \N{\DL_k}_{\LtGt}}\right)^{1/2}
\eeq
with this minimum equal
\beq\label{eq:min}
2\Big( k\log k \N{\NtD}_{L^2(\Gamma)\rightarrow H^1_k(\Gamma)} \N{\DL_k}_{\LtGt}\Big)^{1/2} + \N{\DL_k}_{\LtGt} + k \log k \N{\NtD}_{L^2(\Gamma)\rightarrow H^1_k(\Gamma)}.
\eeq

From here on, we ignore all factors of $\log k$ (i.e, we set each occurrence of $\log k$ to $1$) and assume that the bounds on $\N{\NtD}_{L^2(\Gamma)\rightarrow H^1_k(\Gamma)}$ in Theorem \ref{thm:NtD} are sharp;
recall that the bounds on $\|\DL_k\|_{\LtGt}$ in Theorem \ref{th:K_k_bounds} are sharp modulo the factors of $\log k$ by \cite[\S3]{GaSp:19} and \cite[\S A]{HaTa:15}.

When $\Oi$ is a ball, inputting the bounds $\|\NtD\|_{L^2(\Gamma)\rightarrow H^1_k(\Gamma)}\sim k^{-2/3}$ and $\|\DL_k\|_{\LtGt}\sim 1$ into \eqref{eq:opt_eta} and \eqref{eq:min}, we see that the optimal $|\eta|$ is  $|\eta|\sim k^{-1/6}$ and the corresponding right-hand side of \eqref{eq:cond_bound} $ \sim k^{1/3}$. This is the same $k$-dependence of this right-hand side when $|\eta|\sim 1$.

When $\Oi$ is nontrapping, inputting the bounds $\|\NtD\|_{L^2(\Gamma)\rightarrow H^1_k(\Gamma)}\sim k^{-1/3}$ and $\|\DL_k\|_{\LtGt}\sim k^{1/4}$ into \eqref{eq:opt_eta} and \eqref{eq:min}, we see that the optimal $|\eta|$ is $|\eta|\sim k^{-11/24}$ and the corresponding right-hand side of \eqref{eq:cond_bound} $ \sim k^{2/3}$. However, under the choice $|\eta|\sim 1$ the right-hand side of \eqref{eq:cond_bound} $ \sim k^{11/12}$, which is larger.

In summary, these arguments indicate that the condition number of $\Breg$ may grow slower with $k$ for choices of $\eta$ that decrease with $k$ than for the standard choice that $\eta\in \Rea\setminus\{0\}$ is independent of $k$. We now investigate this numerically for the specific example of the kite of Figure \ref{fig:kite}.

\paragraph{Computation of the condition number for the kite with varying $\eta$.}

Figure \ref{fig:kite_eta} plots the computed condition number for $\eta=0.5$, $\eta=0.5k^{-1/6}$, $\eta=0.5k^{-1/3}$, and $\eta=0.5k^{-1/2}$ for $k\in $  \((5,640)\) (where the set up for these numerical experiments is as described in \S\ref{sec:82}). In particular, the condition numbers for both $\eta=0.5k^{-1/6}$ and $\eta=0.5k^{-1/3}$ are smaller than those for $\eta=0.5$, and they also grow with $k$ at a slower rate; the condition number for $\eta=0.5k^{-1/2}$ grows at the same rate with $k$ as the condition number for $\eta=0.5$. These results may seem surprising, since the arguments above indicate that the optimal $|\eta|$ for generic nontrapping $\Oi$ is $|\eta|\sim k^{-11/24}$. However, these arguments were based on the assumption that the bound $\|\NtD\|_{L^2(\Gamma)\rightarrow H^1_k(\Gamma)}\lesssim k^{-1/3}$ is sharp. The fact that the computed growth of $\|(B_{k,\eta,S_{\ri k}})^{-1}\|_{\LtGt}$ in Figure \ref{fig:kite} is lower than expected from Theorem \ref{thm:upper_bound_inverse} (see Table \ref{table:sum_results_piecewise_smooth}) indicates that $\|\NtD\|_{L^2(\Gamma)\rightarrow H^1_k(\Gamma)}$ for the kite may be smaller than $k^{-1/3}$; this would mean that (from \eqref{eq:opt_eta}) the optimal $|\eta|$ is larger than $k^{-11/24}$, which is consistent with Figure \ref{fig:kite_eta}.

\paragraph{Number of GMRES iterations for the kite with varying $\eta$.}

The left-hand plot in Figure \ref{fig:kite_GMRES} shows the number of iterations when GMRES is applied to $\mathbf{M}^{-1}  \mathbf{B}_{k,\eta,R}$ for the kite with 
$\eta=0.5k^{-\alpha}$, for $\alpha=0, 1/6,1/3,1/2$, with $k\in (5,1280)$  and incoming plane wave at angle $\pi$ to the horizontal (i.e., $\hat{a}$ in \S\ref{sec:Neumann} equals $(-1,0)$). We apply GMRES to $\mathbf{M}^{-1}  \mathbf{B}_{k,\eta,R}$, i.e., the Galerkin matrix preconditioned with the mass matrix, rather than $\mathbf{B}_{k,\eta,R}$ itself, since the former better inherits properties of the operator $\Breg$ at the continuous level; see Lemma \ref{lem:discon}.
The number of iterations is smallest for $\eta=0.5$, although the rate of growth with $k$ is smallest for $\eta=0.5 k^{-1/6}$ over the range of $k$ considered. 

In the right-hand plot, we show the number of iterations for 
$\eta=0.5$, $\eta=0.5k^{-1/6}$, $\eta=k^{-1/6}$, and $\eta=2k^{-1/6}$.
Of these choices of $\eta$, the number of iterations is now lowest for $\eta=k^{-1/6}$ for $64 \leq k\leq 1280$
 (observe that when $k=64, k^{-1/6}=0.5$) and the rate of growth of the number of iterations for $\eta=k^{-1/6}$ is lower than that for $\eta=0.5$ for $64 \leq k\leq 1280$.

\begin{figure}[h!]
        \includegraphics[width=\textwidth]{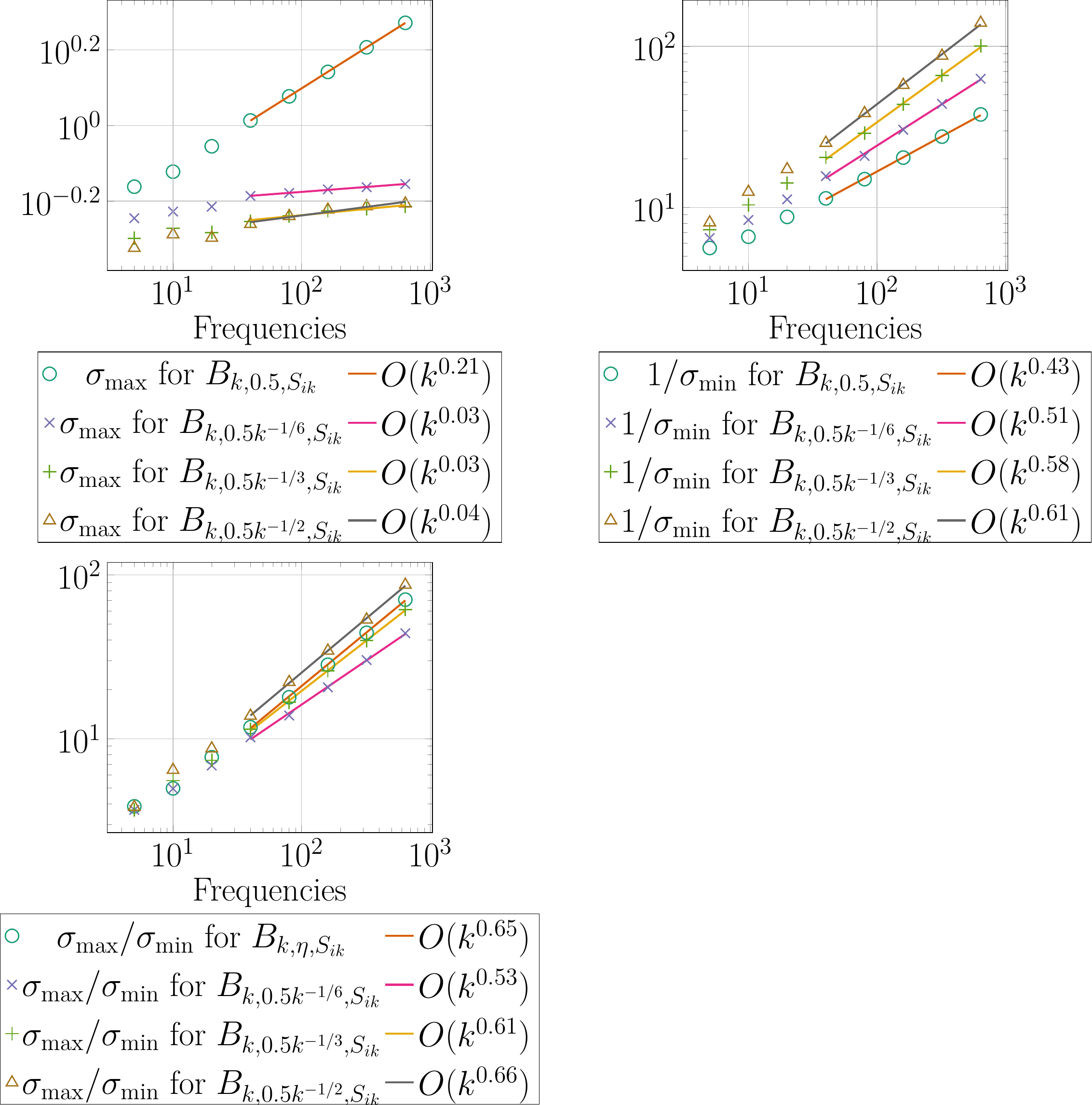}
    \caption{The computed norms and condition number of $\Breg$ for the kite with different $\eta$}\label{fig:kite_eta}
\end{figure}

\begin{figure}[h!]
\scalebox{0.5}{
        \includegraphics[width=\textwidth]{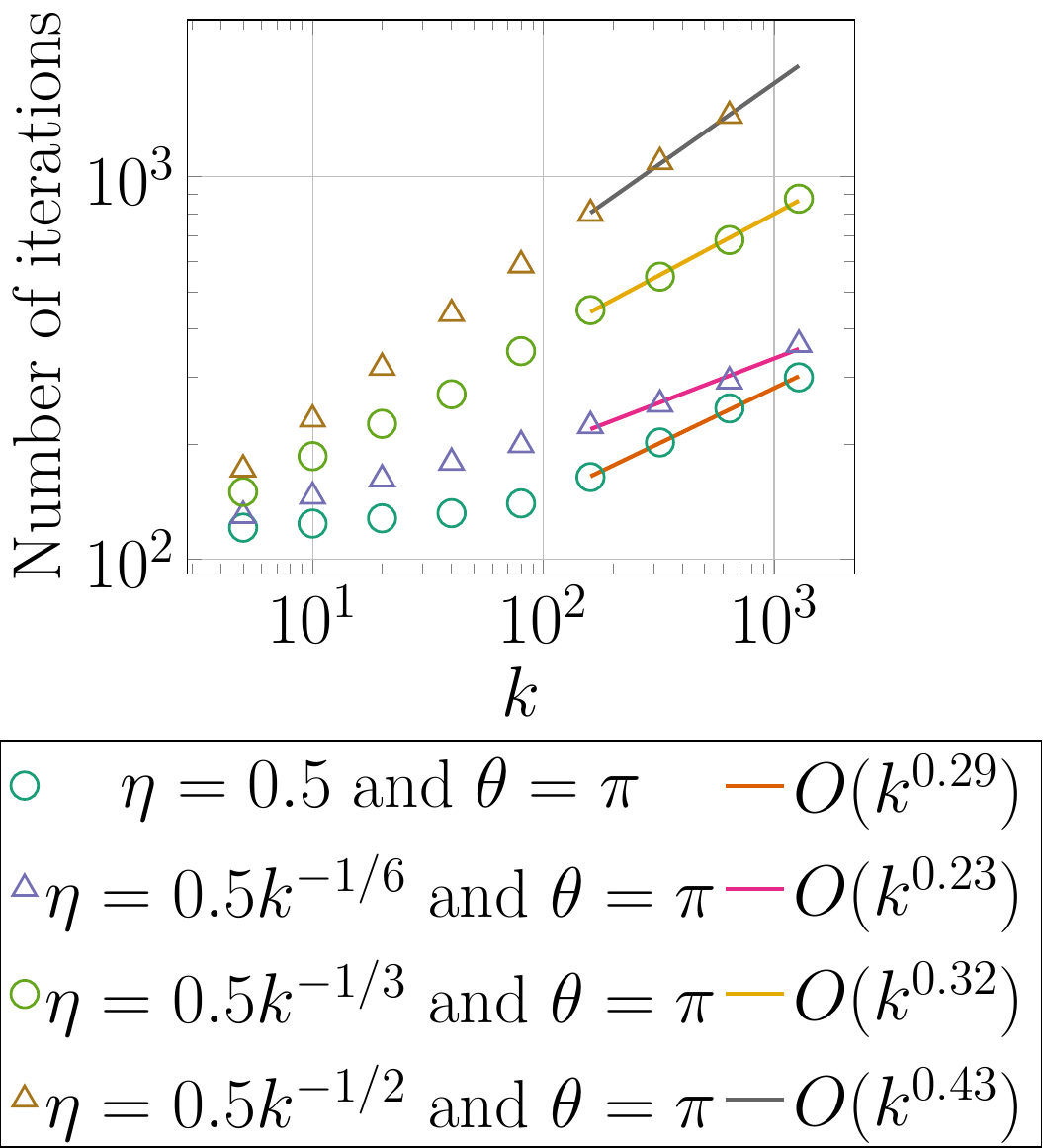}
        }
\scalebox{0.5}{
        \includegraphics[width=\textwidth]{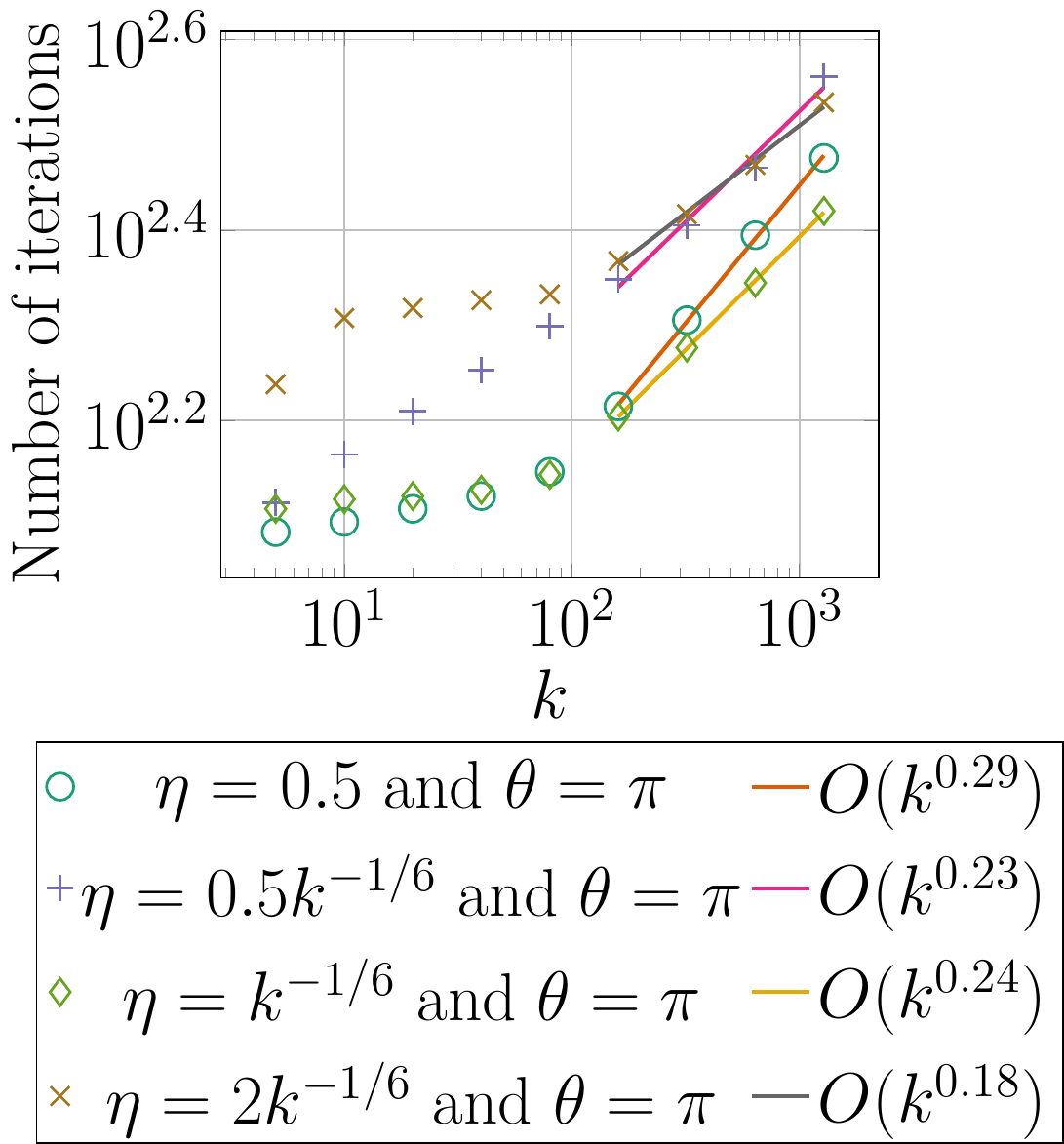}
        }
 \caption{The number of iterations when GMRES is applied to $\mathbf{M}^{-1} \mathbf{B}_{k,\eta,S_{\ri k}}$ for the kite with different $\eta$}\label{fig:kite_GMRES}
\end{figure}

\appendix

\section{Recap of layer potentials, jump relations, and Green's integral representation}\label{sec:recap}

The single-layer and double-layer potentials, $\cS_k$ and $\cK_k$ respectively, are defined for $\phi\in L^1(\Gamma)$, $\bx \in \bbR^d \setminus \Gamma$, and $k \in \Com\setminus\{0\}$, by
\begin{align}\label{eq:SLPDLP}
    \calS_k \varphi (\bx) = \int_{\Gamma} \Phi_k (\bx,\by) \varphi (\by) \dif s (\by) \quad\tand\quad     \cK_k \varphi (\bx) = \int_{\Gamma} \dfrac{\partial \Phi_k (\bx,\by)}{\partial n(\by)} \varphi (\by) \dif s (\by),
\end{align}
where $\Phi_k(x,y)$ is the fundamental solution of the Helmholtz equation defined by
\beq\label{eq:fund}
\Phi_k(x,y):= \frac{\ri}{4}\left(\frac{k}{2\pi |x-y|}\right)^{(d-2)/2}H_{(d-2)/2}^{(1)}\big(k|x-y|\big)= \left\{\begin{array}{cc}
                                                                                                            \displaystyle{\frac{\ri}{4}H_0^{(1)}\big(k|x-y|\big)}, & d=2, \\
                                                                                                            \displaystyle{\frac{\re^{\ri k |x-y|}}{4\pi |x-y|}}, & d=3,
                                                                                                          \end{array}\right.
\eeq
where $H^{(1)}_\nu$ denotes the Hankel function of the first kind of order $\nu$.
The fundamental solution of the Laplace equation is defined by
\beq\label{eq:LaplaceFund}
\Phi_0(\bx,\by):=
\displaystyle{\frac{1}{2\pi} \log \big(a|x-y|^{-1}\big),}  \quad d= 2, \qquad 
:= \dfrac{1}{(d-2)C_d |\bx-\by|^{d-2}}, \quad d\geq 3,
\eeq
where $C_d$ is the surface area of the unit sphere $S^{d-1}\subset \Rea^d$ and  $a\in \Rea$.
If $u$ is the solution to the scattering problem \eqref{eq:Helmholtz}, then Green's integral representation implies that, for $\bx \in \Oe$, 
\begin{align}
    u(\bx) = u^I(\bx) +\cK_k \big(\gamma^+  u\big)(\bx)- \calS_k \big(\partial_{n}^+ u\big) (\bx)
    = u^I(\bx) +\cK_k \big(\gamma^+  u\big)(\bx);\label{eq:Green}
\end{align}
see, e.g., \cite[Theorems 2.21 and 2.43]{ChGrLaSp:12}.
The potentials \eqref{eq:SLPDLP} are related to the integral operators \eqref{eq:SD'} and \eqref{eq:DH} via the jump relations
\beq\label{eq:jumprelations}
\gamma^\pm \cS_k = S_k, \quad \partial^\pm_n \cS_k = \mp \frac{1}{2}I + \DL_k',
\quad \gamma^\pm \cK_k = \pm \frac{1}{2}I + \DL_k, \quad \partial^\pm_n \cK_k = H_k;
\eeq
see, e.g., \cite[\S7, Page 219]{Mc:00}.
We recall the mapping properties
(see, e.g., \cite[Theorems 2.17 and 2.18]{ChGrLaSp:12}), valid when $\Gamma$ is Lipschitz, $k\in \Com$, and $|s|\leq 1/2$,
\begin{align}\nonumber
S_k : H^{s-1/2}(\Gamma)\rightarrow H^{s+1/2}(\Gamma), \quad\qquad& H_k: H^{s+1/2}(\Gamma)\rightarrow H^{s-1/2}(\Gamma),\\
\DL_k : H^{s+1/2}(\Gamma)\rightarrow H^{s+1/2}(\Gamma), \quad\qquad& \DL_k': H^{s-1/2}(\Gamma)\rightarrow H^{s-1/2}(\Gamma).
\label{eq:mapping}
\end{align}

\section{Geometric definitions}\label{sec:geo}

\begin{definition}[Nontrapping]\label{def:nontrapping}
$\Oi\subset \Rea^d$ is \emph{nontrapping} if $\bound$ is $C^\infty$ and,
given $R$ such that $\overline{\Oi}\subset B_R(\bze)$, there exists a $T(R)<\infty$ such that 
all the billiard trajectories (in the sense of Melrose--Sj{\"o}strand~\cite[Definition 7.20]{MeSj:82})
that start in $\Oe\cap B_R(\bze)$ at time zero leave $\Oe\cap B_R(\bze)$ by time $T(R)$.
\end{definition}

\begin{definition}[Smooth hypersurface]\label{def:sh}
$\Gamma\subset \R^d$ is a {\em smooth hypersurface} if there exists $\Gammaext$, a compact, embedded,
 smooth, $(d-1)$-dimensional submanifold of $\R^d$, possibly with boundary,
 such that $\Gamma$ is an open subset of $\Gammaext$, with $\Gamma$ strictly away from $\partial \Gammaext$, and the boundary of $\Gamma$ can be written as a disjoint union
\beqs
\partial \Gamma=\left(\bigcup_{\ell=1}^n Y_\ell\right)\cup \Sigma,
\eeqs
where each $Y_\ell$ is an open, relatively compact, smooth embedded manifold of dimension $d-2$ in $\Gammaext$, $\Gamma$ lies locally on one side of $Y_\ell$, and  $\Sigma$ is closed set with $d-2$ measure $0$ and $\Sigma \subset \overline{\bigcup_{l=1}^nY_l}$. We then refer to the manifold $\Gammaext$ as an extension of $\Gamma$. 
\end{definition}
\noi For example, when $d=3$, the interior of a 2-d polygon is a smooth hypersurface, with $Y_i$ the edges and $\Sigma$ the set of corner points.
\begin{definition}[Curved]\label{def:curved}
A smooth hypersurface is \emph{curved} if there is a choice of normal so that the second fundamental form of the hypersurface is everywhere positive definite.
\end{definition}

Recall that the principal curvatures are the eigenvalues of the matrix of the second fundamental form in an orthonormal basis of the tangent space, and thus ``curved'' is equivalent to the principal curvatures being everywhere strictly positive (or everywhere strictly negative, depending on the choice of the normal).

\begin{definition}[Piecewise smooth]\label{def:piecewisesmooth}
A hypersurface $\Gamma$ is \emph{piecewise smooth} if $\Gamma=\cup_{i=1}^N \overline{\Gamma}_i$ where $\Gamma_i$ are smooth hypersurfaces 
and $\Gamma_i\cap \Gamma_j=\emptyset.$
\end{definition}

\begin{definition}[Piecewise curved]\label{def:piecewisecurved}
A piecewise-smooth hypersurface $\Gamma$ is \emph{piecewise curved} if $\Gamma$ is as in Definition \ref{def:piecewisesmooth} and each $\Gamma_j$ is curved.
\end{definition}

\section{Extension of the results to the exterior impedance problem}\label{sec:ext_imp}

\paragraph{The exterior impedance problem and associated boundary integral equations.}
Just as for the Neumann problem, we consider the plane-wave scattering problem for simplicity.
Given $u^I(x) = \exp(\ri k x \cdot \hat{a})$ for $\hat{a}\in \Rea^d$ with $|\hat{a}|_2=1$, $k>0$, and $\beta \in L^\infty(\Gamma)$ with $\Re \beta\geq 0$, let $u\in H^1_{\rm loc}(\Oe)$ be the solution of 
\begin{align}\label{eq:Helmholtz_imp}
\Delta u + k^2 u =0 \quad\tin\quad \Oe ,\qquad 
\partial_n^+ u + \ri \beta \gamma^+ u  = 0 \quad\ton\quad\Gamma,
\end{align}
and $u^S:= u- u^I$ satisfies the radiation condition \eqref{eq:src}; the solution of this problem is unique by, e.g., \cite[Corollary 2.9]{ChGrLaSp:12}.
Then
\begin{align}\label{eq:direct_imp}
\Bregimp \gamma^+ u:= 
\left[    \Breg
    + \ri \beta \left( \Reg \left(\half I+ \DL_k'\right) - \ri \eta S_k\right) \right] 
         \gamma^+  u  = \ri \eta \gamma^+ u^I  - \Reg \partial_{n}^+ u^I
\end{align}
(which reduces to \eqref{eq:direct} when $\beta=0$).
Indeed, expressing $u$ via Green's integral representation (see the first equality in \eqref{eq:Green}) and using the impedance boundary condition in \eqref{eq:Helmholtz_imp} yields.
\beq\label{eq:Green_imp}
u = u^I + (\cK_k + \ri \beta \cS_k)\gamma^+ u.
\eeq
Taking Dirichlet and Neumann traces of this expression and using the jump relations in \eqref{eq:jumprelations} yields two integral equations, which combined (after using the impedance boundary condition from \eqref{eq:Helmholtz_imp}) give \eqref{eq:direct_imp}.
Furthermore, if $\phi$ satisfies
\begin{align}\label{eq:indirect_imp}
\Bregimp' \phi:= 
\left[\Breg' + \ri \beta \left( \left(\frac{1}{2} I + \DL_k\right)\Reg - \ri \eta S_k \right)\right] 
\phi  = -\partial_n^+ u^I - \ri \beta \gamma^+ u^I,
\end{align}
(which reduces to \eqref{eq:indirect} when $\beta=0$) then, by the jump relations \eqref{eq:jumprelations}, $u=u^I+ (\cK_k  \Reg-\ri \eta \cS_k)\phi$ is a solution of \eqref{eq:Helmholtz_imp} and \eqref{eq:src}.

\paragraph{Bounds on the norms of $\Bregimp$ and $\Bregimp'$.}
Bounds analogous to those in Theorem \ref{thm:upper_bound_norm} on the norms of the operators on the left-hand sides of the BIEs \eqref{eq:direct_imp} and \eqref{eq:indirect_imp}
follow by arguing as in the proof of Theorem \ref{thm:upper_bound_norm} and using, in addition, the bounds on $\|S_k\|_{\LtGt}$ from 
\cite[Theorem 1.2]{GaSm:15}, \cite[Appendix A]{HaTa:15}, \cite[Chapter 4]{Ga:19} (with these bounds sharp up to a factor of $\log k$).

\paragraph{Bounds on the norms of $(\Bregimp)^{-1}$ and $(\Bregimp')^{-1}$.} Theorem \ref{thm:invert} holds with $\Breg$ and $\Breg'$ replaced by $\Bregimp$ and $\Bregimp'$, and the proof is essentially identical.
The upper bounds on the norms of $(\Bregimp)^{-1}$ and $(\Bregimp')^{-1}$ in Theorem \ref{thm:upper_bound_inverse} are based on Lemma \ref{lem:fav_formula_reg}. To state the analogue of Lemma \ref{lem:fav_formula_reg} for $\Bregimp$ and $\Bregimp'$, we 
let $\ItDext$ be the impedance-to-Dirichlet map for the exterior impedance problem 
\beqs
\Delta u + k^2 u =0 \quad\tin \Oi ,\qquad \partial_n^+ u + \ri \beta \gamma^+ u = g \quad\ton \Gamma,
\eeqs
and $u$ satisfies the radiation condition \eqref{eq:src}; i.e., $\ItDext$ is the map $g\mapsto \gamma^+u$.

\begin{lemma}\label{lem:fav_formula_reg_imp}
    \begin{align}\label{eq:fav_formula_reg_imp}
        (\Bregimp)^{-1}        
        = \ItDext \Reg^{-1} - \Big(I-\ItDext \big(\ri\eta \Reg^{-1} + \ri \beta\big)\Big)\ItDR
    \end{align}
    and
    \begin{align}\label{eq:fav_formula_reg2_imp}
(\Bregimp')^{-1}
        =  \Reg^{-1}\ItDext - \Reg^{-1}\ItDR \Big( \Reg - \big( \ri \eta I + \ri \beta \Reg) \ItDext\Big).
    \end{align}
    \end{lemma}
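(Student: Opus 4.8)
The plan is to mirror exactly the proof of Lemma \ref{lem:fav_formula_reg}, replacing the Neumann-to-Dirichlet map $\NtD$ by the exterior impedance-to-Dirichlet map $\ItDext$ and tracking the extra impedance term $\ri\beta$. First I would establish \eqref{eq:fav_formula_reg_imp}. Given $g$ and $\varphi$ with $\Bregimp\varphi = g$, set $u := (\cK_k + \ri\beta\cS_k)\varphi$; the motivation is that $\Bregimp$ is the direct BIE coming from Green's representation \eqref{eq:Green_imp}, where $u$ is $u^I$ plus a combined single- and double-layer potential. Using the jump relations \eqref{eq:jumprelations}, one computes $\gamma^{\pm}u = (\pm\tfrac12 I + \DL_k)\varphi + \ri\beta S_k\varphi$ and $\partial_n^{\pm}u = H_k\varphi + (\mp\tfrac12 I + \DL_k')(\ri\beta\varphi)$, hence $\varphi = \gamma^+u - \gamma^-u$ and $\partial_n^+u - \partial_n^-u = \ri\beta\varphi$, so $\partial_n^+u = \partial_n^-u + \ri\beta\varphi = \partial_n^-u + \ri\beta(\gamma^+u - \gamma^-u)$. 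Applying $\Reg\partial_n^- - \ri\eta\gamma^-$ to the representation and unwinding the algebra (as in the derivation of \eqref{eq:direct_imp}) gives $\Reg\partial_n^- u - \ri\eta\gamma^- u = g$, so by definition of $\ItDR$ (the map solving \eqref{eq:ItDSdef}) we have $\gamma^- u = \ItDR g$. On the other hand $u$ solves the exterior impedance problem $\partial_n^+ u + \ri\beta\gamma^+ u = \partial_n^+ u^I + \ri\beta\gamma^+ u^I$ wait — more directly, $\partial_n^+ u + \ri\beta\gamma^+ u =: h$ defines $h$, and then $\gamma^+ u = \ItDext h$; expressing $h$ via $\partial_n^+u = \Reg^{-1}(g + \ri\eta\gamma^- u) = \Reg^{-1}(g + \ri\eta\ItDR g)$ and $\gamma^+ u = \varphi + \gamma^- u = \varphi + \ItDR g$, and then substituting $\varphi = \ItDext h - \ItDR g$ and solving the resulting linear relation for $\varphi$ in terms of $g$, yields \eqref{eq:fav_formula_reg_imp}.

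Next I would deduce \eqref{eq:fav_formula_reg2_imp} from \eqref{eq:fav_formula_reg_imp} by the same duality argument used in Lemma \ref{lem:fav_formula_reg}. One needs: (a) an adjoint relation $(\Bregimp')^{-1} = ((B_{k,\eta,\Reg'}^{\mathrm{imp}})^{-1})'$ analogous to \eqref{eq:eye1}, which follows from the impedance analogue of Lemma \ref{lem:quasiadjoint} (using $\langle \DL_k\phi,\psi\rangle_{\Gamma,\Rea} = \langle\phi,\DL_k'\psi\rangle_{\Gamma,\Rea}$, the self-adjointness of $S_k$, and \eqref{eq:Hdual}); (b) $(\ItDext)' = \ItDext$, which holds by Green's second identity applied in $\Oi$ to two solutions of the exterior impedance problem together with the radiation condition, exactly as in \eqref{eq:NtDdual2} (again it is crucial that $\langle\cdot,\cdot\rangle_{\Gamma,\Rea}$ is the \emph{real}-valued pairing; cf.~\cite[Lemma 6.13]{Sp:15}); and (c) the identity $(\ItDSp)' = \Reg^{-1}\ItDS\Reg$ from \eqref{eq:eye2}, which is unchanged since $\ItDR$ does not involve $\beta$. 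Replacing $\Reg$ by $\Reg'$ in \eqref{eq:fav_formula_reg_imp}, taking the $'$, and using (a)--(c) then produces \eqref{eq:fav_formula_reg2_imp}.

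The main obstacle I anticipate is not conceptual but bookkeeping: carefully propagating the $\ri\beta$ terms through the combination of jump relations so that the coefficient of $\ItDR$ comes out as $I - \ItDext(\ri\eta\Reg^{-1} + \ri\beta)$ rather than something superficially different, and similarly checking that on the $\Bregimp'$ side the argument of $\ItDR$ assembles to $\Reg - (\ri\eta I + \ri\beta\Reg)\ItDext$. A secondary point is making sure all the operators in \eqref{eq:fav_formula_reg_imp}--\eqref{eq:fav_formula_reg2_imp} are well-defined on the relevant Sobolev spaces: this requires knowing $\ItDext$ and $\ItDR$ are bounded on the appropriate scales (the former is standard for the exterior impedance problem with $\Re\beta\geq 0$, the latter from Lemma \ref{lem:ItDS1}/\ref{lem:ItDS2}), and that the impedance analogue of Theorem \ref{thm:invert} holds so that $\Bregimp$ and $\Bregimp'$ are indeed invertible — which, as noted in the paragraph preceding the lemma, follows by an essentially identical argument to the proof of Theorem \ref{thm:invert}, using that $u\mapsto 0$ forces $u=0$ in $\Oe$ by uniqueness of the exterior impedance problem and $u=0$ in $\Oi$ by Lemma \ref{lem:ItDS1}.
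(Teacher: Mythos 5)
Your architecture is the paper's: derive \eqref{eq:fav_formula_reg_imp} by setting $u=(\cK_k+\ri\beta\cS_k)\varphi$, showing $\Reg\partial_n^-u-\ri\eta\gamma^-u=\Bregimp\varphi=g$ so that $\gamma^-u=\ItDR g$, and reading off $\varphi=\gamma^+u-\gamma^-u$; and then obtain \eqref{eq:fav_formula_reg2_imp} by duality, with exactly the ingredients (a)--(c) the paper uses (the only slip there is that $(\ItDext)'=\ItDext$ follows from Green's second identity in $\Oe\cap B_R$ together with the radiation condition, i.e.\ the impedance analogue of \eqref{eq:NtDdual}, not from Green's identity in $\Oi$). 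However, the bookkeeping in your first half --- the very step you flag as the main obstacle --- is wrong as written and does not assemble to \eqref{eq:fav_formula_reg_imp}. From your own (correct) trace formulas $\partial_n^{\pm}u=H_k\varphi+(\mp\tfrac12 I+\DL_k')(\ri\beta\varphi)$, the jump is $\partial_n^+u-\partial_n^-u=-\ri\beta\varphi$, not $+\ri\beta\varphi$; and the interior boundary condition $\Reg\partial_n^-u-\ri\eta\gamma^-u=g$ gives $\partial_n^-u=\Reg^{-1}(g+\ri\eta\ItDR g)$, whereas you use this expression for $\partial_n^+u$ --- but here $\partial_n^+u\neq\partial_n^-u$, precisely because of the single-layer term $\ri\beta\cS_k$. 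With your relations, $h:=\partial_n^+u+\ri\beta\gamma^+u$ still contains $\varphi$ through $\ItDext(\ri\beta\varphi)$, and ``solving the resulting linear relation'' produces a spurious prefactor $\big(I-\ItDext\,\ri\beta\big)^{-1}$ (or $\big(I-2\,\ItDext\,\ri\beta\big)^{-1}$ if you propagate your sign consistently) multiplying the right-hand side of \eqref{eq:fav_formula_reg_imp}, so the stated formula does not come out.

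The repair is exactly the cancellation you missed: since $\gamma^+u-\gamma^-u=\varphi$ and $\partial_n^+u-\partial_n^-u=-\ri\beta\varphi$, the impedance combinations of the exterior and interior traces coincide,
\beqs
\partial_n^+u+\ri\beta\gamma^+u=\partial_n^-u+\ri\beta\gamma^-u=\Reg^{-1}g+\big(\ri\eta\Reg^{-1}+\ri\beta\big)\ItDR g,
\eeqs
so $h$ depends on $g$ alone, there is no linear relation to solve, and $\varphi=\ItDext h-\ItDR g$ gives \eqref{eq:fav_formula_reg_imp} directly; this is the ``analogous argument'' the paper invokes, and with this correction the rest of your proposal (including the duality step for \eqref{eq:fav_formula_reg2_imp}) goes through as in the paper.
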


\bpf
By \eqref{eq:NtDdual}, if $u$ and $v$ are solutions of the Helmholtz equation in $\Oe$ satisfying the radiation condition \eqref{eq:src}, then
\beqs
\big\langle \partial_n^+ u + \ri \beta \gamma^+ u , \gamma^+ v\rangle_{\Gamma,\Rea} = \big\langle
 \partial_n^+ v + \ri \beta \gamma^+ v, \gamma^+ u \rangle_{\Gamma,\Rea}, 
\eeqs
and thus $\ItDext = (\ItDext)'$. The formula \eqref{eq:fav_formula_reg2_imp} follows from \eqref{eq:fav_formula_reg_imp} 
by taking the $^\prime$ of \eqref{eq:fav_formula_reg_imp} and arguing exactly as in the start of the proof of Lemma \ref{lem:fav_formula_reg}. 
To prove \eqref{eq:fav_formula_reg_imp}, 
given $g$ and $\varphi$ satisfying  $\Bregimp \varphi= g$,
let  \(u :=             (\cK_k + \ri \beta \cS_k)
         \varphi\); the motivation for this choice is that we are dealing with the direct BIE arising from Green's integral representation where, by \eqref{eq:Green_imp}, $u$ is the sum of $u^I$ and this particular combination of potentials. 
Exactly as in the proof of Lemma \ref{lem:fav_formula_reg}, the jump relations imply that $\Reg \partial^-_n u - \ri \eta \gamma^- u = \Bregimp\varphi =g$, and then  analogous arguments to those in the proof of Lemma \ref{lem:fav_formula_reg} give the result  \eqref{eq:fav_formula_reg_imp}.
\epf

Given the bounds on $\Reg$ and $\Reg^{-1}$ from Corollary \ref{cor:Rbound} and the bounds on $\ItDS$ from 
Corollary \ref{cor:ItDS} and Theorem \ref{thm:ItDS}, 
bounding $(\Bregimp)^{-1}$ and $(\Bregimp')^{-1}$ reduces to bounding $\ItDext$.
Indeed, arguing exactly as in the proof of Theorem \ref{thm:upper_bound_inverse}, we find that if $\eta\in \mathbb{R}\setminus\{0\}$ is independent of $k$ then
\begin{align}\label{eq:hot_imp}
\big\| (\Bregimp)^{-1}\big\|_{L^2(\Gamma)\rightarrow L^2(\Gamma)} \leq C \Big( 1+ \big((1+\lvert \eta \rvert)k + \beta\big) \big\| \ItDext\big\|_{H^{-1}_k(\Gamma)\rightarrow L^2(\Gamma)}\Big)
\end{align}
(compare to \eqref{eq:hot1}).

We now consider the most-common exterior impedance boundary condition where $\beta =k$.

\ble\label{lem:ItDext}
If $\Gamma$ is $C^\infty$ and $\beta=k$, then given $k_0>0$ there exists $C>0$ such that
\beq\label{eq:ItDext}
\big\| \ItDext\big\|_{H^{s-1/2}_k(\Gamma)\to H^{s+1/2}_k(\Gamma)} \leq C \quad\tfa k\geq k_0 \tand |s|\leq 1/2.
\eeq
\ele

When $\Oi$ is a ball and $u(r,\theta)= H_0^{(1)}(kr)$ (in $d=2$), standard Hankel-function asymptotics for large argument (see, e.g., \cite[\S10.17]{Di:22}) show that \eqref{eq:ItDext} is sharp in its $k$-dependence (see \cite[Lemma 5.5]{BaSpWu:16} for analogous arguments for the interior impedance to Dirichlet map).

\bpf[Proof of Lemma \ref{lem:ItDext}]
By repeating the proof of Lemma \ref{lem:NtD1} with 
 \eqref{eq:NtDdual2} replaced by \eqref{eq:ItDdual}, we see that 
the analogue of Lemma \ref{lem:NtD1} holds with $\NtD$ replaced by $\ItDext$. It is therefore sufficient to prove that 
\beq\label{eq:BSW1}
\big\| \ItDext\big\|_{L^2(\Gamma)\to H^{1}_k(\Gamma)} \leq C \quad\tfa k\geq k_0.
\eeq

The arguments in \cite[Proof of Lemma 3.2]{GaMaSp:21} can be used to show that given $R>0$ and $k_0>0$ there exists $C>0$ such that if $(\Delta +k^2)u=0$ in $\Oe$ and $u$ satisfies the Sommerfeld radiation condition, then
\beq\label{eq:GMS1}
\N{u}_{L^2(\Oe \cap B_R)} \leq C \N{ k^{-1}\partial_n^+ u + \ri \gamma^+ u }_{L^2(\Gamma)} \quad\tfa k\geq k_0
\eeq
(compare to \cite[Equation 3.23]{GaMaSp:21}). We first show how \eqref{eq:BSW1} follows from \eqref{eq:GMS1}; we then discuss how to prove \eqref{eq:GMS1}.

Once we have proved \eqref{eq:GMS1}, an argument using the real part of Green's identity shows that 
\beqs
k^{-1} \N{\nabla u}_{L^2(\Oe\cap B_R)} + \N{u}_{L^2(\Oe \cap B_R)} \leq C \N{ k^{-1}\partial_n^+ u + \ri \gamma^+ u }_{L^2(\Gamma)} \quad\tfa k\geq k_0;
\eeqs
see, e.g., \cite[Equation 4.8]{GaLaSp:21}, \cite[Lemma 2.2(b)]{Sp:14}. Then, an argument using the imaginary part of Green's identity shows that 
\beqs
k^{-1} \N{\partial_n^+ u }_{L^2(\Gamma)} + \N{\gamma^+ u }_{L^2(\Gamma)} \leq 2 \N{k^{-1} \partial^+_n u + \ri \gamma^+ u}_{L^2(\Gamma)};
\eeqs
see, e.g., \cite[Lemma 4.14]{Sp:14}. 
Finally, an argument using a Rellich-type identity shows that 
\beq\label{eq:GMS4}
k^{-1}\N{\nabla_\Gamma (\gamma^+ u) }_{L^2(\Gamma)} \leq C \Big( k^{-1} \N{\partial_n^+ u }_{L^2(\Gamma)} + \N{\gamma^+ u }_{L^2(\Gamma)} 
+k^{-1} \N{\nabla u}_{L^2(\Oe\cap B_R)} + \N{u}_{L^2(\Oe \cap B_R)} \Big),
\eeq
see \cite[\S5.1.2, 5.2.1]{Ne:67}, \cite[Lemma 3.5(ii)]{Sp:14}, where recall from \S\ref{sec:weighted_norms} that $\nabla_\Gamma$ is the surface gradient.
Combining \eqref{eq:GMS1}-\eqref{eq:GMS4}, we obtain \eqref{eq:BSW1}.

We now describe how to modify the proof of \cite[Equation 3.23]{GaMaSp:21} (part of \cite[Lemma 3.2]{GaMaSp:21}) to prove \eqref{eq:GMS1}. 
\cite[Lemma 3.2]{GaMaSp:21} considers the more-general impedance-type boundary condition $-k^{-1}Q_b \partial_n^+ u + \ri \gamma^+u =0$, whereas now we take $Q_b=-1$ (note that in \cite{GaMaSp:21} the sign convention with the normal is different to here; see \cite[Page 6724]{GaMaSp:21}).
\cite[Lemma 3.2]{GaMaSp:21} establishes estimates for $k$ in a strip in the complex plane, whereas here we are only interested in real $k$, and therefore we take $C=0$ at the start of \cite[Proof of Lemma 3.2]{GaMaSp:21}. Next, the bound \cite[Equation 3.23]{GaMaSp:21} is similar to \eqref{eq:GMS1} except with the $L^2(\Gamma)$ norm on the right-hand side replaced by the $H^{3/2}(\Gamma)$ norm; this is because \cite[Lemma 3.2]{GaMaSp:21} ultimately wants to bound the $H^2$ norm of $u$ in the domain. Since we are not interested in this $H^2$ norm, we can replace the $H^{3/2}(\Gamma)$ norm by the $L^2(\Gamma)$ norm and the arguments of \cite[Proof of Lemma 3.2]{GaMaSp:21} go through unchanged. Finally, the arguments in \cite[Proof of Lemma 3.2]{GaMaSp:21} concern the complex-scaled operator $\Delta_\theta +k^2$, whereas we want to now use them with the original Helmholtz operator $\Delta +k^2$. These arguments go through with now 
(i) the fact that the defect measure of the incoming set is zero (\cite[Proposition 3.5]{Bu:02}, \cite[Lemma 3.4]{GaSpWu:20}, \cite[Lemma 3.6]{GaLaSp:21}) replacing that $\mu( \{ {\bf r}\geq 2r_1\})=0$ at the bottom of \cite[Page 6751]{GaMaSp:21}, and (ii) propagation now possible in both directions along the flow (as opposed to only forwards for the complex-scaled operator -- see the bottom of \cite[Page 6751]{GaMaSp:21}).
\epf

Combining \eqref{eq:hot_imp} and \eqref{eq:ItDext}, we obtain the following bound on $(\Bregimp)^{-1}$.

\begin{corollary}
Suppose that $\Gamma$ is $C^\infty$, $\beta=k$, and $\eta\in \mathbb{R}\setminus \{0\}$ is independent of $k$. Then given $k_0>0$ there exists $C>0$ such that
\begin{align*}
\big\| (\Bregimp)^{-1}\big\|_{L^2(\Gamma)\rightarrow L^2(\Gamma)} \leq C ( 1+ k) \quad \tfa k\geq k_0.
\end{align*}
\end{corollary}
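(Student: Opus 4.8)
The plan is to derive this bound directly from the two ingredients already assembled in Appendix~\ref{sec:ext_imp}: the abstract formula \eqref{eq:fav_formula_reg_imp} for $(\Bregimp)^{-1}$, and the uniform bound \eqref{eq:ItDext} on the exterior impedance-to-Dirichlet map $\ItDext$ with $\beta=k$. The key point is that the scalar estimate \eqref{eq:hot_imp} was obtained by running the proof of Theorem~\ref{thm:upper_bound_inverse} with $\NtD\Reg^{-1}$ replaced by $\ItDext\Reg^{-1}$; so the proof here is really just bookkeeping of the various factors of $k$ in \eqref{eq:hot_imp}.

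First I would recall \eqref{eq:hot_imp}: for $\eta\in\mathbb{R}\setminus\{0\}$ independent of $k$ (which is our hypothesis),
\beqs
\big\| (\Bregimp)^{-1}\big\|_{L^2(\Gamma)\rightarrow L^2(\Gamma)} \leq C \Big( 1+ \big((1+\lvert \eta \rvert)k + \beta\big) \big\| \ItDext\big\|_{H^{-1}_k(\Gamma)\rightarrow L^2(\Gamma)}\Big).
\eeqs
Substituting $\beta=k$, the prefactor $(1+|\eta|)k+\beta$ is bounded by a $k$-independent multiple of $k$ (since $\eta$ is fixed), so that
\beqs
\big\| (\Bregimp)^{-1}\big\|_{L^2(\Gamma)\rightarrow L^2(\Gamma)} \leq C \Big( 1+ k\, \big\| \ItDext\big\|_{H^{-1}_k(\Gamma)\rightarrow L^2(\Gamma)}\Big).
\eeqs
Then I would invoke Lemma~\ref{lem:ItDext} with $s=-1/2$, which gives $\big\| \ItDext\big\|_{H^{-1}_k(\Gamma)\rightarrow L^2(\Gamma)}\leq C$ for all $k\geq k_0$ (this is exactly the $s=-1/2$ endpoint of \eqref{eq:ItDext}, noting $H^{-1}_k(\Gamma)=H^{-1/2+(-1/2)}_k(\Gamma)$ and $L^2(\Gamma)=H^{1/2+(-1/2)}_k(\Gamma)$). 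Combining the two displays yields
\beqs
\big\| (\Bregimp)^{-1}\big\|_{L^2(\Gamma)\rightarrow L^2(\Gamma)} \leq C (1+ k) \quad \tfa k\geq k_0,
\eeqs
which is the claim.

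There is essentially no obstacle here — everything has been set up so that the corollary follows by tracking constants. The one thing I would double-check is that the derivation leading to \eqref{eq:hot_imp} only used $\|\ItDS\|_{\LtGt}\leq C$ (Theorem~\ref{thm:ItDS}, valid for $\eta$ independent of $k$) and the bounds on $\Reg,\Reg^{-1}$ from Corollary~\ref{cor:Rbound}, all of which are in force under the stated hypotheses (($\Gamma$ is $C^\infty$, $\Reg$ satisfies Assumption~\ref{ass:Reg}); note Lemma~\ref{lem:ItDext} already requires $\Gamma$ to be $C^\infty$. If one also wanted the analogous bound on $(\Bregimp')^{-1}$, it follows in the same way from \eqref{eq:fav_formula_reg2_imp} together with the norm-equality machinery (the analogue of Corollary~\ref{cor:norms}), but that is not asked for in the statement.
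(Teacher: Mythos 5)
Your proposal is correct and follows exactly the paper's argument: the corollary is obtained by combining \eqref{eq:hot_imp} with the uniform bound \eqref{eq:ItDext} (at the endpoint $s=-1/2$) and substituting $\beta=k$ with $\eta$ fixed. Nothing further is needed.
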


\section*{Acknowledgements}
EAS thanks Zydrunas Gimbutas (NIST) and Leslie Greengard (New York University and Flatiron Institute) for useful discussions 
about the operators $\Breg$ and $\Bregp$ during a visit to New York University in November 2012.
The authors thank the anonymous referee for their careful reading of the paper and constructive comments.
PM and EAS were supported by EPSRC grant EP/R005591/1  and JG by EPSRC grant EP/V001760/1.
The authors have no competing interests to declare that are relevant to the content of this article.

\footnotesize{
\bibliographystyle{acm}
\bibliography{biblio_combined_sncwadditions.bib}
}

\end{document}